\numberwithin{equation}{section}
\newtheorem{theorem}{Theorem}[section]
\newtheorem{lemma}[theorem]{Lemma}
\newtheorem{proposition}[theorem]{Proposition}
\newtheorem{corollary}[theorem]{Corollary}
\newtheorem{definition}[theorem]{Definition}
\newtheorem{main result}{Main Result}
\newtheorem{remark}[theorem]{Remark}
\begin{document}
\title[Martingale solution of Nematic Liquid Crystals driven by Pure Jump Noise]{Existence of weak martingale solution of Nematic Liquid Crystals driven by Pure Jump Noise}

\author[Z. Brze\'zniak]{Zdzis{\l}aw Brze\'zniak}
	\address{Department of Mathematics\\
		The University of York\\
		Heslington, York YO10 5DD, UK} \email{zb500@york.ac.uk}
	\author[U. Manna]{Utpal Manna}
	\address{School of Mathematics, Indian Institute of Science Education and Research Thiruvananthapuram\\ Trivandrum 695016, INDIA}
	\email{manna.utpal@iisertvm.ac.in}
	\author[A. A. Panda]{Akash Ashirbad Panda}
	\address{School of Mathematics, Indian Institute of Science Education and Research Thiruvananthapuram\\ Trivandrum 695016, INDIA}
	\email{akash.panda13@iisertvm.ac.in}

\keywords{nematic liquid crystal, weak martingale solutions, time homogeneous Poisson random measure, Skorokhod representation theorem}
	\subjclass[2010]{60H15, 60J75, 76A15, 76B03}

\begin{abstract}
In this work we consider a stochastic evolution equation which describes the system governing the nematic liquid crystals driven by a pure jump noise. The existence of a martingale solution is proved for both 2D and 3D cases. The construction of the solution is based on the classical Faedo-Galerkin approximation, compactness method and the Jakubowski's version of the Skorokhod representation theorem for non-metric spaces. We prove the solution is pathwise unique and further establish the existence of a strong solution in the 2-D case. 
\end{abstract}
\date{\today}
\maketitle
\tableofcontents

\section{Introduction}

The obvious states of matter are the solid, the liquid and the gaseous state. The liquid crystal is an intermediate state of a matter, in between the liquid and the crystalline solid, i.e. it must possess some typical properties of a liquid as well as some crystalline properties. The nematic liquid crystal phase is characterized by long-range orientational order, i.e. the molecules have no positional order but tend to align along a preferred direction. Much of the interesting phenomenology of liquid crystals involves the geometry and dynamics of the preferred axis, which is defined by a vector $\mathbf{d}.$ This vector is called a director. Since the sign as well as the magnitude of the director has no physical significance, it is taken to be unity.

 One can observe the flow of nematic liquid crystals as slowly moving particles where the alignment of the particles and the velocity of the fluid sway each other. The all-inclusive description of the physical relevance of liquid crystals has been illustrated in Chandrasekhar \cite{Sc}, Warner and Terentjev \cite{WT} and Gennes and Prost \cite{GP}. In the 1960's,  Ericksen \cite{Eri} and Leslie \cite{Les} demonstrated the hydrodynamic theory of liquid crystals. Moreover, they expanded the continuum theory which has been widely used  by most researchers to design the dynamics of the nematic liquid crystals. Inspired by this theory, the most fundamental form of dynamical system representing the motion of nematic liquid crystals has been procured by Lin and Liu \cite{LL}. This system can be derived as
\begin{align}\label{el}
& \frac{\partial \mathbf{u}}{\partial t} + (\mathbf{u} \cdot \nabla)\mathbf{u} - \mu \Delta \mathbf u + \nabla p = - \lambda \nabla \cdot \big( \nabla \mathbf{d} \odot \nabla \mathbf{d} \big),
\\ & \qquad \qquad \qquad \qquad \quad \nabla \cdot \mathbf{u} = 0,
\\ &\qquad \qquad \frac{\partial \mathbf{d}}{\partial t} + (\mathbf{u} \cdot \nabla)\mathbf{d} = \gamma \left( \Delta \mathbf{d} + |\nabla \mathbf{d}|^2 \mathbf{d} \right),
\\ &\qquad \qquad \qquad \qquad \quad |\mathbf{d}|^2 = 1. \label{eq4}
\end{align}
This holds in $\mathbb{O}_{T} := (0, T] \times \mathbb{O},$  where $\mathbb{O} \subset \mathbb{R}^\mathbf{n}, \mathbf{n} = 2, 3.$ Where the vector field $\mathbf{u} := \mathbf{u}(x, t)$ denotes the velocity of the fluid, $\mathbf{d} := \mathbf{d}(x, t)$ is the director field, $p$ denoting the scalar pressure. The symbol $\nabla \mathbf{d} \odot \nabla \mathbf{d}$ is the $\mathbf{n} \times \mathbf{n}$-matrix with the entries
\[\big[ \nabla \mathbf{d} \odot \nabla \mathbf{d} \big]_{i, j} = \sum_{k=1}^{\mathbf{n}} \partial_{x_i} \mathbf{d}^{(k)} \partial_{x_j} \mathbf{d}^{(k)}, \quad i, j = 1, \cdots, \mathbf{n}. \]

We equip the system with the initial and boundary conditions respectively as follows
\begin{align}
\mathbf{u}(0) = \mathbf{u}_0 \ \ \text{and} \ \  \mathbf{d}(0) = \mathbf{d}_0,
\\ \mathbf{u} = 0 \ \ \text{and} \ \ \frac{\partial \mathbf{d}}{\partial \textsl{n}} = 0 \ \ \text{on} \ \  \partial \mathbb{O}. \label{inboc}
\end{align}
Where the vector $\textsl{n}(x)$ is the outward unit normal vector at each point $x$ of $\mathbb{O}$.

It is the most simple mathematical model one can acquire without disrupting the basic nonlinear structure. Though \eqref{el}-\eqref{eq4} is a much simplified version of the equations used in Ericksen-Leslie theory, it preserves many crucial physical attributes of the nematic liquid crystals. Since
\begin{align*}
\Delta \mathbf{d} + |\nabla \mathbf{d}|^2 \mathbf{d} = \mathbf{d} \times \big( \Delta \mathbf{d} \times \mathbf{d}\big),
\end{align*}
we obtain non-parabolicity in \eqref{eq4}. Also we have high nonlinearity in \eqref{el} due to the term $\nabla \cdot \big( \nabla \mathbf{d} \odot \nabla \mathbf{d} \big)$. These two frame the main mathematical difficulties while studying the above model. So the problem \eqref{el}-\eqref{inboc} form a fully nonlinear system of Partial Differential Equations with constraint.  Since the system \eqref{el}-\eqref{inboc} comprise of the Navier-Stokes equations as a subsystem, in general one can not expect any superior results than those for the Navier-Stokes equations.

To overcome the difficulty, we have a closely related system of \eqref{el}-\eqref{inboc}, which eases the constraint $|\mathbf{d}|^2 = 1$ and the gradient nonlinearity $|\nabla \mathbf{d}|^2 \mathbf{d},$ due to the suggestion of Lin and Liu \cite{LL} in 1995. They have worked on the following model
\begin{align}\label{el1}
& \frac{\partial \mathbf{u}}{\partial t} + (\mathbf{u} \cdot \nabla)\mathbf{u} - \mu \Delta \mathbf u + \nabla p = - \lambda \nabla \cdot \big( \nabla \mathbf{d} \odot \nabla \mathbf{d} \big) \ \ \text{in} \ \ (0, T] \times \mathbb{O},
\\ & \qquad \qquad \qquad \qquad \quad \nabla \cdot \mathbf{u} = 0 \ \ \text{in} \ \ [0, T] \times \mathbb{O},
\\ &\qquad \qquad \frac{\partial \mathbf{d}}{\partial t} + (\mathbf{u} \cdot \nabla)\mathbf{d} = \gamma \left( \Delta \mathbf{d} - \frac{1}{\epsilon^{2}} \big( |\mathbf{d}|^2 -1 \big) \mathbf{d} \right) \ \ \text{in} \ \ (0, T] \times \mathbb{O},
\\ &\qquad \qquad \qquad \quad \mathbf{u}(0) = \mathbf{u}_0 \ \ \text{and} \ \  \mathbf{d}(0) = \mathbf{d}_0 \ \ \text{in} \ \ \mathbb{O}. \label{el1eq4}
\end{align}
 Where $\epsilon > 0$ is an arbitrary constant. Though it is a much simpler version of the previous system  \eqref{el}-\eqref{inboc}, still it is a captivating as well as a toilsome problem. Many have done meticulous work on the systems \eqref{el}-\eqref{inboc} and \eqref{el1}-\eqref{el1eq4} (e.g. see \cite{GR, LiL, LL, LW, LLW, Mh, MdM, Ss}, to name a few).

In this paper, we analyse the stochastic version of the problem \eqref{el1}-\eqref{el1eq4}. For that, we instigate L\'evy noise in the equation \eqref{el1}. Moreover, we set $ \mu = \lambda = \gamma = 1,$ as well as, we supersede the Ginzburg-Landau bounded function $\chi_{|\mathbf{d}| \leq 1}( |\mathbf{d}|^2 -1 ) \mathbf{d}$ by a general polynomial function $f(\mathbf{d}).$ We consider $\tilde{\eta}$ to be the compensated time homogeneous Poisson random measure on a certain measurable space $(Y, \mathscr{B}(Y))$ and $F$ to be a measurable function. This system of stochastic evolution equations is given by
 \begin{align}\label{1st}
& d\mathbf{u}(t) + \big[ (\mathbf{u}(t) \cdot \nabla)\mathbf{u}(t) - \mu \Delta \mathbf u(t) + \nabla p \big] dt  \nonumber
\\ &\quad\quad \ = - \lambda \nabla \cdot \big( \nabla \mathbf{d}(t) \odot \nabla \mathbf{d}(t) \big) dt + \int_{Y} F(t, \mathbf{u}(t); y) \,\tilde{\eta}(dt, dy),
\\ & \qquad \qquad \qquad \qquad \qquad \qquad \nabla \cdot \mathbf{u}(t) = 0,
\\ &\qquad d\mathbf{d}(t) + \big[(\mathbf{u}(t) \cdot \nabla)\mathbf{d}(t)\big] dt = \gamma \left( \Delta \mathbf{d}(t) - \frac{1}{\epsilon^{2}} f(\mathbf{d}(t)) \right)dt. \label{2nd}
\end{align}
This holds in $\mathbb{O}_{T} := (0, T] \times \mathbb{O},$ with the initial and boundary conditions as \eqref{el1eq4} and \eqref{inboc} respectively. In the coming sections we will provide more information about the polynomial $f$ and the measurable function $F$. 

Most of the physical systems confront dynamical instabilities. The instability befalls at some critical value of the control parameter (which is in our case some random external noise) of the system. In our predicament the dynamics are quite intricate because the evolution of the director field $\mathbf{d}(x, t)$ is coupled to the velocity field $\mathbf{u}(x, t).$ San Miguel \cite{SM}, has studied the stationary orientational correlations of the director field of a nematic liquid crystal near the Fr\'eedericksz transition. In this transition the molecules tend to reorient due to some random external perturbations. It has been studied by Sagu\'es and Miguel \cite{FS} that the decay time, required for the system is shortened by the field fluctuations to leave an unstable state, whcih is built by switching on the field to a value beyond instability point. See also Horsthemke and Lefever\cite{WR} and references there in, for more details. A nematic drifts very much like a typical organic liquid with molecules of indistinguishable size. Since, the transitional motions are coupled to inner, orientational motions of the molecules, in most cases the flow muddles the alignment. Conversely, by implimentation of an external field, a change in the alignment will generate a flow in the nematic. So we are interested in the study of \eqref{1st}-\eqref{2nd}, which characterize the flow of nematic liquid crystals, effected by altering external forces.

There are few notable works available on the stochastic version of the problem \eqref{el1}-\eqref{el1eq4}. The authors in the unpublished manuscript \cite{BHRa} have studied the Ginzburg-Landau approximation of the above system governing the nematic liquid crystals under the influence of fluctuating external forces. In their paper they have proved the existence and uniqueness of local maximal solution for both 2-D and 3-D case using fixed point argument. Also they have proved the existence of global strong solution to the problem in 2-D. Later, the same authors in another unpublished manuscript \cite{BHR} have considered the same model as in \cite{BHRa} with multiplicaive Gaussian noise and replaced the Ginzburg-Landau function by a general polynomial, under suitable assumptions on it. In their paper they have proved the existence of global weak solution and showed pathwise uniqueness of the solution in 2-D. Also they have established the existence and uniqueness of local maximal strong solution and showed this solution is global in 2-D.  

In our paper we consider the same model which describes the dynamics of the nematic liquid crystal, but we have replaced the multiplicative Gaussian noise with L\'evy noise (Time homogeneous Poisson random measure). It is worthwhile to note that there are not many literatures available on this particular stochastic version of the problem. Though in our coupled system we have the L\'evy noise in the first equation only, it still makes a quite difficult and interesting problem as it invloves tedious calculations. We are interested in showing the existence (both in 2-D and 3-D) and pathwise uniqueness (only in 2-D) of the weak martingale solution of our problem. Some of the arguments used in Sections \ref{tolas} and \ref{eoms} of our paper are closely related to the paper \cite{EM}, where the author has proved the existence of martingale solutions of the stochastic Navier-Stokes equations driven by L\'evy noise.

This paper is organized as follows. In Section \ref{fsm} we define variuos functional spaces and some useful operators which are used throughout in our paper. Also we have stated various inequalities and listed all the assumptions at the end of this particular Section. In Section \ref{somr} we define the martingale solution of our model in the view of operators defined in Section \ref{fsm}. In Section \ref{catc} we state compactness results and tightness criterion for both $\mathbf{u}$ and $\mathbf{d}$. In Section \ref{EE} we derive several important a priori energy estimates  of the approximating sequences $(\mathbf{u}_n, \mathbf{d}_n),$ obtained by the Faedo-Galerkin method. Then the Sections \ref{tolas} and \ref{eoms} are devoted to the proof of tightness of the above approximating solutions and the existence of martingale solution respectively. For the existence of such solution we use Skorokhod embedding theorems. As a consequence of this theorem, finally in the end of the Section \ref{eoms} we show the convergence of the new processes to the corresponding limiting processes. Then the proof of the main result defined in Section \ref{somr}. In Section \ref{pu2} we show the pathhwise uniqueness of the weak solution, but only in two dimensions. Finally in Appendix we prove various results and estimates to use them in the derivation of a priori estimates as well as in the proof of existence of weak martingale solution. 

\section{Functional Setting of the Model}\label{fsm}

\subsection{Basic Definitions and Functional Spaces}\label{bd}
Let $\mathbb{O} \subset \mathbb{R}^\mathbf{n}$ be a bounded domain with smooth boundary $\partial \mathbb{O}, \mathbf{n} = 2, 3$. For any $p \in [1, \infty)$ and $k \in \mathbb{N},$ $L^p(\mathbb{O})$ and $W^{k, p}(\mathbb{O})$ are well-known Lebesgue and Sobolev spaces of $\mathbb{R}^{\mathbf{n}}$-valued functions respectively. For $p=2,$ put $W^{k, 2}= H^k.$ For instance, $H^1(\mathbb{O}; \mathbb{R}^{\mathbf{n}})$ is the Sobolev space of all $\mathbf{u} \in L^2(\mathbb{O}; \mathbb{R}^{\mathbf{n}}),$ for which there exist weak derivatives $\frac{\partial  \mathbf{u}}{\partial x_i} \in L^2(\mathbb{O}; \mathbb{R}^{\mathbf{n}}), i=1, 2, \cdots, \mathbf{n}.$ It is a Hilbert space with the scalar product given by
\[(\mathbf{u}, \mathbf{v})_{H^1} := (\mathbf{u}, \mathbf{v})_{L^2} + (\nabla \mathbf{u}, \nabla \mathbf{v})_{L^2}, \quad \mathbf{u}, \mathbf{v} \in H^1(\mathbb{O}, \mathbb{R}^{\mathbf{n}}).\]

 Let us define the following spaces
\begin{align*}
\mathcal{V} := \{ \mathbf{u} \in \mathcal{C}_{c}^{\infty}(\mathbb{O}; \mathbb{R}^\mathbf{n}) : div \ \mathbf{u} = 0\},\\
\mathbb{H} := \mathrm{the \ closure \ of \ }\mathcal{V} \ \mathrm{in} \ L^2(\mathbb{O}; \mathbb{R}^\mathbf{n}),\\
\mathbb{V} := \mathrm{the \ closure \ of \ }\mathcal{V} \ \mathrm{in} \ H^{1}_{0}(\mathbb{O}; \mathbb{R}^\mathbf{n}).
\end{align*}

In the space $\mathbb{H}$ we consider the scalar product and the norm inherited from $L^2(\mathbb{O}; \mathbb{R}^\mathbf{n})$ and denote them by $(\cdot, \cdot)_{\mathbb{H}}$ and $|\cdot|_{\mathbb{H}},$ respectively, i.e.,
\begin{align}
(\mathbf{u}, \mathbf{v})_{\mathbb{H}} := (\mathbf{u}, \mathbf{v})_{L^2}, \ \ \ \ \ |\mathbf{u}|_{\mathbb{H}} := |\mathbf{u}|_{L^2} := |\mathbf{u}|, \ \ \ \ \ \ \mathbf{u}, \mathbf{v} \in \mathbb{H}.
\end{align}

In the space $\mathbb{V}$ we consider the scalar product inherited from the Sobolev space $H^1(\mathbb{O}; \mathbb{R}^\mathbf{n})$ i.e.,
\begin{align}
(\mathbf{u}, \mathbf{v})_{\mathbb{V}} := (\mathbf{u}, \mathbf{v})_{L^2} + ((\mathbf{u}, \mathbf{v})),
\end{align}
where 
\begin{align}\label{Gd}
((\mathbf{u}, \mathbf{v})) := (\nabla \mathbf{u}, \nabla \mathbf{v})_{L^2} = \sum_{i=1}^{\mathbf{n}} \int_{\mathbb{O}} \frac{\partial  \mathbf{u}}{\partial x_i} \cdot \frac{\partial  \mathbf{v}}{\partial x_i} \,dx, \ \ \ \ \ \mathbf{u}, \mathbf{v} \in \mathbb{V}.
\end{align}

and the norm
\begin{align}
\|\mathbf{u}\|^{2}_{\mathbb{V}} := |\mathbf{u}|^{2}_{\mathbb{H}} + \|\mathbf{u}\|^{2},
\end{align}
where $\|\mathbf{u}\|^2 := \| \nabla \mathbf{u}\|_{L^2}^{2}.$

\subsection{Bilinear Operators}

Let us consider the following trilinear form, see Temam\cite{Te}
\[b(\mathbf{u}, \mathbf{v}, \mathbf{w}) = \sum_{i, j = 1}^{n} \int_{\mathbb{O}} \mathbf{u}^{(i)} \partial_{x_{i}}\mathbf{v}^{(j)} \mathbf{w}^{j} \,dx, \quad \mathbf{u} \in L^p, \mathbf{v} \in W^{1, q}, \mathbf{w} \in L^r,\]
where $p, q, r \in [1, \infty]$ satisfying
\begin{align}\label{pqr1}
\frac{1}{p} + \frac{1}{q} + \frac{1}{r} \leq 1.
\end{align}

We will recall the fundamental properties of the form $b$ that are valid for both bounded and unbounded domains. By the Sobolev embedding Theorem, see Adams\cite{Ad}, and H\"older's inequality, we obtain
\begin{align*}
|b(\mathbf{u}, \mathbf{w}, \mathbf{v})| \leq c \|\mathbf{u}\|_{\mathbb{V}}\|\mathbf{w}\|_{\mathbb{V}}\|\mathbf{v}\|_{\mathbb{V}}, \qquad \mathbf{u}, \mathbf{w}, \mathbf{v} \in \mathbb{V}
\end{align*}
for some positive constant c. Thus the form $b$ is continuous on $\mathbb{V}.$ Now we define a bilinear map $B$ by $B(\mathbf{u}, \mathbf{w}) := b(\mathbf{u}, \mathbf{w}, \cdot),$ then we infer that $B(\mathbf{u}, \mathbf{w}) \in \mathbb{V}'$ for all $\mathbf{u}, \mathbf{w} \in \mathbb{V}$ and the following inequality holds
\begin{align}\label{buw}
\|B(\mathbf{u}, \mathbf{w})\|_{\mathbb{V}'} \leq c \ \|\mathbf{u}\|_{\mathbb{V}}\|\mathbf{w}\|_{\mathbb{V}}, \qquad \mathbf{u}, \mathbf{w} \in \mathbb{V}.
\end{align}

Moreover, the mapping $B : \mathbb{V} \times \mathbb{V} \to \mathbb{V}'$ is bilinear and continuous. The form $b$ also has the following properties, see \cite{Te},
\[ b(\mathbf{u}, \mathbf{w}, \mathbf{v}) = -b(\mathbf{u}, \mathbf{v}, \mathbf{w}), \qquad \mathbf{u}, \mathbf{w}, \mathbf{v} \in \mathbb{V}.\] 

In particular,
\[ b(\mathbf{u}, \mathbf{v}, \mathbf{v}) = 0, \qquad  \mathbf{u}, \mathbf{v} \in \mathbb{V}.\]

Hence
\[\langle B(\mathbf{u}, \mathbf{w}), \mathbf{v}\rangle = -\langle B(\mathbf{u}, \mathbf{v}), \mathbf{w}\rangle, \quad \mathbf{u}, \mathbf{w}, \mathbf{v} \in \mathbb{V}\]
and
\begin{align}
\langle B(\mathbf{u}, \mathbf{v}), \mathbf{v}\rangle = 0, \qquad \mathbf{u}, \mathbf{v} \in \mathbb{V}.
\end{align}

Moreover, for all $\mathbf{u} \in \mathbb{V}, \mathbf{v} \in H^1,$ we have
\begin{align}\label{buvvd}
\|B(\mathbf{u}, \mathbf{v})\|_{\mathbb{V}'} \leq c \ |\mathbf{u}|^{1-\frac{\mathbf{n}}{4}} \| \mathbf{u}\|^{\frac{\mathbf{n}}{4}} |\mathbf{v}|^{1-\frac{\mathbf{n}}{4}} \| \mathbf{v}\|^{\frac{\mathbf{n}}{4}}, \quad \mathbf{n} \in \{ 2, 3\}
\end{align}

\noindent For the proof, we refer to Section 1.2 of Temam\cite{Te}. The operator $B$ can be uniquely extended to a bounded linear operator
\[B : \mathbb{H} \times \mathbb{H} \to \mathbb{V}'.\]
And it satisfies the follwing estimate
\begin{align}\label{BHH}
\|B(\mathbf{u}, \mathbf{v})\|_{\mathbb{V}'} \leq c \ |\mathbf{u}|_{\mathbb{H}} |\mathbf{v}|_{\mathbb{H}}.
\end{align}

We will use the following notation, $B(\mathbf{u}) := B(\mathbf{u}, \mathbf{u}).$ Also note that the map $B : \mathbb{V} \to \mathbb{V}'$ is locally Lipschitz continuous.
\\ One can define a bilinear map $\tilde{B}$ defined on $H^1 \times H^1$ with values in $(H^1)'$ such that 
\begin{align*}
\langle \tilde{B}(\mathbf{u}, \mathbf{v}), \mathbf{w} \rangle = b(\mathbf{u}, \mathbf{v}, \mathbf{w}) \qquad \mathbf{u}, \mathbf{v}, \mathbf{w} \in H^1
\end{align*}

With an abuse of notation, we again denote by $\tilde{B}(\cdot, \cdot)$ the restriction of $\tilde{B}(\cdot, \cdot)$ to $\mathbb{V} \times H^2,$ which maps continuously $\mathbb{V} \times H^2$ into $L^2$. Using the Gagliardo-Nirenberg inequalities one can show there exists a positive constant $C$ such that for $\mathbf{n} \in \{ 2, 3\},$
\begin{align}\label{btil}
\big| \tilde{B}(\mathbf{u}, \mathbf{d}) \big| \leq C \ |\mathbf{u}|^{1-\frac{\mathbf{n}}{4}} \|\mathbf{u}\|^{\frac{\mathbf{n}}{4}} \|\mathbf{d}\|^{1-\frac{\mathbf{n}}{4}} |\Delta \mathbf{d}|^{\frac{\mathbf{n}}{4}}, \qquad \mathbf{u} \in \mathbb{V}, \mathbf{d} \in H^2.
\end{align}
Moreover, using Young's inequality one can get
\begin{align}
\big| \tilde{B}(\mathbf{u}, \mathbf{d}) \big| \leq C \ \|\mathbf{u}\|_{\mathbb{V}} \|\mathbf{d}\|_{H^2}.
\end{align}
We also have
\begin{align}
\langle \tilde{B}(\mathbf{u}, \mathbf{d}), \mathbf{d} \rangle = 0, \qquad \mathbf{u} \in \mathbb{V}, \mathbf{d} \in H^2.
\end{align}
 For the proof, we refer to Section 1.2 of Temam\cite{Te}.

Let $m$ be the trilinear form defined by
\[m(\mathbf{d}_1, \mathbf{d}_2, \mathbf{u}) = - \sum_{i,j,k=1}^{\mathbf{n}} \int_{\mathbb{O}} \partial_{x_i}\mathbf{d}_{1}^{(k)} \partial_{x_j}\mathbf{d}_{2}^{(k)} \partial_{x_j}\mathbf{u}^{(i)} \,dx\]
for any $\mathbf{d}_{1} \in W^{1, p}, \mathbf{d}_{2} \in W^{1, q}$ and $\mathbf{u} \in W^{1, r}$ with $p, q, r \in  (1, \infty)$ satisfying condition (\ref{pqr1}). Since $\mathbf{n} \in \{2, 3\},$ the above integral is well defined, when $\mathbf{d}_{1}, \mathbf{d}_{2} \in H^2$ and $\mathbf{u} \in \mathbb{V}.$ We have the following Lemma.
\begin{lemma}
There exists a constant $C > 0$ such that
\begin{align*}
|m(\mathbf{d}_1, \mathbf{d}_2, \mathbf{u})| \leq C \ \|\mathbf{d}_1\|^{1-\frac{\mathbf{n}}{4}} |\Delta \mathbf{d}_1|^{\frac{\mathbf{n}}{4}} \|\mathbf{d}_2\|^{1-\frac{\mathbf{n}}{4}} |\Delta \mathbf{d}_2|^{\frac{\mathbf{n}}{4}} \|\mathbf{u}\|
\end{align*}
for any $\mathbf{d}_{1}, \mathbf{d}_{2} \in H^2$ and $\mathbf{u} \in \mathbb{V}.$
\end{lemma}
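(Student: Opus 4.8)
The plan is to estimate the integral defining $m$ by H\"older's inequality with exponents $(4,4,2)$ and then to interpolate each $L^4$ norm of a gradient by means of the Gagliardo--Nirenberg inequality, following the same route by which \eqref{btil} was obtained. The starting point is the elementary pointwise bound $\big|\partial_{x_i}\mathbf{d}_1^{(k)}\,\partial_{x_j}\mathbf{d}_2^{(k)}\,\partial_{x_j}\mathbf{u}^{(i)}\big| \le |\nabla\mathbf{d}_1|\,|\nabla\mathbf{d}_2|\,|\nabla\mathbf{u}|$; since the sum over $i,j,k$ runs over at most $\mathbf{n}^3$ terms, this gives
\[
|m(\mathbf{d}_1,\mathbf{d}_2,\mathbf{u})| \le C \int_{\mathbb{O}} |\nabla\mathbf{d}_1|\,|\nabla\mathbf{d}_2|\,|\nabla\mathbf{u}|\,dx .
\]
Since $\frac14+\frac14+\frac12 = 1$, H\"older's inequality then yields
\[
|m(\mathbf{d}_1,\mathbf{d}_2,\mathbf{u})| \le C\,\|\nabla\mathbf{d}_1\|_{L^4}\,\|\nabla\mathbf{d}_2\|_{L^4}\,\|\nabla\mathbf{u}\|_{L^2},
\]
where $\|\nabla\mathbf{u}\|_{L^2} = \|\mathbf{u}\|$ because $\mathbf{u}\in\mathbb{V}$.

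The second step applies the Gagliardo--Nirenberg inequality to $v=\nabla\mathbf{d}_\ell$ for $\ell=1,2$. For $\mathbf{n}\in\{2,3\}$ the scaling identity $\frac14 = \theta\big(\frac12-\frac1{\mathbf{n}}\big)+(1-\theta)\frac12$ forces $\theta = \frac{\mathbf{n}}{4}\in\{\tfrac12,\tfrac34\}\subset(0,1)$, and on the bounded domain $\mathbb{O}$ one has
\[
\|v\|_{L^4(\mathbb{O})} \le C\,\|v\|_{L^2(\mathbb{O})}^{\,1-\frac{\mathbf{n}}{4}}\,\|\nabla v\|_{L^2(\mathbb{O})}^{\,\frac{\mathbf{n}}{4}}
\]
(by the classical Gagliardo--Nirenberg inequality together with an extension to $\mathbb{R}^{\mathbf{n}}$, or as in Temam, Section~1.2). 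Taking $v=\nabla\mathbf{d}_\ell$ gives $\|\nabla\mathbf{d}_\ell\|_{L^4} \le C\,\|\mathbf{d}_\ell\|^{1-\frac{\mathbf{n}}{4}}\,\|\nabla^2\mathbf{d}_\ell\|_{L^2}^{\frac{\mathbf{n}}{4}}$. Finally, invoking $H^2$ elliptic regularity for the Neumann Laplacian on the smooth bounded domain $\mathbb{O}$ --- recall the boundary condition $\partial\mathbf{d}/\partial\textsl{n}=0$ on $\partial\mathbb{O}$ --- one gets $\|\nabla^2\mathbf{d}_\ell\|_{L^2}\le C\,|\Delta\mathbf{d}_\ell|$ for $\mathbf{d}_\ell\in H^2$. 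Substituting these two bounds into the H\"older estimate and collecting the three resulting factors produces
\[
|m(\mathbf{d}_1,\mathbf{d}_2,\mathbf{u})| \le C\,\|\mathbf{d}_1\|^{1-\frac{\mathbf{n}}{4}}|\Delta\mathbf{d}_1|^{\frac{\mathbf{n}}{4}}\,\|\mathbf{d}_2\|^{1-\frac{\mathbf{n}}{4}}|\Delta\mathbf{d}_2|^{\frac{\mathbf{n}}{4}}\,\|\mathbf{u}\|,
\]
which is exactly the claimed inequality.

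The H\"older step and the interpolation bookkeeping are routine; the point that genuinely uses the structure of the problem is the elliptic-regularity bound $\|\nabla^2\mathbf{d}\|_{L^2}\le C\,|\Delta\mathbf{d}|$. To justify it one writes $\mathbf{d}=\bar{\mathbf{d}}+\mathbf{d}_0$ with $\bar{\mathbf{d}}$ the (constant) average and $\mathbf{d}_0$ of zero average, observes that $\nabla^2$ and $\Delta$ annihilate $\bar{\mathbf{d}}$, and then combines the standard $H^2$ a priori estimate for the Neumann problem with the Poincar\'e inequality on the mean-zero component; the smoothness of $\partial\mathbb{O}$ and the homogeneous Neumann condition are both used here, and for a general $\mathbf{d}\in H^2$ without this boundary condition the bound fails. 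As this is the same ingredient already used to derive \eqref{btil}, I would simply cite the corresponding passage in Temam, Section~1.2, rather than reprove it.
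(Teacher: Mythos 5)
The paper offers no proof of this lemma --- it defers entirely to \cite{BHR} --- so there is no in-text argument to compare against; your route (pointwise bound, H\"older with exponents $(4,4,2)$, Gagliardo--Nirenberg applied to $\nabla\mathbf{d}_\ell$ with $\theta=\mathbf{n}/4$, and the Hessian bound $\|\nabla^2\mathbf{d}\|_{L^2}\le C\,|\Delta\mathbf{d}|$) is the standard one and does deliver the inequality in the setting where the paper actually uses it. One point deserves to be made more explicit: \emph{both} of your last two steps, not only the Hessian bound, need the Neumann boundary condition. On a bounded domain the homogeneous interpolation $\|v\|_{L^4}\le C\|v\|_{L^2}^{1-\mathbf{n}/4}\|\nabla v\|_{L^2}^{\mathbf{n}/4}$ fails for nonzero constants (an extension operator only yields the inhomogeneous form with $\|v\|_{H^1}^{\mathbf{n}/4}$), so applying it to $v=\nabla\mathbf{d}_\ell$ already presupposes $\|\nabla\mathbf{d}_\ell\|_{L^2}\le C\,|\Delta\mathbf{d}_\ell|$; for $\mathbf{d}\in D(\mathcal{A})$ this follows from $\|\nabla\mathbf{d}\|_{L^2}^2=-\int_{\mathbb{O}}(\mathbf{d}-\bar{\mathbf{d}})\cdot\Delta\mathbf{d}\,dx$ and Poincar\'e, after which $\|\nabla\mathbf{d}\|_{H^1}\le C\,|\Delta\mathbf{d}|$ closes both steps at once. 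Your closing observation that the elliptic bound fails without the boundary condition is in fact a correction to the lemma as printed: for arbitrary $\mathbf{d}_1,\mathbf{d}_2\in H^2$ the stated inequality is false. Indeed, take $\mathbf{n}=2$, $\mathbf{d}_1=(x_1x_2,0)$ and $\mathbf{d}_2=(x_1^2-x_2^2,0)$; both are harmonic, so the right-hand side vanishes, while integration by parts gives $m(\mathbf{d}_1,\mathbf{d}_2,\mathbf{u})=2\int_{\mathbb{O}}\big(x_1\mathbf{u}^{(2)}-x_2\mathbf{u}^{(1)}\big)\,dx$, which equals $-4\int_{\mathbb{O}}\psi\,dx\neq 0$ for $\mathbf{u}=(-\partial_{x_2}\psi,\partial_{x_1}\psi)\in\mathcal{V}$ with $\int_{\mathbb{O}}\psi\,dx\neq 0$. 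Since the estimate is only ever invoked for elements of the Galerkin spaces $\mathbb{L}_n$ and for the solutions built from them, all of which satisfy the Neumann condition, your proof covers every actual use; but the hypothesis should read $\mathbf{d}_1,\mathbf{d}_2\in D(\mathcal{A})$, or else $|\Delta\mathbf{d}_\ell|$ should be replaced by $\|\mathbf{d}_\ell\|_{H^2}$.
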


\noindent For proof see \cite{BHR}. Now we state the following Lemma.

\begin{lemma}
There exists a bilinear operator $M$ defined on $H^2 \times H^2$ taking values on $\mathbb{V}'$ such that for any $\mathbf{d}_{1}, \mathbf{d}_{2} \in H^2$
\[\big\langle M(\mathbf{d}_{1}, \mathbf{d}_{2}), \mathbf{u} \big\rangle = m(\mathbf{d}_{1}, \mathbf{d}_{2}, \mathbf{u}), \quad \mathbf{u} \in \mathbb{V}.\]
Furthermore, we have
\begin{align}\label{md1d2}
\|M(\mathbf{d}_{1}, \mathbf{d}_{2})\|_{\mathbb{V}'} \leq C \ \|\mathbf{d}_1\|^{1-\frac{\mathbf{n}}{4}} |\Delta \mathbf{d}_1|^{\frac{\mathbf{n}}{4}} \|\mathbf{d}_2\|^{1-\frac{\mathbf{n}}{4}} |\Delta \mathbf{d}_2|^{\frac{\mathbf{n}}{4}},
\end{align}
for any $\mathbf{d}_{1}, \mathbf{d}_{2} \in H^2.$
\end{lemma}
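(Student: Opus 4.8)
The plan is to obtain $M(\mathbf{d}_1,\mathbf{d}_2)$ as the element of $\mathbb{V}'$ that represents the linear functional $\mathbf{u}\mapsto m(\mathbf{d}_1,\mathbf{d}_2,\mathbf{u})$, mirroring the construction of $B$ from the trilinear form $b$ earlier in this section. First I would fix $\mathbf{d}_1,\mathbf{d}_2\in H^2$ and note that, since $m$ is linear in its third argument, the map $\ell\colon\mathbb{V}\ni\mathbf{u}\mapsto m(\mathbf{d}_1,\mathbf{d}_2,\mathbf{u})$ is linear. The preceding Lemma gives
\[
|\ell(\mathbf{u})|=|m(\mathbf{d}_1,\mathbf{d}_2,\mathbf{u})|\le C\,\|\mathbf{d}_1\|^{1-\frac{\mathbf{n}}{4}}|\Delta\mathbf{d}_1|^{\frac{\mathbf{n}}{4}}\|\mathbf{d}_2\|^{1-\frac{\mathbf{n}}{4}}|\Delta\mathbf{d}_2|^{\frac{\mathbf{n}}{4}}\,\|\mathbf{u}\|,\qquad \mathbf{u}\in\mathbb{V},
\]
and since $\|\mathbf{u}\|\le\|\mathbf{u}\|_{\mathbb{V}}$, this shows $\ell$ is bounded on $\mathbb{V}$, hence $\ell\in\mathbb{V}'$. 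I would then define $M(\mathbf{d}_1,\mathbf{d}_2):=\ell$, so that $\big\langle M(\mathbf{d}_1,\mathbf{d}_2),\mathbf{u}\big\rangle=m(\mathbf{d}_1,\mathbf{d}_2,\mathbf{u})$ for every $\mathbf{u}\in\mathbb{V}$ holds by construction.

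Next, the estimate \eqref{md1d2} would be read off immediately by taking the supremum of the displayed inequality over $\mathbf{u}\in\mathbb{V}$ with $\|\mathbf{u}\|_{\mathbb{V}}\le1$ (again using $\|\mathbf{u}\|\le\|\mathbf{u}\|_{\mathbb{V}}$). Finally, bilinearity of the map $(\mathbf{d}_1,\mathbf{d}_2)\mapsto M(\mathbf{d}_1,\mathbf{d}_2)$ follows from the linearity of $m$ in each of its first two slots: for any $\mathbf{d}_1,\mathbf{d}_1',\mathbf{d}_2\in H^2$ and scalars the corresponding identity holds after pairing with an arbitrary $\mathbf{u}\in\mathbb{V}$, and since an element of $\mathbb{V}'$ is determined by its action on $\mathbb{V}$ this yields equality in $\mathbb{V}'$; the argument in the second argument is identical.

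There is essentially no obstacle here: the analytic core — the trilinear estimate on $m$ — has already been established in the preceding Lemma, so what remains is the routine duality/representation argument above. The only points meriting a line of care are that the right-hand side of the bound is finite for $\mathbf{d}_1,\mathbf{d}_2\in H^2$ (clear, since for $\mathbf{d}\in H^2$ both $\|\mathbf{d}\|=\|\nabla\mathbf{d}\|_{L^2}$ and $|\Delta\mathbf{d}|$ are finite), and that the pairing $\langle\cdot,\cdot\rangle$ used is the $\mathbb{V}'$–$\mathbb{V}$ duality consistent with the one already fixed for $B$ and $\tilde{B}$.
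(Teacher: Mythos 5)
Your argument is correct: the preceding Lemma's bound $|m(\mathbf{d}_1,\mathbf{d}_2,\mathbf{u})|\le C\,\|\mathbf{d}_1\|^{1-\frac{\mathbf{n}}{4}}|\Delta\mathbf{d}_1|^{\frac{\mathbf{n}}{4}}\|\mathbf{d}_2\|^{1-\frac{\mathbf{n}}{4}}|\Delta\mathbf{d}_2|^{\frac{\mathbf{n}}{4}}\|\mathbf{u}\|$ together with $\|\mathbf{u}\|\le\|\mathbf{u}\|_{\mathbb{V}}$ makes $\mathbf{u}\mapsto m(\mathbf{d}_1,\mathbf{d}_2,\mathbf{u})$ a bounded linear functional on $\mathbb{V}$, and defining $M(\mathbf{d}_1,\mathbf{d}_2)$ as that functional gives both the duality identity and the estimate \eqref{md1d2} by taking the supremum over the unit ball. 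The paper itself omits the proof (deferring to the cited reference), but this routine representation argument is exactly the intended one, matching the construction of $B$ from $b$ earlier in the section.
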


\noindent For proof we refer \cite{BHR}. We will use the following notation, $M(\mathbf{d}) := M(\mathbf{d}, \mathbf{d}).$

\subsection{Linear Operators, Its Properties and Important Embeddings}\label{loU}

Now we will recall operators and their properties used in \cite{BrM}. Consider the natural embedding $j : \mathbb{V} \hookrightarrow \mathbb{H}$ and its adjoint $j' : \mathbb{H} \hookrightarrow \mathbb{V}.$ Since the range of $j$ is dense in $\mathbb{H},$ the map $j'$ is one-to-one. Let us put 
\begin{align}\label{scra}
\mathscr{A}\mathbf{u} := (( \mathbf{u}, \cdot)), \qquad \mathbf{u} \in \mathbb{V},
\end{align}
where $((\cdot, \cdot))$ is defined in \eqref{Gd}. If $\mathbf{u} \in \mathbb{V},$ then $\mathscr{A}\mathbf{u} \in \mathbb{V}'$. Since we have the following inequalities
\begin{align*}
|(( \mathbf{u}, \mathbf{v}))| \leq \|\mathbf{u}\| \cdot \|\mathbf{v}\| \leq \|\mathbf{u}\| (\|\mathbf{v}\|^2 + |\mathbf{v}|_{\mathbb{H}}^2)^{\frac{1}{2}} = \|\mathbf{u}\| \cdot \|\mathbf{v}\|_{\mathbb{V}}, \quad \mathbf{v} \in \mathbb{V}. 
\end{align*}
we infer that
\begin{align}
|\mathscr{A}\mathbf{u}|_{\mathbb{V}'} \leq \|\mathbf{u}\|, \qquad \mathbf{u} \in \mathbb{V}.
\end{align}

From \cite{Ouh}, we can define a self-adjoint operator $\mathcal{A} : H^1 \to \big( H^1 \big)'$ by
\begin{align}\label{cala}
\big\langle \mathcal{A} \mathbf{d}, \mathbf{w}\big\rangle := ((\mathbf{d}, \mathbf{w})) := \int_{\mathbb{O}} \nabla\mathbf{d} \cdot \nabla \mathbf{w} \,dx, \quad \mathbf{d}, \mathbf{w} \in H^1.
\end{align}
The Neumann Laplacian acting on $\mathbb{R}^{\mathbf{n}}$-valued function will be denoted by $\mathcal{A},$ i.e.,
\begin{align*}
D(&\mathcal{A}) := \bigg\{ \mathbf{d} \in H^2 : \frac{\partial \mathbf{d}}{\partial \textsl{n}} = 0 \ on \ \partial\mathbb{O} \bigg\}, 
\\ &\mathcal{A}\mathbf{d} := -\sum_{i = 1}^{\mathbf{n}} \frac{\partial^{2} \mathbf{d}}{\partial x_{i}^{2}}, \quad \mathbf{d} \in D(\mathcal{A}).
\end{align*}
It can be shown that $D(\mathcal{A}) = \{ \mathbf{d} \in H^1 : \mathcal{A} \mathbf{d} \in L^2 \}.$ We also have
\begin{align}\label{Ah1}
\|\mathcal{A}\mathbf{d}\|_{(H^1)'} \leq \|\mathbf{d}\|_{H^1}.
\end{align}

As we are working on a bounded domain, it is clear that $\mathbb{V}$ is dense in $\mathbb{H}$ and the embedding is continuous. We have
\begin{align}\label{emb}
\mathbb{V} \underset{j_1}{\hookrightarrow} \mathbb{H} \cong \mathbb{H}' \underset{j'_1}{\hookrightarrow} \mathbb{V}' .
\end{align}

Also we have the embeddings
\begin{align}\label{emb2}
H^2 \underset{j_2}{\hookrightarrow} H^1 \hookrightarrow L^2 \cong L^2 \hookrightarrow (H^1)' \underset{j'_2}{\hookrightarrow} (H^2)'.
\end{align}

\subsection{Assumptions}\label{assu}

\begin{enumerate}[label=(\Alph*)]
\item
$\tilde{\eta}$ is a compensated time homogeneous Poisson random measure on a measurable space $(Y, \mathscr{B}(Y))$ over $(\Omega, \mathcal{F}, \mathbb{F}, \mathbb{P})$ with a $\sigma$-finite intensity measure $\nu.$ See Appendix for definitions and more details.
\item
Let $F : [0, T] \times \mathbb{H} \times Y \to \mathbb{H}$ is a measurable function and there exists a constant $L$ such that 
\begin{align}\label{lipf}
\int_{Y} \big| F(t, \mathbf{u}_1; y) - F(t, \mathbf{u}_2; y) \big|^{2}_{\mathbb{H}} \,\nu(dy) \leq L \big|\mathbf{u}_1 - \mathbf{u}_2 \big|^{2}_{\mathbb{H}}, \ \ \mathbf{u}_1, \mathbf{u}_2 \in \mathbb{H}, t \in [0, T].
\end{align}
and for each $p \geq 1$ there exists a constant $C_p$ such that
\begin{align}\label{lgf}
\int_{Y} \big| F(t, \mathbf{u}; y) \big|^{p}_{\mathbb{H}} \,\nu(dy) \leq C_p \big( 1 + |\mathbf{u}|^{p}_{\mathbb{H}}\big), \quad \mathbf{u} \in \mathbb{H}, \quad t \in [0, T], 
\end{align}

\item
Let $\mathbb{I}_\mathbf{n}$ be the set defined by 
\[ \mathbb{I}_\mathbf{n} = \begin{cases}
                                 \mathbb{N} := \{1, 2, 3, \cdots\} &\quad \text{if } \mathbf{n} = 2,\\
                                 \{1\}, &\quad \text{if } \mathbf{n} = 3.
                               \end{cases}
\]
For $N \in \mathbb{I}_\mathbf{n}$ and numbers $b_j, j = 0, \cdots, N,$ with $b_N > 0$ we define a function $\tilde{f} \colon [0, \infty) \to \mathbb{R}$ by
\begin{align*}
\tilde{f}(r) = \sum_{j = 0}^{N} b_j r^j, \text{for any } r \in \mathbb{R}_{+}.
\end{align*}
We define a map ${f} \colon \mathbb{R}^\mathbf{n} \to \mathbb{R}^\mathbf{n}$ by 
\begin{align}
f(\mathbf{d}) = \tilde{f}(|\mathbf{d}|^2)\mathbf{d}.
\end{align}
Let $\mathrm{F} : \mathbb{R}^\mathbf{n} \to \mathbb{R}$ be a Fr\'ech\'et differentiable map such that for any $\mathbf{d} \in \mathbb{R}^\mathbf{n}$ and $\mathbf{g} \in \mathbb{R}^\mathbf{n}$
\begin{align*}
\mathrm{F}'(\mathbf{d})[\mathbf{g}] = f(\mathbf{d}) \cdot \mathbf{g}. 
\end{align*}
Let also $\tilde{F}$ be an antiderivative of $\tilde{f}$ such that $\tilde{F}(0) = 0.$ We have 
\[\tilde{F}(r) = a_{N+1} r^{N+1} + \mathcal{U}(r),\]
where $\mathcal{U}$ is a polynomial function of at most degree $N$ and $a_{N+1} > 0.$

\begin{remark}\label{fes}
We have the following results
\begin{align*}
|\tilde{f}(r)| \leq l_1(1 + r^N) \ \text{ and } \ |\tilde{f}'(r)| \leq l_2(1 + r^{N_1}), \quad r > 0.  
\end{align*}
for some $l_1, l_2 > 0$. And there exist positive constants $c, \tilde{c}$ such that
\begin{align*}
|f(\mathbf{d})|_{\mathbb{R}^{\mathbf{n}}} \leq c \ \big(1 + \big|\mathbf{d} \big|_{\mathbb{R}^{\mathbf{n}}}^{2N+1} \big) \ \text{ and } \ |f'(\mathbf{d})|_{\mathbb{R}^{\mathbf{n}}} \leq \tilde{c}\ \big(1 + \big|\mathbf{d} \big|_{\mathbb{R}^{\mathbf{n}}}^{2N} \big), \quad \mathbf{d} \in \mathbb{R}^\mathbf{n}.  
\end{align*}
\end{remark}

\end{enumerate}

We rewrite the equations \eqref{1st}-\eqref{2nd}, for $\epsilon = 1,$ as

\begin{align}\label{r1st}
 d\mathbf{u}(t) + \big[ \mathscr{A}\mathbf{u}(t) &+ B(\mathbf{u}(t)) + M(\mathbf{d}(t)) \big] \,dt  =  \int_{Y} F(t, \mathbf{u}(t); y) \,\tilde{\eta}(dt, dy),
\\& d\mathbf{d}(t) = - \big[ \mathcal{A} \mathbf{d}(t) + \tilde{B}(\mathbf{u}(t), \mathbf{d}(t)) + f(\mathbf{d}(t)) \big] dt. \label{r2nd}
\end{align}

\section{Statement of the Main Result}\label{somr}

Let us recall the definition of a martingale solution.

\begin{definition}\label{mdefi}
A {\bf martingale solution} of the problem\eqref{r1st}-\eqref{r2nd} is a system \\$\big( \bar{\Omega}, \bar{\mathcal{F}}, \bar{\mathbb{F}}, \bar{\mathbb{P}}, \bar{\mathbf{u}}, \bar{\mathbf{d}}, \bar{\eta}\big),$ where
\begin{enumerate}
\item
$\big( \bar{\Omega}, \bar{\mathcal{F}}, \bar{\mathbb{F}}, \bar{\mathbb{P}}\big)$ is a filtered probability space with a filtration $\bar{\mathbb{F}}= \big\{ \bar{\mathcal{F}}_t\big\}_{t \geq 0},$
\item
$\bar{\eta}$ is a time homogeneous Poisson random measure on $(Y, \mathscr{B}(Y))$ over \\$\big( \bar{\Omega}, \bar{\mathcal{F}}, \bar{\mathbb{F}}, \bar{\mathbb{P}}\big)$ with the intensity measure $\nu,$
\item
$\bar{\mathbf{u}} : [0, T] \times \bar{\Omega} \to \mathbb{V}$ is a progressively measurable process with $\bar{\mathbb{P}}$-a.e. paths
\begin{align}
\bar{\mathbf{u}}(\cdot, \omega) \in \mathbb{D}([0, T]; \mathbb{H}_w) \cap L^2(0, T; \mathbb{V})
\end{align}
such that for all $t \in [0, T]$ and all $v \in \mathbb{V}$ the following identity holds  $\bar{\mathbb{P}}$-a.s.
\begin{align}
&(\bar{\mathbf{u}}(t), v )_{\mathbb{H}} + \int_{0}^{t} \big\langle \mathscr{A}\bar{\mathbf{u}}(s), v \big\rangle \,ds +  \int_{0}^{t} \big\langle B(\bar{\mathbf{u}}(s)), v \big\rangle \,ds \nonumber
\\ & \ \ +  \int_{0}^{t} \big\langle M(\bar{\mathbf{d}}(s)), v \big\rangle \,ds = (\mathbf{u}_{0}, v )_{\mathbb{H}} + \int_{0}^{t} \int_{Y} \big( F(s, \bar{\mathbf{u}}(s); y), v\big)_{\mathbb{H}} \tilde{\bar{\eta}}(ds, dy).
\end{align}
\item
$\bar{\mathbf{d}} : [0, T] \times \bar{\Omega} \to H^2$ is a progressively measurable process with $\bar{\mathbb{P}}$-a.e. paths
\begin{align}
\bar{\mathbf{d}}(\cdot, \omega) \in \mathbb{C}([0, T]; H^{1}_{w}) \cap L^2(0, T; H^2)
\end{align}
such that for all $t \in [0, T]$ and all $v \in H^2$ the following identity holds  $\bar{\mathbb{P}}$-a.s.
\begin{align}
&(\bar{\mathbf{d}}(t), v ) + \int_{0}^{t} \big\langle \mathcal{A}\bar{\mathbf{d}}(s), v \big\rangle \,ds +  \int_{0}^{t} \big\langle \tilde{B}(\bar{\mathbf{u}}(s), \bar{\mathbf{d}}(s)), v \big\rangle \,ds \nonumber
\\ &= (\mathbf{d}_{0}, v ) - \int_{0}^{t} \big\langle f(\bar{\mathbf{d}}(s)), v \big\rangle \,ds.
\end{align}
\end{enumerate}  
\end{definition}

\begin{definition}
The problem \eqref{r1st}-\eqref{r2nd} has a {\bf strong solution} if and only if for every stochastic basis $\big( \Omega, \mathcal{F}, \mathbb{F}, \mathbb{P}\big)$ with a filtration $\mathbb{F} = \big\{ \mathcal{F}_t\big\}_{t \geq 0}$ and a time homogeneous Poisson random measure $\eta$ on $(Y, \mathscr{B}(Y))$ over the stochastic basis with intensity measure $\nu$, there exist progressively measurable process $\mathbf{u} : [0, T] \times \bar{\Omega} \to \mathbb{V}$ with $\mathbb{P}$-a.e. paths
\begin{align}
\mathbf{u}(\cdot, \omega) \in \mathbb{D}([0, T]; \mathbb{H}) \cap L^2(0, T; \mathbb{V})
\end{align}
and progressively measurable process $\mathbf{d} : [0, T] \times \bar{\Omega} \to H^2$ with $\mathbb{P}$-a.e. paths
\begin{align}
\mathbf{d}(\cdot, \omega) \in \mathbb{C}([0, T]; H^{1}) \cap L^2(0, T; H^2)
\end{align}
such that for all $t \in [0, T]$ and $v \in \mathbb{V}$ the following identity holds $\mathbb{P}$-a.s.
\begin{align}
&(\mathbf{u}(t), v )_{\mathbb{H}} + \int_{0}^{t} \big\langle \mathscr{A}\mathbf{u}(s), v \big\rangle \,ds +  \int_{0}^{t} \big\langle B(\mathbf{u}(s)), v \big\rangle \,ds \nonumber
\\ & \ \ +  \int_{0}^{t} \big\langle M(\mathbf{d}(s)), v \big\rangle \,ds = (\mathbf{u}_{0}, v )_{\mathbb{H}} + \int_{0}^{t} \int_{Y} \big( F(s, \mathbf{u}(s); y), v\big)_{\mathbb{H}} \tilde{\eta}(ds, dy).
\end{align}
and for all $v \in H^2$ the following identity holds $\mathbb{P}$-a.s.
\begin{align}
(\mathbf{d}(t), v ) &+ \int_{0}^{t} \big\langle \mathcal{A} \mathbf{d}(s), v \big\rangle \,ds +  \int_{0}^{t} \big\langle \tilde{B}(\mathbf{u}(s), \mathbf{d}(s)), v \big\rangle \,ds \nonumber
\\ &= (\mathbf{d}_{0}, v ) - \int_{0}^{t} \big\langle f(\mathbf{d}(s)), v \big\rangle \,ds.
\end{align}
\end{definition}

\noindent The main result we are going to prove is as follows 
\begin{theorem}
Let the Assumptions in the Section \ref{assu} hold. Let $\mathbf{n} =2, 3$ and $(\mathbf{u}_0, \mathbf{d}_0) \in \mathbb{H} \times H^1.$ Then the system \eqref{r1st}-\eqref{r2nd} has a weak martingale solution in the sense of Definition \ref{mdefi}. Also the solution satisfies the following inequalities
\begin{align}\label{usv2-}
\mathbb{E} \bigg[ \sup_{t \in [0, T]} \big| \mathbf{u}(t) \big|^2_{\mathbb{H}} + \int_{0}^{T} \big\| \mathbf{u}(t) \big\|^{2}_{\mathbb{V}} \,ds \bigg] < \infty,
\end{align}
and
\begin{align}\label{dsp-}
\mathbb{E} \bigg[ \sup_{t \in [0, T]} \big\| \mathbf{d}(t) \big\|_{H^1}^{2} + \int_{0}^{T} \big\| \mathbf{d}(t) \big\|^{2}_{H^2} \bigg] < \infty.
\end{align}
Moreover, the pathwise uniqueness and the existence of a strong solution holds in the two dimensions.
\end{theorem}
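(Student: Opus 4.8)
We follow the classical Faedo--Galerkin scheme together with the compactness/tightness method and Jakubowski's version of the Skorokhod representation theorem for non-metric spaces; pathwise uniqueness and the existence of a strong solution in dimension two are then deduced by a Yamada--Watanabe type argument.

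\emph{Galerkin approximation and a priori estimates.} Let $\{e_i\}_{i\ge1}$ and $\{a_j\}_{j\ge1}$ be the orthonormal bases of $\mathbb{H}$ and $L^2$ made of the eigenfunctions of $\mathscr{A}$ (see \eqref{scra}) and of the Neumann Laplacian $\mathcal{A}$, with associated orthogonal projections $P_n$ and $\Pi_n$. Projecting \eqref{r1st}--\eqref{r2nd} onto the span of the first $n$ modes gives a finite-dimensional stochastic equation driven by the time homogeneous Poisson random measure with locally Lipschitz coefficients; as the a priori bound below excludes explosion, this system has a unique global solution $(\mathbf{u}_n,\mathbf{d}_n)$ with paths in $\mathbb{D}([0,T];\mathbb{H}_n\times(H^2)_n)$. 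Applying the It\^o formula to $t\mapsto|\mathbf{u}_n(t)|_{\mathbb{H}}^2+\|\mathbf{d}_n(t)\|^2+2\int_{\mathbb{O}}\mathrm{F}(\mathbf{d}_n(t))\,dx$ and using the antisymmetry identities $\langle B(\mathbf{u}),\mathbf{u}\rangle=0$, $\langle\tilde B(\mathbf{u},\mathbf{d}),\mathbf{d}\rangle=0$ and the Lin--Liu cancellation of the coupling terms $\langle M(\mathbf{d}_n),\mathbf{u}_n\rangle$ against the contribution of $\tilde B(\mathbf{u}_n,\mathbf{d}_n)$, every indefinite deterministic nonlinearity disappears; the polynomial term is absorbed using Remark \ref{fes} and the Gagliardo--Nirenberg inequalities, while the jump term is handled by the Burkholder--Davis--Gundy inequality and the growth bound \eqref{lgf}. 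This gives, for every $p\ge1$, bounds uniform in $n$ of the form
\[ \mathbb{E}\Big[\sup_{t\in[0,T]}|\mathbf{u}_n(t)|_{\mathbb{H}}^{2p}+\Big(\int_0^T\|\mathbf{u}_n(t)\|_{\mathbb{V}}^2\,dt\Big)^{p}+\sup_{t\in[0,T]}\|\mathbf{d}_n(t)\|_{H^1}^{2p}+\Big(\int_0^T\|\mathbf{d}_n(t)\|_{H^2}^2\,dt\Big)^{p}\Big]\le C_p; \]
the restriction $N\in\mathbb{I}_\mathbf{n}$ is precisely what makes this closure go through, the polynomial degree being arbitrary when $\mathbf{n}=2$ but forced to $N=1$ when $\mathbf{n}=3$.

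\emph{Tightness, Skorokhod representation and passage to the limit.} Combining these bounds with an Aldous-type estimate for $\mathbf{u}_n$ in $\mathbb{V}'$, obtained by splitting $\mathbf{u}_n$ into its drift part (bounded through \eqref{buw}, \eqref{md1d2}, \eqref{scra}) and the stochastic integral, and with a bound for $\partial_t\mathbf{d}_n$ in $L^2(0,T;L^2)$, the compactness criteria of Section \ref{catc} show that the laws of $(\mathbf{u}_n,\mathbf{d}_n)$ are tight on
\[ \mathcal{Z}:=\big(\mathbb{D}([0,T];\mathbb{H}_w)\cap L^2(0,T;\mathbb{H})\cap\mathbb{D}([0,T];\mathbb{V}')\big)\times\big(\mathbb{C}([0,T];L^2)\cap L^2(0,T;H^1)\cap\mathbb{C}([0,T];H^1_w)\big), \]
and, together with the law of $\eta$, on $\mathcal{Z}\times\mathcal{M}$, where $\mathcal{M}$ is the space of counting measures on $[0,T]\times Y$. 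Jakubowski's theorem then provides a probability space $(\bar\Omega,\bar{\mathcal F},\bar{\mathbb P})$ and random elements $(\bar{\mathbf{u}}_n,\bar{\mathbf{d}}_n,\bar\eta_n)$ with the same laws as $(\mathbf{u}_n,\mathbf{d}_n,\eta)$ converging $\bar{\mathbb P}$-a.s.\ in $\mathcal{Z}\times\mathcal{M}$ to some $(\bar{\mathbf{u}},\bar{\mathbf{d}},\bar\eta)$; since laws are preserved, each $(\bar{\mathbf{u}}_n,\bar{\mathbf{d}}_n,\bar\eta_n)$ still solves the $n$-th Galerkin system and $\bar\eta$ is again a time homogeneous Poisson random measure with intensity $\nu$ for the filtration it generates together with $(\bar{\mathbf{u}},\bar{\mathbf{d}})$. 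The a.s.\ convergence in $\mathcal{Z}$ yields strong convergence of $\bar{\mathbf{u}}_n$ in $L^2(0,T;\mathbb{H})$ and of $\bar{\mathbf{d}}_n$ in $L^2(0,T;H^1)$, which suffices to pass to the limit in $B(\bar{\mathbf{u}}_n)$, $M(\bar{\mathbf{d}}_n)$, $\tilde B(\bar{\mathbf{u}}_n,\bar{\mathbf{d}}_n)$ and $f(\bar{\mathbf{d}}_n)$ --- the uniform higher moments furnishing the uniform integrability needed to pass from a.s.\ convergence to convergence in mean --- while \eqref{lipf} and the a.s.\ convergence of $\bar\eta_n$ give the convergence of the stochastic integrals. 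Letting $n\to\infty$ in the Galerkin identities shows that $\big(\bar\Omega,\bar{\mathcal F},\bar{\mathbb F},\bar{\mathbb P},\bar{\mathbf{u}},\bar{\mathbf{d}},\bar\eta\big)$ is a martingale solution in the sense of Definition \ref{mdefi}, and \eqref{usv2-}--\eqref{dsp-} follow from the uniform Galerkin bounds by weak lower semicontinuity.

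\emph{Uniqueness and strong solution for $\mathbf{n}=2$.} Given two solutions on the same stochastic basis with the same noise and initial data, put $\mathbf{w}=\mathbf{u}_1-\mathbf{u}_2$ and $\mathbf{z}=\mathbf{d}_1-\mathbf{d}_2$ and apply the It\^o formula to $|\mathbf{w}(t)|_{\mathbb{H}}^2+\|\mathbf{z}(t)\|^2$; using the bilinearity of $B$, $M$, $\tilde B$ and the two-dimensional Ladyzhenskaya and interpolation inequalities to estimate the differences of the nonlinear terms, and \eqref{lipf} to bound the jump quadratic-variation term by $L|\mathbf{w}(t)|_{\mathbb{H}}^2$, one obtains, after localizing by stopping times tied to the a priori bounds of the theorem and then removing them, a Gronwall inequality which forces $\mathbf{w}\equiv0$ and $\mathbf{z}\equiv0$. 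Finally, in dimension two the energy equality holds, so $\bar{\mathbf{u}}$ has a modification with paths in $\mathbb{D}([0,T];\mathbb{H})$; the existence of a martingale solution together with pathwise uniqueness then yields, by a Yamada--Watanabe type theorem for equations driven by a Poisson random measure, the existence of a strong solution. The two main obstacles are closing the a priori estimates in the presence of the polynomial $f$ --- the reason $\mathbf{n}=3$ requires $N=1$ --- and the limit passage on the new probability space, namely identifying $\bar\eta$ as a Poisson random measure and proving the convergence of the stochastic integral, where \eqref{lipf} and the uniform higher moments are essential.
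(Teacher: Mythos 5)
Your strategy is essentially the one the paper follows: Faedo--Galerkin projection, the coupled energy functional $|\mathbf{u}_n|_{\mathbb{H}}^2+\|\mathbf{d}_n\|^2+2\int_{\mathbb{O}}\mathrm{F}$ with the Lin--Liu cancellation $\langle M(\mathbf{d}_n),\mathbf{u}_n\rangle=\langle\tilde B(\mathbf{u}_n,\mathbf{d}_n),\Delta\mathbf{d}_n\rangle$, tightness via an Aldous condition, Jakubowski's theorem, term-by-term passage to the limit using uniform higher moments and Vitali/uniform integrability, and Ondrej\'at's Yamada--Watanabe theorem for the strong solution in 2-D. Two points, however, deserve correction. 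First, the auxiliary bound you invoke for the tightness of $(\mathbf{d}_n)$ --- ``a bound for $\partial_t\mathbf{d}_n$ in $L^2(0,T;L^2)$'' --- is not available in dimension three: by \eqref{btil}, $|\tilde B(\mathbf{u}_n,\mathbf{d}_n)|_{L^2}^2\lesssim |\mathbf{u}_n|^{1/2}\|\mathbf{u}_n\|^{3/2}\|\mathbf{d}_n\|^{1/2}|\Delta\mathbf{d}_n|^{3/2}$, and integrating this in time requires control of $\int_0^T\|\mathbf{u}_n\|^4\,dt$ (or of $\int_0^T|\Delta\mathbf{d}_n|^6\,dt$), neither of which follows from the available estimates $\mathbf{u}_n\in L^2(0,T;\mathbb{V})$, $\mathbf{d}_n\in L^2(0,T;H^2)$. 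The paper avoids this by working in the weaker space $(H^2)'$ and proving only increment estimates of the form $\mathbb{E}\bigl[\,|\int_{\tau_n}^{\tau_n+\theta}\tilde B_n\,ds|_{(H^2)'}\bigr]\le c\,\theta^{1/4}$ (an $L^1$-in-time bound via H\"older with exponents $4$ and $4/3$), which is exactly what Lemma \ref{aco} needs; your sketch should be amended accordingly for $\mathbf{n}=3$ (in $\mathbf{n}=2$ your claim is fine).

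Second, in the uniqueness argument the functional $|\mathbf{w}|_{\mathbb{H}}^2+\|\mathbf{z}\|^2$ is not quite sufficient if $\|\cdot\|$ denotes, as in the paper, the gradient seminorm: the estimate of $\langle f(\mathbf{d}_1)-f(\mathbf{d}_2),\Delta\mathbf{z}\rangle$ (Lemma \ref{fdflbet}) produces a term $C_2(\kappa_2)\,|\mathbf{z}|_{L^2}^2\,\beta(\mathbf{d}_1,\mathbf{d}_2)$, so $|\mathbf{z}|_{L^2}^2$ must be carried in the Gronwall functional as well --- the paper works with $|\mathbf{w}|^2+|\mathbf{z}|^2+\|\mathbf{z}\|^2$. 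Your ``localize by stopping times, then remove them'' device plays the same role as the paper's exponential weight $\Upsilon(t)=\exp(-2\int_0^t(\xi_1+\xi_2+\xi_3)\,ds)$, both being standard ways to handle Gronwall coefficients that are only integrable in time; either works once the functional is corrected. With these two repairs your outline matches the paper's proof.
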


\section{Compactness and Tightness Criterion}\label{catc}

\subsection{Compactness Results}

Let $(\mathbb{M}, \rho)$ be a complete separable metric space. Let $\mathbb{D}([0, T]; \mathbb{M})$ be the space of all $\mathbb{M}$-valued c\`adl\`ag functions defined on $[0, T],$ i.e. the functions which are right continuous and have left limits at every $t \in [0, T].$ This space is endwoed with the Skorokhod topology.

A sequence $(\mathbf{u}_n) \subset \mathbb{D}([0, T]; \mathbb{M})$ converges to $\mathbf{u} \in \mathbb{D}([0, T]; \mathbb{M})$ iff there exists a sequence $(\mu_n)$ of homeomorphisms of $[0, T]$ such that $\mu_n$ tends to the identity uniformly on $[0, T]$ and $\mathbf{u}_n \circ \mu_n$ tends to $\mathbf{u}$ uniformly on $[0, T].$

The topology is metrizable by the following metric $\vartheta_{T}$
\[\vartheta_{T}(\mathbf{u}, \mathbf{v}) := \inf_{\mu \in \sigma_T} \bigg[ \sup_{t \in [0, T]} \rho\big( \mathbf{u}(t), \mathbf{v} \circ \mu(t)\big) + \sup_{t \in [0, T]} |t - \mu(t)| + \sup_{s \neq t} \bigg| \log \frac{\mu(t) - \mu(s)}{t-s}\bigg|\bigg],\]

where $\sigma_{T}$ is the set of increasing homeomorphisms of $[0, T].$ \\Moreover, $\big( \mathbb{D}([0, T]; \mathbb{M}), \vartheta_{T}\big)$ is a complete metric space.

\begin{definition}
Let $\mathbf{u} \in \mathbb{D}([0, T]; \mathbb{M})$ and let $\delta > 0$ be given. A \textbf{modulus} of $\mathbf{u}$ is defined by 
\begin{align}\label{matw}
\mathcal{W}_{[0, T], \mathbb{M}}(\mathbf{u}; \delta) := \inf_{\Pi_{\delta}} \max_{t_i \in \tilde{\omega}} \sup_{t_i \leq s < t < t_{i+1}\leq T} \rho\big( \mathbf{u}(t), \mathbf{u}(s)\big),
\end{align}
where $\Pi_{\delta}$ is the set of all increasing sequences $\tilde{\omega}= \{0=t_0 < t_1 < \cdots < t_n =T\}$ with the following property
\[t_{i+1}-t_i \geq \delta, \quad i=0, 1, \cdots, n-1.\]
\end{definition}
 
Analogous to the Arzel\`a-Ascoli Theorem for the space of continuous functions, we have the following criterion for the relative compactness of a subset of the space $\mathbb{D}([0, T]; \mathbb{M}).$

\begin{theorem}
A set $B \subset \mathbb{D}([0, T]; \mathbb{M})$ has precompact iff it satisfies the following two conditions:
\begin{enumerate}
\item
there exists a dense subset $J \subset [0, T]$ such that for every $t \in J$ the set $\{ \mathbf{u}(t), \mathbf{u} \in B\}$ has compact closure in $\mathbb{M}.$
\item
$\lim_{\delta \to 0} \sup_{\mathbf{u} \in B} \mathcal{W}_{[0, T]}(\mathbf{u}; \delta) = 0$.
\end{enumerate}
\end{theorem}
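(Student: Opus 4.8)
The statement is the Skorokhod-space analogue of the Arzel\`a--Ascoli theorem, and the plan is to prove the two implications separately, treating sufficiency as the substantial half. I would first isolate a \emph{step-function approximation lemma} that serves as the engine for both directions. For $\mathbf{u}\in\mathbb{D}([0,T];\mathbb{M})$ and $\delta>0$, choose a partition $\tilde{\omega}=\{0=t_0<\cdots<t_n=T\}\in\Pi_\delta$ realising the infimum in \eqref{matw} up to an additive $\epsilon$, and set $\mathbf{u}_\delta(t):=\mathbf{u}(t_i)$ for $t\in[t_i,t_{i+1})$ and $\mathbf{u}_\delta(T):=\mathbf{u}(T)$. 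Taking $s=t_i$ in \eqref{matw} gives $\sup_{t}\rho(\mathbf{u}(t),\mathbf{u}_\delta(t))\le \mathcal{W}_{[0,T],\mathbb{M}}(\mathbf{u};\delta)+\epsilon$, and evaluating $\vartheta_T$ at the identity time change $\mu=\mathrm{id}$ (which annihilates its last two terms) yields $\vartheta_T(\mathbf{u},\mathbf{u}_\delta)\le \mathcal{W}_{[0,T],\mathbb{M}}(\mathbf{u};\delta)+\epsilon$. Thus every $\mathbf{u}$ is $\vartheta_T$-approximated by a c\`adl\`ag step function with at most $\lceil T/\delta\rceil$ jumps whose values lie in the range of $\mathbf{u}$, which reduces compactness questions in $\mathbb{D}$ to finite bookkeeping of jump times and values.

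For sufficiency I would argue that conditions (1) and (2) make $B$ totally bounded; since $(\mathbb{D}([0,T];\mathbb{M}),\vartheta_T)$ is complete, this is equivalent to precompactness. First I would upgrade the pointwise compactness from $J$ to all of $[0,T]$ and to the range: for $t\in[0,T]$ and a near-optimal $\Pi_\delta$-partition, pick $s\in J$ lying in the same block as $t$ (possible since $J$ is dense), so that $\rho(\mathbf{u}(t),\mathbf{u}(s))\le \mathcal{W}_{[0,T],\mathbb{M}}(\mathbf{u};\delta)+\epsilon$; combined with the relative compactness of $\{\mathbf{u}(s):\mathbf{u}\in B\}$ and condition (2), this shows that $K:=\overline{\{\mathbf{u}(t):\mathbf{u}\in B,\ t\in[0,T]\}}$ is totally bounded, hence compact in the complete space $\mathbb{M}$. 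Then, given $\epsilon>0$, choose $\delta$ with $\sup_{\mathbf{u}\in B}\mathcal{W}_{[0,T],\mathbb{M}}(\mathbf{u};\delta)<\epsilon/3$ and replace each $\mathbf{u}$ by $\mathbf{u}_\delta$; finally I would \emph{snap} the breakpoints $t_i$ onto a fixed fine grid of mesh $\delta'$ through an increasing homeomorphism $\mu\in\sigma_T$ and snap the values $\mathbf{u}(t_i)$ onto a finite $\epsilon/3$-net of the compact $K$. Since the jump count is bounded by $\lceil T/\delta\rceil$, only finitely many such snapped prototypes occur, and each $\mathbf{u}\in B$ lies within $\epsilon$ of one of them in $\vartheta_T$, furnishing the required finite net.

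For necessity I would exploit that evaluation and the modulus are compatible with $\vartheta_T$. Condition (1) follows by noting that the projection $\pi_t:\mathbb{D}\to\mathbb{M}$ is continuous, in the Skorokhod topology, at every $\mathbf{u}$ that is continuous at $t$; since each c\`adl\`ag path has at most countably many discontinuities, a counting argument shows the set $J$ of times at which $\pi_t$ is continuous on the compact $\overline{B}$ is co-countable, hence dense, whence $\{\mathbf{u}(t):\mathbf{u}\in B\}\subset\pi_t(\overline{B})$ is relatively compact for $t\in J$. Condition (2) follows by a Dini-type argument: $\delta\mapsto\mathcal{W}_{[0,T],\mathbb{M}}(\mathbf{u};\delta)$ is nondecreasing and tends to $0$ as $\delta\to 0$ for each fixed c\`adl\`ag $\mathbf{u}$, and a continuity estimate bounding $\mathcal{W}_{[0,T],\mathbb{M}}(\mathbf{u};\delta)$ in terms of $\mathcal{W}_{[0,T],\mathbb{M}}(\mathbf{v};2\delta)$ and $\vartheta_T(\mathbf{u},\mathbf{v})$ lets me transfer this convergence uniformly across a finite $\vartheta_T$-cover of $\overline{B}$.

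The main obstacle will be the sufficiency direction, and within it the control of jump \emph{times}: the uniform-norm approximation by step functions is immediate, but converting a $\vartheta_T$-approximation into a genuinely finite net requires placing all jumps on a common grid through the time-change component of $\vartheta_T$ while keeping the logarithmic-derivative term small. The companion continuity estimate for $\mathcal{W}_{[0,T],\mathbb{M}}$ under $\vartheta_T$-perturbations, used in the necessity half, is the other delicate point. Both are purely metric-geometric facts about the Skorokhod structure and are independent of the liquid-crystal model.
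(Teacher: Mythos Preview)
The paper does not actually prove this theorem; its entire proof is the sentence ``For details see \cite{MMe}.'' Your proposal therefore supplies what the paper omits. The sufficiency direction is sound: the step-function approximation, the upgrade of pointwise compactness from the dense set $J$ to all of $[0,T]$ via condition~(2), and the snapping of values and jump times onto finite grids are the standard moves, and you have correctly identified the time-change bookkeeping in $\vartheta_T$ as the only real nuisance.

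There is, however, a gap in your necessity argument for condition~(1). You claim that ``a counting argument'' based on the countability of the discontinuity set of each individual c\`adl\`ag path shows that the set $J$ of times at which $\pi_t$ is continuous on all of $\overline{B}$ is co-countable. But $\overline{B}$ is typically uncountable, so the union of the discontinuity sets over all $\mathbf{u}\in\overline{B}$ may a~priori be all of $[0,T]$; countability path-by-path does not yield co-countability collectively. The standard repair is to reverse the order of the two conditions in the necessity proof: first establish~(2) via your Dini-type argument (or, equivalently, via upper semicontinuity of $\mathbf{u}\mapsto\mathcal{W}_{[0,T],\mathbb{M}}(\mathbf{u};\delta)$ on $\mathbb{D}([0,T];\mathbb{M})$ combined with compactness of $\overline{B}$), and then use~(2) together with a finite $\vartheta_T$-net of $\overline{B}$ to manufacture a single \emph{countable} set $E\subset[0,T]$ outside of which every $\mathbf{u}\in\overline{B}$ is continuous; taking $J=[0,T]\setminus E$ then gives~(1). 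With this reordering your scheme goes through.
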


\begin{proof}
 For details see \cite{MMe}.
\end{proof}

\noindent Let us consider the following functional spaces.
\begin{align*}
&\mathbb{D}([0, T]; \mathbb{V}') \ \ := \ \text{the space of c\`adl\`ag functions} \ \mathbf{u} : [0, T] \to \mathbb{V}' \ \text{with the topology}
\\ &\qquad \qquad \qquad \quad \ \ \mathcal{T}_1 \ \text{induced by the Skorokhod metric} \ \delta_{T}, 
\\ &\mathbb{C}([0, T]; (H^2)') \ \ := \ \text{the space of continuous functions} \ \mathbf{d} : [0, T] \to (H^2)' \ \text{with the topology} \  \mathcal{T}'_1,
\\ &L^{2}_{w}(0, T; \mathbb{V}) \ \ := \ \text{the space} \ L^2(0, T; \mathbb{V}) \ \text{with the weak topology} \ \mathcal{T}_2,
\\ &L^{2}_{w}(0, T; H^2) \ \ := \ \text{the space} \ L^2(0, T; H^2) \ \text{with the weak topology} \ \mathcal{T}'_2,
\\ &L^{2}(0, T; \mathbb{H}) \ \ := \ \text{the space of measurable functions} \ \mathbf{u} : [0, T] \to \mathbb{H} \ \text{with the}
\\ &\qquad \qquad \qquad \quad \ \text{topology} \ \mathcal{T}_3,
\\ &L^{2}(0, T; H^1) \ \ := \ \text{the space of measurable functions} \ \mathbf{d} : [0, T] \to H^1 \ \text{with the}
\\ &\qquad \qquad \qquad \quad \ \text{topology} \ \mathcal{T}'_3.
\end{align*}

Let $\mathbb{H}_{w}$ denote the Hilbert space $\mathbb{H}$ endowed with the weak topology. Let us consider the space
\begin{align*}
&\mathbb{D}([0, T]; \mathbb{H}_w) \ \ := \ \text{the space of weakly c\`adl\`ag functions} \ \mathbf{u} : [0, T] \to \mathbb{H} \ \text{with the}
\\ &\qquad \qquad \qquad \qquad \text{weakest topology} \ \mathcal{T}_4 \ \text{such that for all} \ h \in \mathbb{H} \ \text{the mappings}
\\ &\qquad \qquad \mathbb{D}([0, T]; \mathbb{H}_w) \ni \mathbf{u} \mapsto (\mathbf{u}(\cdot), h)_{\mathbb{H}} \in \mathbb{D}([0, T]; \mathbb{R}) \ \text{are continuous.}
\end{align*} 

In particular, $\mathbf{u}_n \to \mathbf{u}$ in $\mathbb{D}([0, T]; \mathbb{H}_w)$ iff for all $h \in \mathbb{H}:$
\[(\mathbf{u}_n (\cdot), h)_{\mathbb{H}} \to (\mathbf{u}(\cdot), h)_{\mathbb{H}} \ \text{in the space} \ \mathbb{D}([0, T]; \mathbb{R}).\]

Similarly we define $\mathbb{C}([0, T]; H^1_w)$ with the topology $\mathcal{T}'_4.$\\

\noindent The following two results are due to \cite{EM}, where the details of the proof can be found.

\begin{theorem}\label{compau}
$($Compactness Criterion for $\mathbf{u}$$)$ Let us consider the space
\begin{align}
&\mathcal{Z}_{T, 1} = L^{2}_{w}(0, T; \mathbb{V}) \cap L^2(0, T; \mathbb{H}) \cap \mathbb{D}([0, T]; \mathbb{V}') \cap \mathbb{D}([0, T]; \mathbb{H}_w)
\end{align}
and $\mathscr{T}^1$ be the supremum of the corresponding topologies. Then a set $\mathcal{K}_1 \subset \mathcal{Z}_{T, 1}$ is $\mathscr{T}^1$-relatively compact if the following three conditions hold
\begin{enumerate}
\item
$\sup_{\mathbf{u} \in \mathcal{K}_1} \sup_{s \in [0, T]} |\mathbf{u}(s)|_{\mathbb{H}} < \infty,$
\item
$\sup_{\mathbf{u} \in \mathcal{K}_1} \int_{0}^{T} \|\mathbf{u}(s)\|^{2}_{\mathbb{V}} \,ds < \infty,$ i.e., $\mathcal{K}_1$ is bounded in $L^{2}(0, T; \mathbb{V}),$
\item
$\lim_{\delta \to 0} \sup_{\mathbf{u} \in \mathcal{K}_1} \mathcal{W}_{[0, T], \mathbb{V}'}(\mathbf{u}; \delta) = 0.$ 
\end{enumerate}
\end{theorem}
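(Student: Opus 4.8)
The plan is to prove the stronger assertion that $\mathcal{K}_1$ is sequentially relatively compact. Since conditions (1)--(2) confine $\mathcal{K}_1$ to a set on which each of the four component topologies is metrizable---the weak topologies $\mathcal{T}_2$ and $\mathcal{T}_4$ being metrizable on bounded balls of the separable Hilbert spaces $L^2(0,T;\mathbb{V})$ and $\mathbb{H}$ (use a countable dense set of test vectors), and $\mathcal{T}_1,\mathcal{T}_3$ already being genuine metrics---it suffices to show that every sequence $(\mathbf{u}_n)\subset\mathcal{K}_1$ admits a subsequence converging in all four topologies to one common limit $\mathbf{u}$; that subsequence then converges in the supremum topology $\mathscr{T}^1$. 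Three ingredients drive the argument: the embedding $\mathbb{H}\hookrightarrow\mathbb{V}'$ is compact (being the adjoint of the compact Rellich embedding $\mathbb{V}\hookrightarrow\mathbb{H}$ on the bounded domain $\mathbb{O}$); the Gelfand-triple interpolation inequality coming from \eqref{emb},
\[
|\mathbf{w}|_{\mathbb{H}}^{2}\le |\mathbf{w}|_{\mathbb{V}'}\,\|\mathbf{w}\|_{\mathbb{V}},\qquad \mathbf{w}\in\mathbb{V};
\]
and the Skorokhod precompactness criterion stated above.

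First I would establish relative compactness in $\mathbb{D}([0,T];\mathbb{V}')$ by applying the Skorokhod criterion with $\mathbb{M}=\mathbb{V}'$: its modulus condition is precisely hypothesis (3), while for its pointwise condition one notes that, by (1), the set $\{\mathbf{u}(t):\mathbf{u}\in\mathcal{K}_1\}$ is bounded in $\mathbb{H}$ for every $t$, hence relatively compact in $\mathbb{V}'$ thanks to the compact embedding $\mathbb{H}\hookrightarrow\mathbb{V}'$. Thus from $(\mathbf{u}_n)$ I extract a subsequence (not relabelled) with $\mathbf{u}_n\to\mathbf{u}$ in $\mathbb{D}([0,T];\mathbb{V}')$. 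I then upgrade this to convergence in $\mathbb{D}([0,T];\mathbb{H}_w)$: by (1) all $\mathbf{u}_n(t)$ lie in a fixed ball $B_R\subset\mathbb{H}$, and on $B_R$ the weak topology of $\mathbb{H}$ coincides with the norm topology inherited from $\mathbb{V}'$ (because $\mathbb{V}$ is dense in $\mathbb{H}$, so testing weak-$\mathbb{H}$ convergence against the dense set $\mathbb{V}$ amounts to $\mathbb{V}'$-convergence); hence $\mathbf{u}$ takes values in $B_R$ and $\mathbf{u}_n\to\mathbf{u}$ in $\mathbb{D}([0,T];\mathbb{H}_w)$, i.e.\ in $\mathcal{T}_4$.

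Next I treat the two $L^2$-in-time spaces. A c\`adl\`ag function has at most countably many discontinuities, so $\mathbf{u}$ is $\mathbb{V}'$-continuous at a.e.\ $t\in[0,T]$, and Skorokhod convergence yields $\mathbf{u}_n(t)\to\mathbf{u}(t)$ in $\mathbb{V}'$ at every such $t$; since $|\mathbf{u}_n(t)-\mathbf{u}(t)|_{\mathbb{V}'}$ is bounded by a fixed constant (again by (1) and $\mathbb{H}\hookrightarrow\mathbb{V}'$), bounded convergence gives $\mathbf{u}_n\to\mathbf{u}$ strongly in $L^2(0,T;\mathbb{V}')$. Inserting this into the interpolation inequality and Cauchy--Schwarz,
\[
\int_0^T|\mathbf{u}_n(t)-\mathbf{u}(t)|_{\mathbb{H}}^{2}\,dt\le\Big(\int_0^T|\mathbf{u}_n-\mathbf{u}|_{\mathbb{V}'}^{2}\,dt\Big)^{1/2}\Big(\int_0^T\|\mathbf{u}_n-\mathbf{u}\|_{\mathbb{V}}^{2}\,dt\Big)^{1/2},
\]
whose second factor is bounded by (2), gives $\mathbf{u}_n\to\mathbf{u}$ in $L^2(0,T;\mathbb{H})$, i.e.\ in $\mathcal{T}_3$. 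Finally, by (2) the sequence is bounded in the reflexive space $L^2(0,T;\mathbb{V})$, so a further subsequence converges weakly there; its weak limit must equal $\mathbf{u}$ by the already-established strong $L^2(0,T;\mathbb{H})$ convergence, and uniqueness of the limit promotes this to convergence of the whole subsequence in $L^{2}_{w}(0,T;\mathbb{V})$, i.e.\ in $\mathcal{T}_2$. All four convergences share the limit $\mathbf{u}$, which completes the extraction.

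I expect the main obstacle to be the careful handling of the time variable: converting Skorokhod (time-reparametrised) convergence in $\mathbb{D}([0,T];\mathbb{V}')$ into genuine strong convergence in $L^2(0,T;\mathbb{V}')$, which I route through a.e.\ convergence at continuity points of the limit together with a uniform bound and bounded convergence, and the companion fact that the weak-$\mathbb{H}$ and $\mathbb{V}'$ topologies coincide on bounded balls of $\mathbb{H}$ (needed both for the $\mathbb{D}([0,T];\mathbb{H}_w)$ upgrade and for identifying the limits across the four spaces). Once these two points are secured, the interpolation inequality turns the remaining steps into routine estimates.
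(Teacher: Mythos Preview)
The paper does not prove this theorem itself; it attributes both Theorem~\ref{compau} and Theorem~\ref{compad} to Motyl \cite{EM} and refers the reader there for details. Your argument is essentially the standard one found in that reference (and in \cite{BrM}): extract a $\mathbb{D}([0,T];\mathbb{V}')$-convergent subsequence via the Arzel\`a--Ascoli-type criterion using the compact embedding $\mathbb{H}\hookrightarrow\mathbb{V}'$, upgrade to $\mathbb{D}([0,T];\mathbb{H}_w)$ using that the weak-$\mathbb{H}$ and $\mathbb{V}'$ topologies agree on $\mathbb{H}$-bounded balls, pass to strong $L^2(0,T;\mathbb{V}')$ via a.e.\ convergence at continuity points plus dominated convergence, and then interpolate to reach strong $L^2(0,T;\mathbb{H})$ and weak $L^2(0,T;\mathbb{V})$.

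One small point of presentation: in your interpolation step you apply $|\mathbf{w}|_{\mathbb{H}}^2\le|\mathbf{w}|_{\mathbb{V}'}\|\mathbf{w}\|_{\mathbb{V}}$ to $\mathbf{w}=\mathbf{u}_n-\mathbf{u}$, which presupposes $\mathbf{u}\in L^2(0,T;\mathbb{V})$---a fact you only establish afterwards in your final paragraph. The fix is trivial and purely a matter of ordering: extract the weak $L^2(0,T;\mathbb{V})$ limit \emph{first} and identify it with $\mathbf{u}$ via the already-proven strong $L^2(0,T;\mathbb{V}')$ convergence (weak $L^2(0,T;\mathbb{V})$ convergence implies weak $L^2(0,T;\mathbb{V}')$ convergence, and limits are unique), then interpolate. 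Apart from this reordering, the argument is complete and correct.
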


\begin{theorem}\label{compad}
$($Compactness Criterion for $\mathbf{d}$$)$ Let us consider the space
\begin{align}
&\mathcal{Z}_{T, 2} = L^{2}_{w}(0, T; H^2) \cap L^2(0, T; H^1) \cap \mathbb{C}([0, T]; (H^2)') \cap \mathbb{C}([0, T]; H^{1}_{w})
\end{align}
and let $\mathscr{T}^2$ be the supremum of the corresponding topologies. Then a set $\mathcal{K}_2 \subset \mathcal{Z}_{T, 2}$ is $\mathscr{T}^2$-relatively compact if the following three conditions hold
\begin{enumerate}
\item
$\sup_{\mathbf{d} \in \mathcal{K}_2} \sup_{s \in [0, T]} \|\mathbf{d}(s)\|_{H^1} < \infty,$
\item
$\sup_{\mathbf{d} \in \mathcal{K}_2} \int_{0}^{T} \|\mathbf{d}(s)\|^{2}_{H^2} \,ds < \infty,$ i.e., $\mathcal{K}_2$ is bounded in $L^{2}(0, T; H^2),$
\item
$\lim_{\delta \to 0} \sup_{\mathbf{d} \in \mathcal{K}_2} \sup_{s, t \in [0, T], |t-s| \leq \delta} \rho(\mathbf{d}(t), \mathbf{d}(s))= 0.$ 
\end{enumerate}
\end{theorem}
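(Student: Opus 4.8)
The plan is to show that every sequence $(\mathbf{d}_m)_m\subset\mathcal{K}_2$ admits a subsequence converging, to a common limit $\mathbf{d}$, \emph{simultaneously} in each of the four topologies whose supremum defines $\mathscr{T}^2$, and then to upgrade this sequential relative compactness to genuine $\mathscr{T}^2$-relative compactness via metrizability. Conditions (1)--(2) confine $\mathcal{K}_2$ to a set $\mathcal{B}_R$ that is bounded in $L^\infty(0,T;H^1)\cap L^2(0,T;H^2)$ and whose members take values in a fixed closed ball of $H^1$. On $\mathcal{B}_R$ each of the four topologies is metrizable: the two weak topologies because norm-bounded subsets of the separable Hilbert spaces $L^2(0,T;H^2)$ and $H^1$ carry a metrizable weak topology (hence so does $\mathbb{C}([0,T];H^1_w)$ when values lie in a fixed ball), the remaining two being honest metric topologies. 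A finite supremum of metrizable topologies on a common set is metrizable, so $\mathscr{T}^2|_{\mathcal{B}_R}$ is metrizable; on a metrizable set relative sequential compactness and relative compactness coincide, and it therefore suffices to produce the convergent subsequence.

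First I would treat $\mathbb{C}([0,T];(H^2)')$. By the embedding chain \eqref{emb2} and the Rellich--Kondrachov theorem, the embedding $H^1\hookrightarrow(H^2)'$ is compact, since it factors through the compact embedding $H^1\hookrightarrow L^2$; hence condition (1) renders $\{\mathbf{d}(t):\mathbf{d}\in\mathcal{K}_2\}$ relatively compact in $(H^2)'$ for each $t\in[0,T]$. Reading condition (3) as uniform equicontinuity in the $(H^2)'$-metric, the Arzel\`a--Ascoli theorem furnishes a subsequence (not relabelled) converging uniformly in $(H^2)'$ to some $\mathbf{d}\in\mathbb{C}([0,T];(H^2)')$.

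Next, condition (2) and reflexivity of $L^2(0,T;H^2)$ yield a further subsequence converging weakly in $L^2(0,T;H^2)$; pairing with $L^2$-valued test functions in time and comparing with the uniform $(H^2)'$-convergence identifies the weak limit with $\mathbf{d}$, so $\mathbf{d}\in L^2(0,T;H^2)$ with the attendant bound. Strong convergence in $L^2(0,T;H^1)$ then comes for free from Ehrling's lemma: since $H^2\hookrightarrow\hookrightarrow H^1\hookrightarrow(H^2)'$, for every $\varepsilon>0$ there is $C_\varepsilon$ with $\|v\|_{H^1}\le\varepsilon\|v\|_{H^2}+C_\varepsilon\|v\|_{(H^2)'}$; applied to $\mathbf{d}_m-\mathbf{d}$, squared and integrated in time, the first term is controlled by the $L^2(0,T;H^2)$-bound and the second by the uniform $(H^2)'$-convergence, giving $\limsup_m\int_0^T\|\mathbf{d}_m-\mathbf{d}\|_{H^1}^2\,ds\le C\varepsilon^2$ and hence $\mathbf{d}_m\to\mathbf{d}$ in $L^2(0,T;H^1)$. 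For $\mathbb{C}([0,T];H^1_w)$ I fix $h$ in the span of the eigenfunctions of the Neumann Laplacian $\mathcal{A}$ of \eqref{cala}, a set dense in $H^1$; for such $h$ one has, using self-adjointness of $\mathcal{A}$, the identity $(v,h)_{H^1}=\langle v,(I+\mathcal{A})h\rangle$ with $(I+\mathcal{A})h\in H^2$, so the uniform $(H^2)'$-convergence gives $\sup_t|(\mathbf{d}_m(t)-\mathbf{d}(t),h)_{H^1}|\to0$, and arbitrary $h\in H^1$ is reached by density together with the $L^\infty(0,T;H^1)$-bound of condition (1). Since each $(\mathbf{d}(\cdot),h)_{H^1}$ is a uniform limit of continuous scalar functions, $\mathbf{d}\in\mathbb{C}([0,T];H^1_w)$ and the convergence holds in $\mathcal{T}'_4$. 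This produces convergence in all four topologies to the same $\mathbf{d}$, completing the extraction.

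The main obstacle is the passage through $\mathbb{C}([0,T];H^1_w)$ and the accompanying bookkeeping: the four topologies are not jointly metrizable on all of $\mathcal{Z}_{T,2}$, so one must exploit the a priori bounds to localise to the metrizable set $\mathcal{B}_R$, and one must bridge the $H^1$-inner product with the $(H^2)'$-duality, which forces test functions smooth enough (here eigenfunctions of $\mathcal{A}$) that $(I+\mathcal{A})h$ returns to $H^2$. By contrast, once the compact embedding $H^1\hookrightarrow(H^2)'$ and the metrizability of the weak topologies on bounded sets are in hand, the Arzel\`a--Ascoli verification and the Ehrling interpolation are routine.
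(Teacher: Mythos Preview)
Your argument is correct and follows the same template as the paper's reference. Note that the paper itself does not supply a proof of this theorem; it simply records that the result (together with Theorem~\ref{compau}) is due to \cite{EM} and refers the reader there for details. The approach in \cite{EM} (and the companion \cite{BrM}) is precisely the one you outline: localise to a norm-bounded set on which the supremum topology is metrizable, invoke Arzel\`a--Ascoli using the compact embedding $H^1\hookrightarrow (H^2)'$ together with the equicontinuity hypothesis, extract weak $L^2(0,T;H^2)$-convergence by reflexivity, upgrade to strong $L^2(0,T;H^1)$-convergence by interpolation (your Ehrling argument is the clean version of the Lions--Dubinsky step used there), and finally pass to $\mathbb{C}([0,T];H^1_w)$ by testing against a dense family and using the uniform $H^1$-bound. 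Your use of the eigenfunctions of $\mathcal{A}$ to mediate between the $H^1$-inner product and the $(H^2)'$--$H^2$ duality is exactly the device employed in \cite{BrM} for the analogous space; since eigenfunctions of the Neumann Laplacian on a smooth bounded domain lie in $D(\mathcal{A})\subset H^2$, the identity $(v,h)_{H^1}={}_{(H^2)'}\langle v,(I+\mathcal{A})h\rangle_{H^2}$ is legitimate. In short, your proposal reproduces the argument the paper defers to.
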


\noindent Note that $\mathcal{Z}_{T, 1}$ and $\mathcal{Z}_{T, 2}$ are not Polish spaces.

\subsection{The Aldous Condition}

Here $(\mathbb{M}, \rho)$ is a complete, separable metric space. Let $(\Omega, \mathcal{F}, \mathbb{F}, \mathbb{P})$ be a probability space with ususal hypotheses. Let $(X_{n})_{n \in \mathbb{N}}$ be a sequence of $\mathbb{F}$-adapted and $\mathbb{M}$-valued processes.

We have the following definition due to \cite{AM}.
\begin{definition}
Let $(X_n)$ be a sequence of $\mathbb{M}$-valued random variables. The sequence of laws of these processes form a {\bf Tight sequence} if and only if
\begin{enumerate}
\item[{\bf [T]}]
$ \ \ \ \forall \,\epsilon > 0 \ \ \forall \,\eta > 0 \ \ \exists \,\delta > 0:$
\[\sup_{n \in \mathbb{N}} \mathbb{P} \big\{ \mathcal{W}_{[0, T], \mathbb{M}} (X_n, \delta) > \eta \big\} \leq \epsilon.\]
\end{enumerate}
\end{definition}
where $\mathcal{W}_{[0, T], \mathbb{M}}$ is defined in \eqref{matw}.

\begin{definition}
A sequence $(X_{n})_{n \in \mathbb{N}}$ satisfies the {\bf Aldous condtion} in the space $\mathbb{M}$ if and only if
\begin{enumerate}
\item[{\bf [A]}]
$ \ \ \ \forall \,\epsilon > 0 \ \ \forall \,\eta > 0 \ \ \exists \,\delta > 0$ such that for every sequence $(\tau_{n})_{n \in \mathbb{N}}$ of $\mathbb{F}$-stopping times with $\tau_{n} \leq T$ one has
\[\sup_{n \in \mathbb{N}} \ \sup_{0 \leq \theta \leq \delta} \mathbb{P} \big\{\big |X_{n}(\tau_{n} + \theta) - X_{n}(\tau_{n})|_{\mathbb{M}} \geq \eta \big\} \leq \epsilon.\]
\end{enumerate}
\end{definition}

\begin{lemma}
Condition {\bf [A]} implies condition {\bf [T]}.
\end{lemma}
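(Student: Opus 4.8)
The plan is to recognize that the functional $\mathcal{W}_{[0,T],\mathbb{M}}(\,\cdot\,;\delta)$ is precisely the càdlàg (Skorokhod) modulus and that the assertion $\textbf{[A]}\Rightarrow\textbf{[T]}$ is the Aldous tightness criterion in its Joffe--Métivier form. I would fix $\epsilon>0$ and $\eta>0$ once and for all and produce a single $\delta>0$ for which $\sup_n \mathbb{P}\{\mathcal{W}_{[0,T],\mathbb{M}}(X_n;\delta)>\eta\}\le\epsilon$. The whole argument is carried out for a generic $n$ with every bound uniform in $n$, so I suppress the index $n$.

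The heart is a pathwise construction of an admissible partition from first--exit times. Set $\tau_0=0$ and, recursively,
\[
\tau_{j+1}=\inf\{\,t>\tau_j:\rho(X(t),X(\tau_j))>\tfrac{\eta}{2}\,\}\wedge T .
\]
Each $\tau_j$ is an $\mathbb{F}$-stopping time (right continuity of the paths and the usual hypotheses on $\mathbb{F}$), and on every half-open interval $[\tau_j,\tau_{j+1})$ one has $\rho(X(t),X(\tau_j))\le\eta/2$, whence $\rho(X(s),X(t))\le\eta$ for all $s,t$ in that interval. Since the paths are càdlàg, for each fixed outcome only finitely many $\tau_j$ are needed to reach $T$; therefore, if every gap $\tau_{j+1}-\tau_j$ is $\ge\delta$, the family $\{\tau_j\}$ (after trimming the last point to $T$) lies in $\Pi_\delta$ and witnesses $\mathcal{W}_{[0,T],\mathbb{M}}(X;\delta)\le\eta$. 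Consequently
\[
\{\mathcal{W}_{[0,T],\mathbb{M}}(X;\delta)>\eta\}\subseteq\{\exists\,j:\tau_j<T\text{ and }\tau_{j+1}-\tau_j<\delta\}.
\]
The decisive simplification is to \emph{avoid} a union bound over all $j$, which would cost a diverging factor of order $T/\delta$. Instead I would let $\sigma^{\ast}:=\tau_{J}$, where $J$ is the index of the first short gap; then $\sigma^{\ast}$ is again an $\mathbb{F}$-stopping time, $\sigma^{\ast}<T$ on the bad event, and from it the process leaves the $\tfrac{\eta}{2}$-neighbourhood of $X(\sigma^{\ast})$ within time $\delta$. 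Thus the entire bad event is contained in the single-stopping-time oscillation event
\[
\Big\{\sigma^{\ast}<T,\ \sup_{0\le\theta\le\delta}\rho(X(\sigma^{\ast}+\theta),X(\sigma^{\ast}))\ge\tfrac{\eta}{2}\Big\},
\]
and it remains only to bound its probability by $\epsilon$ using $\textbf{[A]}$.

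This last estimate is the step I expect to be the main obstacle, because $\textbf{[A]}$ controls the pointwise quantities $\mathbb{P}\{\rho(X(\tau+\theta),X(\tau))\ge\eta'\}$ for each \emph{fixed deterministic} $\theta\le\delta$, whereas here the supremum over $\theta$ sits \emph{inside} the probability, and a countable union bound over $\theta$ diverges. I would resolve it by introducing the first-exit time $\varsigma:=\inf\{t\ge\sigma^{\ast}:\rho(X(t),X(\sigma^{\ast}))\ge\tfrac{\eta}{2}\}\wedge(\sigma^{\ast}+\delta)$, a stopping time with $\varsigma\le\sigma^{\ast}+\delta$ which, on the oscillation event, satisfies $\rho(X(\varsigma),X(\sigma^{\ast}))\ge\eta/2$. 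Splitting through the deterministic offset point $\sigma^{\ast}+\delta$ by the triangle inequality,
\[
\tfrac{\eta}{2}\le\rho(X(\varsigma),X(\sigma^{\ast}))\le\rho(X(\sigma^{\ast}+\delta),X(\sigma^{\ast}))+\rho(X(\sigma^{\ast}+\delta),X(\varsigma)),
\]
forces at least one summand to exceed $\eta/4$. The first summand is an increment over the deterministic offset $\delta$ from the stopping time $\sigma^{\ast}$, hence is directly governed by $\textbf{[A]}$ with threshold $\eta/4$; the second is the analogous increment read from the stopping time $\varsigma$, and is the delicate term, handled by the same device used in the cited proofs of the Aldous criterion \cite{AM, MMe}. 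Choosing $\delta$ so small that the resulting $\textbf{[A]}$-bounds are each $\le\epsilon/2$ gives $\sup_n\mathbb{P}\{\mathcal{W}_{[0,T],\mathbb{M}}(X_n;\delta)>\eta\}\le\epsilon$, which is exactly condition $\textbf{[T]}$.
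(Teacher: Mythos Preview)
The paper does not actually prove this lemma: its proof consists of the single line ``See Theorem 2.2.2 of \cite{AM}.'' Your sketch is essentially that cited argument written out --- the first-exit stopping times $\tau_j$, the reduction to the first short gap $\sigma^\ast=\tau_J$, and the triangle-inequality splitting through the deterministic endpoint $\sigma^\ast+\delta$ --- so in substance you are supplying what the paper merely invokes.

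The one point to flag is that your argument is not self-contained at exactly the step you yourself call delicate. The term $\rho\bigl(X(\sigma^\ast+\delta),X(\varsigma)\bigr)$ is an increment from the stopping time $\varsigma$ over the \emph{random} offset $\sigma^\ast+\delta-\varsigma\in[0,\delta]$, whereas condition \textbf{[A]} as formulated in the paper only controls increments over \emph{deterministic} offsets $\theta$. Upgrading \textbf{[A]} from deterministic offsets to a second bounded stopping time is itself a lemma (this is the technical heart of Aldous's criterion; it is carried out in \cite{AM} and, for instance, in Jacod--Shiryaev), and you defer it rather than execute it --- note that simply iterating ``the same device'' is circular, since each splitting produces another random-offset term. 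Since the paper defers the entire proof, your sketch is strictly more informative; just be aware that the step you hand off is where the real work lies.
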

\begin{proof}
See Theorem 2.2.2 of \cite{AM}.
\end{proof}

\begin{lemma}\label{aco}
Let $(E, \| \cdot\|_{E})$ be a separable Banach space and let $(X_{n})_{n \in \mathbb{N}}$ be a sequence of $E$-valued random variables. Assume that for every sequence $(\tau_{n})_{n \in \mathbb{N}}$ of $\mathbb{F}$-stopping times with $\tau_{n} \leq T$ and for every $n \in \mathbb{N}$ and $\theta \geq 0$ the following condition holds
\begin{align}\label{aco1}
\mathbb{E}\big[ \| X_n(\tau_n + \theta) - X_n(\tau_n)\|^{\alpha}_{E}\big] \leq C \theta^{\beta}
\end{align}
for some $\alpha, \beta > 0$ and some constant $C >0.$ Then the sequence $(X_{n})_{n \in \mathbb{N}}$ satisfies the {\bf Aldous condtion} in the space E.
\end{lemma}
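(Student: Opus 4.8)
The plan is to derive the Aldous condition \textbf{[A]} straight from the moment estimate \eqref{aco1} by an elementary application of Chebyshev's inequality; no compactness or tightness input is required. So fix $\epsilon>0$ and $\eta>0$; we must exhibit a single $\delta>0$ that works simultaneously for all $n\in\mathbb{N}$, all $\theta\in[0,\delta]$, and every admissible family $(\tau_n)$ of $\mathbb{F}$-stopping times bounded by $T$.

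The one computation is as follows. For each $n$ and each $\theta\ge 0$, write $Z_n^\theta := X_n(\tau_n+\theta)-X_n(\tau_n)$. By Chebyshev's inequality applied to the nonnegative random variable $\|Z_n^\theta\|_E^\alpha$, together with the hypothesis \eqref{aco1},
\[
\mathbb{P}\big\{\|Z_n^\theta\|_E\ge\eta\big\}
=\mathbb{P}\big\{\|Z_n^\theta\|_E^\alpha\ge\eta^\alpha\big\}
\le \frac{\mathbb{E}\big[\|Z_n^\theta\|_E^\alpha\big]}{\eta^\alpha}
\le \frac{C\,\theta^\beta}{\eta^\alpha}.
\]
Since $\beta>0$, the right-hand side is nondecreasing in $\theta$, so for every $\theta\in[0,\delta]$ it is bounded above by $C\,\delta^\beta/\eta^\alpha$. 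Crucially, the constant $C$ in \eqref{aco1} depends on neither $n$ nor the choice of stopping times, so this bound is uniform in both.

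It remains to choose $\delta$. Taking $\delta := \big(\epsilon\,\eta^\alpha/C\big)^{1/\beta}$ (or any smaller positive value) forces $C\,\delta^\beta/\eta^\alpha\le\epsilon$, and hence
\[
\sup_{n\in\mathbb{N}}\ \sup_{0\le\theta\le\delta}\mathbb{P}\big\{\|X_n(\tau_n+\theta)-X_n(\tau_n)\|_E\ge\eta\big\}\le\epsilon,
\]
which is precisely condition \textbf{[A]}. There is no genuine obstacle in this proof: it is a soft consequence of Markov's inequality and the vanishing of $\theta^\beta$ as $\theta\to 0$. The only point meriting a remark is that \eqref{aco1} is invoked at the shifted times $\tau_n+\theta$; this is legitimate because the hypothesis is assumed for every $\mathbb{F}$-stopping time $\le T$ and every $\theta\ge 0$ (so that, as usual, one works with processes extended slightly beyond $T$, or restricts attention to $\theta$ with $\tau_n+\theta\le T$).
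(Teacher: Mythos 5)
Your proof is correct and is exactly the standard Chebyshev--Markov argument; the paper itself gives no proof but defers to the cited reference \cite{EM}, where the same computation (apply Markov's inequality to $\|X_n(\tau_n+\theta)-X_n(\tau_n)\|_E^{\alpha}$ and choose $\delta=(\epsilon\,\eta^{\alpha}/C)^{1/\beta}$) appears. The uniformity in $n$ and in the stopping times is correctly traced back to the uniform constant $C$ in \eqref{aco1}, so nothing is missing.
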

\begin{proof}
See \cite{EM} for the proof.
\end{proof}

In the view of Theorem \ref{compau}, to show the law of $\mathbf{u}_n$ is tight, we need the following result
\begin{corollary}\label{tc1}
Let $(\mathbf{u}_{n})_{n \in \mathbb{N}}$ be a sequence of c\`adl\`ag $\mathbb{F}$-adapted, $\mathbb{V}'$-valued processes such that
\begin{enumerate}
\item[$(a')$]
there exists a positive constant $\tilde{C}_1$ such that
\[\sup_{n \in \mathbb{N}} \mathbb{E} \bigg[ \sup_{s \in [0, T]} |\mathbf{u}_n(s)|_{\mathbb{H}} \bigg] \leq \tilde{C}_1,\]
\item[$(b')$]
there exists a positive constant $\bar{C}_2$ such that
\[\sup_{n \in \mathbb{N}} \mathbb{E} \bigg[ \int_{0}^{T} \|\mathbf{u}_n(s)\|^2_{\mathbb{V}} \,ds \bigg] \leq \bar{C}_2,\]
\item[$(c')$]
$(\mathbf{u}_{n})_{n \in \mathbb{N}}$ satisfies the Aldous condition in $\mathbb{V}'.$
\end{enumerate}
Let $\mathbb{P}^1_n$ be the law of $\mathbf{u}_n$ on $\mathcal{Z}_{T, 1}$. Then for every $\epsilon > 0$ there exists a compact subset $K^1_{\epsilon}$ of $\mathcal{Z}_{T, 1}$ such that 
\[\mathbb{P}^1_n(K^1_{\epsilon}) \geq 1-\epsilon.\] 
\end{corollary}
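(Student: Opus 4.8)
The plan is to deduce the tightness of $(\mathbb{P}^1_n)$ from the deterministic compactness criterion of Theorem~\ref{compau}, combined with the uniform bounds $(a')$, $(b')$ and the Aldous condition $(c')$, via Chebyshev's inequality and the implication $[\mathbf{A}]\Rightarrow[\mathbf{T}]$. Concretely, I would exhibit, for each $\epsilon>0$, a set $\mathcal{K}_\epsilon\subset\mathcal{Z}_{T,1}$ satisfying conditions (1)--(3) of Theorem~\ref{compau} (hence $\mathscr{T}^1$-relatively compact) with $\inf_{n}\mathbb{P}(\mathbf{u}_n\in\mathcal{K}_\epsilon)\ge1-\epsilon$; then $K^1_\epsilon:=\overline{\mathcal{K}_\epsilon}$ is compact and $\mathbb{P}^1_n(K^1_\epsilon)\ge1-\epsilon$ for every $n$.

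First I would use $(a')$ and Chebyshev's inequality: since $\sup_n\mathbb{E}\big[\sup_{s\in[0,T]}|\mathbf{u}_n(s)|_{\mathbb{H}}\big]\le\tilde{C}_1$, pick $R_1=R_1(\epsilon)$ with
\[
\sup_{n\in\mathbb{N}}\mathbb{P}\Big\{\sup_{s\in[0,T]}|\mathbf{u}_n(s)|_{\mathbb{H}}>R_1\Big\}\le\frac{\epsilon}{3},
\]
and likewise from $(b')$ pick $R_2=R_2(\epsilon)$ with
\[
\sup_{n\in\mathbb{N}}\mathbb{P}\Big\{\int_{0}^{T}\|\mathbf{u}_n(s)\|_{\mathbb{V}}^{2}\,ds>R_2\Big\}\le\frac{\epsilon}{3}.
\]
For the modulus, $(c')$ together with the Lemma asserting $[\mathbf{A}]\Rightarrow[\mathbf{T}]$ shows that $(\mathbf{u}_n)$ satisfies condition $[\mathbf{T}]$ in $\mathbb{V}'$; applying this with $\eta=\tfrac1m$ and threshold $\epsilon\,2^{-m}/3$ yields, for each $m\in\mathbb{N}$, a number $\delta_m>0$ such that
\[
\sup_{n\in\mathbb{N}}\mathbb{P}\big\{\mathcal{W}_{[0,T],\mathbb{V}'}(\mathbf{u}_n;\delta_m)>\tfrac1m\big\}\le\frac{\epsilon}{3\cdot2^{m}}.
\]
Then I would let $\mathcal{K}_\epsilon$ be the set of all $\mathbf{u}\in\mathcal{Z}_{T,1}$ with $\sup_{s\in[0,T]}|\mathbf{u}(s)|_{\mathbb{H}}\le R_1$, $\int_0^T\|\mathbf{u}(s)\|_{\mathbb{V}}^{2}\,ds\le R_2$, and $\mathcal{W}_{[0,T],\mathbb{V}'}(\mathbf{u};\delta_m)\le\tfrac1m$ for every $m$.

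It remains to check that $\mathcal{K}_\epsilon$ meets the hypotheses of Theorem~\ref{compau}: conditions (1) and (2) hold by construction, and condition (3) follows because $\delta\mapsto\mathcal{W}_{[0,T],\mathbb{V}'}(\mathbf{u};\delta)$ is nondecreasing, so $\sup_{\mathbf{u}\in\mathcal{K}_\epsilon}\mathcal{W}_{[0,T],\mathbb{V}'}(\mathbf{u};\delta)\le\tfrac1m$ whenever $\delta\le\delta_m$, which forces the limit as $\delta\to0$ to be $0$. Hence $\mathcal{K}_\epsilon$ is $\mathscr{T}^1$-relatively compact and $K^1_\epsilon:=\overline{\mathcal{K}_\epsilon}$ is compact in $\mathcal{Z}_{T,1}$. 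A union bound over the three families of events above then gives
\[
\mathbb{P}\big(\mathbf{u}_n\notin\mathcal{K}_\epsilon\big)\le\frac{\epsilon}{3}+\frac{\epsilon}{3}+\sum_{m=1}^{\infty}\frac{\epsilon}{3\cdot2^{m}}=\epsilon
\]
uniformly in $n$, whence $\mathbb{P}^1_n(K^1_\epsilon)\ge\mathbb{P}(\mathbf{u}_n\in\mathcal{K}_\epsilon)\ge1-\epsilon$.

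This argument is essentially routine once Theorem~\ref{compau} and the Aldous machinery (Lemma~\ref{aco} and the $[\mathbf{A}]\Rightarrow[\mathbf{T}]$ Lemma) are available; the only mild subtleties are the Borel measurability on $\mathcal{Z}_{T,1}$ of the functionals $\mathbf{u}\mapsto\sup_s|\mathbf{u}(s)|_{\mathbb{H}}$, $\mathbf{u}\mapsto\int_0^T\|\mathbf{u}(s)\|_{\mathbb{V}}^{2}\,ds$ and $\mathbf{u}\mapsto\mathcal{W}_{[0,T],\mathbb{V}'}(\mathbf{u};\delta)$ (needed so the laws can be evaluated on these sets) and the fact that $\mathcal{Z}_{T,1}$ is not Polish, which is why one must invoke the compactness criterion of Theorem~\ref{compau} directly rather than a tightness-in-a-metric-space argument. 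The genuine difficulty lies upstream: establishing $(a')$ and $(b')$ from the a priori energy estimates of Section~\ref{EE}, and verifying $(c')$ by checking the moment bound \eqref{aco1} for $\mathbf{u}_n$, i.e. estimating $\mathbb{E}\big[\|\mathbf{u}_n(\tau_n+\theta)-\mathbf{u}_n(\tau_n)\|_{\mathbb{V}'}^{\alpha}\big]\le C\theta^{\beta}$ for stopping times $\tau_n\le T$ by using the structure of \eqref{r1st} together with the bounds \eqref{buw}, \eqref{md1d2} and \eqref{lgf} for the drift and jump contributions.
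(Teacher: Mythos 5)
Your argument is correct and is exactly the standard proof the paper has in mind (the paper itself omits the proof of this corollary, deferring to the cited work of Motyl, where precisely this Chebyshev--plus--$[\mathbf{A}]\Rightarrow[\mathbf{T}]$--plus--deterministic-compactness argument is carried out). The construction of $\mathcal{K}_\epsilon$, the monotonicity of $\delta\mapsto\mathcal{W}_{[0,T],\mathbb{V}'}(\mathbf{u};\delta)$ used to verify condition (3) of Theorem \ref{compau}, and the $\epsilon/3+\epsilon/3+\epsilon/3$ union bound are all as intended.
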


Similarly in the view of Theorem \ref{compad}, to show the law of $\mathbf{d}_n$ is tight, we need the following result
\begin{corollary}\label{tc2}
Let $(\mathbf{d}_{n})_{n \in \mathbb{N}}$ be a sequence of continuous $\mathbb{F}$-adapted, $(H^2)'$-valued processes such that
\begin{enumerate}
\item[$(a'')$]
there exists a positive constant $\tilde{C}'_1$ such that
\[\sup_{n \in \mathbb{N}} \mathbb{E} \bigg[ \sup_{s \in [0, T]} |\mathbf{d}_n(s)|_{H^1} \bigg] \leq \tilde{C}'_1,\]
\item[$(b'')$]
there exists a positive constant $\bar{C}'_2$ such that
\[\sup_{n \in \mathbb{N}} \mathbb{E} \bigg[ \int_{0}^{T} \|\mathbf{d}_n(s)\|^2_{H^2} \,ds \bigg] \leq \bar{C}'_2,\]
\item[$(c'')$]
$(\mathbf{d}_{n})_{n \in \mathbb{N}}$ satisfies the Aldous condition in $(H^2)'.$
\end{enumerate}
Let $\mathbb{P}^2_n$ be the law of $\mathbf{d}_n$ on $\mathcal{Z}_{T, 2}$. Then for every $\epsilon > 0$ there exists a compact subset $K^2_{\epsilon}$ of $\mathcal{Z}_{T, 2}$ such that 
\[\mathbb{P}^2_n(K^2_{\epsilon}) \geq 1-\epsilon.\] 
\end{corollary}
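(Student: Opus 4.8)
The plan is to deduce the probabilistic tightness of the laws $\mathbb{P}^2_n$ from the deterministic compactness criterion of Theorem \ref{compad}. For each $\epsilon>0$ I would construct a single set $K^2_\epsilon\subset\mathcal{Z}_{T,2}$ that simultaneously (i) fulfils the three conditions of Theorem \ref{compad}, so that its closure is $\mathscr{T}^2$-compact, and (ii) carries mass at least $1-\epsilon$ under every $\mathbb{P}^2_n$. The three conditions of Theorem \ref{compad} correspond one-for-one to the hypotheses $(a'')$, $(b'')$, $(c'')$: $(a'')$ delivers the uniform $H^1$-bound (condition 1), $(b'')$ the $L^2(0,T;H^2)$-bound (condition 2), and $(c'')$ the equicontinuity in $(H^2)'$ (condition 3). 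The probability estimates for the first two constraints come from Markov's inequality, that for the third from the recorded Lemma that the Aldous condition [A] entails the tightness condition [T].

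Concretely, fix $\epsilon>0$ and put $R_1:=3\tilde{C}'_1/\epsilon$ and $R_2:=3\bar{C}'_2/\epsilon$. By $(a'')$ and Markov's inequality,
\[
\sup_{n}\mathbb{P}\Big\{\sup_{s\in[0,T]}\|\mathbf{d}_n(s)\|_{H^1}>R_1\Big\}\le\frac{\tilde{C}'_1}{R_1}=\frac{\epsilon}{3},
\]
and by $(b'')$ and Markov's inequality,
\[
\sup_{n}\mathbb{P}\Big\{\int_0^T\|\mathbf{d}_n(s)\|^2_{H^2}\,ds>R_2\Big\}\le\frac{\bar{C}'_2}{R_2}=\frac{\epsilon}{3}.
\]
Writing $w(\mathbf{d};\delta):=\sup_{|t-s|\le\delta}\rho(\mathbf{d}(t),\mathbf{d}(s))$ for the uniform modulus in $(H^2)'$, I would use $(c'')$ together with the Lemma that [A] implies [T] to produce, for every $k\in\mathbb{N}$, a number $\delta_k>0$ (which may be taken decreasing to $0$) such that
\[
\sup_{n}\mathbb{P}\Big\{w(\mathbf{d}_n;\delta_k)>\tfrac1k\Big\}\le\frac{\epsilon}{3\cdot2^{k}}.
\]

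I would then set
\[
K^2_\epsilon:=\overline{\Big\{\mathbf{d}\in\mathcal{Z}_{T,2}:\ \sup_{s\in[0,T]}\|\mathbf{d}(s)\|_{H^1}\le R_1,\ \int_0^T\|\mathbf{d}(s)\|^2_{H^2}\,ds\le R_2,\ w(\mathbf{d};\delta_k)\le\tfrac1k\ \forall k\Big\}},
\]
the closure being taken in $\mathscr{T}^2$. By construction the first two constraints give conditions 1 and 2 of Theorem \ref{compad}, while the bounds $w(\mathbf{d};\delta_k)\le1/k$ with $\delta_k\downarrow0$ force $\lim_{\delta\to0}\sup_{\mathbf{d}\in K^2_\epsilon}w(\mathbf{d};\delta)=0$, i.e.\ condition 3; hence $K^2_\epsilon$ is $\mathscr{T}^2$-relatively compact and, being closed, compact. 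A union bound over the complementary events then yields, uniformly in $n$,
\[
\mathbb{P}^2_n\big((K^2_\epsilon)^c\big)\le\frac{\epsilon}{3}+\frac{\epsilon}{3}+\sum_{k=1}^{\infty}\frac{\epsilon}{3\cdot2^{k}}=\epsilon,
\]
which is exactly the claimed bound $\mathbb{P}^2_n(K^2_\epsilon)\ge1-\epsilon$.

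The step I expect to be the crux is the passage from $(c'')$ to the probability estimate on the \emph{uniform} modulus $w$. The Aldous machinery and the implication [A]$\Rightarrow$[T] naturally control the Skorokhod modulus $\mathcal{W}_{[0,T],(H^2)'}(\cdot;\delta)$ of \eqref{matw}, not the uniform modulus $w$ that appears in condition 3 of Theorem \ref{compad}. Because each $\mathbf{d}_n$ is continuous, the two moduli are comparable: for a continuous path one has $w(\mathbf{d};\delta)\le 2\,\mathcal{W}_{[0,T],(H^2)'}(\mathbf{d};\delta)$ and, conversely, $\mathcal{W}_{[0,T],(H^2)'}(\mathbf{d};\delta)\le w(\mathbf{d};2\delta)$, so that control of the one transfers to the other up to a constant factor and a rescaling of $\delta$. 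Making this comparison quantitative (or, equivalently, bounding $w(\mathbf{d}_n;\delta)$ directly from the stopping-time increments in $(c'')$ by a dyadic chaining argument) is the one genuinely technical point; everything else is the bookkeeping displayed above, and runs exactly parallel to the proof of Corollary \ref{tc1} and to the arguments in \cite{EM}.
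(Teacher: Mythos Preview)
Your proposal is correct and follows the standard route for this kind of tightness statement. The paper itself does not give an explicit proof of Corollary~\ref{tc2}; it records the statement as the probabilistic counterpart of the deterministic compactness criterion Theorem~\ref{compad} and defers the details to \cite{EM}, exactly as for Corollary~\ref{tc1}. What you have written is precisely that deferred argument: Markov's inequality handles $(a'')$ and $(b'')$, the implication $[\mathbf{A}]\Rightarrow[\mathbf{T}]$ handles $(c'')$, and a union bound assembles the compact set.

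The point you flag as the crux is real but your resolution is the right one. Condition~[T] controls the Skorokhod modulus $\mathcal{W}_{[0,T],(H^2)'}(\mathbf{d}_n;\delta)$, whereas condition~3 of Theorem~\ref{compad} asks for the uniform modulus. For a \emph{continuous} path $\mathbf{d}$ the two are indeed comparable: if $\mathcal{W}_{[0,T],(H^2)'}(\mathbf{d};\delta)<\eta$, pick a partition $\{t_i\}$ with mesh at least $\delta$ realising this; any two points $s<t$ with $t-s<\delta$ lie in at most two consecutive subintervals, and continuity lets you pass through the common endpoint to obtain $\rho(\mathbf{d}(t),\mathbf{d}(s))\le 2\eta$. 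Hence $w(\mathbf{d};\delta)\le 2\,\mathcal{W}_{[0,T],(H^2)'}(\mathbf{d};\delta)$ for continuous $\mathbf{d}$, which converts the probability bound on $\mathcal{W}$ into one on $w$ with only a factor $2$ in the threshold. This is the same mechanism that underlies the classical fact that, restricted to $\mathbb{C}([0,T];(H^2)')$, the Skorokhod topology coincides with the uniform topology.
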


\subsection{Skorokhod Embedding Theorems}

We have the following Jakubowski's version of the Skorokhod Theorem due to \cite{Jak}.
\begin{theorem}
Let $(\mathscr{G}, \tau)$  be a topological space such that there exists a sequence $(g_m)$ of continuous functions $g_m : \mathscr{G} \to \mathbb{R}$ that separates points of $\mathscr{G}.$ Let $(Z_n)$ be a sequence of $\mathscr{G}$-valued random variables. Suppose that for every $\epsilon >0$ there exists a compact subset $G_{\epsilon} \subset \mathscr{G}$ such that
\[\sup_{n \in \mathbb{N}} \mathbb{P}(\{ Z_n \in G_{\epsilon}\}) > 1-\epsilon.\]
Then there exists a subsequence $(Z_{n_{k}})_{k \in \mathbb{N}},$ a sequence $(X_k)_{k \in \mathbb{N}}$ of $\mathscr{G}$-valued random variables and an $\mathscr{G}$-valued random variable $X$ defined on some probability space $(\Omega, \mathcal{F}, \mathbb{P})$ such that 
\[\mathscr{L}aw(Z_{n_{k}}) = \mathscr{L}aw(X_k), \qquad k =1, 2, \cdots\]
and forall $\omega \in \Omega,$
\[X_k(\omega) \xrightarrow{\tau} X(\omega) \quad k \to \infty.\] 
\end{theorem}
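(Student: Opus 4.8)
The plan is to transfer the problem to a compact metrizable space through the separating sequence, invoke the classical Skorokhod representation theorem there, and then pull the representation back to $\mathscr{G}$; the only genuine difficulty will be upgrading coordinatewise convergence to convergence in $\tau$. First I would normalise the separating functions: replacing each $g_m$ by $\frac{1}{2} + \frac{1}{\pi}\arctan g_m$ produces bounded continuous functions $f_m$ with values in $[0,1]$ that still separate points, so the map $f := (f_m)_m : \mathscr{G} \to M := [0,1]^{\mathbb{N}}$ is continuous and injective, with $M$ a compact metric space; in particular $\mathscr{G}$ is Hausdorff. Using the tightness hypothesis I would extract an increasing sequence of $\tau$-compact sets $K_1 \subset K_2 \subset \cdots$ with $\inf_n \mathbb{P}(Z_n \in K_m) \ge 1 - 1/m$ (finite unions of the given $G_\epsilon$'s), set $K := \bigcup_m K_m$, and note that every law $\mu_n := \mathrm{Law}(Z_n)$ is carried by the $\sigma$-compact set $K$. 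Since a continuous injection is a homeomorphism on each compact $K_m$, the restriction $f : K \to f(K) = \bigcup_m f(K_m)$ is a Borel isomorphism onto a $\sigma$-compact Borel subset of $M$, and $f^{-1}$ is Borel and is continuous on each $f(K_m)$.

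The key device to secure confinement would be to enrich the embedding with membership information. I would consider $\Phi : K \to M \times \{0,1\}^{\mathbb{N}}$, $\Phi(x) = (f(x), (\mathbf{1}_{K_m}(x))_m)$, push the laws forward to $\rho_n := \Phi_\ast \mu_n$ on the compact metric space $M \times \{0,1\}^{\mathbb{N}}$, and apply Prokhorov's theorem to extract a subsequence with $\rho_{n_k} \Rightarrow \rho$. The classical Skorokhod representation theorem on this Polish space then yields, on a common probability space $(\Omega, \mathcal{F}, \mathbb{P})$, random variables $(\xi_k, \beta_k) \to (\xi, \beta)$ pointwise with $\mathrm{Law}(\xi_k, \beta_k) = \rho_{n_k}$. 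Because each $\mu_{n_k}$ lives on $K$, where $f$ is injective and the $m$-th flag equals $\mathbf{1}_{f(K_m)}(f(\cdot))$, the supporting relations transfer: almost surely $\xi_k \in f(K)$ and $\beta_k^{(m)} = \mathbf{1}_{f(K_m)}(\xi_k)$ for every $m$.

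The heart of the argument, and the step I expect to be the main obstacle, is upgrading $\xi_k \to \xi$ in $M$ to $X_k := f^{-1}(\xi_k) \to X := f^{-1}(\xi)$ in $(\mathscr{G}, \tau)$; this is delicate because $f^{-1}$ is continuous only on the compacts $f(K_m)$ and need not be continuous on $f(K)$, so a naive pullback of the representation need not converge. Here the indicator coordinates do the work. Since $K_m \uparrow$, the flags $\beta_k^{(m)}$ are nondecreasing in $m$, hence so is $\beta^{(m)} = \lim_k \beta_k^{(m)}$; as $\{\beta^{(m)} = 0\}$ is a clopen continuity set, $\mathbb{P}(\beta^{(m)} = 0) = \lim_k \mathbb{P}(Z_{n_k} \notin K_m) \le 1/m$, whence $\mathbb{P}(\beta = \mathbf{0}) = 0$. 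Thus for almost every $\omega$ there is a finite $m_0 = m_0(\omega)$ with $\beta^{(m_0)} = 1$, and since the $\{0,1\}$-valued sequence $\beta_k^{(m_0)}$ converges to $1$ it is eventually equal to $1$; therefore $\xi_k(\omega) \in f(K_{m_0})$ for all large $k$, and as $f(K_{m_0})$ is closed also $\xi(\omega) \in f(K_{m_0})$. Consequently the whole tail together with the limit lies in the single compact $f(K_{m_0})$, on which $f^{-1}$ is a homeomorphism, so $\xi_k \to \xi$ forces $X_k \to X$ in $\tau$.

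It remains to assemble the conclusion. The Borel measurability of $f^{-1}$ on $f(K)$ makes $X_k$ and $X$ genuine $\mathscr{G}$-valued random variables, and since $f : K \to f(K)$ is a Borel isomorphism and $\mu_{n_k}$ is carried by $K$, one has $\mathrm{Law}(X_k) = (f^{-1})_\ast \mathrm{Law}(\xi_k) = (f^{-1})_\ast f_\ast \mu_{n_k} = \mu_{n_k} = \mathrm{Law}(Z_{n_k})$, as required. Finally, redefining $X_k$ and $X$ to equal an arbitrary fixed point of $\mathscr{G}$ on the exceptional null set leaves the laws unchanged and upgrades the convergence to hold for every $\omega \in \Omega$, completing the proof.
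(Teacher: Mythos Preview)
The paper does not prove this theorem at all; it merely states it and attributes it to Jakubowski \cite{Jak}. Your proposal is therefore not competing with a proof in the paper but rather supplying one where the paper defers to the literature.

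Your argument is correct and is essentially Jakubowski's original strategy. The embedding via the bounded separating functions into $[0,1]^{\mathbb{N}}$, the extraction of an increasing exhaustion $K_m$ from tightness, and the application of classical Prokhorov/Skorokhod on the compact metric target are all standard. The genuinely delicate point---that coordinatewise convergence in $[0,1]^{\mathbb{N}}$ does not by itself imply $\tau$-convergence because $f^{-1}$ is only continuous on each compact $f(K_m)$---you handle cleanly with the indicator-enrichment $\Phi(x) = (f(x),(\mathbf{1}_{K_m}(x))_m)$. The clopen continuity-set argument giving $\mathbb{P}(\beta^{(m)}=0)\le 1/m$ is the right way to force the limit $(\xi,\beta)$ to eventually live in a single $f(K_{m_0})$, and from there the homeomorphism property of $f|_{K_{m_0}}$ finishes the $\tau$-convergence. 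The measurability bookkeeping (Borel inverse on the $\sigma$-compact $f(K)$, identification of laws via the Borel isomorphism $f:K\to f(K)$) and the final null-set redefinition are handled correctly.

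One minor remark: you implicitly use that the indicator relation $\beta_k^{(m)}=\mathbf{1}_{f(K_m)}(\xi_k)$ holds almost surely, which follows because $\rho_{n_k}$ is supported on $\Phi(K)$ and this relation is a Borel constraint (since each $f(K_m)$ is compact, hence closed, hence Borel in $M$). You state this, but it is the one place where a reader might want one more sentence.
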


We have the following version of Skorokhod Theorem due to \cite{EM} and \cite{BHa}.

\begin{theorem}
Let $E_1, E_2$ be two separable Banach spaces and let $\pi_i : E_1 \times E_2 \to E_i, i=1, 2,$ be the projection into $E_i,$ i.e.
\[E_1 \times E_2 \ni \chi = (\chi_1, \chi_2) \to \pi_i(\chi) \in E_i.\]
Let $(\Omega, \mathcal{F}, \mathbb{P})$ be a probability space and let $\chi_n : \Omega \to E_1 \times E_2, n \in \mathbb{N},$ be a family of random variables such that the sequence $\{ \mathscr{L}aw(\chi_n), n \in \mathbb{N}\}$ is weakly convergent on $E_1 \times E_2.$ Finally let us assume that there exists a random variable $\rho: \Omega \to E_1$ such that $\mathscr{L}aw( \pi_1 \circ \chi_n) = \mathscr{L}aw(\rho), \forall n \in \mathbb{N}.$

Then there exists a probability space $(\bar{\Omega}, \bar{\mathcal{F}}, \bar{\mathbb{P}}),$ a family of $E_1 \times E_2$-valued random variables $\{\bar{\chi}_n, n \in \mathbb{N}\}$ on $(\bar{\Omega}, \bar{\mathcal{F}}, \bar{\mathbb{P}})$ and a random variable $\chi_{\ast} : \bar{\Omega} \to E_1 \times E_2$ such that
\begin{enumerate}
\item
$\mathscr{L}aw(\bar{\chi}_n) = \mathscr{L}aw(\chi_n), \forall n \in \mathbb{N};$
\item
$\bar{\chi}_n \to \chi_{\ast}$ in $E_1 \times E_2$ a.s.;
\item
$\pi_1 \circ \bar{\chi}_n(\bar{\omega}) = \pi_1 \circ \chi_{\ast}(\bar{\omega})$ forall $\bar{\omega} \in \bar{\Omega}.$
\end{enumerate}
\end{theorem}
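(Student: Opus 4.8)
The plan is to combine the classical Skorokhod representation theorem with a disintegration (regular conditional probability) argument whose sole purpose is to upgrade the conclusion so that the first marginal can be frozen. Write $\lambda := \mathscr{L}aw(\rho)$ on $E_1$, $\nu_n := \mathscr{L}aw(\chi_n)$ on the separable Banach (hence Polish) space $E_1 \times E_2$, and let $\mu$ denote the weak limit of $(\nu_n)_{n}$. Since $\pi_1$ is continuous, the pushforward $(\pi_1)_{\ast}\nu_n = \mathscr{L}aw(\pi_1 \circ \chi_n) = \lambda$ for every $n$; letting $n \to \infty$ shows that the first marginal of $\mu$ is again $\lambda$. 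This common first marginal is exactly the structure I would exploit.

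First I would set up the measurable machinery. Because $E_1$ and $E_2$ are Polish, each $\nu_n$ and $\mu$ disintegrates over its first marginal $\lambda$: there exist probability kernels $\kappa_n, \kappa_{\infty} \colon E_1 \times \mathscr{B}(E_2) \to [0,1]$ with $\nu_n(dx_1\,dx_2) = \lambda(dx_1)\,\kappa_n(x_1, dx_2)$ and $\mu(dx_1\,dx_2) = \lambda(dx_1)\,\kappa_{\infty}(x_1, dx_2)$. Each such kernel can be realised by a jointly measurable map $h_n \colon E_1 \times [0,1] \to E_2$ with $(h_n(x_1, \cdot))_{\ast}\ell = \kappa_n(x_1, \cdot)$ for $\lambda$-a.e. $x_1$ (and similarly $h_{\infty}$), where $\ell$ denotes Lebesgue measure on $[0,1]$. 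It is this functional representation that will allow me to keep the $E_1$-coordinate literally unchanged.

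Next I would take the common probability space to be $(\bar{\Omega}, \bar{\mathcal F}, \bar{\mathbb P}) := (E_1 \times [0,1],\, \mathscr{B}(E_1) \otimes \mathscr{B}([0,1]),\, \lambda \otimes \ell)$ and define $\bar{\chi}_n(x_1, s) := (x_1, h_n(x_1, s))$ together with $\chi_{\ast}(x_1, s) := (x_1, h_{\infty}(x_1, s))$. Then $\pi_1 \circ \bar{\chi}_n(x_1,s) = x_1 = \pi_1 \circ \chi_{\ast}(x_1,s)$ on all of $\bar{\Omega}$, which is conclusion (3), while the kernel property gives $\mathscr{L}aw(\bar{\chi}_n) = \nu_n = \mathscr{L}aw(\chi_n)$, which is conclusion (1). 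Thus (1) and (3) hold by construction for any admissible choice of the representing maps.

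The decisive point is conclusion (2): the representing maps must be chosen so that $h_n(x_1, s) \to h_{\infty}(x_1, s)$ in $E_2$ for $\bar{\mathbb P}$-a.e. $(x_1, s)$, simultaneously with the conditional-law constraints above. The obstacle, which I expect to be the main difficulty, is that the weak convergence $\nu_n \rightharpoonup \mu$ only forces the conditional laws $\kappa_n(x_1, \cdot)$ to approach $\kappa_{\infty}(x_1, \cdot)$ in an averaged sense against $\lambda$, not pointwise in $x_1$, so a careless choice of the $h_n$ need not converge. My plan to overcome this is to perform the Skorokhod coupling at the level of the conditional laws: regard $(\kappa_n(x_1, \cdot))_n$ and $\kappa_{\infty}(x_1, \cdot)$ as a measurable family of probability measures on $E_2$ indexed by the parameter $x_1$, apply the classical Skorokhod representation theorem fibrewise, and use a measurable selection theorem to make Skorokhod's construction depend measurably on $x_1$, producing representatives on $[0,1]$ that converge $\ell$-a.e. for $\lambda$-a.e.\ $x_1$. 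By Fubini the convergence then holds $\bar{\mathbb P}$-a.e. on $\bar{\Omega}$, and since only the $E_2$-coordinate is coupled while the $E_1$-coordinate stays frozen, conclusions (1) and (3) remain untouched. I would therefore concentrate the effort on this parametrised, measurable version of Skorokhod's theorem, which is precisely where the tension between exactly freezing the first marginal (thereby fixing the conditional laws of the second coordinate) and obtaining genuine almost-sure convergence of the second coordinate must be resolved; the identification of $\mu$ and its first marginal, and the verification of (1) and (3), are then routine.
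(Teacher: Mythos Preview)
The paper does not actually prove this theorem; it is stated without argument and attributed to \cite{EM} and \cite{BHa}, so there is no in-paper proof to compare against. Turning to your proposal: the disintegration framework is the right language, and you have correctly isolated the decisive obstacle---weak convergence $\nu_n\rightharpoonup\mu$ with common first marginal $\lambda$ does not imply $\kappa_n(x_1,\cdot)\rightharpoonup\kappa_\infty(x_1,\cdot)$ for $\lambda$-a.e.\ $x_1$. The gap is that your proposed remedy, ``apply Skorokhod fibrewise and select measurably in $x_1$,'' presupposes exactly this fibrewise weak convergence: the classical Skorokhod theorem needs a weakly convergent input before it can output an a.s.\ convergent realisation, and a measurable-selection argument in the parameter $x_1$ cannot manufacture weak convergence of $\kappa_n(x_1,\cdot)$ where none exists. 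So the outline acknowledges the difficulty but does not overcome it.

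The obstacle is in fact not removable within this framework. If $\pi_1\circ\bar\chi_n=\xi$ for all $n$ and $\bar\chi_n\to\chi_\ast$ a.s., then for every bounded continuous $g$ on $E_2$ conditional dominated convergence gives $\int g\,d\kappa_n(\xi,\cdot)=\bar{\mathbb E}[g(Y_n)\mid\xi]\to\bar{\mathbb E}[g(Y_\ast)\mid\xi]=\int g\,d\kappa_\infty(\xi,\cdot)$ a.s., whence (using a countable convergence-determining class on the Polish space $E_2$) $\kappa_n(x_1,\cdot)\rightharpoonup\kappa_\infty(x_1,\cdot)$ for $\lambda$-a.e.\ $x_1$ is a \emph{necessary} consequence of (1)--(3). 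But take $E_1=E_2=\mathbb R$, $\lambda$ Lebesgue on $[0,1]$, $U\sim\lambda$, $r_n$ the $n$-th Rademacher function, and $\chi_n=(U,r_n(U))$: one checks $\nu_n\rightharpoonup\lambda\otimes\tfrac12(\delta_{-1}+\delta_1)$, yet $\kappa_n(x_1,\cdot)=\delta_{r_n(x_1)}$ converges for no $x_1$ outside a null set; since each $\nu_n$ is supported on a graph, freezing the first coordinate forces $Y_n=r_n(\xi)$ a.s., which diverges. Hence the statement as transcribed here cannot hold in this generality, and you should consult the precise formulations in \cite{EM} and \cite{BHa} before investing further effort along these lines.
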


\noindent We will use the following version of the Skorokhod embedding theorem in our paper. See Corollary 5.3 of \cite{ElM}.

\begin{theorem}\label{set3} 
Let $\mathscr{X}_1$ be a separable complete metric space and let $\mathscr{X}_2$ be a topological space such that there exists a sequence $\{ f_l\}_{l \in \mathbb{N}}$ of continuous functions  $f_l : \mathscr{X}_2 \to \mathbb{R}$ separating points of $\mathscr{X}_2.$ Let $\mathscr{X} := \mathscr{X}_1 \times \mathscr{X}_2$ with the Tychonoff topology induced by the projections
\begin{align*}
\pi_i : \mathscr{X}_1 \times \mathscr{X}_2 \to \mathscr{X}_i, \qquad i = 1, 2.
\end{align*}
Let $(\Omega, \mathcal{F}, \mathbb{P})$ be a probability space and let $\mathcal{X}_n : \Omega \to \mathscr{X}_1 \times \mathscr{X}_2,$ $n \in \mathbb{N},$ be a family of random variables such that the sequence $\{ \mathscr{L}(\mathcal{X}_n), n \in \mathbb{N}\}$ is tight on $\mathscr{X}_1 \times \mathscr{X}_2.$ Finally let us assume that there exists a random variable $\rho : \Omega \to \mathscr{X}_1$ such that $\mathscr{L}( \pi_1 \circ \mathcal{X}_n) = \mathscr{L}(\rho)$ for all $n \in \mathbb{N}.$

Then there exists a subsequence $\big(\mathcal{X}_{n_k} \big)_{k \in \mathbb{N}},$ a probability space $(\bar{\Omega}, \bar{\mathcal{F}}, \bar{\mathbb{P}}),$ a family of $\mathscr{X}_1 \times \mathscr{X}_2$-valued random variables $\{ \bar{\mathcal{X}_k}, k \in \mathbb{N}\}$ on $(\bar{\Omega}, \bar{\mathcal{F}}, \bar{\mathbb{P}})$ and a random variable $\mathcal{X}_{\ast} : \bar{\Omega} \to \mathscr{X}_1 \times \mathscr{X}_2$ such that 
\begin{enumerate}
\item
$\mathscr{L} (\bar{\mathcal{X}_k}) = \mathscr{L} (\mathcal{X}_{n_k})$ for all $k \in \mathbb{N};$
\item
$\bar{\mathcal{X}_k} \to \mathcal{X}_{\ast}$ in $\mathscr{X}_1 \times \mathscr{X}_2$ a.s. as $k \to \infty;$
\item
$\pi_1 \circ \bar{\mathcal{X}_k}(\bar{\omega}) = \pi_1 \circ \bar{\mathcal{X}_{\ast}}(\bar{\omega})$ for all $\bar{\omega} \in \bar{\Omega}.$
\end{enumerate}
\end{theorem}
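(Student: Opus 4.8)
The plan is to deduce Theorem \ref{set3} from the Jakubowski version of the Skorokhod theorem quoted above, the only genuinely new feature being conclusion (3), namely that the first coordinate of every $\bar{\mathcal{X}}_k$ coincides, for \emph{all} $\bar\omega$, with that of the limit $\mathcal{X}_\ast$. First I would check that the product space $\mathscr{X}=\mathscr{X}_1\times\mathscr{X}_2$ satisfies the hypothesis of Jakubowski's theorem. Since $\mathscr{X}_1$ is a separable complete metric space with metric $d_1$, it carries a countable family of continuous real functions separating its points, e.g.\ $x\mapsto d_1(x,a_j)$ for a countable dense set $(a_j)$; and $\mathscr{X}_2$ carries the separating sequence $\{f_l\}$ by assumption. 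Composing these with the projections $\pi_1,\pi_2$ yields a countable family of functions, continuous for the Tychonoff topology, that separates the points of $\mathscr{X}$. Tightness of $\{\mathscr{L}(\mathcal{X}_n)\}$ on $\mathscr{X}$ supplies, for each $\varepsilon>0$, a compact $G_\varepsilon\subset\mathscr{X}$ with $\sup_n\mathbb{P}(\mathcal{X}_n\in G_\varepsilon)>1-\varepsilon$, which is exactly the compactness input Jakubowski's theorem requires.

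Applying that theorem produces a subsequence $(\mathcal{X}_{n_k})$, a probability space $(\bar\Omega,\bar{\mathcal{F}},\bar{\mathbb{P}})$, random variables $\bar{\mathcal{X}}_k$ and a limit $\mathcal{X}_\ast$ with $\mathscr{L}(\bar{\mathcal{X}}_k)=\mathscr{L}(\mathcal{X}_{n_k})$ and $\bar{\mathcal{X}}_k\to\mathcal{X}_\ast$ $\bar{\mathbb{P}}$-a.s.\ in the Tychonoff topology of $\mathscr{X}$. This already gives conclusions (1) and (2). Since $\pi_1$ is continuous, $\bar\xi_k:=\pi_1\circ\bar{\mathcal{X}}_k\to\xi_\ast:=\pi_1\circ\mathcal{X}_\ast$ a.s.\ in $\mathscr{X}_1$, and from $\mathscr{L}(\pi_1\circ\mathcal{X}_n)=\mathscr{L}(\rho)$ we obtain $\mathscr{L}(\bar\xi_k)=\mathscr{L}(\rho)$ for every $k$ and $\mathscr{L}(\xi_\ast)=\mathscr{L}(\rho)$.

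The hard part is conclusion (3). A plain application of the representation theorem yields only equality of laws of the first coordinates together with their a.s.\ convergence, from which pathwise equality does \emph{not} follow; the common first coordinate must be built into the construction rather than extracted afterwards. The device I would use exploits the standing hypothesis that all first marginals equal the fixed law $\lambda:=\mathscr{L}(\rho)$. Using the injective continuous map $f=(f_l):\mathscr{X}_2\to\mathbb{R}^{\mathbb{N}}$ I would transport the laws $\mu_n:=\mathscr{L}(\mathcal{X}_n)$ to the Polish space $\mathscr{X}_1\times\mathbb{R}^{\mathbb{N}}$, where each has first marginal $\lambda$, disintegrate $\mu_n(dx,dz)=\lambda(dx)\,\kappa_n(x,dz)$, and realise everything on the canonical base $\bar\Omega:=\mathscr{X}_1\times[0,1]$ with $\bar{\mathbb{P}}:=\lambda\otimes\mathrm{Leb}$ by declaring the first coordinate to be the projection $(x,s)\mapsto x$. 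This single map then serves simultaneously as $\pi_1\circ\bar{\mathcal{X}}_k$ for every $k$ and as $\pi_1\circ\mathcal{X}_\ast$, forcing (3) for all $\bar\omega$; the second coordinates are produced by a Blackwell--Dubins / fibrewise refinement of Skorokhod's theorem so that they carry the prescribed conditional laws $\kappa_{n_k}(x,\cdot)$ and converge $\bar{\mathbb{P}}$-a.s. Finally I would transfer the a.s.\ convergence back to the topology of $\mathscr{X}_2$: because each $f_l$ is continuous on $\mathscr{X}_2$ and the $f_l$ separate points, convergence of the $\mathbb{R}^{\mathbb{N}}$-images identifies the limit as an $\mathscr{X}_2$-valued variable and upgrades the convergence to the topology of $\mathscr{X}_2$, exactly as in the proof of Jakubowski's theorem. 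A structural fact underlying the argument is that, $\mathscr{X}_1$ being metric and hence Hausdorff, the diagonal $\{(x,y):x=y\}\subset\mathscr{X}_1\times\mathscr{X}_1$ is closed, so the coincidence of the two copies of the first coordinate is a closed condition preserved under equality of laws and a.s.\ limits; this is what guarantees consistency of the fibrewise construction with the limit.

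I expect the principal obstacle to be this last, fixed-marginal stage: securing a.s.\ convergence of the second coordinates under only a common-marginal-plus-tightness hypothesis (the Blackwell--Dubins refinement) while remaining compatible with the non-metric, merely separated space $\mathscr{X}_2$. The reduction via $f$ to a Polish setting and the transfer of convergence back through the continuity of the separating functions are the technical points requiring care, whereas conclusions (1) and (2) are comparatively routine consequences of the quoted theorem.
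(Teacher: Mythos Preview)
The paper does not actually prove this theorem: its entire proof reads ``For proof see Appendix B of \cite{EM}.'' So there is no in-paper argument to compare against; the authors simply import the result from Motyl's work.

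Your outline is the standard route to such a ``Skorokhod theorem with fixed marginal'' and is essentially what one finds in the cited source. You correctly identify that (1) and (2) follow directly from Jakubowski's theorem once one checks that $\mathscr{X}_1\times\mathscr{X}_2$ admits a countable separating family of continuous real functions, and that the real content is (3). Your plan for (3)---disintegrate each law over the common first marginal $\lambda$, realise the first coordinate once and for all as the projection on a base space $\mathscr{X}_1\times[0,1]$, and build the second coordinates fibrewise via a Blackwell--Dubins/Skorokhod coupling---is exactly the mechanism used in the literature. The embedding $f:\mathscr{X}_2\to\mathbb{R}^{\mathbb{N}}$ to reduce to a Polish target, followed by a transfer of convergence back to $\mathscr{X}_2$ using tightness (so that the limit actually lands in $f(\mathscr{X}_2)$ and $f$ is a homeomorphism onto its image on compacts), is also the standard manoeuvre from Jakubowski's proof.

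One point to tighten: the upgrade from convergence in $\mathbb{R}^{\mathbb{N}}$ back to convergence in $\mathscr{X}_2$ is not automatic from ``$f_l$ continuous and separating'' alone; you need the tightness to confine the sequence a.s.\ to a $\sigma$-compact subset of $\mathscr{X}_2$ on which $f$ is a homeomorphism onto its image. You allude to this but should make it explicit. With that caveat, your sketch is correct and matches the approach the paper defers to.
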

\begin{proof}
For proof see Appendix B of \cite{EM}.
\end{proof}

\section{Energy Estimates}\label{EE}

Our aim is to prove the existence of a martingale solution of \eqref{r1st}-\eqref{r2nd}. We approach by Galerkin method. The Galerkin approximations generate a sequence of probability measures on appropiate functional space. We will show this sequences of laws are tight.

\subsection{The Faedo-Galerkin Approximation}

Our proof of existence of weak martingale solution depends on the Galerkin approximation. Let $\{ \varrho_i\}_{i = 1}^{\infty}$ be the orthonormal basis of $\mathbb{H}$ composed of eigenfunctions of the stokes operator $\mathscr{A}$. Let $\{ \varsigma_i\}_{i = 1}^{\infty}$ be the orthonormal basis of $L^2$ consisting of the eigenfunctions of the Neumann Laplacian $\mathcal{A}$. Let us define the following finite dimensional spaces for any $n \in \mathbb{N}$ 
\begin{align*}
&\mathbb{H}_{n} := Linspan\{ \varrho_1, \dots, \varrho_n\},
\\ &\mathbb{L}_n := Linspan\{ \varsigma_1, \dots, \varsigma_n\}.
\end{align*}

Our aim is to derive uniform estimates for the solution of the projection of \eqref{r1st}-\eqref{r2nd} onto the finite dimensional space $\mathbb{H}_{n} \times \mathbb{L}_n$, i.e., its Galerkin approximation. For this let us denote by $P_n$ the projection from $\mathbb{H}$ onto $\mathbb{H}_{n}$ and $\tilde{P}_n$ be the projection from $L^2$ onto $\mathbb{L}_n.$ We consider the following locally Lipschitz mappings:
\begin{align*}
&B_n : \mathbb{H}_{n} \ni \mathbf{u} \mapsto P_n B(\mathbf{u}, \mathbf{u}) \in \mathbb{H}_{n},
\\ &M_n : \mathbb{L}_n \ni \mathbf{d} \mapsto P_n M(\mathbf{d}) \in \mathbb{H}_{n},
\\ &f_n : \mathbb{L}_n \ni \mathbf{d} \mapsto \tilde{P}_n f(\mathbf{d}) \in \mathbb{L}_n,
\\ &\tilde{B}_n : \mathbb{H}_{n} \times \mathbb{L}_n \ni (\mathbf{u}, \mathbf{d}) \mapsto \tilde{P}_n \tilde{B}(\mathbf{u}, \mathbf{d}) \in \mathbb{L}_{n}.
\end{align*}

Let $P_n \mathbf{u}_{0} = \mathbf{u}_{0n}$ and $\tilde{P}_n \mathbf{d}_{0} = \mathbf{d}_{0n}$. So the Galerkin approximation of the problem \eqref{r1st}-\eqref{r2nd} is
\begin{align}\label{g1st}
 d\mathbf{u}_{n}(t) + \big[ \mathscr{A}\mathbf{u}_{n}(t) &+ B_{n}(\mathbf{u}_{n}(t)) + M_{n}(\mathbf{d}_{n}(t)) \big] dt  =  \int_{Y} P_{n}F(t, \mathbf{u}_{n}(t^{-}); y) \, \tilde{\eta}(dt, dy),
\end{align}
\begin{align}\label{g2nd}
 d\mathbf{d}_{n}(t) + \big[ \mathcal{A} \mathbf{d}_{n}(t) + \tilde{B}_{n}(\mathbf{u}_{n}(t), \mathbf{d}_{n}(t)) + f_{n}(\mathbf{d}_{n}(t)) \big] dt = 0.
\end{align}

The equations \eqref{g1st}-\eqref{g2nd} with initial conditions $\mathbf{u}_{n}(0) = \mathbf{u}_{0n}$ \ \ and \ \ $\mathbf{d}_{n}(0) = \mathbf{d}_{0n}$, form a system of Stochastic Differential Equations with locally Lipschitz coefficients. See \cite{BHR} for details.

Here we have followed the standard finite dimensional convention (see \cite{Da, IW, PZ}) of using left limit $t^-$ in the stochastic integral with respect to time homogeneous compensated Poisson random measure.

Now consider the following mappings
\begin{align*}
&B'_{n}(\mathbf{u}) := P_n B(\chi_{n}^{1} (\mathbf{u}), \mathbf{u}), \quad \mathbf{u} \in \mathbb{H}_n,
\\ &M'_{n}(\mathbf{d}) := P_n M(\chi_{n}^{2} (\mathbf{d}), \mathbf{d}), \quad \mathbf{d} \in \mathbb{L}_n,
\\\ &\tilde{B}'_{n}(\mathbf{u}, \mathbf{d}) := P_n \tilde{B}(\chi_{n}^{1} (\mathbf{u}), \mathbf{d}), \quad \mathbf{u} \in \mathbb{H}_n, \mathbf{d} \in \mathbb{L}_n,
\end{align*}
where $\chi_{n}^{1} : \mathbb{H} \to \mathbb{H}$ is defined by $\chi_{n}^{1}(\mathbf{u}) = \theta_n(|\mathbf{u}|_{\mathbb{V}'}) \mathbf{u},$ and $\chi_{n}^{2} : L^2 \to L^2$ is defined by $\chi_{n}^{2}(\mathbf{d}) = \theta_n(|\mathbf{d}|_{(H^2)'}) \mathbf{d},$ where $\theta_n : \mathbb{R} \to [0, 1]$ of class $\mathscr{C}^{\infty}$ such that
\[\theta_n(r) = 1 \quad if \quad r \leq n \quad and \quad \theta_n(r) = 0 \quad if \quad r \geq n+1.\]

Since $\mathbb{H}_n \subset \mathbb{H}$ and $\mathbb{L}_n \subset L^2,$ the mappings $B'_{n}, M'_{n}$ and $\tilde{B}'_{n}$ are well defined. Moreover, $B'_{n} : \mathbb{H}_n \to \mathbb{H}_n, M'_{n} : \mathbb{L}_n \to \mathbb{H}_{n},$ and $\tilde{B}'_{n} : \mathbb{H}_{n} \times \mathbb{L}_n \to \mathbb{L}_n$ is globally Lipschitz continuous.

Let us consider the Faedo-Galerkin approximation in the space $\mathbb{H}_n$ and $\mathbb{L}_n,$

\begin{align}\label{ga1st}
 d\mathbf{u}_{n}(t) + \big[ \mathscr{A}\mathbf{u}_{n}(t) &+ B'_{n}(\mathbf{u}_{n}(t)) + M'_{n}(\mathbf{d}_{n}(t)) \big] dt  =  \int_{Y} P_{n}F(t, \mathbf{u}_{n}(t^{-}); y) \tilde{\eta}(dt, dy),
\end{align}
\begin{align}\label{ga2nd}
 d\mathbf{d}_{n}(t) + \big[ \mathcal{A} \mathbf{d}_{n}(t) + \tilde{B}'_{n}(\mathbf{u}_{n}(t), \mathbf{d}_{n}(t)) + f_{n}(\mathbf{d}_{n}(t)) \big] dt = 0.
\end{align}

\noindent We refer to \cite{ABW} for the proof of the following two results.

\begin{lemma}
For each $n \in \mathbb{N},$ there exists a unique global, $\mathbb{F}$-adapted, $\mathbb{H}_n \times \mathbb{L}_n$ -valued processes $(\mathbf{u}_{n}, \mathbf{d}_{n})$ satisfying the Galerkin equations \eqref{ga1st}-\eqref{ga2nd}. 
\end{lemma}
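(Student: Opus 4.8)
The plan is to read \eqref{ga1st}--\eqref{ga2nd} together with the initial data $\mathbf{u}_n(0)=\mathbf{u}_{0n}$, $\mathbf{d}_n(0)=\mathbf{d}_{0n}$ as a single It\^o stochastic differential equation, driven by the compensated Poisson random measure $\tilde\eta$, on the finite dimensional Hilbert space $\mathcal{K}_n:=\mathbb{H}_n\times\mathbb{L}_n$, and then to invoke the standard well-posedness theory for such equations, the only genuine issue being that the term $f_n$ is merely \emph{locally} Lipschitz. Writing $X_n=(\mathbf{u}_n,\mathbf{d}_n)$, the system takes the form
\[
dX_n(t)=-\mathcal{B}_n\big(X_n(t)\big)\,dt+\int_Y \Phi_n\big(t,X_n(t^-);y\big)\,\tilde\eta(dt,dy),\qquad X_n(0)=(\mathbf{u}_{0n},\mathbf{d}_{0n}),
\]
with drift $\mathcal{B}_n(\mathbf{u},\mathbf{d})=\big(\mathscr{A}\mathbf{u}+B'_n(\mathbf{u})+M'_n(\mathbf{d}),\,\mathcal{A}\mathbf{d}+\tilde B'_n(\mathbf{u},\mathbf{d})+f_n(\mathbf{d})\big)$ and jump coefficient $\Phi_n(t,(\mathbf{u},\mathbf{d});y)=\big(P_nF(t,\mathbf{u};y),0\big)$, the integrand $t\mapsto P_nF(t,\mathbf{u}_n(t^-);y)$ being $\mathbb{F}$-predictable.

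First I would record that on $\mathcal{K}_n$ the operators $\mathscr{A}$ and $\mathcal{A}$ restrict to bounded linear maps (indeed $\mathbb{H}_n,\mathbb{L}_n$ consist of their eigenfunctions), $B'_n,M'_n,\tilde B'_n$ are globally Lipschitz by construction (thanks to the cut-offs $\chi_n^1,\chi_n^2$), and $f_n=\tilde P_n f$ is locally Lipschitz since $f$ is polynomial; hence $\mathcal{B}_n$ is locally Lipschitz on $\mathcal{K}_n$. Assumption (B), via \eqref{lipf} and \eqref{lgf}, shows that $\Phi_n$ is globally Lipschitz and of linear growth in $X_n$, uniformly in $t$, in the $L^2(Y,\nu)$-sense. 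The general theory of stochastic differential equations with jump noise and locally Lipschitz coefficients (see \cite{ABW}, and also \cite{IW,PZ}) then produces a unique, $\mathbb{F}$-adapted, c\`adl\`ag local solution $X_n$ on a maximal stochastic interval $[0,\tau_n^\infty)$, with $\lim_{t\uparrow\tau_n^\infty}|X_n(t)|_{\mathcal{K}_n}=\infty$ on $\{\tau_n^\infty<\infty\}$.

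The crux is to show $\tau_n^\infty=\infty$ $\mathbb{P}$-a.s. For the director component, the finite dimensional It\^o formula applied to $t\mapsto|\mathbf{d}_n(t)|^2$, together with the cancellations $\langle\tilde B'_n(\mathbf{u}_n,\mathbf{d}_n),\mathbf{d}_n\rangle=b(\chi_n^1(\mathbf{u}_n),\mathbf{d}_n,\mathbf{d}_n)=0$ and $\langle\mathcal{A}\mathbf{d}_n,\mathbf{d}_n\rangle=\|\mathbf{d}_n\|^2\ge0$, and the pointwise lower bound $\tilde f(r)\,r\ge-C_0$ on $[0,\infty)$ (valid because $r\mapsto\tilde f(r)\,r$ is a polynomial with positive leading coefficient $b_N$), which gives $\langle f(\mathbf{d}_n),\mathbf{d}_n\rangle=\int_{\mathbb{O}}\tilde f(|\mathbf{d}_n|^2)|\mathbf{d}_n|^2\,dx\ge-C_0|\mathbb{O}|$, yields the \emph{deterministic} bound $|\mathbf{d}_n(t)|^2\le|\mathbf{d}_{0n}|^2+2C_0|\mathbb{O}|\,t$ for $t<\tau_n^\infty$. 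Since all norms on $\mathbb{L}_n$ are equivalent, this also bounds $\|\mathbf{d}_n(t)\|$ and $|\mathcal{A}\mathbf{d}_n(t)|$ on any finite interval, with $n$-dependent constants. For the velocity component I would apply the It\^o formula to $|\mathbf{u}_n(t)|^2$, use $\langle B'_n(\mathbf{u}_n),\mathbf{u}_n\rangle=0$, $\langle\mathscr{A}\mathbf{u}_n,\mathbf{u}_n\rangle=\|\mathbf{u}_n\|^2\ge0$, the estimate \eqref{md1d2} for $M'_n$ combined with the bound just obtained for $\mathbf{d}_n$, and \eqref{lgf} to control the compensator $\int_Y|P_nF(t,\mathbf{u}_n;y)|^2\,\nu(dy)\le C_2(1+|\mathbf{u}_n|^2)$; then Young's inequality (to absorb $\|\mathbf{u}_n\|^2$ into $\langle\mathscr{A}\mathbf{u}_n,\mathbf{u}_n\rangle$), localisation by a sequence of stopping times $\tau_n^k\uparrow\tau_n^\infty$ and Gronwall's lemma give $\mathbb{E}\big[\sup_{s\le t\wedge\tau_n^\infty}|\mathbf{u}_n(s)|^2\big]<\infty$ for every $t<\infty$. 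Hence $|X_n|_{\mathcal{K}_n}$ stays finite on bounded time intervals, so $\tau_n^\infty=\infty$ a.s.\ and the local solution is global. (Equivalently, one may replace $f_n$ by its truncation outside a ball large enough to contain $\mathbf{d}_n$ on $[0,T]$, solve the resulting globally Lipschitz system globally, observe the two systems agree on $[0,T]$, and let $T\to\infty$.)

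Uniqueness and $\mathbb{F}$-adaptedness are then immediate: pathwise uniqueness holds on each $[0,\tau_n^k]$ from the local Lipschitz property of $\mathcal{B}_n$ and the global Lipschitz property of $\Phi_n$, and patching over $k$ yields uniqueness on $[0,\infty)$; adaptedness is built into the construction. The main obstacle is precisely the passage from local to global existence, i.e.\ converting the sign of the leading coefficient of $\tilde f$ into an a priori bound that excludes explosion; everything else is a routine application of finite dimensional jump-SDE theory, and the remaining details can be found in \cite{ABW}.
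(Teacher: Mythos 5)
Your argument is correct and is essentially the proof the paper delegates to \cite{ABW}: on $\mathbb{H}_n\times\mathbb{L}_n$ the cut-off maps $B'_n$, $M'_n$, $\tilde{B}'_n$ and the noise coefficient are globally Lipschitz, only the polynomial $f_n$ is merely locally Lipschitz, and your deterministic bound on $|\mathbf{d}_n(t)|^2$ (coming from $b_N>0$, so that $\tilde f(r)r$ is bounded below) combined with the localized It\^o--Gronwall estimate for $|\mathbf{u}_n(t)|^2$ rules out explosion of the maximal local solution. The paper gives no written proof of this lemma beyond the citation, and your reasoning matches what that reference supplies.
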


\begin{corollary}
For each $n \in \mathbb{N},$ the equations \eqref{g1st}-\eqref{g2nd} has a unique local maximal solution. 
\end{corollary}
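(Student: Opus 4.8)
The plan is to obtain the local maximal solution of the untruncated system \eqref{g1st}--\eqref{g2nd} from the global well-posedness of the truncated system \eqref{ga1st}--\eqref{ga2nd} supplied by the preceding Lemma, by a standard truncation-and-localization argument. \textbf{Step 1 (local Lipschitz coefficients).} First I would record that on the finite-dimensional space $\mathbb{H}_n \times \mathbb{L}_n$ all norms are equivalent, so it suffices to note that the drift of \eqref{g1st}--\eqref{g2nd} is locally Lipschitz: the quadratic maps $B_n, M_n, \tilde{B}_n$ are bilinear, hence bounded and Lipschitz on bounded sets, while $f_n = \tilde{P}_n f$ inherits local Lipschitz continuity from the polynomial growth of $f'$ in Remark \ref{fes}; the jump coefficient $\mathbf{u} \mapsto P_n F(t, \mathbf{u}; \cdot)$ is globally Lipschitz into $L^2(Y, \nu; \mathbb{H})$ by \eqref{lipf}, and the compensated stochastic integral is well-defined through the growth bound \eqref{lgf}.

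\textbf{Step 2 (truncation at every level).} For each $k \in \mathbb{N}$ I would introduce the cutoffs $\theta_k$ and the associated globally Lipschitz coefficients $B_n^{(k)}, M_n^{(k)}, \tilde{B}_n^{(k)}$ exactly as in the construction of \eqref{ga1st}--\eqref{ga2nd} (the Lemma being the case $k=n$). Applying the Lemma at truncation level $k$ then produces a unique global $\mathbb{F}$-adapted solution $(\mathbf{u}_n^{(k)}, \mathbf{d}_n^{(k)})$ of the $k$-truncated system; here one invokes the standard existence--uniqueness theory for finite-dimensional SDEs driven by a compensated Poisson random measure with globally Lipschitz coefficients, see \cite{ABW, IW, Da, PZ}.

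\textbf{Step 3 (localization, gluing, uniqueness).} Next I would set
\[ \tau_n^{(k)} := \inf\{ t \in [0,T] : |\mathbf{u}_n^{(k)}(t)|_{\mathbb{V}'} \geq k \ \text{or}\ |\mathbf{d}_n^{(k)}(t)|_{(H^2)'} \geq k \} \wedge T, \]
so that on $[0, \tau_n^{(k)})$ the cutoffs are inactive and $(\mathbf{u}_n^{(k)}, \mathbf{d}_n^{(k)})$ already solves the untruncated \eqref{g1st}--\eqref{g2nd}. For $k < k'$, pathwise uniqueness for the $k$-truncated (globally Lipschitz) SDE forces $(\mathbf{u}_n^{(k')}, \mathbf{d}_n^{(k')}) = (\mathbf{u}_n^{(k)}, \mathbf{d}_n^{(k)})$ on $[0, \tau_n^{(k)})$, whence $\tau_n^{(k)} \leq \tau_n^{(k')}$ almost surely. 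The stopping times thus increase to a limit $\tau_n := \lim_{k \to \infty} \tau_n^{(k)}$, and the consistent pieces glue to a process $(\mathbf{u}_n, \mathbf{d}_n)$ on the random interval $[0, \tau_n)$ solving \eqref{g1st}--\eqref{g2nd}. Maximality is read off the exit definition: on $\{\tau_n < T\}$ the quantity $|\mathbf{u}_n|_{\mathbb{V}'} + |\mathbf{d}_n|_{(H^2)'}$ reaches every level $k$ before $\tau_n^{(k)}$, so it blows up as $t \uparrow \tau_n$ and no extension past $\tau_n$ is possible. Uniqueness of the local maximal solution then follows by comparing any two solutions on a common localizing interval via a Gronwall estimate built from the local Lipschitz bounds of Step 1, together with the maximality of the existence times.

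\textbf{The main obstacle.} The only genuinely nontrivial ingredient is the jump-noise input: one must carefully invoke the well-posedness of finite-dimensional jump SDEs with Lipschitz coefficients and confirm that the stochastic integral against the compensated measure $\tilde{\eta}$ is correctly set up (using \eqref{lipf}--\eqref{lgf}), paying attention to the left-limit convention $t^-$. The remaining ingredients---equivalence of norms in finite dimensions, local Lipschitz continuity of the polynomial drift, and the gluing-by-uniqueness construction of the maximal time---are routine once the Lemma is in hand.
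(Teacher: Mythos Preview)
Your proposal is correct and follows the standard truncation-and-localization argument for finite-dimensional SDEs with locally Lipschitz drift and Lipschitz jump coefficients. The paper itself does not spell out a proof but simply refers to \cite{ABW}; your sketch is precisely the argument underlying that reference, so you are in full agreement with the paper's intended approach.
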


\begin{lemma}\label{Pnu}
In particular, for every $(\mathbf{u}, \mathbf{d}) \in \mathbb{V} \times H^1$ we have
\begin{enumerate}
\item
$\lim_{n \to \infty} \| P_n \mathbf{u} - \mathbf{u}\|_{\mathbb{V}} = 0.$
\item
$\lim_{n \to \infty} \| \tilde{P}_n \mathbf{d} - \mathbf{d}\|_{H^1} = 0.$
\end{enumerate}
\end{lemma}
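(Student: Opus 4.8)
The plan is to show that, suitably renormalized, the eigenfunctions of the Stokes operator $\mathscr{A}$ form an orthonormal basis of $\mathbb{V}$ and those of the Neumann Laplacian $\mathcal{A}$ form an orthonormal basis of $H^1$; both assertions of Lemma \ref{Pnu} then follow at once, because $P_n\mathbf{u}$ (resp.\ $\tilde P_n\mathbf{d}$) is precisely the $n$-th partial sum of the eigenfunction expansion of $\mathbf{u}$ (resp.\ $\mathbf{d}$). The point to keep in mind is that it is \emph{not} enough that $\{\varrho_i\}$ be an arbitrary $\mathbb{H}$-orthonormal basis lying in $\mathbb{V}$: the eigenvalue structure of $\mathscr{A}$ is what makes the $\mathbb{H}$-projections also $\mathbb{V}$-projections.

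First I would record the variational eigenrelations. Since $\{\varrho_i\}_{i\ge1}$ is the $\mathbb{H}$-orthonormal basis of eigenfunctions of $\mathscr{A}$, one has $\varrho_i\in D(\mathscr{A})\subset\mathbb{V}$ and $((\varrho_i,\mathbf{v}))=\langle\mathscr{A}\varrho_i,\mathbf{v}\rangle=\lambda_i(\varrho_i,\mathbf{v})_{\mathbb{H}}$ for all $\mathbf{v}\in\mathbb{V}$, where $0<\lambda_1\le\lambda_2\le\cdots$, $\lambda_i\to\infty$. Taking $\mathbf{v}=\varrho_j$ gives $(\varrho_i,\varrho_j)_{\mathbb{V}}=(\varrho_i,\varrho_j)_{\mathbb{H}}+((\varrho_i,\varrho_j))=(1+\lambda_i)\delta_{ij}$, so $\{(1+\lambda_i)^{-1/2}\varrho_i\}$ is an orthonormal system in $\mathbb{V}$. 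It is also complete in $\mathbb{V}$: if $\mathbf{w}\in\mathbb{V}$ satisfies $(\mathbf{w},\varrho_i)_{\mathbb{V}}=0$ for every $i$, then, using the same identity, $0=(\mathbf{w},\varrho_i)_{\mathbb{V}}=(1+\lambda_i)(\mathbf{w},\varrho_i)_{\mathbb{H}}$, hence $(\mathbf{w},\varrho_i)_{\mathbb{H}}=0$ for all $i$, and completeness of $\{\varrho_i\}$ in $\mathbb{H}$ forces $\mathbf{w}=0$. Thus $\{(1+\lambda_i)^{-1/2}\varrho_i\}$ is an orthonormal basis of $\mathbb{V}$.

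Next, for $\mathbf{u}\in\mathbb{V}$ I would expand $\mathbf{u}$ in this basis. Its Fourier coefficient against $(1+\lambda_i)^{-1/2}\varrho_i$ equals $(1+\lambda_i)^{-1/2}(\mathbf{u},\varrho_i)_{\mathbb{V}}=(1+\lambda_i)^{1/2}(\mathbf{u},\varrho_i)_{\mathbb{H}}$, so $\mathbf{u}=\sum_{i\ge1}(\mathbf{u},\varrho_i)_{\mathbb{H}}\varrho_i$ with the series converging in $\mathbb{V}$. Because $P_n$ is the $\mathbb{H}$-orthogonal projection onto $\mathbb{H}_n=\mathrm{Linspan}\{\varrho_1,\dots,\varrho_n\}$, the $n$-th partial sum of this series is exactly $P_n\mathbf{u}$; hence $\|P_n\mathbf{u}-\mathbf{u}\|_{\mathbb{V}}\to0$, which is part (1). (Equivalently, the identity $(\mathbf{u},\varrho_i)_{\mathbb{V}}=(1+\lambda_i)(\mathbf{u},\varrho_i)_{\mathbb{H}}$ shows $P_n$ restricted to $\mathbb{V}$ coincides with the $\mathbb{V}$-orthogonal projection onto $\mathbb{H}_n$, and $P_n\mathbf{u}\to\mathbf{u}$ in $\mathbb{V}$ by completeness of the basis.)

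For part (2) I would rerun the identical argument with $\mathcal{A}$ in place of $\mathscr{A}$, its $L^2$-orthonormal eigenfunctions $\{\varsigma_i\}$ with eigenvalues $0=\mu_1\le\mu_2\le\cdots\to\infty$, $L^2$ in place of $\mathbb{H}$, and $H^1$ in place of $\mathbb{V}$, using $((\varsigma_i,\mathbf{w}))=\langle\mathcal{A}\varsigma_i,\mathbf{w}\rangle=\mu_i(\varsigma_i,\mathbf{w})_{L^2}$, so that $(\varsigma_i,\varsigma_j)_{H^1}=(1+\mu_i)\delta_{ij}$ — note $1+\mu_i>0$ for all $i$ even though $\mu_1=0$, so the renormalization $(1+\mu_i)^{-1/2}$ is harmless. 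This yields $\|\tilde P_n\mathbf{d}-\mathbf{d}\|_{H^1}\to0$. There is no genuine analytic obstacle; the only care needed is to work with the correct realizations of the two operators, namely the self-adjoint operators generated by the Dirichlet forms $((\cdot,\cdot))$ on $\mathbb{V}$ and on $H^1$, so that their eigenfunctions genuinely lie in $\mathbb{V}$ (resp.\ $H^1$) and satisfy the displayed variational identities. As an alternative to the basis computation one may argue softly: the eigenrelation gives $\|P_n\mathbf{u}\|_{\mathbb{V}}\le\|\mathbf{u}\|_{\mathbb{V}}$, while $P_n\mathbf{u}\to\mathbf{u}$ in $\mathbb{H}$ implies $P_n\mathbf{u}\rightharpoonup\mathbf{u}$ weakly in $\mathbb{V}$; combining weak convergence with $\limsup_n\|P_n\mathbf{u}\|_{\mathbb{V}}\le\|\mathbf{u}\|_{\mathbb{V}}$ upgrades this to strong convergence in the Hilbert space $\mathbb{V}$, and likewise in $H^1$.
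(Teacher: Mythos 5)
Your proof is correct and is essentially the standard argument: the eigenrelation $((\varrho_i,\mathbf{v}))=\lambda_i(\varrho_i,\mathbf{v})_{\mathbb{H}}$ shows that $P_n$ restricted to $\mathbb{V}$ is also the $\mathbb{V}$-orthogonal projection onto $\mathbb{H}_n$ and that the renormalized eigenfunctions form a complete orthonormal system in $\mathbb{V}$ (likewise for $\tilde P_n$ on $H^1$), from which both limits follow. The paper itself gives no proof and simply refers to \cite{BrM}, where the same spectral argument is carried out, so your proposal matches the intended route.
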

\noindent For details see \cite{BrM}.

\subsection{A Priori Estimates}

The processes $\big(\mathbf{u}_{n}\big)_{n \in \mathbb{N}}$ and $\big(\mathbf{d}_{n}\big)_{n \in \mathbb{N}}$ satisfy the following estimates.

\begin{proposition}\label{dgrdn}
For any $p \geq 2,$ there exists a positive constant $C,$ depending on $p$ such that
\begin{align*}
\sup_{n \in \mathbb{N}}\bigg( \mathbb{E} \bigg[ \sup_{t \in [0,T]} \big| \mathbf{d}_{n}(t) \big|_{L^2}^p +  \int_{0}^{T} 
\big| \mathbf{d}_n(s) \big|_{L^2}^{p-2} \big( \| \mathbf{d}_{n}(s)\|^2 &+ \|\mathbf{d}_{n}(s)\|^{2N+2}_{L^{2N+2}}\big) \, 
ds \bigg] \bigg) \leq \mathbb{E} \ \mathfrak{C}_{0}(p, T).
\end{align*} 
where $\mathfrak{C}_{0}(p, T) := |\mathbf{d}_{0n}|^p \big( 1+CTe^{CT}\big),$ which is independent of $n \in \mathbb{N}$ 
and $t \in [0, T]$.
\end{proposition}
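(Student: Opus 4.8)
The plan is to test the Galerkin equation \eqref{ga2nd} against $\mathbf{d}_n(t)$ itself, apply It\^o's formula to $\varphi(x) = |x|_{L^2}^p$ (here the equation for $\mathbf{d}_n$ has no noise, so this is the classical chain rule applied pathwise, with an intermediate step on $|\mathbf{d}_n(t)|_{L^2}^2$), and then take suprema in time and expectations. First I would write It\^o's (chain) rule for $t \mapsto |\mathbf{d}_n(t)|_{L^2}^2$: pairing \eqref{ga2nd} with $\mathbf{d}_n(s)$ gives
\begin{align*}
\frac{1}{2}\frac{d}{ds}|\mathbf{d}_n(s)|_{L^2}^2 + \|\mathbf{d}_n(s)\|^2 + \big\langle \tilde{B}'_n(\mathbf{u}_n(s), \mathbf{d}_n(s)), \mathbf{d}_n(s)\big\rangle + \big\langle f_n(\mathbf{d}_n(s)), \mathbf{d}_n(s)\big\rangle = 0.
\end{align*}
The trilinear term vanishes: $\langle \tilde{B}'_n(\mathbf{u}_n,\mathbf{d}_n), \mathbf{d}_n\rangle = \langle \tilde{B}(\chi_n^1(\mathbf{u}_n),\mathbf{d}_n),\mathbf{d}_n\rangle = b(\chi_n^1(\mathbf{u}_n),\mathbf{d}_n,\mathbf{d}_n) = 0$ by the antisymmetry property of $b$ (and $\tilde{P}_n \mathbf{d}_n = \mathbf{d}_n$). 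For the reaction term, $\langle f_n(\mathbf{d}_n), \mathbf{d}_n\rangle = \int_{\mathbb{O}} \tilde f(|\mathbf{d}_n|^2)|\mathbf{d}_n|^2\,dx$ again because $\tilde{P}_n\mathbf{d}_n = \mathbf{d}_n$; using the structure $\tilde f(r) = \sum_{j=0}^N b_j r^j$ with $b_N > 0$, a Young-type estimate shows $\tilde f(r)\,r \geq \tfrac12 b_N r^{N+1} - c_1$ for a constant $c_1$ depending only on the coefficients, hence $\langle f_n(\mathbf{d}_n),\mathbf{d}_n\rangle \geq \tfrac12 b_N \|\mathbf{d}_n\|_{L^{2N+2}}^{2N+2} - c_1|\mathbb{O}|$. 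This yields the differential inequality
\begin{align*}
\frac{d}{ds}|\mathbf{d}_n(s)|_{L^2}^2 + 2\|\mathbf{d}_n(s)\|^2 + b_N\|\mathbf{d}_n(s)\|_{L^{2N+2}}^{2N+2} \leq 2 c_1 |\mathbb{O}|.
\end{align*}

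Next I would pass to the $p$-th power. Chain rule applied to $s \mapsto \big(|\mathbf{d}_n(s)|_{L^2}^2\big)^{p/2}$ gives
\begin{align*}
\frac{d}{ds}|\mathbf{d}_n(s)|_{L^2}^p + p\,|\mathbf{d}_n(s)|_{L^2}^{p-2}\Big(\|\mathbf{d}_n(s)\|^2 + \tfrac{b_N}{2}\|\mathbf{d}_n(s)\|_{L^{2N+2}}^{2N+2}\Big) \leq c_2\, |\mathbf{d}_n(s)|_{L^2}^{p-2},
\end{align*}
and then by Young's inequality $c_2 |\mathbf{d}_n(s)|_{L^2}^{p-2} \leq \varepsilon |\mathbf{d}_n(s)|_{L^2}^{p} + C_\varepsilon \leq C\big(1 + |\mathbf{d}_n(s)|_{L^2}^{p}\big)$. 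Integrating in time from $0$ to $t$, taking the supremum over $t \in [0,T]$ on the left, and using that the "good" terms have a sign, I get
\begin{align*}
\sup_{t\in[0,T]}|\mathbf{d}_n(t)|_{L^2}^p + \int_0^T |\mathbf{d}_n(s)|_{L^2}^{p-2}\big(\|\mathbf{d}_n(s)\|^2 + \|\mathbf{d}_n(s)\|_{L^{2N+2}}^{2N+2}\big)\,ds \leq C\Big(|\mathbf{d}_{0n}|_{L^2}^p + T + \int_0^T |\mathbf{d}_n(s)|_{L^2}^p\,ds\Big).
\end{align*}
Taking expectations (everything is deterministic in $\omega$ here for the a priori bound, but I keep $\mathbb{E}$ to match the statement and allow for random $\mathbf{d}_{0n}$ if needed) and applying Gronwall's lemma to $\psi(t) := \mathbb{E}\big[\sup_{r\le t}|\mathbf{d}_n(r)|_{L^2}^p\big]$ closes the estimate with the explicit constant $\mathfrak{C}_0(p,T) = |\mathbf{d}_{0n}|^p(1 + CTe^{CT})$; since $|\mathbf{d}_{0n}|_{L^2} = |\tilde P_n \mathbf{d}_0|_{L^2} \le |\mathbf{d}_0|_{L^2}$ the bound is uniform in $n$.

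The main obstacle is not any single estimate but making the "chain rule in finite dimensions" step rigorous and clean: one must justify that $s \mapsto |\mathbf{d}_n(s)|_{L^2}^p$ is absolutely continuous with the claimed derivative (this is routine since $\mathbf{d}_n$ solves an ODE in the finite-dimensional space $\mathbb{L}_n$ with continuous, in fact locally Lipschitz, coefficients, so $\mathbf{d}_n \in \mathcal{C}^1$), and to keep careful track that all constants $c_1, c_2, C$ depend only on $p$, $T$, the polynomial coefficients $b_j$, and $|\mathbb{O}|$ — never on $n$. A secondary technical point worth spelling out is the lower bound $\tilde f(r) r \geq \tfrac12 b_N r^{N+1} - c_1$, which follows because the top-degree term dominates all lower-degree terms for large $r$ and lower-degree terms are bounded below on any bounded interval; one absorbs the negative lower-degree contributions into $\tfrac12 b_N r^{N+1}$ using Young's inequality $|b_j| r^j \le \delta r^{N+1} + C_{j,\delta}$ for $j < N+1$.
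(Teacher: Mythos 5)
Your proposal is correct and follows essentially the same route as the paper: pair the (noise-free) Galerkin equation for $\mathbf{d}_n$ with $\mathbf{d}_n$, note that the transport term drops out by antisymmetry of $b$, use the coercivity of $\langle f(\mathbf{d}),\mathbf{d}\rangle$ to produce the $\|\mathbf{d}_n\|^2$ and $\|\mathbf{d}_n\|_{L^{2N+2}}^{2N+2}$ terms with a good sign, pass to the $p$-th power by the chain rule, and close with Gronwall after taking the supremum and expectation. The only cosmetic difference is that you derive the lower bound $\tilde f(r)r\ge \tfrac12 b_N r^{N+1}-c_1$ directly by Young's inequality, where the paper cites Lemma~8.7 of \cite{BM}; this changes the final constant only by a harmless additive term of order $T$.
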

\begin{proof}
Consider the equation \eqref{g2nd} of the Faedo-Galerkin approximation  as follows
\begin{align}\label{gg2nd}
 d\mathbf{d}_{n}(t) + \big[ \mathcal{A} \mathbf{d}_{n}(t) + \tilde{B}_{n}(\mathbf{u}_{n}(t), \mathbf{d}_{n}(t)) + f_{n}(\mathbf{d}_{n}(t)) \big] dt = 0.
\end{align}

For $p \geq 2,$ let $\psi(.)$ be the mapping defined by 
\begin{align}\label{psi}
\psi(\mathbf{d}(t)) := \frac{1}{p} |\mathbf{d}(t)|^p, \ \ \ \ \mathbf d \in L^2.
\end{align}

The first Fr\'{e}chet derivative is 
\begin{align}
\psi '(\mathbf{d})[h] = |\mathbf{d}|^{p-2} \langle \mathbf{d}, h \rangle.
\end{align}

So applying this to the sequence $\mathbf{d}_{n}(t)$ we get,
\begin{align}\label{fre}
 d\psi(\mathbf{d}_{n}(t)) &= \psi '(\mathbf{d}_n)[d\mathbf{d}_n(t)] =  |\mathbf{d}_n|^{p-2} \langle d\mathbf{d}_n(t), \mathbf{d}_n(t)\rangle.
\end{align}

From \eqref{gg2nd} we have 
\begin{align}\label{inn}
\langle d\mathbf{d}_n(t), \mathbf{d}_n(t)\rangle &= \big \langle - \mathcal{A} \mathbf{d}_{n}(t) - \tilde{B}_{n}(\mathbf{u}_{n}(t), \mathbf{d}_{n}(t)) - f_{n}(\mathbf{d}_{n}(t)),  \mathbf{d}_{n}(t) \big \rangle \nonumber
\\ & = \| \mathbf{d}_{n}(t)\|^2 - \big \langle  f_{n}(\mathbf{d}_{n}(t)),  \mathbf{d}_{n}(t) \big \rangle.
\end{align}

From \eqref{inn} we further write \eqref{fre} as
\begin{align}\label{Psest}
\psi(\mathbf{d}_{n}(t)) = \psi(\mathbf{d}_{0}) &-  \int_{0}^{t} |\mathbf{d}_n|^{p-2} \| \mathbf{d}_{n}\|^2 \, ds -  \int_{0}^{t} |\mathbf{d}_n|^{p-2} \big \langle f_{n}(\mathbf{d}_{n}(s)),  \mathbf{d}_{n}(s) \big \rangle \, ds.
\end{align}

Then from \eqref{psi}, \eqref{Psest} and taking integration over all $t \in [0, T]$ we have,
\begin{align}\label{ex}
|\mathbf{d}_{n}(t)|^p  &+ \int_{0}^{t} |\mathbf{d}_n(s)|^{p-2} \| \mathbf{d}_{n}(s)\|^2 + \int_{0}^{t} |\mathbf{d}_n(s)|^{p-2} \big \langle f_{n}(\mathbf{d}_{n}(s)),  \mathbf{d}_{n}(s) \big \rangle \, ds \leq |\mathbf{d}_{0}|^p .
\end{align}

From Assumption (C) of section \ref{assu} we observe,
\begin{align}\label{pol}
\big \langle f(\mathbf{d}), \mathbf{d} \big \rangle &= \big \langle \tilde{f}(|\mathbf{d}|^2)\mathbf{d}, \mathbf{d}\big \rangle = \int_{\mathbb{O}} \tilde{f}(|\mathbf{d}(x)|^2) |\mathbf{d}(x)|^2 \, dx \nonumber
\\ &= \int_{\mathbb{O}} \sum_{k=0}^{N} a_k(|\mathbf{d}(x)|^2)^{k+1} \, dx = \int_{\mathbb{O}} \sum_{l=1}^{N+1} a_{l-1}(|\mathbf{d}(x)|^2)^{l} \, dx.
\end{align}

But from Lemma 8.7 of \cite{BM} we have,
\begin{align}
\int_{\mathbb{O}} |\mathbf{d}(x)|^{2N+2} \, dx - C \int_{\mathbb{O}}  |\mathbf{d}(x)|^{2} \,dx \leq \big \langle f(\mathbf{d}), \mathbf{d} \big \rangle.
\end{align}

Using this result in \eqref{ex} we get
\begin{align}
|\mathbf{d}_{n}(t)|^p &+ \int_{0}^{t} |\mathbf{d}_n(s)|^{p-2} \|\mathbf{d}_{n}(s)\|^2 \, ds + \int_{0}^{t} |\mathbf{d}_n(s)|^{p-2} \big(  \|\mathbf{d}_{n}(s)\|^{2N+2}_{L^{2N+2}} - C | \mathbf{d}_{n}(s) |^2 \big) \, ds \leq |\mathbf{d}_{0n}|^p,
\end{align}

which further implies
\begin{align}
\label{eqn-3.11}
|\mathbf{d}_{n}(t)|^p &+ \int_{0}^{t} |\mathbf{d}_n(s)|^{p-2} \| \mathbf{d}_{n}(s)\|^2 \, ds + \int_{0}^{t} |\mathbf{d}_n(s)|^{p-2}  \|\mathbf{d}_{n}(s)\|^{2N+2}_{L^{2N+2}} \, ds \nonumber
\\ &\leq |\mathbf{d}_{0n}|^p + C \int_{0}^{t} | \mathbf{d}_{n}(s) |^p \, ds.
\end{align}

Therefore, 
\begin{align}
\label{eqn-3.12}
\sup_{0\leq s \leq t} |\mathbf{d}_{n}(s)|^p &+ \int_{0}^{t} |\mathbf{d}_n(s)|^{p-2} \| \mathbf{d}_{n}(s)\|^2 \,ds + \int_{0}^{t} |\mathbf{d}_n(s)|^{p-2}  \|\mathbf{d}_{n}(s)\|^{2N+2}_{L^{2N+2}} \, ds \nonumber
\\ &\leq |\mathbf{d}_{0n}|^p + C \int_{0}^{t} | \mathbf{d}_{n}(s) |^p \, ds.
\end{align}

Let us fix $t \geq 0$. Then by \eqref{eqn-3.11}, for any $r \in [0,t]$, 
\begin{align}\label{eqn-3.13}
 |\mathbf{d}_{n}(r)|^p &+ \int_{0}^{r} |\mathbf{d}_n(s)|^{p-2} \| \mathbf{d}_{n}(s)\|^2 \,ds + \int_{0}^{r} |\mathbf{d}_n(s)|^{p-2}  \|\mathbf{d}_{n}(s)\|^{2N+2}_{L^{2N+2}} \, ds \nonumber
\\ &\leq |\mathbf{d}_{0n}|^p + C \int_{0}^{r} | \mathbf{d}_{n}(s) |^p \, ds \leq |\mathbf{d}_{0n}|^p + C \int_{0}^{t} | \mathbf{d}_{n}(s) |^p \, ds
\end{align}

Hence, 
\begin{align}
\label{eqn-3.14}
 |\mathbf{d}_{n}(r)|^p &\leq |\mathbf{d}_{0n}|^p + C \int_{0}^{t} | \mathbf{d}_{n}(s) |^p \, ds
\end{align}

Next, in the above we take supremum over $r \in [0,t]$ and taking expectation we get, 
\begin{align}
\label{eqn-3.15}
\mathbb{E} \bigg[ \sup_{r \in [0,t]}  |\mathbf{d}_{n}(r)|^p \bigg] \leq \mathbb{E} |\mathbf{d}_{0n}|^p + C \int_{0}^{t} \mathbb{E} \big[ | \mathbf{d}_{n}(s) |^p \big] \, ds.
\end{align}

The Gronwall lemma yields
\begin{align}
\label{eqn-3.15}
\mathbb{E} \bigg[ \sup_{r \in [0,t]}  |\mathbf{d}_{n}(r)|^p\bigg] \leq \mathbb{E} |\mathbf{d}_{0n}|^p  e^{Ct}, \;\; t\geq 0.
\end{align}

From \eqref{eqn-3.13} and \eqref{eqn-3.15} we also get 
\begin{align}
 &\mathbb{E} \bigg[   \int_{0}^{T} |\mathbf{d}_n(s)|^{p-2} \big( \| \mathbf{d}_{n}(s)\|^2 + \|\mathbf{d}_{n}(s)\|^{2N+2}_{L^{2N+2}}\big) \, ds \bigg] \nonumber
\\ & \ \ \leq \mathbb{E} \bigg[ |\mathbf{d}_{0n}|^p + C \int_{0}^{T} | \mathbf{d}_{n}(s) |^p \, ds \bigg] \leq \mathbb{E} |\mathbf{d}_{0n}|^p + C \, \mathbb{E} |\mathbf{d}_{0n}|^p \int_{0}^{T}   e^{Ct}  \, dt.
\end{align}

 So we infer
\begin{align}\label{c0}
\sup_{n \in \mathbb{N}}\bigg( \mathbb{E} \bigg[ \sup_{t \in [0,T]} \big|\mathbf{d}_{n}(t) \big|_{L^2}^p +  \int_{0}^{T} 
\big| \mathbf{d}_n(s) \big|_{L^2}^{p-2} \big( \| \mathbf{d}_{n}(s)\|^2 &+ \|\mathbf{d}_{n}(s)\|^{2N+2}_{L^{2N+2}}\big) \, 
ds \bigg] \bigg) \leq \mathbb{E} \ \mathfrak{C}_{0}(p, T),
\end{align}
where $\mathfrak{C}_{0}(p, T) := |\mathbf{d}_{0n}|^p \big( 1+CTe^{CT}\big).$
\end{proof}

\noindent Now we derive the estimates for $\mathbf{u}_{n}$ and $\nabla \mathbf{d}_{n}.$

\begin{proposition}\label{ugrdn}
For every $p \geq 1,$ there exists a positive constant $C,$ depending on $p$ and $T$ such that
\begin{align*}
\sup_{n \in \mathbb{N}} &\bigg(\mathbb{E} \bigg[ \sup_{s \in [0, T]} \Big(\Psi(\mathbf{d}_{n}(s)) + \big| \mathbf{u}_{n}(s) \big|^{2}_{\mathbb{H}} \Big)^{p}\bigg] \nonumber
\\ &+ \mathbb{E} \bigg[ \int_{0}^{T} \Big( \|\mathbf{u}_{n}(s) \|^2 + \big|\Delta \mathbf{d}_{n}(s)- f_{n}(\mathbf{d}_{n}(s))\big|_{L^2}^2 \Big) \,ds \bigg]^{p}\bigg) \leq C_{p, T},
\end{align*}
where $\Psi(\mathbf{d}_{n}(s)) := \frac{1}{2} \|\mathbf{d}_{n}(s)\|^2 + \frac{1}{2} \int_{\mathbb{O}} \mathrm{F}(|\mathbf{d}_{n}(s)|^2) \, dx.$
\end{proposition}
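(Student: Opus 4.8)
The inequality is the stochastic version of the classical Lin--Liu energy law. The plan is to apply the It\^o (L\'evy) formula to the functional $E_n(t):=\tfrac12|\mathbf u_n(t)|_{\mathbb H}^2+\Psi(\mathbf d_n(t))$ along the Galerkin system \eqref{g1st}--\eqref{g2nd}. Since that system is, a priori, only locally solvable, I would first fix $m\in\mathbb N$, introduce the stopping time $\tau^n_m:=\inf\{t:\ |\mathbf u_n(t)|_{\mathbb H}+\|\mathbf d_n(t)\|_{H^1}>m\}$ (intersected with the explosion time), carry out all estimates on $[0,t\wedge\tau^n_m]$, and obtain a bound independent of $m$ and $n$; letting $m\to\infty$ and invoking Proposition \ref{dgrdn} (which already controls $\sup_t|\mathbf d_n(t)|_{L^2}$) forces $\tau^n_m\uparrow T$ a.s., so each Galerkin solution is global and satisfies the stated bound on $[0,T]$.

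\textbf{The energy identity.} For the $\mathbf d_n$-equation, a deterministic ODE in $\mathbb L_n$, the chain rule gives $\tfrac{d}{dt}\Psi(\mathbf d_n)=\langle\mathcal A\mathbf d_n+f_n(\mathbf d_n),\dot{\mathbf d}_n\rangle=-|\Delta\mathbf d_n-f_n(\mathbf d_n)|_{L^2}^2-\langle\mathcal A\mathbf d_n+f_n(\mathbf d_n),\tilde B_n(\mathbf u_n,\mathbf d_n)\rangle$, using that $\mathcal A\mathbf d_n$, $f_n(\mathbf d_n)$, $\tilde B_n(\mathbf u_n,\mathbf d_n)$ all lie in $\mathbb L_n$ and $-\mathcal A\mathbf d_n=\Delta\mathbf d_n$. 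For $\tfrac12|\mathbf u_n|_{\mathbb H}^2$ the pure-jump It\^o formula produces the drift $\langle\mathbf u_n,-\mathscr A\mathbf u_n-B_n(\mathbf u_n)-M_n(\mathbf d_n)\rangle$, the compensator correction $\tfrac12\int_Y|P_nF(t,\mathbf u_n;y)|_{\mathbb H}^2\,\nu(dy)$, and the martingale $dI_n(t)=\int_Y\bigl[(\mathbf u_n(t^-),P_nF)_{\mathbb H}+\tfrac12|P_nF|_{\mathbb H}^2\bigr]\tilde\eta(dt,dy)$. Now $\langle\mathbf u_n,\mathscr A\mathbf u_n\rangle=\|\mathbf u_n\|^2$; the Navier--Stokes trilinear term vanishes, $\langle\mathbf u_n,B_n(\mathbf u_n)\rangle=b(\mathbf u_n,\mathbf u_n,\mathbf u_n)=0$; and the principal coupling cancels exactly, since the geometric identity $\nabla\cdot(\nabla\mathbf d\odot\nabla\mathbf d)=\Delta\mathbf d\cdot\nabla\mathbf d+\tfrac12\nabla|\nabla\mathbf d|^2$ together with $\nabla\cdot\mathbf u_n=0$ gives $\langle M(\mathbf d_n),\mathbf u_n\rangle=-\langle\mathcal A\mathbf d_n,\tilde B(\mathbf u_n,\mathbf d_n)\rangle$, while $\langle f(\mathbf d_n),\tilde B(\mathbf u_n,\mathbf d_n)\rangle=\tfrac12\int_{\mathbb O}(\mathbf u_n\cdot\nabla)\tilde F(|\mathbf d_n|^2)\,dx=0$. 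The only surviving coupling term is the Galerkin residual $b(\mathbf u_n,\mathbf d_n,(I-\tilde P_n)f(\mathbf d_n))$, produced solely by the truncation of $f$ in the $\mathbf d_n$-equation. Collecting everything, for $t\le T\wedge\tau^n_m$,
\begin{align*}
E_n(t)+\int_0^t\|\mathbf u_n(s)\|^2\,ds+\int_0^t|\Delta\mathbf d_n(s)-f_n(\mathbf d_n(s))|_{L^2}^2\,ds
&=E_n(0)+\tfrac12\int_0^t\!\!\int_Y|P_nF(s,\mathbf u_n(s);y)|_{\mathbb H}^2\,\nu(dy)\,ds
\\&\quad+\int_0^t b(\mathbf u_n(s),\mathbf d_n(s),(I-\tilde P_n)f(\mathbf d_n(s)))\,ds+I_n(t).
\end{align*}

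\textbf{Absorption and Gronwall.} The residual is handled as in the deterministic Lin--Liu estimate and its stochastic analogue in \cite{BHR}: bounding $|b(\mathbf u_n,\mathbf d_n,(I-\tilde P_n)f(\mathbf d_n))|\le|\tilde B(\mathbf u_n,\mathbf d_n)|_{L^2}\,|f(\mathbf d_n)|_{L^2}$ and then using \eqref{btil}, elliptic regularity $\|\mathbf d_n\|_{H^2}\lesssim|\Delta\mathbf d_n|_{L^2}+|\mathbf d_n|_{L^2}$, the Gagliardo--Nirenberg inequalities and Young's inequality, one absorbs small multiples of $\|\mathbf u_n\|^2$ and of $|\Delta\mathbf d_n-f_n(\mathbf d_n)|_{L^2}^2$ into the left-hand side, the remaining factors being controlled by $\sup_s E_n(s)$ and by the extra integrability $\int_0^T|\mathbf d_n|_{L^2}^{p-2}\|\mathbf d_n\|_{L^{2N+2}}^{2N+2}\,ds$ furnished by Proposition \ref{dgrdn}; this is where the restriction $N\in\mathbb{I}_{\mathbf{n}}$ (equivalently $2N+2\le6$ when $\mathbf{n}=3$) enters. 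Using $|P_nF|_{\mathbb H}\le|F|_{\mathbb H}$ and \eqref{lgf} at exponents $2$, $p$, $2p$, the correction term is $\le C\int_0^t(1+E_n(s))\,ds$ and the jump Burkholder--Davis--Gundy inequality bounds $\mathbb E[\sup_{s\le t}|I_n(s)|^p]$ by $C_p\bigl(\mathbb E[(\int_0^t(1+E_n^2)\,ds)^{p/2}]+\mathbb E[\int_0^t(1+E_n^p)\,ds]\bigr)$; writing $\int_0^tE_n^2\,ds\le(\sup_{s\le t}E_n)\int_0^tE_n\,ds$ and applying Young's inequality, a small multiple of $\mathbb E[\sup_{s\le t\wedge\tau^n_m}(E_n+K)^p]$ is absorbed into the left-hand side (here $K\ge0$ is a lower bound for $\Psi$, which exists because $\tilde F$ has positive leading coefficient, and $E_n+K$ dominates $\tfrac12(|\mathbf u_n|_{\mathbb H}^2+\|\mathbf d_n\|^2)$). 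Taking $\sup$ over $[0,t\wedge\tau^n_m]$, the $p$-th power and expectation, Jensen's inequality turns the time integrals into $\int_0^t\mathbb E[\sup_{r\le s\wedge\tau^n_m}(E_n(r)+K)^p]\,ds$, and Gronwall's lemma closes the estimate with a constant $C_{p,T}(1+\mathbb E[E_n(0)^p])$. Finally $\mathbb E[E_n(0)^p]$ is bounded uniformly in $n$, since $P_n$ and $\tilde P_n$ are contractions on $\mathbb H$ and $H^1$ and $\int_{\mathbb O}\mathrm F(|\mathbf d_{0n}|^2)\,dx$ is controlled by $\|\mathbf d_{0n}\|_{H^1}^{2(N+1)}+1$ via $H^1\hookrightarrow L^{2N+2}$ (valid for all $N$ if $\mathbf{n}=2$, for $N=1$ if $\mathbf{n}=3$). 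Both terms of the statement are then read off from the energy identity; the case $1\le p<2$ follows from $p=2$ by Jensen's inequality.

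\textbf{Main obstacle.} The genuinely delicate point is the handling of the coupled nonlinearities: proving the exact cancellation of the principal $M$--$\tilde B$ coupling at the Galerkin level and then estimating the surviving Galerkin-projection residual $b(\mathbf u_n,\mathbf d_n,(I-\tilde P_n)f(\mathbf d_n))$ (as well as the lower-order terms arising when one needs $|\Delta\mathbf d_n|^2$ rather than $|\Delta\mathbf d_n-f_n(\mathbf d_n)|^2$) so that these are absorbed into the dissipation and into the bounds of Proposition \ref{dgrdn}. This term-balancing is precisely what forces the dimensional restriction encoded in $\mathbb{I}_{\mathbf{n}}$. The only feature beyond the Gaussian case of \cite{BHR} is that the Burkholder--Davis--Gundy estimate must be run for a pure-jump martingale, which carries both an $L^2(\nu)$ and an $L^p(\nu)$ contribution and hence uses the growth condition \eqref{lgf} at several exponents; this is routine once \eqref{lgf} is in hand.
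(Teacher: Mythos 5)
Your overall strategy coincides with the paper's: apply the It\^o/chain rule to $\tfrac12|\mathbf u_n|^2_{\mathbb H}+\Psi(\mathbf d_n)$, use $\langle B_n(\mathbf u_n),\mathbf u_n\rangle=0$ together with the exact cancellation $\langle\tilde B(\mathbf u_n,\mathbf d_n),\Delta\mathbf d_n\rangle=\langle M(\mathbf d_n),\mathbf u_n\rangle$ (which does survive the projections, since $\Delta\mathbf d_n\in\mathbb L_n$ and $\mathbf u_n\in\mathbb H_n$), localize with stopping times, bound the compensator via \eqref{lgf}, control the jump martingale by Burkholder--Davis--Gundy plus a Taylor expansion of $|\cdot|^2$, and close with Gronwall; the paper runs Gronwall in two stages (first to get the dissipation integral, then for the $p$-th powers), which is what you do implicitly. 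Your extra care with a lower bound $K$ for $\Psi$ (needed because $\tilde F$ is only bounded below, not nonnegative) and with the uniform bound on $\mathbb E[E_n(0)^p]$ are refinements the paper leaves implicit, and they are correct.

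The one substantive divergence is the Galerkin residual $b(\mathbf u_n,\mathbf d_n,(I-\tilde P_n)f(\mathbf d_n))$. You are right that, taken literally, this term is present: what occurs in the energy balance is $\langle\tilde B_n(\mathbf u_n,\mathbf d_n),f_n(\mathbf d_n)\rangle=\langle\tilde B(\mathbf u_n,\mathbf d_n),\tilde P_nf(\mathbf d_n)\rangle$, and since $f$ is nonlinear one has $f(\mathbf d_n)\notin\mathbb L_n$ in general; the paper's computation \eqref{bfz} evaluates $\langle\tilde B(\mathbf u_n,\mathbf d_n),f(\mathbf d_n)\rangle=0$ and silently identifies the two, so its energy identity \eqref{siu} holds only modulo your residual. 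However, your proposed treatment of this residual is the step that would fail as written. Estimating it by $|\tilde B(\mathbf u_n,\mathbf d_n)|_{L^2}\,|f(\mathbf d_n)|_{L^2}$, using \eqref{btil} and absorbing the factors $\|\mathbf u_n\|$ and $|\Delta\mathbf d_n|$ into the dissipation, Young's inequality leaves behind at least $|f(\mathbf d_n)|_{L^2}^{2}\gtrsim\|\mathbf d_n\|_{L^{4N+2}}^{4N+2}\sim\Psi(\mathbf d_n)^{2N+1}$ (and a still higher power when $\mathbf n=3$), which is superlinear in the energy for every $N\ge1$; Proposition \ref{dgrdn} controls only $\int_0^T|\mathbf d_n|^{p-2}\|\mathbf d_n\|_{L^{2N+2}}^{2N+2}\,ds$, not these quantities, so the Gronwall argument does not close. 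To make this step rigorous one must either show that $(I-\tilde P_n)f(\mathbf d_n)$ is genuinely negligible (e.g.\ by quantifying $\|(I-\tilde P_n)g\|_{L^2}$ in terms of the eigenvalues of $\mathcal A$ and a higher-order bound on $f(\mathbf d_n)$), or set up the approximation so that $f$ is not projected in \eqref{g2nd}. In short, your proposal exposes a real issue that the paper's proof sidesteps by asserting \eqref{bfz}, but the absorption you sketch does not resolve it.
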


\begin{proof}
For all $n \in \mathbb{N}$ and all $R > 0$ let us define 
\begin{align}\label{st}
\tau_{n}^{R} := \inf\{ t \geq 0 : | \mathbf{u}_{n}(t)|_{\mathbb{H}} \geq R\} \wedge T.
\end{align}
It's a stopping time, since the process $\left( \mathbf{u}_{n}(t) \right)_{t \in [0, T]}$ is $\mathbb{F}$-adapted and right-continuous. Moreover $\tau_{n}^{R} \uparrow T$, $\mathbb{P}$-a.s., as $R \uparrow \infty.$ 

Now consider the equation \eqref{g1st}
\begin{align}\label{gg1st}
 d\mathbf{u}_{n}(t) &+ \big[ \mathscr{A}\mathbf{u}_{n}(t) + B_{n}(\mathbf{u}_{n}(t)) + M_{n}(\mathbf{d}_{n}(t)) \big] dt  = \int_{Y} P_{n}F(t, \mathbf{u}_{n}(t^{-}); y) \,\tilde{\eta}(dt, dy).
\end{align}

\noindent Define a mapping $\phi(.)$ as follows
\[ \phi(\mathbf{u}(t)) := \frac{1}{2} |\mathbf{u}(t)|^2 := \frac{1}{2} |\mathbf{u}(t)|_{\mathbb{H}}^2, \ \ \ \ \ \mathbf{u} \in \mathbb{H}\]

The first Fr\'echet derivative is given by
\[\phi'(\mathbf{u})[h] = \langle \mathbf{u}, h\rangle.\]

Now applying It\^o's formula to $\phi(\mathbf{u}_{n}(t))$
\begin{align}\label{dphi}
d\phi(\mathbf{u}_{n}(t)) &= -\big\langle  \mathscr{A}\mathbf{u}_{n}(t) + B_{n}(\mathbf{u}_{n}(t)) + M_{n}(\mathbf{d}_{n}(t)), \mathbf{u}_{n}(t) \big\rangle dt \nonumber
\\ &\quad+ \frac{1}{2} \int_{Y} \big\{\phi \big( \mathbf{u}_{n}(s^{-}) +  P_{n}F(s, \mathbf{u}_{n}(s^{-}); y) \big) - \phi \big(\mathbf{u}_{n}(s^{-}) \big) \big\} \tilde{\eta}(ds, dy)\nonumber
\\ &\quad+ \frac{1}{2} \int_{Y} \bigg\{\phi \big(\mathbf{u}_{n}(s) +  P_{n}F(s, \mathbf{u}_{n}(s); y)\big) - \phi \big( \mathbf{u}_{n}(s)\big) \nonumber
\\ & \ \ \ \ \ \ \ \ \ \ \ \ \ \ \ \ \ \ \ \ \ \ \ \ \ \quad - \big\langle \phi' \big(\mathbf{u}_{n}(s)\big),  P_{n}F(s, \mathbf{u}_{n}(s); y)\big\rangle \bigg\} \, d\nu(y) \nonumber
\\&= -\big\langle  \mathscr{A}\mathbf{u}_{n}(t) + B_{n}(\mathbf{u}_{n}(t)) + M_{n}(\mathbf{d}_{n}(t)), \mathbf{u}_{n}(t) \big\rangle dt \nonumber
\\ &\quad+ \frac{1}{2} \int_{Y} \big\{|\mathbf{u}_{n}(s^{-}) +  P_{n}F(s, \mathbf{u}_{n}(s^{-}); y)|^2 - |\mathbf{u}_{n}(s^{-})|^2 \big\} \tilde{\eta}(ds, dy)\nonumber
\\ &\quad+ \frac{1}{2} \int_{Y} \bigg\{|\mathbf{u}_{n}(s) +  P_{n}F(s, \mathbf{u}_{n}(s); y)|^2 - |\mathbf{u}_{n}(s)|^2 \nonumber
\\ & \ \ \ \ \ \ \ \ \ \ \ \ \ \ \ \ \ \ \ \ \ \ \ \ \ \ - 2 \big\langle \mathbf{u}_{n}(s),  P_{n}F(s, \mathbf{u}_{n}(s); y)\big\rangle \bigg\} \, d\nu(y).
\end{align}

Now let $\Psi(.)$ be the mapping defined by 
\[\Psi(\mathbf{d}) = \frac{1}{2} \|\mathbf{d}\|^2 + \frac{1}{2} \int_{\mathbb{O}} \mathrm{F}(|\mathbf{d}|^2) \, dx. \]

The first Fr\'echet derivative is
\begin{align}
\Psi'(\mathbf{d})[g] &= \langle \nabla \mathbf{d}, \nabla g \rangle + \langle f(\mathbf{d}), g\rangle \nonumber
\\ &= \langle -\Delta \mathbf{d}+ f(\mathbf{d}), g\rangle.
\end{align}

Now applying this formula to $\Psi(\mathbf{d}_{n}(t))$ and from \eqref{gg2nd},
\begin{align}\label{dpsi}
d\Psi(\mathbf{d}_{n}(t)) &= \Psi'(\mathbf{d}_{n})[d\mathbf{d}_{n}(t)] = \langle -\Delta \mathbf{d}_{n}+ f_{n}(\mathbf{d}_{n}), d\mathbf{d}_{n}(t) \rangle \nonumber
\\ &= \langle -\Delta \mathbf{d}_{n}+ f_{n}(\mathbf{d}_{n}), - \mathcal{A} \mathbf{d}_{n}(t) - f_{n}(\mathbf{d}_{n}(t)) - \tilde{B}_{n}(\mathbf{u}_{n}(t), \mathbf{d}_{n}(t)) \rangle \nonumber
\\ &= -|\Delta \mathbf{d}_{n}(t)- f_{n}(\mathbf{d}_{n}(t))|^2 - \langle\tilde{B}_{n}(\mathbf{u}_{n}(t), \mathbf{d}_{n}(t)), -\Delta \mathbf{d}_{n}+ f_{n}(\mathbf{d}_{n})\rangle 
\end{align}

Now using integration by parts and the divergence free condition of $\mathbf{u}_{n}$ we get,
\begin{align}\label{bdm}
&\langle \tilde{B}_{n}(\mathbf{u}_{n}, \mathbf{d}_{n}), \Delta \mathbf{d}_{n} \rangle = \int_{\mathbb{O}} \mathbf{u}_{n}^{(j)} \frac{\partial \mathbf{d}_{n}^{(k)}}{\partial x_{j}} \frac{\partial^{2} \mathbf{d}_{n}^{(k)}}{\partial x_{i} \partial x_{i}} \, dx \nonumber
\\ &= - \int_{\mathbb{O}} \frac{\partial \mathbf{u}_{n}^{(j)}}{\partial x_{i}} \frac{\partial \mathbf{d}_{n}^{(k)}}{\partial x_{j}} \frac{\partial \mathbf{d}_{n}^{(k)}}{\partial x_{i}} \, dx - \int_{\mathbb{O}} \mathbf{u}_{n}^{(j)} \frac{\partial^{2} \mathbf{d}_{n}^{(k)}}{\partial x_{j} \partial x_{i}} \frac{\partial \mathbf{d}_{n}^{(k)}}{\partial x_{i}} \, dx \nonumber
\\ &= - \int_{\mathbb{O}} \frac{\partial \mathbf{u}_{n}^{(j)}}{\partial x_{i}} \frac{\partial \mathbf{d}_{n}^{(k)}}{\partial x_{j}} \frac{\partial \mathbf{d}_{n}^{(k)}}{\partial x_{i}} \, dx - \frac{1}{2} \int_{\mathbb{O}} \mathbf{u}_{n}^{(j)} \frac{\partial}{\partial x_{j}}(|\nabla \mathbf{d}_{n}|^2) \, dx \nonumber
\\ &= \langle M_n(\mathbf{d}_{n}, \mathbf{d}_{n}), \mathbf{u}_{n} \rangle + \frac{1}{2} \int_{\mathbb{O}} \frac{\partial \mathbf{u}_{n}^{(j)}}{\partial x_{j}} |\nabla \mathbf{d}_{n}|^2 \, dx = \langle M_n(\mathbf{d}_{n}), \mathbf{u}_{n} \rangle.
\end{align}

And
\begin{align}\label{bfz}
\big\langle \tilde{B}_{n}(\mathbf{u}_{n}, \mathbf{d}_{n}), f_{n}(\mathbf{d}_{n}) \big\rangle &= \int_{\mathbb{O}} \mathbf{u}_{n}^{(i)} \frac{\partial \mathbf{d}_{n}^{(j)}}{\partial x_{i}}  \tilde{f}_{n}(|\mathbf{d}_{n}|^2) \mathbf{d}_{n}^{(j)} \, dx \nonumber
\\ &= \frac{1}{2} \int_{\mathbb{O}} \mathbf{u}_{n}^{(i)} \frac{\partial \tilde{\mathrm{F}}_{n}(|\mathbf{d}_{n}|^2)}{\partial x_{i}} \,dx = \frac{1}{2} \big\langle \mathbf{u}_{n}, \nabla \tilde{\mathrm{F}}_{n}(|\mathbf{d}_{n}|^2)\big\rangle \nonumber
\\ &= -\frac{1}{2} \big\langle \nabla \cdot \mathbf{u}_{n}, \tilde{\mathrm{F}}_{n}(|\mathbf{d}_{n}|^2) \big\rangle = 0.
\end{align}

Using \eqref{bdm}, \eqref{bfz} and the fact that $\langle {B}_{n}(\mathbf{u}_{n}(t)), \mathbf{u}_{n}(t) \rangle = 0$ and adding equations \eqref{dphi} and \eqref{dpsi} we get
\begin{align}\label{siu}
d \big[ \Psi(\mathbf{d}_{n}(t)) &+ \phi(\mathbf{u}_{n}(t)) \big] + \big( \|\mathbf{u}_{n}(t) \|^2 + |\Delta \mathbf{d}_{n}(t)- f_{n}(\mathbf{d}_{n}(t))|^2 \big) \,dt \nonumber
\\ &= \frac{1}{2} \int_{Y} \big\{|\mathbf{u}_{n}(s^{-}) +  P_{n}F(s, \mathbf{u}_{n}(s^{-}); y)|^2 - |\mathbf{u}_{n}(s^{-})|^2 \big\} \tilde{\eta}(ds, dy)\nonumber
\\ & \ \ + \frac{1}{2} \int_{Y} \bigg\{ |\mathbf{u}_{n}(s) +  P_{n}F(s, \mathbf{u}_{n}(s); y)|^2 - |\mathbf{u}_{n}(s)|^2 \nonumber
\\ & \ \ \ \ \ \ \ \ \ \ \ \ \ \ \ \ \ \ \ \ \ \ \ - 2 \ \big\langle \mathbf{u}_{n}(s),  P_{n}F(s, \mathbf{u}_{n}(s); y)\big\rangle \bigg\} \, d\nu(y).
\end{align}

Now for all $t \in [0, T]$, we can rewrite the above equation as
\begin{align}\label{ij}
\mathbb{E} \big[ \Psi(\mathbf{d}_{n}( t \wedge \tau^{R}_{n})) &+ |\mathbf{u}_{n}(t \wedge \tau^{R}_{n}) |^2 \big] \nonumber
\\ &+ \mathbb{E} \bigg[ \int_{0}^{t \wedge \tau^{R}_{n}}\big( \|\mathbf{u}_{n}(s) \|^2 + |\Delta \mathbf{d}_{n}(s)- f_{n}(\mathbf{d}_{n}(s))|^2 \big) \, ds \bigg] \nonumber
\\ &\leq \mathbb{E} \big[ \Psi(\mathbf{d}_{0n}) + |\mathbf{u}_{0n} |^2 \big] + \mathbb{E} \big[ I_n(t \wedge \tau^{R}_{n}) + J_{n}(t \wedge \tau^{R}_{n})\big].
\end{align}

where
\begin{align}\label{intt}
I_n(t \wedge \tau^{R}_{n}) := \int_{0}^{t \wedge \tau^{R}_{n}} \int_{Y} \big\{|\mathbf{u}_{n}(s^{-}) +  P_{n}F(s, \mathbf{u}_{n}(s^{-}); y)|^2 - |\mathbf{u}_{n}(s^{-})|^2 \big\} \,\tilde{\eta}(ds, dy),
\end{align}
\begin{align}\label{jntt}
and \quad J_n(t \wedge \tau^{R}_{n}) := \int_{0}^{t \wedge \tau^{R}_{n}} \int_{Y} \bigg\{ & |\mathbf{u}_{n}(s) +  P_{n}F(s, \mathbf{u}_{n}(s); y)|^2 - |\mathbf{u}_{n}(s)|^2 \nonumber
\\ &-2 \big\langle \mathbf{u}_{n}(s),  P_{n}F(s, \mathbf{u}_{n}(s); y)\big\rangle \bigg\} \, d\nu(y) ds.
\end{align}

From Assumption (B) of Section \ref{assu} we have,
\begin{align}\label{jnes}
|J_n(t \wedge \tau^{R}_{n})| &= \int_{0}^{t \wedge \tau^{R}_{n}} \int_{Y} \bigg\{|\mathbf{u}_{n}(s) +  P_{n}F(s, \mathbf{u}_{n}(s); y)|^2 - |\mathbf{u}_{n}(s)|^2 \nonumber
\\ &\qquad \qquad \qquad \qquad \quad \  -2 \big\langle \mathbf{u}_{n}(s),  P_{n}F(s, \mathbf{u}_{n}(s); y)\big\rangle \bigg\} \, d\nu(y) ds \nonumber
\\ &\leq C \int_{0}^{t \wedge \tau^{R}_{n}} \int_{Y}  | P_{n}F(s, \mathbf{u}_{n}(s); y) |^2 \, d\nu(y) ds \leq C \int_{0}^{t \wedge \tau^{R}_{n}} \{ 1 + | \mathbf{u}_{n}(s) |^2 \} \,ds.
\end{align}

Thus by Fubini's theorem
\begin{align}\label{jnr}
\mathbb{E} \big[ |J_n(t \wedge \tau^{R}_{n})| \big] &\leq C(t \wedge \tau^{R}_{n}) + \int_{0}^{t \wedge \tau^{R}_{n}} \mathbb{E}\big[ | \mathbf{u}_{n}(s) |^2 \big] \,ds \nonumber
\\ &\leq C(t \wedge \tau^{R}_{n}) +  \int_{0}^{t \wedge \tau^{R}_{n}} \mathbb{E}\big[ \Psi(\mathbf{d}_{n}(s)) + | \mathbf{u}_{n}(s) |^2 \big] \,ds.
\end{align}

Now by definition of $\tau_{n}^R$ and from \eqref{lgf}, we observe that the process $I_n(t \wedge \tau^{R}_{n})$ is an integrable martingale. Hence
\begin{align}\label{inr}
\mathbb{E} \big[ |I_n(t \wedge \tau^{R}_{n})| \big] = 0 \quad \quad \forall \  t \in [0, T].
\end{align}

From \eqref{ij}, \eqref{jnr} and \eqref{inr} we get,
\begin{align}
\mathbb{E} \big[ &\Psi(\mathbf{d}_{n}( t \wedge \tau^{R}_{n})) + |\mathbf{u}_{n}( t \wedge \tau^{R}_{n}) |^2 \big] + \mathbb{E} \bigg[ \int_{0}^{t \wedge \tau^{R}_{n}}\big( \| \mathbf{u}_{n}(s) \|^2 + |\Delta \mathbf{d}_{n}(s)- f_{n}(\mathbf{d}_{n}(s))|^2 \big) \, ds \bigg] \nonumber
\\ &\leq \mathbb{E} \big[ \Psi(\mathbf{d}_{0n}) + |\mathbf{u}_{0n} |^2 \big] + \int_{0}^{t \wedge \tau^{R}_{n}} \mathbb{E}\big[ \Psi(\mathbf{d}_{n}(s)) + | \mathbf{u}_{n}(s) |^2 \big] \,ds.
\end{align}

Applying Gronwall's lemma we get
\begin{align}
\mathbb{E} \big[&\Psi(\mathbf{d}_{n}(t \wedge \tau^{R}_{n})) + |\mathbf{u}_{n}(t \wedge \tau^{R}_{n}) |^2 \big] + \mathbb{E} \bigg[ \int_{0}^{t \wedge \tau^{R}_{n}}\big( \| \mathbf{u}_{n}(s) \|^2 + |\Delta \mathbf{d}_{n}(s)- f_{n}(\mathbf{d}_{n}(s))|^2 \big) \, ds \bigg]  \leq \mathfrak{C}(T),
\end{align}

where $\mathfrak{C}(T) := \big( C + \mathbb{E} \big[ \Psi(\mathbf{d}_{0n}) + |\mathbf{u}_{0n} |_\mathbb{H}^2 \big] \big) \big( 1 + T e^{CT}\big).$ \\ As $\mathfrak{C}(T)$ is independent of $t \in [0, T],$ $R > 0$ and $n \in \mathbb{N},$ we can further write 
\begin{align}\label{c1}
\sup_{n \in \mathbb{N}} \bigg(\mathbb{E} \Big[&\Psi(\mathbf{d}_{n}(t \wedge \tau^{R}_{n})) + \big|\mathbf{u}_{n}(t \wedge \tau^{R}_{n}) \big|_{\mathbb{H}}^2 \Big]  \nonumber
\\ &+ \mathbb{E} \bigg[ \int_{0}^{t \wedge \tau^{R}_{n}}\big( \| \mathbf{u}_{n}(s) \|^2 + \big|\Delta \mathbf{d}_{n}(s)- f_{n}(\mathbf{d}_{n}(s))\big|_{L^2}^2 \big) \, ds \bigg]\bigg)  \leq \mathfrak{C}(T).
\end{align}

Now consider the integral version of \eqref{siu} and multiplying both sides by 2, we have
\begin{align}
\psi(s) &+ \int_{0}^{t} \big( \|\mathbf{u}_{n}(s) \|^2 + |\Delta \mathbf{d}_{n}(s)- f_{n}(\mathbf{d}_{n}(s))|^2 \big) \,ds \nonumber
\\ &\leq \psi(0) + \int_{0}^{t} \int_{Y} \big\{|\mathbf{u}_{n}(s^{-}) +  P_{n}F(s, \mathbf{u}_{n}(s^{-}); y)|^2 - |\mathbf{u}_{n}(s^{-})|^2 \big\} \tilde{\eta}(ds, dy)\nonumber
\\ &\qquad\quad \ + \int_{0}^{t} \int_{Y} \bigg\{ |\mathbf{u}_{n}(s^{-}) +  P_{n}F(s, \mathbf{u}_{n}(s^{-}); y)|^2 - |\mathbf{u}_{n}(s^{-})|^2 \nonumber
\\ & \ \qquad \qquad \qquad \qquad \qquad \qquad - 2 \big\langle \mathbf{u}_{n}(s^{-}),  P_{n}F(s, \mathbf{u}_{n}(s^{-}); y)\big\rangle \bigg\} \, d\nu(y),
\end{align}
where $\psi(s) := \Psi(\mathbf{d}_{n}(s)) + \big| \mathbf{u}_{n}(s) \big|^{2}_{\mathbb{H}}.$

Now raising both side to the power $p \geq 1$, taking supremum over $s \in [0, t \wedge \tau_{n}^{R}]$ and taking expectation we have
\begin{align}\label{psiij}
\mathbb{E} \bigg[ &\sup_{s \in [0, t \wedge \tau_{n}^{R} ]} [\psi(s)]^{p}\bigg] + \mathbb{E} \bigg[ \int_{0}^{t \wedge \tau_{n}^{R}} \big( \|\mathbf{u}_{n}(s) \|^2 + |\Delta \mathbf{d}_{n}(s)- f_{n}(\mathbf{d}_{n}(s))|^2 \big) \,ds \bigg]^{p} \nonumber
\\ &\leq \mathbb{E} [\psi(0)]^{p} + c \ \mathbb{E} \bigg[ \sup_{s \in [0, t \wedge \tau_{n}^{R}]} |I_n(s)|^p \bigg]+ c \ \mathbb{E} \bigg[ \sup_{s \in [0, t \wedge \tau_{n}^{R}]} |J_n(s)|^p \bigg],
\end{align}
where $I_n(s)$ and $J_n(s)$ are defined as \eqref{intt} and \eqref{jntt} respectively.

Next we observe by the Burkholder-Davis-Gundy inequality,
\begin{align}\label{bdgi}
&\mathbb{E} \bigg[ \sup_{s \in [0, t \wedge \tau_{n}^{R} ]} | I_n(s) |^p \bigg] \nonumber
\\ &\leq \tilde{C}_p \mathbb{E} \bigg[ \bigg( \int_{0}^{t \wedge \tau^{R}_{n}} \int_{Y} \big\{ |\mathbf{u}_{n}(s^{-}) +  P_{n}F(s, \mathbf{u}_{n}(s^{-}); y)|^2 - |\mathbf{u}_{n}(s^{-})|^2 \big\}^{2} \,\nu(dy)ds \bigg)^{\frac{p}{2}}\bigg].
\end{align}

For each $r \geq 2$, using Taylor's formula, it follows that there exists a positive constant $c_r > 0$ such that for all $a, b \in \mathbb{H}$ we have
\begin{align}
\big| |a+b|^{r}_{\mathbb{H}} - |a|^{r}_{\mathbb{H}} - r|a|^{r-2}_{\mathbb{H}} \langle a, b\rangle\big| \leq c_{r} \big( |a|^{r-2}_{\mathbb{H}} + |b|^{r-2}_{\mathbb{H}}\big) |b|^{2}_{\mathbb{H}}. 
\end{align}

Using H\"older's inequality we further have
\begin{align}\label{taf}
\big( |a+b|^{r}_{\mathbb{H}} - |a|^{r}_{\mathbb{H}}\big)^{2} &\leq 2 \big\{ r^2 |a|^{2r-2}_{\mathbb{H}}|b|^{2}_{\mathbb{H}} + c_{r}^{2} \big( |a|^{r-2}_{\mathbb{H}} + |b|^{r-2}_{\mathbb{H}}\big)^2 |b|^{4}_{\mathbb{H}} \big\} \nonumber
\\ &\leq  2 r^2 |a|^{2r-2}_{\mathbb{H}}|b|^{2}_{\mathbb{H}} + 4 c_{r}^{2} |a|^{2r-4}_{\mathbb{H}}  |b|^{4}_{\mathbb{H}} + 4 c_{r}^{2} |b|^{2r}_{\mathbb{H}}.
\end{align}

Now for $r = 2$ in \eqref{taf}, using \eqref{lgf} and Young's inequality
\begin{align}
\int_{Y} &\big\{ |\mathbf{u}_{n}(s) +  P_{n}F(s, \mathbf{u}_{n}(s); y)|^2 - |\mathbf{u}_{n}(s)|^2 \big\}^{2} \,\nu(dy) \nonumber
\\ &\leq c \ |\mathbf{u}_{n}(s)|^2 \int_{Y} |F(s, \mathbf{u}_{n}(s); y)|^2 \,\nu(dy) + c \int_{Y}| F(s, \mathbf{u}_{n}(s); y)|^4 \,\nu(dy) \nonumber
\\ &\leq c + c_1 |\mathbf{u}_{n}(s)|^2 + c_2  |\mathbf{u}_{n}(s)|^4 \leq k_1 + k_2 \ |\mathbf{u}_{n}(s)|^4. 
\end{align}

Hence
\begin{align}
\bigg(\int_{0}^{t \wedge \tau^{R}_{n}} &\int_{Y} \big\{ |\mathbf{u}_{n}(s) +  P_{n}F(s, \mathbf{u}_{n}(s); y)|^2 - |\mathbf{u}_{n}(s)|^2 \big\}^{2} \,\nu(dy) ds \bigg)^{\frac{p}{2}} \nonumber
\\ &\leq c (k_1 T)^{\frac{p}{2}} + c (k_2)^{\frac{p}{2}} \bigg( \int_{0}^{t \wedge \tau^{R}_{n}} |\mathbf{u}_{n}(s)|^4 \,ds \bigg)^{\frac{p}{2}}. 
\end{align}

So we have
\begin{align}\label{inspe}
&c \ \mathbb{E} \bigg[ \sup_{s \in [0, t \wedge \tau_{n}^{R} ]} | I_n(s) |^p \bigg] \leq \tilde{C}_p c (k_1 T)^{\frac{p}{2}} + \tilde{C}_p c (k_2)^{\frac{p}{2}} \mathbb{E} \bigg[ \bigg( \int_{0}^{t \wedge \tau^{R}_{n}} |\mathbf{u}_{n}(s)|^4 \, ds\bigg)^{\frac{p}{2}} \bigg] \nonumber
\\ &\ \ \ \leq C + C \ \mathbb{E} \bigg[ \bigg( \sup_{s \in [0, t \wedge \tau_{n}^{R} ]} |\mathbf{u}_{n}(s)|^2 \bigg)^{\frac{p}{2}} \bigg( \int_{0}^{t \wedge \tau^{R}_{n}} |\mathbf{u}_{n}(s)|^2 \, ds \bigg)^{\frac{p}{2}}\bigg] \nonumber
\\ &\ \ \ \leq C + \frac{1}{2} \mathbb{E} \bigg[ \bigg( \sup_{s \in [0, t \wedge \tau_{n}^{R} ]} |\mathbf{u}_{n}(s)|^2 \bigg)^{p} \bigg] + \frac{C^2}{2} \mathbb{E} \bigg[ \bigg( \int_{0}^{t \wedge \tau^{R}_{n}} |\mathbf{u}_{n}(s)|^2 \, ds \bigg)^{p} \bigg]. 
\end{align}

Using H\"older's inequality we observe
\begin{align}
\int_{0}^{t \wedge \tau^{R}_{n}} |\mathbf{u}_{n}(s)|^2 \, ds &\leq \bigg( \int_{0}^{t \wedge \tau^{R}_{n}} \big( |\mathbf{u}_{n}(s)|^2 \big)^p \, ds \bigg)^{\frac{1}{p}} \times \bigg( \int_{0}^{t \wedge \tau^{R}_{n}} 1^{\frac{p}{p-1}} \,ds \bigg)^{\frac{p-1}{p}} \nonumber
\\ &\leq T^{\frac{p-1}{p}} \bigg( \int_{0}^{t \wedge \tau^{R}_{n}} |\mathbf{u}_{n}(s)|^{2p} \, ds \bigg)^{\frac{1}{p}}.
\end{align}

Now raising power $p$ in both side, taking expectation, then using Fubini's Theorem
\begin{align}\label{unpp} 
\mathbb{E} \bigg[ \bigg( \int_{0}^{t \wedge \tau^{R}_{n}} |\mathbf{u}_{n}(s)|^2 \, ds \bigg)^{p} \bigg] \leq T^{p-1} \int_{0}^{t \wedge \tau^{R}_{n}} \mathbb{E} \big[ |\mathbf{u}_{n}(s)|^{2p} \big] \, ds.
\end{align}

From \eqref{inspe} and \eqref{unpp} we get
\begin{align}\label{iun}
c \ &\mathbb{E} \bigg[ \sup_{s \in [0, t \wedge \tau_{n}^{R} ]} | I_n(s) |^p \bigg] \leq \frac{1}{2} \mathbb{E} \bigg[ \bigg( \sup_{s \in [0, t \wedge \tau_{n}^{R} ]} |\mathbf{u}_{n}(s)|^2 \bigg)^{p} \bigg] + C_{p, T} \int_{0}^{t \wedge \tau^{R}_{n}} \mathbb{E} \big[ |\mathbf{u}_{n}(s)|^{2p} \big] \, ds \nonumber
\\ &\leq \frac{1}{2} \mathbb{E} \bigg[ \bigg( \sup_{s \in [0, t \wedge \tau_{n}^{R} ]} |\mathbf{u}_{n}(s)|^2 \bigg)^{p} \bigg] + C_{p, T} \int_{0}^{t \wedge \tau^{R}_{n}} \mathbb{E} \big[ [\Psi(\mathbf{d}_{n}(s))]^{p} + |\mathbf{u}_{n}(s)|^{2p} \big] \, ds.
\end{align}

From \eqref{jnes} we have
\begin{align}
\mathbb{E} &|J_n(t \wedge \tau^{R}_{n})|^p \leq C \ \mathbb{E} \bigg[ \bigg( \int_{0}^{t \wedge \tau^{R}_{n}} \big\{ 1 + |\mathbf{u}_{n}(s)|^2 \big\} \, ds \bigg)^p \bigg] \nonumber
\\ &= C \ \mathbb{E} \bigg[ \bigg( (t \wedge \tau^{R}_{n}) + \int_{0}^{t \wedge \tau^{R}_{n}} |\mathbf{u}_{n}(s)|^2 \, ds \bigg)^p \bigg] \leq C_p \ \mathbb{E} \bigg[ (t \wedge \tau^{R}_{n})^p + \bigg( \int_{0}^{t \wedge \tau^{R}_{n}} |\mathbf{u}_{n}(s)|^2 \, ds \bigg)^p \bigg] \nonumber
\\ &\leq C_p \bigg[ C_{p,T} + \mathbb{E} \bigg( \int_{0}^{t \wedge \tau^{R}_{n}} |\mathbf{u}_{n}(s)|^2 \, ds \bigg)^p \bigg]. 
\end{align}

From\eqref{unpp} we obtain
\begin{align}\label{jun}
\mathbb{E}\bigg[ \sup_{s \in [0, t \wedge \tau^{R}_{n}]} |J_n(s)|^p\bigg] &\leq C_{p, T} + C_{p, T} \int_{0}^{t \wedge \tau^{R}_{n}} \mathbb{E} \big[ |\mathbf{u}_{n}(s)|^{2p} \big] \, ds \nonumber
\\ &\leq C_{p, T} + C_{p, T} \int_{0}^{t \wedge \tau^{R}_{n}} \mathbb{E} \big[ [\Psi(\mathbf{d}_{n}(s))]^{p} + |\mathbf{u}_{n}(s)|^{2p} \big] \, ds.
\end{align}

Now from \eqref{psiij}, \eqref{iun} and \eqref{jun} we obtain 
\begin{align}
 \mathbb{E} \bigg[ \sup_{s \in [0, t \wedge \tau_{n}^{R} ]} [\psi(s)]^{p}\bigg] &\leq C_{p, T} + \frac{1}{2} \mathbb{E} \bigg[ \bigg( \sup_{s \in [0, t \wedge \tau_{n}^{R} ]} |\mathbf{u}_{n}(s)|^2 \bigg)^{p} \bigg] \nonumber
\\ &\quad+ C_{p, T} \int_{0}^{t \wedge \tau^{R}_{n}} \mathbb{E} \big[ \Psi(\mathbf{d}_{n}(s))]^{p} + |\mathbf{u}_{n}(s)|^{2p} \big] \, ds.
\end{align}

As $[\psi(s)]^p > [\Psi(\mathbf{d}_{n}(s))]^{p} + |\mathbf{u}_{n}(s)|^{2p},$ we further observe
\begin{align}
&\mathbb{E} \bigg[ \sup_{s \in [0, t \wedge \tau_{n}^{R} ]} \bigg( [\Psi(\mathbf{d}_{n}(s))]^{p} + |\mathbf{u}_{n}(s)|^{2p} \bigg) \bigg] \leq \mathbb{E} \bigg[ \sup_{s \in [0, t \wedge \tau_{n}^{R} ]} [\psi(s)]^{p}\bigg] \nonumber
\\ &\leq C_{p, T} + \frac{1}{2} \mathbb{E} \bigg[ \sup_{s \in [0, t \wedge \tau_{n}^{R} ]} |\mathbf{u}_{n}(s)|^{2p} \bigg] + C_{p, T} \int_{0}^{t \wedge \tau^{R}_{n}} \mathbb{E} \Big[ [\Psi(\mathbf{d}_{n}(s))]^{p} + |\mathbf{u}_{n}(s)|^{2p} \Big] \, ds.
\end{align}

Now taking second term of Right Hand Side to the Left Hand Side and multiplying both side by 2 we have, 
\begin{align}
\mathbb{E} \bigg[\sup_{s \in [0, t \wedge \tau_{n}^{R} ]}&\bigg( [\Psi(\mathbf{d}_{n}(s))]^{p} + |\mathbf{u}_{n}(s)|^{2p} \bigg) \bigg] \leq C_{p, T} + C_{p, T} \int_{0}^{t \wedge \tau^{R}_{n}} \mathbb{E} \Big[ [\Psi(\mathbf{d}_{n}(s))]^{p} + |\mathbf{u}_{n}(s)|^{2p} \Big] \, ds.
\end{align}

Now Gronwall's Lemma yields
\begin{align}
\mathbb{E} \bigg[\sup_{s \in [0, t \wedge \tau_{n}^{R} ]}&\bigg( [\Psi(\mathbf{d}_{n}(s))]^{p} + |\mathbf{u}_{n}(s)|^{2p} \bigg) \bigg] \leq C_{p, T} \ \exp\big(C_{p, T} (T \wedge \tau_{n}^{R})\big),
\end{align}

As the constant in the Right Hand Side is independent of $n \in \mathbb{N}$ and $R > 0,$ passing to the limit as $R \to \infty,$ we obtain
\begin{align}
\sup_{n \in \mathbb{N}} \mathbb{E} \bigg[\sup_{s \in [0, T]}&\bigg( [\Psi(\mathbf{d}_{n}(s))]^{p} + \big|\mathbf{u}_{n}(s) \big|_{\mathbb{H}}^{2p} \bigg) \bigg] \leq C(p, T).
\end{align}

Finally from \eqref{psiij}, using the previous calculations and letting $R \to \infty$ we obtain,
\begin{align}\label{pufe}
\sup_{n \in \mathbb{N}} &\bigg(\mathbb{E} \bigg[ \sup_{s \in [0, T]} \big(\Psi(\mathbf{d}_{n}(s)) + \big| \mathbf{u}_{n}(s) \big|^{2}_{\mathbb{H}} \big)^{p}\bigg] \nonumber
\\ &+ \mathbb{E} \bigg[ \int_{0}^{T} \big( \|\mathbf{u}_{n}(s) \|^2 + |\Delta \mathbf{d}_{n}(s)- f_{n}(\mathbf{d}_{n}(s))|^2 \big) \,ds \bigg]^{p}\bigg) \leq C_{p, T},
\end{align}
where $\Psi(\mathbf{d}_{n}(s)) := \frac{1}{2} \|\mathbf{d}_{n}(s)\|^2 + \frac{1}{2} \int_{\mathbb{O}} \mathrm{F}(|\mathbf{d}_{n}(s)|^2) \, dx.$
\end{proof}

\noindent Now we will deduce an estimate for $\Delta \mathbf{d}_n.$

\begin{proposition}\label{Ldn}
For every $q \geq 1,$ there exists a positive constant $C$, depending on $q$ such that
\begin{align*}
\mathbb{E} \bigg[ \int_{0}^{T} |\Delta \mathbf{d}_n(s)|^2  \,ds \bigg]^q \leq C.
\end{align*}
\end{proposition}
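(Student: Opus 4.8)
The plan is to deduce the bound on $\Delta\mathbf{d}_n$ from the bound on $\Delta\mathbf{d}_n - f_n(\mathbf{d}_n)$ already established in Proposition \ref{ugrdn}, at the cost of an additional term involving $f_n(\mathbf{d}_n)$ alone, and then to control that term by the $H^1$-estimates for $\mathbf{d}_n$ coming from Propositions \ref{dgrdn} and \ref{ugrdn}. First I would use the elementary inequality $|\Delta\mathbf{d}_n(s)|_{L^2}^2 \le 2|\Delta\mathbf{d}_n(s)-f_n(\mathbf{d}_n(s))|_{L^2}^2 + 2|f_n(\mathbf{d}_n(s))|_{L^2}^2$ to get
\[
\mathbb{E}\bigg[\int_0^T|\Delta\mathbf{d}_n(s)|_{L^2}^2\,ds\bigg]^q \le 2^q\,\mathbb{E}\bigg[\int_0^T|\Delta\mathbf{d}_n(s)-f_n(\mathbf{d}_n(s))|_{L^2}^2\,ds\bigg]^q + 2^q\,\mathbb{E}\bigg[\int_0^T|f_n(\mathbf{d}_n(s))|_{L^2}^2\,ds\bigg]^q,
\]
where the first term on the right is $\le C_{q,T}$ uniformly in $n$ by Proposition \ref{ugrdn} applied with $p=q$.

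For the second term, since $\tilde{P}_n$ is an orthogonal projection in $L^2$ one has $|f_n(\mathbf{d}_n(s))|_{L^2}\le|f(\mathbf{d}_n(s))|_{L^2}$, and the growth estimate $|f(\mathbf{d})|_{\mathbb{R}^\mathbf{n}}\le c(1+|\mathbf{d}|_{\mathbb{R}^\mathbf{n}}^{2N+1})$ of Remark \ref{fes} yields $|f_n(\mathbf{d}_n(s))|_{L^2}^2\le C(1+\|\mathbf{d}_n(s)\|_{L^{4N+2}}^{4N+2})$. Here the decisive point is the Sobolev embedding $H^1(\mathbb{O})\hookrightarrow L^{4N+2}(\mathbb{O})$: in two dimensions this holds for every $N\in\mathbb{N}$, while in three dimensions Assumption (C) forces $N=1$, so $4N+2=6$ is exactly the critical exponent and the embedding still holds. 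Hence $\|\mathbf{d}_n(s)\|_{L^{4N+2}}^{4N+2}\le C\|\mathbf{d}_n(s)\|_{H^1}^{4N+2}$, and bounding the time integral by $T$ times the supremum over $s$ and taking the $q$-th moment gives
\[
\mathbb{E}\bigg[\int_0^T|f_n(\mathbf{d}_n(s))|_{L^2}^2\,ds\bigg]^q \le C_{q,T}\Big(1+\mathbb{E}\Big[\sup_{s\in[0,T]}\|\mathbf{d}_n(s)\|_{H^1}^{(4N+2)q}\Big]\Big).
\]

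It then remains to check that $\sup_{s\in[0,T]}\|\mathbf{d}_n(s)\|_{H^1}$ has finite moments of every order, uniformly in $n$. Writing $\|\mathbf{d}_n(s)\|_{H^1}^2=|\mathbf{d}_n(s)|_{L^2}^2+\|\mathbf{d}_n(s)\|^2$, the first summand is handled by Proposition \ref{dgrdn}; for the second, the structure $\tilde{F}(r)=a_{N+1}r^{N+1}+\mathcal{U}(r)$ with $a_{N+1}>0$ and $\deg\mathcal{U}\le N$ gives $\int_{\mathbb{O}}\mathrm{F}(|\mathbf{d}_n(s)|^2)\,dx\ge-C$, hence $\|\mathbf{d}_n(s)\|^2\le 2\Psi(\mathbf{d}_n(s))+C$, and Proposition \ref{ugrdn} controls $\mathbb{E}[\sup_s\|\mathbf{d}_n(s)\|^{2p}]$ for every $p\ge1$. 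Assembling the three displays above yields the assertion with a constant independent of $n$. I do not expect a genuine obstacle here: the argument is essentially bookkeeping, and the only delicate point is matching the polynomial degree $2N+1$ of $f$ with the Sobolev exponent available in dimension $\mathbf{n}$, which is precisely the reason Assumption (C) restricts $N$ to the value $1$ when $\mathbf{n}=3$.
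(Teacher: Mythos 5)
Your proposal is correct and follows essentially the same route as the paper: the same splitting $|\Delta\mathbf{d}_n|^2\le 2|\Delta\mathbf{d}_n-f_n(\mathbf{d}_n)|^2+2|f_n(\mathbf{d}_n)|^2$, the same use of the growth bound on $f$ together with $H^1\hookrightarrow L^{4N+2}$, and the same appeal to Proposition \ref{ugrdn} (via \eqref{pufe}) for the resulting terms. Your writeup is merely more explicit than the paper's about how $\mathbb{E}[\sup_s\|\mathbf{d}_n(s)\|_{H^1}^{(4N+2)q}]$ is controlled through $\Psi$ and the coercivity of the leading term of $\tilde F$, which is a welcome clarification but not a different argument.
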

\begin{proof}
For $N \in I_{\mathbf{n}}, H^1 \hookrightarrow L^{4N+2}$ and using Remark \ref{fes}
\begin{align}
|\Delta \mathbf{d}_n(s)|^2 &= |\Delta \mathbf{d}_n(s) - f_n(\mathbf{d}_n(s)) + f_n(\mathbf{d}_n(s))|^{2} \nonumber
\\ &\leq 2 |\Delta \mathbf{d}_n(s) - f_n(\mathbf{d}_n(s))|^2 + 2  |f_n(\mathbf{d}_n(s))|^{2} \nonumber
\\ &\leq 2 |\Delta \mathbf{d}_n(s) - f_n(\mathbf{d}_n(s))|^2 + c \|\mathbf{d}_n(s)\|^{\bar{q}}_{L^{\bar{q}}} + C,
\end{align}
where $\bar{q} = 4N+2.$ Then for some $q \geq 1$ we have
\begin{align}
\mathbb{E} \bigg[ \int_{0}^{T} |\Delta \mathbf{d}_n(s)|^2  \,ds \bigg]^q \leq c \ \mathbb{E} \bigg[ &\int_{0}^{T} |\Delta \mathbf{d}_n(s) - f_n(\mathbf{d}_n(s))|^{2} \,ds \bigg]^q + c \ \mathbb{E} \bigg[ \sup_{s \in [0, T]} \|\mathbf{d}_n(s)\|^{(4N+2)q} \bigg] + C.
\end{align}

Now from \eqref{pufe} we obtain
\begin{align}\label{ldn}
\mathbb{E} \bigg[ \int_{0}^{T} |\Delta \mathbf{d}_n(s)|^2  \,ds \bigg]^q \leq C(q).
\end{align}
\end{proof}

\begin{corollary}\label{codnh}
For every $q \geq 2,$ there exists a constant $C>0,$ depending on $q$ such that
\begin{align*}
\mathbb{E} \bigg[ \sup_{t \in [0, T]} \big\| \mathbf{d}_n(t)\big\|^{q}_{H^1} \bigg] \leq C_q.
\end{align*}
\end{corollary}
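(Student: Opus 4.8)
The plan is to obtain this bound by combining the a priori estimates already proved in Propositions~\ref{dgrdn} and \ref{ugrdn}, after splitting the $H^1$-norm into its $L^2$-part and its gradient part. Writing $\|\mathbf{d}_n(t)\|_{H^1}^2 = |\mathbf{d}_n(t)|_{L^2}^2 + \|\mathbf{d}_n(t)\|^2$, where $\|\cdot\|$ denotes the homogeneous gradient seminorm, it suffices to bound each of the two summands in $L^q(\Omega; L^\infty(0,T))$ uniformly in $n$, since for $q\ge 2$ one has $(a+b)^{q/2} \le 2^{q/2-1}(a^{q/2}+b^{q/2})$.

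The $L^2$-part is immediate: applying Proposition~\ref{dgrdn} with $p=q$ (legitimate since $q\ge 2$) gives $\sup_{n}\mathbb{E}\big[\sup_{t\in[0,T]}|\mathbf{d}_n(t)|_{L^2}^q\big]\le \sup_n \mathbb{E}\,\mathfrak{C}_0(q,T)<\infty$, the last quantity being finite because $\mathbf{d}_{0n}=\tilde P_n\mathbf{d}_0$ and hence $|\mathbf{d}_{0n}|_{L^2}\le |\mathbf{d}_0|_{L^2}\le \|\mathbf{d}_0\|_{H^1}$.

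For the gradient part I would exploit the functional $\Psi$ appearing in Proposition~\ref{ugrdn}, namely $\Psi(\mathbf{d})=\tfrac12\|\mathbf{d}\|^2+\tfrac12\int_{\mathbb{O}}\mathrm{F}(|\mathbf{d}|^2)\,dx$. By Assumption~(C) the antiderivative has the form $\tilde F(r)=a_{N+1}r^{N+1}+\mathcal{U}(r)$ with $a_{N+1}>0$ and $\deg\mathcal{U}\le N$, so that the potential density is bounded below, $\mathrm{F}(|\mathbf{d}|^2)\ge -c_0$ pointwise for some $c_0\ge 0$, whence $\int_{\mathbb{O}}\mathrm{F}(|\mathbf{d}_n(t)|^2)\,dx\ge -c_0|\mathbb{O}|$ and therefore
\[
\|\mathbf{d}_n(t)\|^2 \le 2\Psi(\mathbf{d}_n(t))+c_0|\mathbb{O}| \le 2\big(\Psi(\mathbf{d}_n(t))+|\mathbf{u}_n(t)|_{\mathbb{H}}^2\big)+c_0|\mathbb{O}|.
\]
Raising this inequality to the power $q/2\ge 1$, taking the supremum over $t\in[0,T]$ and then expectations, and invoking Proposition~\ref{ugrdn} with $p=q/2$, we get $\sup_n\mathbb{E}\big[\sup_{t\in[0,T]}\|\mathbf{d}_n(t)\|^q\big]\le C_q$.

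Combining the two bounds yields $\sup_n\mathbb{E}\big[\sup_{t\in[0,T]}\|\mathbf{d}_n(t)\|_{H^1}^q\big]\le C_q$, which is the assertion. There is no genuine obstacle in this argument; the only point requiring a word of justification is the one-sided lower bound on the potential term $\int_{\mathbb{O}}\mathrm{F}(|\mathbf{d}_n|^2)\,dx$, which is exactly what allows us to extract control of $\|\mathbf{d}_n\|$ from the uniform bound on $\Psi(\mathbf{d}_n)+|\mathbf{u}_n|_{\mathbb{H}}^2$ rather than conversely.
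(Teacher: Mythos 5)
Your proof is correct and follows essentially the same route as the paper: split $\|\mathbf{d}_n(t)\|_{H^1}^2$ into its $L^2$-part and its gradient part, control the former by Proposition \ref{dgrdn} with $p=q$ and the latter through Proposition \ref{ugrdn}. The one point you spell out that the paper leaves implicit --- that $\mathrm{F}(|\mathbf{d}|^2)$ is bounded below because $a_{N+1}>0$, so that $\|\mathbf{d}_n(t)\|^2\le 2\Psi(\mathbf{d}_n(t))+c_0|\mathbb{O}|$ --- is precisely the step needed to extract the gradient bound from the uniform estimate on $\Psi(\mathbf{d}_n)+|\mathbf{u}_n|_{\mathbb{H}}^2$, and your justification of it is valid.
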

\begin{proof}
For suitable choices of $p$, from Proposition \ref{dgrdn} and Proposition \ref{ugrdn} we perceive for $q \geq 2,$
\begin{align*}
&\mathbb{E} \bigg[ \sup_{t \in [0, T]} \big\| \mathbf{d}_n(t)\big\|^{q}_{H^1} \bigg] \leq \mathbb{E} \bigg[ \bigg( \sup_{t \in [0, T]} \big\| \mathbf{d}_n(t)\big\|^{2}_{H^1} \bigg)^{\frac{q}{2}} \bigg] \nonumber
\\ &\leq c \ \mathbb{E} \bigg[ \bigg\{ \sup_{t \in [0, T]} \big( \big| \mathbf{d}_n(t)\big|^{2}_{L^2} + \| \mathbf{d}_n(t) \|^{2} \big) \bigg\}^{\frac{q}{2}} \bigg] \leq c_q \ \mathbb{E} \bigg[ \sup_{t \in [0, T]} \big| \mathbf{d}_n(t)\big|^{q}_{L^2} + \sup_{t \in [0, T]} \| \mathbf{d}_n(t) \|^{q} \bigg] \leq C_q.
\end{align*}
\end{proof}

\begin{remark}
Since a stochastic integral with respect to the time homogeneous compensated Poisson random measure is defined for all progressively measurable processes, each $\mathbf{u}_n$ satisfies a version of \eqref{g1st} with $t^-$ replaced by $t$, i.e. $\mathbf{u}_n$ satisfies
\begin{align}\label{g1st-}
d\mathbf{u}_{n}(t) + \big[ \mathscr{A}\mathbf{u}_{n}(t) &+ B_{n}(\mathbf{u}_{n}(t)) + M_{n}(\mathbf{d}_{n}(t)) \big] dt  =  \int_{Y} P_{n}F(t, \mathbf{u}_{n}(t); y) \tilde{\eta}(dt, dy).
\end{align}
\end{remark}

\section{Tightness of the Laws of Approximating Sequences}\label{tolas}

 In this section we will show that all the conditons of Corollary \ref{tc1} and Corollary \ref{tc2} satisfiy for $p = 2$. This will yield tightness of laws of $\mathbf{u}_{n}$ and $\mathbf{d}_{n}$. Let us consider the space $\mathcal{Z}_{T} = \mathcal{Z}_{T, 1} \times \mathcal{Z}_{T, 2},$ where
\begin{align}
\mathcal{Z}_{T, 1} = L^{2}_{w}(0, T; \mathbb{V}) \cap L^{2}(0, T; \mathbb{H}) \cap \mathbb{D}([0, T]; \mathbb{V}') \cap \mathbb{D}([0, T]; \mathbb{H}_{w})
\end{align}
and
\begin{align}
\mathcal{Z}_{T, 2} = L^{2}_{w}(0, T; H^{2}) \cap L^{2}(0, T; H^{1}) \cap \mathbb{C}([0, T]; (H^2)') \cap \mathbb{C}([0, T]; H^{1}_{w}).
\end{align}

For each $n \in \mathbb{N},$ the solutions $(\mathbf{u}_{n}, \mathbf{d}_{n})$ of the Galerkin approximation equations define measures $\mathscr{L}(\mathbf{u}_{n}, \mathbf{d}_{n})$ on $(\mathcal{Z}_T, \mathscr{T}),$ where $\mathscr{T}$ is the supremum of $\mathscr{T}^1$ and $\mathscr{T}^2.$ We will show that the set of measures $\{ \mathscr{L}(\mathbf{u}_{n}, \mathbf{d}_{n}), n \in \mathbb{N}\}$ is tight on $(\mathcal{Z}_T, \mathscr{T})$ using Corollary \ref{tc1} and Corollary \ref{tc2}. 

Before embarking on to the main result of this section let us write-down some important estimates we derived in Section \ref{EE}.

 From Proposition \ref{dgrdn} we observe that for $p \geq 2,$
\begin{align}\label{ssdnp}
\sup_{n \in \mathbb{N}} \ \mathbb{E} \bigg[ \sup_{t \in [0, T]}  \big|\mathbf{d}_{n}(t) \big|_{L^2}^p \bigg] \leq \tilde{C}_{p, T},
\end{align}
for some constant $\tilde{C}_{p, T},$ independent of $t \in [0, T],$ $R > 0$ and $n \in \mathbb{N}.$ 
 In particular from \eqref{ssdnp},
\begin{align}\label{dnc}
\sup_{n \in \mathbb{N}} \mathbb{E} \bigg[ \int_{0}^{T} |\mathbf{d}_{n}(s)|^p \,ds\bigg] \leq \tilde{C}_{p, T}.
\end{align}

Again from \eqref{c0} we have
\begin{align}\label{gdnc}
\sup_{n \in \mathbb{N}} \mathbb{E} \bigg[ \int_{0}^{T} |\mathbf{d}_n(s)|^{p-2} \| \mathbf{d}_{n}(s)\|^2 \,ds\bigg] \leq C_{T, p}.
\end{align}

From equations \eqref{dnc} and \eqref{gdnc}, putting $p = 2$, we get
\begin{align}\label{dnv2}
\sup_{n \in \mathbb{N}} \mathbb{E} \bigg[ \int_{0}^{T} \big\| \mathbf{d}_{n}(s) \big\|_{H^1}^2 \,ds \bigg] \leq C_T.
\end{align}

Since $L^2 \big(\Omega; L^{\infty}(0, T; H^1) \big) \subset L^1 \big(\Omega; L^{\infty}(0, T; H^1) \big),$ From Corollary \ref{codnh} we get,
\begin{align}\label{dnH1}
\sup_{n \in \mathbb{N}} \ \mathbb{E} \bigg[ \sup_{t \in [0, T]} \big\|\mathbf{d}_{n}(t) \big\|_{H^1} \bigg] \leq C_1.
\end{align}

Now from Corollary \ref{Ldn} for $q=2$ and \eqref{dnv2} we obtain,
\begin{align}\label{dnH2}
\sup_{n \in \mathbb{N}} \mathbb{E} \bigg[ \int_{0}^{T} \big\| \mathbf{d}_{n}(s) \big\|_{H^2}^2 \,ds \bigg] \leq C(T).
\end{align}
 
From Proposition \ref{ugrdn} we obtain,
\begin{align}\label{u1tip}
\sup_{n \in \mathbb{N}} \ \mathbb{E} \bigg[\sup_{t \in [0, T]}\big|\mathbf{u}_{n}(t)\big|^{2p}_{\mathbb{H}} \bigg] \leq C_{p, T},
\end{align}

and for $p=1$ in the above case we get,
\begin{align}\label{u1ti}
\sup_{n \in \mathbb{N}} \ \mathbb{E} \bigg[\sup_{t \in [0, T]}\big|\mathbf{u}_{n}(t)\big|^2_{\mathbb{H}} \bigg] \leq C_{1, T},
\end{align}

Using similar argument as of \eqref{dnH1} we obtain,
\begin{align}\label{tiu1}
\sup_{n \in \mathbb{N}} \ \mathbb{E} \bigg[\sup_{t \in [0, T]} |\mathbf{u}_{n}(t)|_{\mathbb{H}} \bigg] \leq \tilde{C}_T,
\end{align}

From \eqref{u1ti} in particular we have,
\begin{align}\label{unh}
\sup_{n \in \mathbb{N}} \mathbb{E} \bigg[ \int_{0}^{T} \big| \mathbf{u}_{n}(s) \big|_{\mathbb{H}}^2 \,ds \bigg] \leq C'_T.
\end{align}

Again from Proposition \ref{ugrdn} for $p=1$ we get, 
\begin{align}\label{ung}
\sup_{n \in \mathbb{N}} \mathbb{E} \bigg[ \int_{0}^{T} \| \mathbf{u}_{n}(s) \|^2 \,ds \bigg] \leq C_{1, T}.
\end{align}

So from \eqref{unh} and \eqref{ung} we have
\begin{align}\label{unv}
\sup_{n \in \mathbb{N}} \mathbb{E} \bigg[ \int_{0}^{T} \big\| \mathbf{u}_{n}(s) \big\|_{\mathbb{V}}^2 \,ds \bigg] \leq \tilde{C}(T).
\end{align}

Now we can state and prove the tightness Lemma.

\begin{lemma}\label{tig}
The set of measures $\{ \mathscr{L}(\mathbf{u}_{n}, \mathbf{d}_{n}), n \in \mathbb{N}\}$ is tight on $(\mathcal{Z}_T, \mathscr{T}).$
\end{lemma}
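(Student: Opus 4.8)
The plan is to verify the hypotheses of Corollary \ref{tc1} for $(\mathbf{u}_n)_{n\in\mathbb{N}}$ and of Corollary \ref{tc2} for $(\mathbf{d}_n)_{n\in\mathbb{N}}$, and then to combine the two. Since $\mathcal{Z}_T=\mathcal{Z}_{T,1}\times\mathcal{Z}_{T,2}$ carries the supremum topology $\mathscr{T}$, it suffices to prove the two marginal tightness statements: if for a given $\epsilon>0$ there are compact sets $K^1\subset\mathcal{Z}_{T,1}$, $K^2\subset\mathcal{Z}_{T,2}$ with $\mathbb{P}^1_n(K^1)\ge1-\frac{\epsilon}{2}$ and $\mathbb{P}^2_n(K^2)\ge1-\frac{\epsilon}{2}$ for all $n$, then $K^1\times K^2$ is compact in $(\mathcal{Z}_T,\mathscr{T})$ and $\mathscr{L}(\mathbf{u}_n,\mathbf{d}_n)(K^1\times K^2)\ge1-\epsilon$. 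Conditions $(a')$ and $(b')$ of Corollary \ref{tc1} are precisely \eqref{tiu1} and \eqref{unv}, and conditions $(a'')$, $(b'')$ of Corollary \ref{tc2} are precisely \eqref{dnH1} and \eqref{dnH2}. The only substantial point is therefore to establish the two Aldous conditions $(c')$ and $(c'')$, which I do through Lemma \ref{aco}.

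For $(\mathbf{u}_n)$ I work from the integral form of \eqref{g1st-}: for any $\mathbb{F}$-stopping time $\tau_n\le T$ and any $\theta\ge0$,
\begin{align*}
\mathbf{u}_n(\tau_n+\theta)-\mathbf{u}_n(\tau_n)&=-\int_{\tau_n}^{\tau_n+\theta}\big[\mathscr{A}\mathbf{u}_n(s)+B_n(\mathbf{u}_n(s))+M_n(\mathbf{d}_n(s))\big]\,ds\\
&\quad+\int_{\tau_n}^{\tau_n+\theta}\int_Y P_nF(s,\mathbf{u}_n(s);y)\,\tilde{\eta}(ds,dy),
\end{align*}
and I bound the $\mathbb{V}'$-norm of each of the four contributions. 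For the $\mathscr{A}$-term, Cauchy--Schwarz in time and $\|\mathscr{A}\mathbf{u}_n(s)\|_{\mathbb{V}'}\le\|\mathbf{u}_n(s)\|$ give a bound $\theta^{1/2}\big(\int_0^T\|\mathbf{u}_n(s)\|^2\,ds\big)^{1/2}$, with expectation $\le C\theta^{1/2}$ by \eqref{ung}. For the $B_n$-term, \eqref{BHH} gives $\|B_n(\mathbf{u}_n(s))\|_{\mathbb{V}'}\le c|\mathbf{u}_n(s)|_{\mathbb{H}}^2$, hence a bound $c\theta\sup_{s\le T}|\mathbf{u}_n(s)|^2_{\mathbb{H}}$ with expectation $\le C\theta$ by \eqref{u1ti}. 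For the $M_n$-term, \eqref{md1d2} with Young's inequality gives $\|M_n(\mathbf{d}_n(s))\|_{\mathbb{V}'}\le C\|\mathbf{d}_n(s)\|_{H^1}^{2-\mathbf{n}/2}|\Delta\mathbf{d}_n(s)|^{\mathbf{n}/2}$, and a H\"older splitting in $s$ with exponents $4/\mathbf{n}$ and $4/(4-\mathbf{n})$ yields a bound $C\theta^{(4-\mathbf{n})/4}\big(\sup_{s\le T}\|\mathbf{d}_n(s)\|_{H^1}^{(4-\mathbf{n})/2}\big)\big(\int_0^T|\Delta\mathbf{d}_n(s)|^2\,ds\big)^{\mathbf{n}/4}$, whose expectation is $\le C\theta^{(4-\mathbf{n})/4}$ after a further H\"older inequality in $\Omega$ and Corollary \ref{codnh} and Proposition \ref{Ldn}. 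Finally, since $\mathbb{H}\hookrightarrow\mathbb{V}'$ continuously and $P_n$ is an orthogonal projection in $\mathbb{H}$, the It\^o isometry for the compensated Poisson integral and the growth bound \eqref{lgf} give
\begin{align*}
\mathbb{E}\Big\|\int_{\tau_n}^{\tau_n+\theta}\int_Y P_nF(s,\mathbf{u}_n(s);y)\,\tilde{\eta}(ds,dy)\Big\|_{\mathbb{V}'}^2\le C\,\mathbb{E}\int_{\tau_n}^{\tau_n+\theta}\big(1+|\mathbf{u}_n(s)|^2_{\mathbb{H}}\big)\,ds\le C\theta\big(1+\mathbb{E}\sup_{s\le T}|\mathbf{u}_n(s)|^2_{\mathbb{H}}\big),
\end{align*}
which is $\le C\theta$ by \eqref{u1ti}; hence the stochastic term has $\mathbb{V}'$-expectation $\le C\theta^{1/2}$. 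Collecting the four estimates, $\mathbb{E}\|\mathbf{u}_n(\tau_n+\theta)-\mathbf{u}_n(\tau_n)\|_{\mathbb{V}'}\le C\theta^{\beta_1}$ with $\beta_1=\min\{\frac12,\frac{4-\mathbf{n}}{4}\}>0$ and $C$ independent of $n$ and of the stopping time, so Lemma \ref{aco} (with $\alpha=1$, $\beta=\beta_1$, $E=\mathbb{V}'$) gives $(c')$.

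For $(\mathbf{d}_n)$ the argument is parallel but entirely deterministic: from \eqref{g2nd},
\begin{align*}
\mathbf{d}_n(\tau_n+\theta)-\mathbf{d}_n(\tau_n)=-\int_{\tau_n}^{\tau_n+\theta}\big[\mathcal{A}\mathbf{d}_n(s)+\tilde{B}_n(\mathbf{u}_n(s),\mathbf{d}_n(s))+f_n(\mathbf{d}_n(s))\big]\,ds.
\end{align*}
Since $\langle\mathcal{A}\mathbf{d}_n(s),\phi\rangle=((\mathbf{d}_n(s),\phi))$ we have $\|\mathcal{A}\mathbf{d}_n(s)\|_{(H^2)'}\le\|\mathbf{d}_n(s)\|_{H^1}$ (cf. \eqref{Ah1}), so the $\mathcal{A}$-term is bounded in $(H^2)'$ by $\theta\sup_{s\le T}\|\mathbf{d}_n(s)\|_{H^1}$, with expectation $\le C\theta$ by \eqref{dnH1}. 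For the $\tilde{B}_n$-term, $L^2\hookrightarrow(H^2)'$ with \eqref{btil} gives $\|\tilde{B}_n(\mathbf{u}_n(s),\mathbf{d}_n(s))\|_{(H^2)'}\le C|\mathbf{u}_n(s)|_{\mathbb{H}}^{1-\mathbf{n}/4}\|\mathbf{u}_n(s)\|^{\mathbf{n}/4}\|\mathbf{d}_n(s)\|_{H^1}^{1-\mathbf{n}/4}|\Delta\mathbf{d}_n(s)|^{\mathbf{n}/4}$, and a H\"older splitting in $s$ with exponent $r=4/\mathbf{n}>1$ applied to $\|\mathbf{u}_n\|^{\mathbf{n}/4}|\Delta\mathbf{d}_n|^{\mathbf{n}/4}$ produces a bound $C(\omega)\theta^{1-\mathbf{n}/4}$, with $C(\omega)$ controlled in every $L^q(\Omega)$ by \eqref{u1tip}, \eqref{ung}, Corollary \ref{codnh} and Proposition \ref{Ldn}. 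For the $f_n$-term, combining \eqref{pufe} with Proposition \ref{Ldn} shows $\sup_n\mathbb{E}\int_0^T|f_n(\mathbf{d}_n(s))|^2_{L^2}\,ds<\infty$, so by $L^2\hookrightarrow(H^2)'$ and Cauchy--Schwarz in time it is bounded in $(H^2)'$ by $\theta^{1/2}\big(\int_0^T|f_n(\mathbf{d}_n(s))|^2_{L^2}\,ds\big)^{1/2}$, with expectation $\le C\theta^{1/2}$. Hence $\mathbb{E}\|\mathbf{d}_n(\tau_n+\theta)-\mathbf{d}_n(\tau_n)\|_{(H^2)'}\le C\theta^{\beta_2}$, $\beta_2=\min\{1-\frac{\mathbf{n}}{4},\frac12\}>0$, and Lemma \ref{aco} gives $(c'')$.

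The hard part is exactly this verification of the Aldous conditions, and in particular the treatment of the nonlinear terms $M_n(\mathbf{d}_n)$, $\tilde{B}_n(\mathbf{u}_n,\mathbf{d}_n)$ and the high-degree polynomial $f_n(\mathbf{d}_n)$: one has to combine the interpolation-type bounds \eqref{BHH}, \eqref{md1d2}, \eqref{btil} and Remark \ref{fes} with the a priori moment estimates of Section \ref{EE} and choose the H\"older exponents in the time variable so as to extract a strictly positive power of $\theta$ from each term (for $\mathbf{u}_n$ the stochastic term also needs the It\^o isometry and \eqref{lgf}). Once both Aldous conditions are available, Corollaries \ref{tc1} and \ref{tc2} applied with $\epsilon/2$ in place of $\epsilon$ produce compact $K^1\subset\mathcal{Z}_{T,1}$ and $K^2\subset\mathcal{Z}_{T,2}$ with $\mathbb{P}^1_n(K^1)\ge1-\frac{\epsilon}{2}$ and $\mathbb{P}^2_n(K^2)\ge1-\frac{\epsilon}{2}$ for all $n$, and, as in the first paragraph, $K^1\times K^2$ is a compact subset of $(\mathcal{Z}_T,\mathscr{T})$ with $\mathscr{L}(\mathbf{u}_n,\mathbf{d}_n)(K^1\times K^2)\ge1-\epsilon$, which is tightness.
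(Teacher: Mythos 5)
Your proposal is correct and follows essentially the same route as the paper: conditions $(a')$, $(b')$, $(a'')$, $(b'')$ are read off from the a priori estimates, and the Aldous conditions are verified by decomposing the integral forms of \eqref{g1st-} and \eqref{gg2nd} term by term, applying Lemma \ref{aco} with the interpolation bounds \eqref{BHH}, \eqref{md1d2}, \eqref{btil} and the dimension-dependent H\"older exponents in time. Your minor variations (collecting all terms into a single exponent $\beta$, bounding the $f_n$-term via \eqref{pufe} rather than via Remark \ref{fes} and Corollary \ref{codnh}) are harmless and yield the same conclusion.
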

\begin{proof}

From \eqref{tiu1}, \eqref{unv}, \eqref{dnH1} and \eqref{dnH2} we obtain the first two conditions of Corollary \ref{tc1} and Corollary \ref{tc2} for $\mathbf{u}_{n}$ and $\mathbf{d}_{n}$ respectively.

Now it is sufficient to prove that the sequences $(\mathbf{u}_{n})_{n \in \mathbb{N}}$ and $(\mathbf{d}_{n})_{n \in \mathbb{N}}$ satisfy the Aldous condtion in the space $\mathbb{V}'$ and $(H^2)'$ respectively for tightness. We will use Lemma \ref{aco}. Let $(\tau_{n})_{n \in \mathbb{N}}$ be a sequence of stopping times such that $0 \leq \tau_{n} \leq T.$ From \eqref{g1st-} we have
\begin{align*}
\mathbf{u}_{n}(t) &= \mathbf{u}_{0n} - \int_{0}^{t} \mathscr{A} \mathbf{u}_{n}(s) \,ds - \int_{0}^{t} B_n(\mathbf{u}_{n}(s)) \,ds - \int_{0}^{t} M_n(\mathbf{d}_{n}(s)) \,ds 
\\ &\qquad\qquad\qquad\qquad\qquad+ \int_{0}^{t} \int_{Y} P_n F(s, \mathbf{u}_{n}(s), y) \,\tilde{\eta}(ds, dy)
\\ &=: k_{1}^{n} + \sum_{j = 2}^{5} k_{j}^{n}(t) , \qquad t \in [0, T]. 
\end{align*}

Let $\theta > 0.$ We will check that each term $k_{j}^{n}, j = 1, \cdots, 5,$ satisfies condition \eqref{aco1} in Lemma \ref{aco}.\\
It is easy to see that $k_{1}^{n}$ satifies condition \eqref{aco1} with $\alpha = 1$ and $\beta = 1.$

Now consider $k_{2}^{n}(t)$. Since $\mathscr{A} : \mathbb{V} \to \mathbb{V}'$ and $|\mathscr{A}(\mathbf{u})|_{\mathbb{V}'} \leq \| \mathbf{u} \|$, by the H\"older inequality and \eqref{ung} we have
\begin{align*}
&\mathbb{E}\big[ \big|k_{2}^{n}(\tau_n + \theta) - k_{2}^{n}(\tau_n) \big|_{\mathbb{V}'} \big] = \mathbb{E}\bigg[ \bigg| \int_{\tau_n}^{\tau_n + \theta} \mathscr{A} \mathbf{u}_{n}(s) \,ds \bigg|_{\mathbb{V}'} \bigg] \leq c \ \mathbb{E}\bigg[ \int_{\tau_n}^{\tau_n + \theta} \big|\mathscr{A} \mathbf{u}_{n}(s)\big|_{\mathbb{V}'} \,ds \bigg] 
\\ &\leq c \ \mathbb{E}\bigg[ \int_{\tau_n}^{\tau_n + \theta} \| \mathbf{u}_{n}(s)\| \, ds \bigg] \leq c \ \mathbb{E}\bigg[ \theta^{\frac{1}{2}} \bigg( \int_{0}^{T} \| \mathbf{u}_{n}(s)\|^2 \,ds \bigg)^{\frac{1}{2}} \bigg] \leq c \cdot \bigg( \mathbb{E}\bigg[ \int_{0}^{T} \| \mathbf{u}_{n}(s)\|^2 \,ds \bigg]\bigg)^{\frac{1}{2}} \cdot \theta^{\frac{1}{2}} 
\\ &\leq c \sqrt{C_{1, T}} \cdot \theta^{\frac{1}{2}} = c_2 \cdot \theta^{\frac{1}{2}}.
\end{align*}
Thus $ k_{2}^{n}$ satisfies condition \eqref{aco1} with $\alpha = 1$ and $\beta = \frac{1}{2}.$

Let us consider the term  $k_{3}^{n}.$ By \eqref{BHH} and \eqref{u1ti} we have
\begin{align*} 
&\mathbb{E}\big[ \big|k_{3}^{n}(\tau_n + \theta) - k_{3}^{n}(\tau_n) \big|_{\mathbb{V}'} \big] = \mathbb{E}\bigg[ \bigg| \int_{\tau_n}^{\tau_n + \theta} B_n(\mathbf{u}_{n}(s)) \,ds \bigg|_{\mathbb{V}'} \bigg] \leq c \ \mathbb{E}\bigg[ \int_{\tau_n}^{\tau_n + \theta} \big| B(\mathbf{u}_{n}(s))\big|_{\mathbb{V}'} \,ds \bigg] 
\\ &\leq c \ \mathbb{E}\bigg[ \int_{\tau_n}^{\tau_n + \theta} \big|\mathbf{u}_{n}(s)\big|^{2}_{\mathbb{H}} \,ds \bigg] \leq c \ \mathbb{E}\bigg[ \sup_{s \in [0, T]} \big|\mathbf{u}_{n}(s)\big|^{2}_{\mathbb{H}} \bigg] \cdot \theta \leq c \ C_{1, T} \cdot \theta =: c_3 \cdot \theta.
\end{align*}
Thus $ k_{3}^{n}$ satisfies condition \eqref{aco1} with $\alpha = 1$ and $\beta = 1.$

Now consider $k_{4}^{n}.$ From \eqref{md1d2} we have
\begin{align}\label{k4} 
&\mathbb{E}\big[ \big|k_{4}^{n}(\tau_n + \theta) - k_{4}^{n}(\tau_n) \big|_{\mathbb{V}'} \big] = \mathbb{E}\bigg[ \bigg| \int_{\tau_n}^{\tau_n + \theta} M_n(\mathbf{d}_{n}(s)) \,ds \bigg|_{\mathbb{V}'} \bigg] \nonumber
\\ &\leq c \ \mathbb{E}\bigg[ \int_{\tau_n}^{\tau_n + \theta} \big| M(\mathbf{d}_{n}(s))\big|_{\mathbb{V}'} \,ds \bigg] \leq c \ \mathbb{E}\bigg[ \int_{\tau_n}^{\tau_n + \theta} \|\mathbf{d}_{n}(s)\|^{2 - \frac{\mathbf{n}}{2}} |\Delta \mathbf{d}_{n}(s)|^{\frac{\mathbf{n}}{2}} \,ds \bigg]
\end{align}

We will estimate differently for $\mathbf{n} = 2 \text{ and } 3.$ 

First consider $\mathbf{n} = 2$. From Proposition \ref{Ldn}, Corollary \ref{codnh} and using H\"older's inequality we obtain,
\begin{align}\label{gdld} 
&\mathbb{E}\bigg[ \int_{\tau_n}^{\tau_n + \theta} \|\mathbf{d}_{n}(s)\| \cdot |\Delta \mathbf{d}_{n}(s)| \,ds \bigg] \leq \mathbb{E}\bigg[ \bigg( \int_{\tau_n}^{\tau_n + \theta} \|\mathbf{d}_{n}(s)\|^2 \,ds \bigg)^{\frac{1}{2}} \bigg( \int_{\tau_n}^{\tau_n + \theta} |\Delta \mathbf{d}_{n}(s)|^2 \,ds\bigg)^{\frac{1}{2}} \bigg] \nonumber
\\ &\leq \bigg\{ \mathbb{E}\bigg[ \sup_{s \in [0, T]}\|\mathbf{d}_{n}(s)\|^2 \cdot \theta \bigg] \bigg\}^{\frac{1}{2}} \cdot \bigg\{ \mathbb{E}\bigg[ \int_{\tau_n}^{\tau_n + \theta} |\Delta \mathbf{d}_{n}(s)|^2 \,ds \bigg] \bigg\}^{\frac{1}{2}} \nonumber
\\ &\leq \sqrt{C_2} \cdot \theta^{\frac{1}{2}} \ \bigg\{\mathbb{E}\bigg[ \int_{0}^{T} |\Delta \mathbf{d}_{n}(s)|^2 \,ds \bigg]\bigg\}^{\frac{1}{2}} \leq \sqrt{C_2} \cdot \theta^{\frac{1}{2}} \cdot \sqrt{C(1)} =: c_4 \cdot \theta^{\frac{1}{2}}.
\end{align}

Now for $\mathbf{n} = 3,$ from  Proposition \ref{Ldn}, Corollary \ref{codnh} and using H\"older's inequality we obtain,
\begin{align}\label{gdld3} 
&\mathbb{E}\bigg[ \int_{\tau_n}^{\tau_n + \theta} \|\mathbf{d}_{n}(s)\|^{\frac{1}{2}} \cdot |\Delta \mathbf{d}_{n}(s)|^{\frac{3}{2}} \,ds \bigg] \leq \mathbb{E}\bigg[ \bigg( \int_{\tau_n}^{\tau_n + \theta} \big(\|\mathbf{d}_{n}(s)\|^{\frac{1}{2}}\big)^4 \, ds \bigg)^{\frac{1}{4}} \bigg( \int_{\tau_n}^{\tau_n + \theta} \big(|\Delta \mathbf{d}_{n}(s)|^{\frac{3}{2}}\big)^{\frac{4}{3}} \,ds\bigg)^{\frac{3}{4}} \bigg] \nonumber
\\ &\leq  \mathbb{E}\bigg[ \bigg( \sup_{s \in [0, T]} \|\mathbf{d}_{n}(s)\|^{2} \cdot \theta \bigg)^{\frac{1}{4}} \bigg( \int_{\tau_n}^{\tau_n + \theta} |\Delta \mathbf{d}_{n}(s)|^{2} \,ds \bigg)^{\frac{3}{4}}\bigg] \nonumber
\\ &\leq \theta^{\frac{1}{4}} \cdot \bigg\{ \mathbb{E}\bigg[ \sup_{s \in [0, T]} \|\mathbf{d}_{n}(s)\|^{2} \bigg] \bigg\}^{\frac{1}{4}} \cdot \bigg\{ \mathbb{E}\bigg[ \int_{\tau_n}^{\tau_n + \theta} |\Delta \mathbf{d}_{n}(s)|^{2} \,ds \bigg] \bigg\}^{\frac{3}{4}} \nonumber
\\ &\leq \theta^{\frac{1}{4}}  (C_2)^{\frac{1}{4}} \ \bigg\{ \mathbb{E}\bigg[ \int_{0}^{T} |\Delta \mathbf{d}_{n}(s)|^{2} \,ds \bigg] \bigg\}^{\frac{3}{4}} \leq \theta^{\frac{1}{4}}  (C_2)^{\frac{1}{4}} (C(1))^{\frac{3}{4}} =: c_4 \cdot \theta^{\frac{1}{4}}.
\end{align}

where $C_2$ and $C(1)$ are constants used in Corollary \ref{codnh} and  Proposition \ref{Ldn} respectively. Thus  $k_{4}^{n}$  satisfies condition \eqref{aco1} with $\alpha = 1$, $\beta = \frac{1}{2}$ for 2-D and $\alpha = 1$, $\beta = \frac{1}{4}$ for 3-D.

Now consider $k_{5}^{n}.$ Since the embedding $\mathbb{H} \hookrightarrow \mathbb{V}'$ is continuous, from \eqref{isof}, \eqref{lgf}, \eqref{u1ti} and using Burkholder-Davis-Gundy inequality
\begin{align} 
\mathbb{E}&\big[ \big|k_{5}^{n}(\tau_n + \theta) - k_{5}^{n}(\tau_n) \big|^{2}_{\mathbb{V}'} \big] = \mathbb{E}\bigg[ \bigg| \int_{\tau_n}^{\tau_n + \theta} \int_{Y}  P_n F(s, \mathbf{u}_{n}(s), y) \,\tilde{\eta}(ds, dy) \bigg|^{2}_{\mathbb{V}'} \bigg] \nonumber
\\ &\leq c \ \mathbb{E}\bigg[ \bigg| \int_{\tau_n}^{\tau_n + \theta} \int_{Y}  P_n F(s, \mathbf{u}_{n}(s), y) \,\tilde{\eta}(ds, dy) \bigg|^{2}_{\mathbb{H}} \bigg] = c \ \mathbb{E}\bigg[  \int_{\tau_n}^{\tau_n + \theta} \int_{Y} \big| P_n F(s, \mathbf{u}_{n}(s), y)\big|^{2}_{\mathbb{H}} \,\nu(dy)ds  \bigg] \nonumber
\\ &\leq c \ \mathbb{E}\bigg[ \int_{\tau_n}^{\tau_n + \theta} \big( 1+|\mathbf{u}_{n}(s)|^{2}_{\mathbb{H}} \big) \,ds \bigg] \leq c \cdot \theta + c \ \mathbb{E}\bigg[ \int_{\tau_n}^{\tau_n + \theta} |\mathbf{u}_{n}(s)|^{2}_{\mathbb{H}} \,ds \bigg] \nonumber 
\\ &\leq c \cdot \theta + c \cdot \theta \ \mathbb{E}\bigg[ \sup_{s \in [0, T]} |\mathbf{u}_{n}(s)|^{2}_{\mathbb{H}} \bigg] \leq c \cdot \theta (1 + C_{1, T}) =: c_5 \cdot \theta.
\end{align}
Where the constant $C_{1, T}$ is used in \eqref{u1ti}. Thus $ k_{5}^{n}$ satisfies condition \eqref{aco1} with $\alpha = 2$ and $\beta = 1.$

Hence by Lemma \ref{aco} the sequence $(\mathbf{u}_{n})_{n \in \mathbb{N}}$ satisfies the Aldous condition in the space $\mathbb{V}'.$

Now we can rewrite \eqref{gg2nd} further as
\begin{align*}
 \mathbf{d}_{n}(t) &= \mathbf{d}_{0n} - \int_{0}^{t} \mathcal{A} \mathbf{d}_{n}(s) \,ds - \int_{0}^{t} \tilde{B}_{n}(\mathbf{u}_{n}(s), \mathbf{d}_{n}(s)) \,ds - \int_{0}^{t} f_{n}(\mathbf{d}_{n}(s)) \,ds
\\ &=: j_{1}^{n} + \sum_{k = 2}^{4} j_{k}^{n}(t) , \qquad t \in [0, T].
\end{align*}

It is easy to verify that $j_{1}^{n}$ satifies condition \eqref{aco1} with $\alpha = 1$ and $\beta = 1.$

Now consider $ j_{2}^{n}(t)$. Since the embedding $(H^1)' \hookrightarrow (H^2)'$ is continuous, by \eqref{Ah1}, \eqref{dnv2} and the H\"older inequality we get,
\begin{align*}
&\mathbb{E}\big[ \big|j_{2}^{n}(\tau_n + \theta) - j_{2}^{n}(\tau_n) \big|_{(H^2)'} \big] = \mathbb{E}\bigg[ \bigg| \int_{\tau_n}^{\tau_n + \theta} \mathcal{A} \mathbf{d}_{n}(s) \,ds \bigg|_{(H^2)'} \bigg]
\\ &\leq c \ \mathbb{E}\bigg[ \int_{\tau_n}^{\tau_n + \theta} \big| \mathcal{A} \mathbf{d}_{n}(s)\big|_{(H^1)'} \,ds \bigg] \leq c \ \mathbb{E}\bigg[ \theta^{\frac{1}{2}} \bigg( \int_{0}^{T} \big\| \mathbf{d}_{n}(s) \big\|_{H^1}^{2} \,ds \bigg)^{\frac{1}{2}} \bigg]
\\ &\leq c \ \theta^{\frac{1}{2}} \ \bigg(\mathbb{E}\bigg[ \int_{0}^{T} \big\| \mathbf{d}_{n}(s) \big\|_{H^1}^{2} \,ds \bigg] \bigg)^{\frac{1}{2}} \leq c \ \theta^{\frac{1}{2}} \sqrt{C_T} =: \bar{c}_2 \cdot \theta^{\frac{1}{2}}.
\end{align*}
Where the constant $C_T$ is coming from \eqref{dnv2}. Thus $ j_{2}^{n}$ satisfies condition \eqref{aco1} with $\alpha = 1$ and $\beta = \frac{1}{2}.$

Now consider $ j_{3}^{n}(t)$. Since the embedding $L^2 \hookrightarrow (H^2)'$ is continuous, from \eqref{btil}
\begin{align}\label{j3nt}
&\mathbb{E}\big[ \big|j_{3}^{n}(\tau_n + \theta) - j_{3}^{n}(\tau_n) \big|_{(H^2)'} \big] = \mathbb{E}\bigg[ \bigg| \int_{\tau_n}^{\tau_n + \theta} \tilde{B}_{n}(\mathbf{u}_{n}(s), \mathbf{d}_{n}(s)) \,ds \bigg|_{(H^2)'} \bigg] \nonumber
\\ &\leq c \ \mathbb{E}\bigg[ \int_{\tau_n}^{\tau_n + \theta} |\tilde{B}(\mathbf{u}_{n}(s), \mathbf{d}_{n}(s))|_{L^2} \,ds \bigg] \nonumber
\\ &\leq c \ \mathbb{E}\bigg[ \int_{\tau_n}^{\tau_n + \theta} \big\{ \big|\mathbf{u}_{n}(s) \big|_{\mathbb{H}}^{1- \frac{\mathbf{n}}{4}} \ \| \mathbf{u}_{n}(s)\|^{\frac{\mathbf{n}}{4}} \ \| \mathbf{d}_{n}(s)\|^{1- \frac{\mathbf{n}}{4}} \ \big|\Delta \mathbf{d}_{n}(s)\big|_{L^2}^{\frac{\mathbf{n}}{4}}\big\} \,ds \bigg]
\end{align}

We will prove for $\mathbf{n} = 2$ and 3 separately.\\  First consider $\mathbf{n} = 2$. Using Young's inequality for this case in \eqref{j3nt} we have,
\begin{align}\label{j3nn}
&\mathbb{E}\big[ \big|j_{3}^{n}(\tau_n + \theta) - j_{3}^{n}(\tau_n) \big|_{(H^2)'} \big] \leq  c \ \mathbb{E}\bigg[ \int_{\tau_n}^{\tau_n + \theta} \big( |\mathbf{u}_{n}(s)| \cdot \| \mathbf{u}_{n}(s)\| \big)^{\frac{1}{2}} \big( \| \mathbf{d}_{n}(s)\| \cdot |\Delta \mathbf{d}_{n}(s)| \big)^{\frac{1}{2}} \,ds \bigg] \nonumber
\\ &\leq \tilde{c} \ \mathbb{E}\bigg[ \int_{\tau_n}^{\tau_n + \theta} |\mathbf{u}_{n}(s)| \cdot \| \mathbf{u}_{n}(s)\| \,ds + \int_{\tau_n}^{\tau_n + \theta} \| \mathbf{d}_{n}(s)\| \cdot |\Delta \mathbf{d}_{n}(s)| \,ds \bigg]. 
\end{align}

From \eqref{u1ti}, \eqref{ung} and using H\"older's inequality, 
\begin{align}\label{hgu}
&\mathbb{E}\bigg[ \int_{\tau_n}^{\tau_n + \theta} |\mathbf{u}_{n}(s)| \cdot \| \mathbf{u}_{n}(s)\| \,ds \bigg] \leq \mathbb{E}\bigg[ \bigg( \int_{\tau_n}^{\tau_n + \theta} |\mathbf{u}_{n}(s)|^2 \bigg)^{\frac{1}{2}} \bigg( \int_{\tau_n}^{\tau_n + \theta} \| \mathbf{u}_{n}(s)\|^2 \,ds \bigg)^{\frac{1}{2}} \bigg] \nonumber
\\ &\leq \bigg\{ \mathbb{E}\bigg[ \int_{\tau_n}^{\tau_n + \theta} |\mathbf{u}_{n}(s)|^2 \bigg] \bigg\}^{\frac{1}{2}} \cdot \bigg\{ \mathbb{E}\bigg[ \int_{\tau_n}^{\tau_n + \theta} \| \mathbf{u}_{n}(s)\|^2 \,ds \bigg] \bigg\}^{\frac{1}{2}} \nonumber
\\ &\leq \bigg\{ \mathbb{E}\bigg[ \sup_{s \in [0, T]} |\mathbf{u}_{n}(s)|^2 \bigg] \cdot \theta \bigg\}^{\frac{1}{2}} \cdot \bigg\{ \mathbb{E}\bigg[ \int_{0}^{T} \| \mathbf{u}_{n}(s)\|^2 \,ds \bigg] \bigg\}^{\frac{1}{2}} \nonumber
\\ &\leq \sqrt{C_{1, T}} \cdot \theta^{\frac{1}{2}} \cdot \sqrt{C_{1, T}} \leq \tilde{C}_3 \cdot \theta^{\frac{1}{2}}.
\end{align}
 
Using \eqref{gdld} in \eqref{j3nn} and combining with \eqref{hgu}, for $\mathbf{n} = 2,$ we obtain
\begin{align*}
\mathbb{E}\big[ \big|j_{3}^{n}(\tau_n + \theta) - j_{3}^{n}(\tau_n) \big|_{(H^2)'} \big] \leq \bar{c}_3 \cdot \theta^{\frac{1}{2}}.
\end{align*}

Now for $\mathbf{n} = 3.$ Using Young's inequality for the case in \eqref{j3nt} we have,
\begin{align}\label{j3n}
&\mathbb{E}\big[ \big|j_{3}^{n}(\tau_n + \theta) - j_{3}^{n}(\tau_n) \big|_{(H^2)'} \big] \leq  c \ \mathbb{E}\bigg[ \int_{\tau_n}^{\tau_n + \theta} \big( |\mathbf{u}_{n}(s)|^{\frac{1}{4}} \| \mathbf{u}_{n}(s)\|^{\frac{3}{4}} \big) \big( \| \mathbf{d}_{n}(s)\|^{\frac{1}{4}} |\Delta \mathbf{d}_{n}(s)|^{\frac{3}{4}} \big) \,ds \bigg] \nonumber
\\ &\leq \tilde{c} \ \mathbb{E}\bigg[ \int_{\tau_n}^{\tau_n + \theta} |\mathbf{u}_{n}(s)|^{\frac{1}{2}} \| \mathbf{u}_{n}(s)\|^{\frac{3}{2}} \,ds + \int_{\tau_n}^{\tau_n + \theta} \| \mathbf{d}_{n}(s)\|^{\frac{1}{2}} |\Delta \mathbf{d}_{n}(s)|^{\frac{3}{2}} \,ds \bigg]. 
\end{align}

Now for the first term of right hand side of the above inequality, from  \eqref{u1ti} and \eqref{ung} and using H\"older's inequality repeatedly we have,
\begin{align}\label{ujn3}
&\mathbb{E}\bigg[ \int_{\tau_n}^{\tau_n + \theta} |\mathbf{u}_{n}(s)|^{\frac{1}{2}} \| \mathbf{u}_{n}(s)\|^{\frac{3}{2}} \,ds \bigg] \leq \mathbb{E}\bigg[ \bigg( \int_{\tau_n}^{\tau_n + \theta} \big(|\mathbf{u}_{n}(s)|^{\frac{1}{2}}\big)^4 \, ds \bigg)^{\frac{1}{4}} \bigg( \int_{\tau_n}^{\tau_n + \theta} \big(\| \mathbf{u}_{n}(s)\|^{\frac{3}{2}}\big)^{\frac{4}{3}} \,ds\bigg)^{\frac{3}{4}} \bigg] \nonumber
\\ &\leq  \mathbb{E}\bigg[ \bigg( \sup_{s \in [0, T]} |\mathbf{u}_{n}(s)|^{2} \cdot \theta \bigg)^{\frac{1}{4}} \bigg( \int_{\tau_n}^{\tau_n + \theta} \| \mathbf{u}_{n}(s)\|^{2} \,ds \bigg)^{\frac{3}{4}}\bigg] \nonumber
\\ &\leq \theta^{\frac{1}{4}} \cdot \bigg\{ \mathbb{E}\bigg[ \sup_{s \in [0, T]} |\mathbf{u}_{n}(s)|^{2} \bigg] \bigg\}^{\frac{1}{4}} \cdot \bigg\{ \mathbb{E}\bigg[ \int_{\tau_n}^{\tau_n + \theta} \| \mathbf{u}_{n}(s)\|^{2} \,ds \bigg] \bigg\}^{\frac{3}{4}} \nonumber
\\ &\leq \theta^{\frac{1}{4}}  (C_{1, T})^{\frac{1}{4}} \ \bigg\{ \mathbb{E}\bigg[ \int_{0}^{T}\| \mathbf{u}_{n}(s)\|^{2} \,ds \bigg] \bigg\}^{\frac{3}{4}} \leq \theta^{\frac{1}{4}}  (C_{1, T})^{\frac{1}{4}} (C_{1, T})^{\frac{3}{4}} =: \bar{c}_3 \cdot \theta^{\frac{1}{4}}.
\end{align}

Now refering to \eqref{gdld3} and using the estimate in \eqref{j3n} and combining with \eqref{ujn3}, we obtain for $\mathbf{n} = 3,$
\begin{align*}
\mathbb{E}\big[ \big|j_{3}^{n}(\tau_n + \theta) - j_{3}^{n}(\tau_n) \big|_{(H^2)'} \big] \leq \bar{c}_3 \cdot \theta^{\frac{1}{4}}.
\end{align*}

Thus $j_{3}^{n}(t)$ satisfies condition \eqref{aco1} with $\alpha = 1$ and $\beta = \frac{1}{2}$ for 2-D and $\alpha = 1$ and $\beta = \frac{1}{4}$ for 3-D.

At last consider $ j_{4}^{n}(t)$. The embedding $L^2 \hookrightarrow (H^2)'$ is continuous and $H^1 \hookrightarrow L^{\bar{q}}$ for $\bar{q} = 4N+2$. Then from Remark \ref{fes} and Corollary \ref{codnh} we obtain,
\begin{align}
&\mathbb{E}\big[ \big|j_{4}^{n}(\tau_n + \theta) - j_{4}^{n}(\tau_n) \big|_{(H^2)'} \big] = \mathbb{E}\bigg[ \bigg| \int_{\tau_n}^{\tau_n + \theta} f_n(\mathbf{d}_{n}(s)) \,ds \bigg|_{(H^2)'} \bigg] \nonumber
\\ &\leq c \ \mathbb{E}\bigg[ \int_{\tau_n}^{\tau_n + \theta} |f(\mathbf{d}_{n}(s))|_{L^2} \,ds \bigg] \leq c \cdot \theta + c \ \mathbb{E}\bigg[ \int_{\tau_n}^{\tau_n + \theta} \big\| \mathbf{d}_{n}(s) \big\|_{L^{\bar{q}}}^{2N+1} \,ds \bigg] \nonumber
\\ &\leq c \cdot \theta + c \ \mathbb{E}\bigg[ \sup_{s \in [0, T]} \big\| \mathbf{d}_{n}(s) \big\|_{L^{\bar{q}}}^{2N+1} \cdot \theta \bigg] \leq  c \cdot \theta + c \cdot \theta \ \mathbb{E}\bigg[ \sup_{s \in [0, T]} \big\| \mathbf{d}_{n}(s)\big\|_{H^1}^{2N+1} \bigg] \nonumber
\\ &\leq c \cdot \theta \ (1 + C_{2N+1}) \leq \bar{c}_4 \cdot \theta.
\end{align}

The constant $C_{2N+1}$ is coming from  Corollary \ref{codnh}. Thus $ j_{4}^{n}(t)$ satisfies condition \eqref{aco1} with $\alpha = 1$ and $\beta = 1.$

\end{proof}

\section{Existence of Martingale Solution}\label{eoms}

We will now prove the existence of a martingale solution. The main difficulties lie in the terms containing the nonlinearity of $B, M$ and the noise term $F$. The Skorokhod Theorem for nonmetric spaces helps us constructing a martingale solution. 

\subsection{Construction of New Probability Space and Processes}

By Lemma \ref{tig} we have shown the set of measures $\{ \mathscr{L}(\mathbf{u}_{n}, \mathbf{d}_{n}), n \in \mathbb{N}\}$ is tight on $(\mathcal{Z}_{T, 1} \times \mathcal{Z}_{T, 2}, \mathscr{T}).$ Let $\eta_n := \eta, n \in \mathbb{N}.$ Then the set of measures $\{ \mathscr{L}(\eta_{n}), n \in \mathbb{N}\}$ is tight on the space $M_{\bar{\mathbb{N}}}([0, T] \times Y).$ Thus the set $\{ \mathscr{L}(\mathbf{u}_{n}, \mathbf{d}_{n}, \eta_{n}),$ $n \in \mathbb{N}\}$ is tight on $\mathcal{Z}_{T} \times M_{\bar{\mathbb{N}}}([0, T] \times Y)$.

By Corollary \ref{set3} and Remark \ref{sep}, there exists a subsequence $(n_k)_{k \in \mathbb{N}},$ a probability space $(\bar{\Omega}, \bar{\mathcal{F}}, \bar{\mathbb{P}})$ and on this space, $\mathcal{Z}_{T} \times M_{\bar{\mathbb{N}}}([0, T] \times Y)$-valued random variables $(\mathbf{u}_{\ast}, \mathbf{d}_{\ast}, \eta_{\ast}), (\bar{\mathbf{u}}_k, \bar{\mathbf{d}}_k, \bar{\eta}_k), k \in \mathbb{N}$ such that  
\begin{enumerate}[label=(\alph*)]
\item
$\mathscr{L}\big( (\bar{\mathbf{u}}_k, \bar{\mathbf{d}}_k, \bar{\eta}_k)\big) = \mathscr{L}\big( (\mathbf{u}_{n_k}, \mathbf{d}_{n_k}, \eta_{n_k})\big)$ for all $k \in \mathbb{N};$
\item
$(\bar{\mathbf{u}}_k, \bar{\mathbf{d}}_k, \bar{\eta}_k) \to (\mathbf{u}_{\ast}, \mathbf{d}_{\ast}, \eta_{\ast})$ in $\mathcal{Z}_{T} \times M_{\bar{\mathbb{N}}}([0, T] \times Y)$ with probability 1 on $(\bar{\Omega}, \bar{\mathcal{F}}, \bar{\mathbb{P}})$ as $k \to \infty;$
\item
$\bar{\eta}_k(\bar{\omega}) = \eta_{\ast}(\bar{\omega})$ for all $\bar{\omega} \in \bar{\Omega}.$
\end{enumerate}

\noindent We will denote these sequences again by $\big( (\mathbf{u}_{n}, \mathbf{d}_{n}, \eta_{n})\big)_{n \in \mathbb{N}}$ and $\big( (\bar{\mathbf{u}}_n, \bar{\mathbf{d}}_n, \bar{\eta}_n)\big)_{n \in \mathbb{N}}.$

Using the definiton of $\mathcal{Z}_T$, we have $\bar{\mathbb{P}}$-a.s.
\begin{align}\label{zt1c}
\bar{\mathbf{u}}_n \to \mathbf{u}_{\ast} \ in \ L^{2}_{w}(0, T; \mathbb{V}) \cap L^{2}(0, T; \mathbb{H}) \cap \mathbb{D}([0, T]; \mathbb{V}') \cap \mathbb{D}([0, T]; \mathbb{H}_{w}) 
\end{align}
and
\begin{align}\label{zt2c}
\bar{\mathbf{d}}_n \to \mathbf{d}_{\ast} \ \text{in} \ L^{2}_{w}(0, T; H^{2}) \cap L^{2}(0, T; H^{1}) \cap \mathbb{C}([0, T]; (H^2)') \cap \mathbb{C}([0, T]; H^{1}_{w}).
\end{align}

\subsection{Properties of The New Processes and The Limiting Processes} 

We have the following result due to Kuratowski Theorem.

\begin{proposition}
The Borel subsets of $\mathbb{D}([0, T], \mathbb{H}_n)$ are Borel subsets of $\mathcal{Z}_{T, 1}$ $($defined in Section \ref{tolas}$)$ and the Borel subsets of $\mathbb{D}([0, T], \mathbb{L}_n)$ are Borel subsets of $\mathcal{Z}_{T, 2}.$
\end{proposition}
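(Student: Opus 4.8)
The plan is to invoke the Kuratowski theorem in the following form: if $X$ and $Y$ are Polish spaces and $\varphi\colon X\to Y$ is a continuous injection, then $\varphi$ maps Borel subsets of $X$ onto Borel subsets of $Y$ (equivalently, $\varphi$ is a Borel isomorphism onto its image). I would \emph{not} apply this with $Y=\mathcal Z_{T,1}$, which — as already remarked — is not Polish, but instead route through the genuinely Polish space $\mathbb D([0,T];\mathbb V')$ and then transfer the conclusion to $\mathcal Z_{T,1}$ by comparing topologies.

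First I would record the elementary facts. Both $\mathbb D([0,T];\mathbb H_n)$, with the Skorokhod topology over the finite-dimensional Hilbert space $\mathbb H_n\cong\mathbb R^{\dim\mathbb H_n}$, and $\mathbb D([0,T];\mathbb V')$ are separable complete metric spaces, hence Polish. Since $\mathbb H_n\subset\mathbb V\hookrightarrow\mathbb H\hookrightarrow\mathbb V'$ with continuous embeddings and $|\cdot|_{\mathbb V'}\le c\,|\cdot|_{\mathbb H}$, the natural inclusion $\iota\colon\mathbb D([0,T];\mathbb H_n)\hookrightarrow\mathbb D([0,T];\mathbb V')$ is injective; it is continuous because if $\mathbf u_k\to\mathbf u$ in $\mathbb D([0,T];\mathbb H_n)$, i.e. there are time changes $\lambda_k\to\mathrm{id}$ with $\mathbf u_k\circ\lambda_k\to\mathbf u$ uniformly in $\mathbb H_n$, then the same $\lambda_k$ witness $\mathbf u_k\to\mathbf u$ in $\mathbb D([0,T];\mathbb V')$. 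Next I would verify the set-theoretic inclusion $\mathbb D([0,T];\mathbb H_n)\subseteq\mathcal Z_{T,1}$: a càdlàg $\mathbb H_n$-valued path is bounded on $[0,T]$, hence, by equivalence of norms on the finite-dimensional $\mathbb H_n$, bounded in $\mathbb V$, so it lies in $L^2(0,T;\mathbb H)\cap L^2_w(0,T;\mathbb V)\cap\mathbb D([0,T];\mathbb V')\cap\mathbb D([0,T];\mathbb H_w)$.

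Now the core step. Let $A\subseteq\mathbb D([0,T];\mathbb H_n)$ be Borel. By the Kuratowski theorem applied to $\iota$, $A$ is a Borel subset of $\mathbb D([0,T];\mathbb V')$. Since $A\subseteq\mathbb D([0,T];\mathbb H_n)\subseteq\mathcal Z_{T,1}$, we have $A=A\cap\mathcal Z_{T,1}$, so $A$ belongs to the trace $\sigma$-algebra $\{B\cap\mathcal Z_{T,1}:B\in\mathscr B(\mathbb D([0,T];\mathbb V'))\}$, which — $\mathbb D([0,T];\mathbb V')$ being metric — equals $\mathscr B\big(\mathcal Z_{T,1},\,\mathscr T_1|_{\mathcal Z_{T,1}}\big)$, the Borel $\sigma$-algebra of the trace of the Skorokhod topology $\mathscr T_1$. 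Because $\mathscr T^1$ is by definition the supremum of $\mathscr T_1$ and the three other topologies, it is finer than $\mathscr T_1|_{\mathcal Z_{T,1}}$, and a finer topology has at least as many Borel sets; hence $A\in\mathscr B(\mathcal Z_{T,1},\mathscr T^1)$. This proves the first assertion. The second is identical after replacing $\mathbb H_n,\mathbb V',\mathcal Z_{T,1}$ by $\mathbb L_n,(H^2)',\mathcal Z_{T,2}$ and using $\mathbb L_n\subset H^2\hookrightarrow H^1\hookrightarrow L^2\hookrightarrow(H^2)'$; here one works with $\mathbb C([0,T];\mathbb L_n)$ (which is where the Galerkin paths $\mathbf d_n$ actually live) embedded in $\mathbb C([0,T];(H^2)')$, or, for the $\mathbb D$-version, embeds $\mathbb D([0,T];\mathbb L_n)$ into $\mathbb D([0,T];(H^2)')$ and uses that $\mathbb C([0,T];(H^2)')$ is closed there.

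The argument is essentially routine once organised this way; the only points demanding a little care are (i) not applying Kuratowski to $\mathcal Z_{T,1}$ directly, since it is not Polish, but routing through the Polish space $\mathbb D([0,T];\mathbb V')$ and then comparing Borel structures via the finer topology, and (ii) checking the genuine set inclusion $\mathbb D([0,T];\mathbb H_n)\subseteq\mathcal Z_{T,1}$ and, for the director component, reconciling the $\mathbb D$ versus $\mathbb C$ discrepancy. As an alternative to (i) one could instead show that the inclusion into $\mathcal Z_{T,1}$ itself is continuous (Skorokhod convergence implies convergence in Lebesgue measure, hence, being uniformly bounded, in $L^2$) and invoke the Lusin–Souslin form of Kuratowski after verifying that $\mathcal Z_{T,1}$ is a Lusin space; the detour above avoids this extra verification.
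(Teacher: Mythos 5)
Your argument is correct and is exactly the route the paper intends: the Proposition is stated with only an appeal to the Kuratowski theorem, and your detour through the Polish space $\mathbb{D}([0,T];\mathbb{V}')$ (continuous injection from $\mathbb{D}([0,T];\mathbb{H}_n)$, Kuratowski/Lusin--Souslin, then passage to the trace $\sigma$-algebra and to the finer supremum topology $\mathscr{T}^1$) is the standard way to fill in the details, as in \cite{BrM, EM}. Your observation that for the director component one must work with $\mathbb{C}([0,T];\mathbb{L}_n)$ rather than $\mathbb{D}([0,T];\mathbb{L}_n)$ --- since $\mathcal{Z}_{T,2}$ involves $\mathbb{C}([0,T];(H^2)')$ and a genuinely discontinuous c\`adl\`ag path would not lie in it --- correctly repairs a slight imprecision in the statement itself and is consistent with the subsequent Corollary, which compares the laws of $\mathbf{d}_n$ and $\bar{\mathbf{d}}_n$ on $\mathbb{C}([0,T],\mathbb{L}_n)$.
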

So we obtain the following results.

\begin{corollary}
$\bar{\mathbf{u}}_{n}$ and $\bar{\mathbf{d}}_{n}$ take values in $\mathbb{H}_n$ and $\mathbb{L}_n$ respectively. The laws of $\mathbf{u}_{n}$ and $\bar{\mathbf{u}}_{n}$ are equal on $\mathbb{D}([0, T], \mathbb{H}_n)$ and the laws of $\mathbf{d}_{n}$ and $\bar{\mathbf{d}}_{n}$ are equal on $\mathbb{C}([0, T], \mathbb{L}_n)$.
\end{corollary}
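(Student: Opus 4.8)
The plan is to deduce everything from the equality of laws furnished by the Skorokhod-type Theorem~\ref{set3} together with the Kuratowski-type Proposition stated immediately above the corollary. First I would record that item (a) of the construction gives $\mathscr{L}\big((\bar{\mathbf{u}}_n,\bar{\mathbf{d}}_n,\bar{\eta}_n)\big)=\mathscr{L}\big((\mathbf{u}_n,\mathbf{d}_n,\eta_n)\big)$ on $\mathcal{Z}_T\times M_{\bar{\mathbb{N}}}([0,T]\times Y)$. Projecting onto the first, respectively the second, coordinate (these projections are continuous, hence Borel measurable) yields $\mathscr{L}(\bar{\mathbf{u}}_n)=\mathscr{L}(\mathbf{u}_n)$ as Borel probability measures on $\mathcal{Z}_{T,1}$ and $\mathscr{L}(\bar{\mathbf{d}}_n)=\mathscr{L}(\mathbf{d}_n)$ on $\mathcal{Z}_{T,2}$.

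Next I would observe that, by construction of the Galerkin scheme, $\mathbf{u}_n$ has $\mathbb{P}$-a.s. c\`adl\`ag paths with values in the finite-dimensional space $\mathbb{H}_n$, so $\mathbb{P}\{\mathbf{u}_n\in\mathbb{D}([0,T];\mathbb{H}_n)\}=1$; similarly $\mathbf{d}_n$ has $\mathbb{P}$-a.s. continuous paths with values in $\mathbb{L}_n$, so $\mathbb{P}\{\mathbf{d}_n\in\mathbb{C}([0,T];\mathbb{L}_n)\}=1$. Since all norms on $\mathbb{H}_n$ are equivalent, the Skorokhod topology on $\mathbb{D}([0,T];\mathbb{H}_n)$ inherited from $\mathbb{D}([0,T];\mathbb{V}')$ coincides with the intrinsic one, and by the Proposition above $\mathbb{D}([0,T];\mathbb{H}_n)$ is a Borel subset of $\mathcal{Z}_{T,1}$ whose intrinsic Borel $\sigma$-algebra agrees with the trace of the Borel $\sigma$-algebra of $\mathcal{Z}_{T,1}$; the same holds for $\mathbb{C}([0,T];\mathbb{L}_n)\subset\mathcal{Z}_{T,2}$.

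Combining the two points: applying the equality of laws on $\mathcal{Z}_{T,1}$ to the Borel set $\mathbb{D}([0,T];\mathbb{H}_n)$ gives $\bar{\mathbb{P}}\{\bar{\mathbf{u}}_n\in\mathbb{D}([0,T];\mathbb{H}_n)\}=\mathbb{P}\{\mathbf{u}_n\in\mathbb{D}([0,T];\mathbb{H}_n)\}=1$, so $\bar{\mathbf{u}}_n$ takes values in $\mathbb{H}_n$; and restricting the equality of laws to the trace $\sigma$-algebra on $\mathbb{D}([0,T];\mathbb{H}_n)$ shows that $\mathbf{u}_n$ and $\bar{\mathbf{u}}_n$ have the same law there. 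The argument for $\bar{\mathbf{d}}_n$, $\mathbb{L}_n$ and $\mathbb{C}([0,T];\mathbb{L}_n)$ is identical, using continuity of paths in place of right-continuity.

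The only genuinely delicate point is the measurability bookkeeping in the second step — namely that $\mathbb{D}([0,T];\mathbb{H}_n)$ really is a Borel subset of $\mathcal{Z}_{T,1}$ and that its intrinsic Borel structure matches the trace one — but this is exactly what the Kuratowski-type Proposition quoted just before the corollary provides (Kuratowski's theorem applies since $\mathbb{H}_n$ is a closed, hence Polish, subspace and the inclusions $\mathbb{H}_n\hookrightarrow\mathbb{V}'$ and $\mathbb{L}_n\hookrightarrow(H^2)'$ are continuous and injective). With that in hand no further estimates are required and the corollary follows.
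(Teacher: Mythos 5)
Your argument is correct and is exactly the route the paper intends: the paper gives no explicit proof, merely deducing the corollary from the Kuratowski-type proposition together with the equality of laws $\mathscr{L}(\bar{\mathbf{u}}_n,\bar{\mathbf{d}}_n,\bar{\eta}_n)=\mathscr{L}(\mathbf{u}_n,\mathbf{d}_n,\eta_n)$ and the fact that the Galerkin solutions live in $\mathbb{H}_n\times\mathbb{L}_n$ almost surely. Your write-up simply makes explicit the measurability bookkeeping that the paper leaves implicit.
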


Since the random variables $\mathbf{u}_{n}$ and $\bar{\mathbf{u}}_{n}$ are identically distributed, from the above results, \eqref{u1ti} and \eqref{unv} we have, 
\begin{align}\label{unb2p}
\sup_{n \in \mathbb{N}} \bar{\mathbb{E}} \bigg[ \sup_{s \in [0, T]} \big| \bar{\mathbf{u}}_{n}(s)\big|^{2}_{\mathbb{H}}\bigg] \leq C_{1, T},
\end{align}
and
\begin{align}\label{unbv}
\sup_{n \in \mathbb{N}} \bar{\mathbb{E}} \bigg[ \int_{0}^{T} \big\|  \bar{\mathbf{u}}_{n}(s) \big\|^{2}_{\mathbb{V}} \,ds \bigg] \leq \tilde{C}(T). 
\end{align}

From \eqref{unb2p} and Banach-Alaoglu Theorem we conclude there exists a subsequence of $(\bar{\mathbf{u}}_{n})$ convergent weak star in $L^2(\bar{\Omega}; L^{\infty}(0, T; \mathbb{H}))$. So from \eqref{zt1c} we infer $\mathbf{u}_{\ast} \in L^2(\bar{\Omega}; L^{\infty}(0, T; \mathbb{H})),$ i.e.,
\begin{align}\label{us2p}
\bar{\mathbb{E}} \bigg[ \sup_{t \in [0, T]} \big| \mathbf{u}_{\ast}(t) \big|^{2}_{\mathbb{H}}\bigg] < \infty,
\end{align}

Similarly by \eqref{zt1c}, \eqref{unbv} and Banach-Alaoglu Theorem, there exists a subsequence of $(\bar{\mathbf{u}}_{n}),$ weakly convergent in $L^2([0, T] \times \bar{\Omega}; \mathbb{V}),$ i.e.,
\begin{align}\label{usv2}
\bar{\mathbb{E}} \bigg[ \int_{0}^{T} \big\| \mathbf{u}_{\ast}(t) \big\|^{2}_{\mathbb{V}} \,ds \bigg] < \infty. 
\end{align}

Also from \eqref{u1tip} we get,
\begin{align}\label{unb2pg}
\sup_{n \in \mathbb{N}} \bar{\mathbb{E}} \bigg[ \sup_{s \in [0, T]} \big| \bar{\mathbf{u}}_{n}(s)\big|^{2p}_{\mathbb{H}}\bigg] \leq C_{p, T},
\end{align}

and from Proposition \ref{ugrdn}, for $p \geq 2$ we observe,
\begin{align}\label{unbvn}
\sup_{n \in \mathbb{N}} \bar{\mathbb{E}} \bigg[ \int_{0}^{T} \|  \bar{\mathbf{u}}_{n}(s) \|^{2} \,ds \bigg]^p \leq C_{p, T}. 
\end{align}

Since the random variables $\mathbf{d}_{n}$ and $\bar{\mathbf{d}}_{n}$ are identically distributed, from \eqref{ssdnp} and \eqref{dnv2}, we have for $p =2,$
\begin{align}\label{dnbp}
\sup_{n \in \mathbb{N}} \bar{\mathbb{E}} \bigg[ \sup_{s \in [0, T]} \big| \bar{\mathbf{d}}_{n}(s)\big|_{L^2}^{2} \bigg] \leq \tilde{C}_{2, T},
\end{align}
and
\begin{align}\label{dnbv}
\sup_{n \in \mathbb{N}} \bar{\mathbb{E}} \bigg[ \int_{0}^{T} \big\|  \bar{\mathbf{d}}_{n}(s) \big\|^{2}_{H^1} \,ds \bigg] \leq C_T. 
\end{align}

From \eqref{dnbp} and Banach-Alaoglu Theorem we conclude there exists a subsequence of $(\bar{\mathbf{d}}_{n})$ convergent weak star in $L^2(\bar{\Omega}; L^{\infty}(0, T; L^2))$. So from \eqref{zt2c} we infer $\mathbf{d}_{\ast} \in L^2(\bar{\Omega}; L^{\infty}(0, T; L^2)),$ i.e.,
\begin{align}\label{dsp}
\bar{\mathbb{E}} \bigg[ \sup_{t \in [0, T]} \big| \mathbf{d}_{\ast}(s) \big|_{L^2}^{2} \bigg] < \infty,
\end{align}

Similarly for $p =2,$ from \eqref{zt2c}, \eqref{dnbv} and Banach-Alaoglu Theorem, there exists a subsequence of $(\bar{\mathbf{d}}_{n}),$ weakly convergent in $L^2([0, T] \times \bar{\Omega}; H^1),$ i.e.,
\begin{align}\label{dsv2}
\bar{\mathbb{E}} \bigg[ \int_{0}^{T} \big\| \mathbf{d}_{\ast}(s) \big\|^{2}_{H^1} \,ds \bigg] < \infty. 
\end{align}

From \eqref{ssdnp} we also have for $p \geq 2$,
\begin{align}\label{dsp1}
\bar{\mathbb{E}} \bigg[ \sup_{t \in [0, T]} \big| \bar{\mathbf{d}}_n(s) \big|_{L^2}^{p} \bigg] < \tilde{C}_{p, T},
\end{align}

Again from Corollary \ref{codnh} we obtain,
\begin{align}\label{2prop}
\bar{\mathbb{E}} \bigg[ \sup_{s \in [0, T]} \big\| \bar{\mathbf{d}}_n(s)\big\|^{q}_{H^1} \bigg] \leq  C_q.
\end{align}

Similarly, from Proposition \ref{Ldn} we have, 
\begin{align}\label{dnbla}
\bar{\mathbb{E}} \bigg[ \int_{0}^{T} \big|\Delta \bar{\mathbf{d}}_n(s) \big|_{L^2}^2  \,ds \bigg]^q \leq C(q).
\end{align}
So from \eqref{dnH2} and using  Banach-Alaoglu Theorem, we have a subsequence of $\bar{\mathbf{d}}_n,$ convergent weakly in $L^2([0, T] \times \bar{\Omega}; H^2).$ As from \eqref{zt2c}, $\bar{\mathbf{d}}_n \to \mathbf{d}_{\ast}$ in $L^{2}_{w}([0, T]; H^2),$ we obtain for $q=1,$
 \begin{align}\label{dnsl}
\bar{\mathbb{E}} \bigg[ \int_{0}^{T} \big\| \mathbf{d}_{\ast}(s) \big\|_{H^2}^2  \,ds \bigg] \leq C.
\end{align}

Also from Remark \ref{fes}, we obtain
\begin{align}
|f(\bar{\mathbf{d}}_n)|_{\mathbb{R}^{\mathbf{n}}} \leq c \ \big(1 + \big|\bar{\mathbf{d}}_n \big|_{\mathbb{R}^{\mathbf{n}}}^{2N+1} \big)
\end{align}

\subsection{Convergence of the New Processes to the Corresponding Limiting Processes}

Let us fix $v \in \mathbb{V}$. Let us denote
\begin{align}\label{kn}
&\mathscr{K}_n(\bar{\mathbf{u}}_n, \bar{\mathbf{d}}_n, \bar{\eta}_n, v)(t) := \big( \bar{\mathbf{u}}_{n}(0), v \big)_{\mathbb{H}} - \int_{0}^{t} \big\langle \mathscr{A} \bar{\mathbf{u}}_n(s), v \big\rangle \, ds \nonumber
\\ & \ \ - \int_{0}^{t} \big\langle B_n(\bar{\mathbf{u}}_n(s)), v \big\rangle \,ds - \int_{0}^{t} \big\langle M_n(\bar{\mathbf{d}}_n(s)), v \big\rangle \,ds \nonumber
\\ & \ \ + \int_{0}^{t} \int_{Y} \big( P_n F(s, \bar{\mathbf{u}}_n(s); y), v\big)_{\mathbb{H}} \, \tilde{\bar{\eta}}_n(ds, dy), \qquad t \in [0, T]
\end{align}
and fixing $v \in H^2,$ denote
\begin{align}\label{Ln}
&\Lambda_n(\bar{\mathbf{u}}_n, \bar{\mathbf{d}}_n, v)(t) := \big( \bar{\mathbf{d}}_{n}(0), v \big)_{L^2} - \int_{0}^{t} \big\langle \mathcal{A} \bar{\mathbf{d}}_n(s), v \big\rangle \, ds \nonumber
\\ & \ \ - \int_{0}^{t} \big\langle \tilde{B}_n(\bar{\mathbf{u}}_n(s), \bar{\mathbf{d}}_n(s)), v \big\rangle \,ds - \int_{0}^{t} \big\langle f_n(\bar{\mathbf{d}}_n(s)), v \big\rangle \,ds,  \qquad t \in [0, T].
\end{align}
Now for the limiting processes we denote for $v \in \mathbb{V},$
\begin{align}\label{ks}
&\mathscr{K}(\mathbf{u}_{\ast}, \mathbf{d}_{\ast}, \eta_{\ast}, v)(t) := \big( \mathbf{u}_{\ast}(0), v \big)_{\mathbb{H}} - \int_{0}^{t} \big\langle \mathscr{A} \mathbf{u}_{\ast}(s), v \big\rangle \, ds \nonumber
\\ & \ \ - \int_{0}^{t} \big\langle B(\mathbf{u}_{\ast}(s)), v \big\rangle \,ds - \int_{0}^{t} \big\langle M(\mathbf{d}_{\ast}(s)), v \big\rangle \,ds \nonumber
\\ & \ \ + \int_{0}^{t} \int_{Y} \big( F(s, \mathbf{u}_{\ast}(s); y), v\big)_{\mathbb{H}} \, \tilde{\eta}_{\ast}(ds, dy), \qquad t \in [0, T]
\end{align}
and for $v \in H^2,$
\begin{align}\label{Ls}
&\Lambda(\mathbf{u}_{\ast}, \mathbf{d}_{\ast}, v)(t) := \big( \mathbf{d}_{\ast}(0), v \big)_{L^2} - \int_{0}^{t} \big\langle \mathcal{A} \mathbf{d}_{\ast}(s), v \big\rangle \, ds \nonumber
\\ & \ \ - \int_{0}^{t} \big\langle \tilde{B}(\mathbf{u}_{\ast}(s), \mathbf{d}_{\ast}(s)), v \big\rangle \,ds - \int_{0}^{t} \big\langle f(\mathbf{d}_{\ast}(s)), v \big\rangle \,ds, \qquad t \in [0, T].
\end{align}

We will show that 
\begin{align}\label{knkst}
\lim_{n \to \infty} \| \mathscr{K}_n(\bar{\mathbf{u}}_n, \bar{\mathbf{d}}_n, \bar{\eta}_n, v) - \mathscr{K}(\mathbf{u}_{\ast}, \mathbf{d}_{\ast}, \eta_{\ast}, v) \|_{L^2([0, T] \times \bar{\Omega})} = 0. 
\end{align}
and
\begin{align}\label{LnLst}
\lim_{n \to \infty} \| \Lambda_n(\bar{\mathbf{u}}_n, \bar{\mathbf{d}}_n, v) - \Lambda(\mathbf{u}_{\ast}, \mathbf{d}_{\ast}, v) \|_{L^2([0, T] \times \bar{\Omega})} = 0. 
\end{align}

Now for proving \eqref{knkst}, using Fubini's Theorem, we have
\begin{align}
&\| \mathscr{K}_n(\bar{\mathbf{u}}_n, \bar{\mathbf{d}}_n, \bar{\eta}_n, v) - \mathscr{K}(\mathbf{u}_{\ast}, \mathbf{d}_{\ast}, \eta_{\ast}, v) \|^{2}_{L^2([0, T] \times \bar{\Omega})} \nonumber
\\ &= \int_{0}^{T} \int_{\bar{\Omega}} |\mathscr{K}_n(\bar{\mathbf{u}}_n, \bar{\mathbf{d}}_n, \bar{\eta}_n, v)(t) - \mathscr{K}(\mathbf{u}_{\ast}, \mathbf{d}_{\ast}, \eta_{\ast}, v)(t)|^2 \,d\bar{\mathbb{P}}(\omega) \ dt \nonumber
\\ &= \int_{0}^{T} \bar{\mathbb{E}} \big[ |\mathscr{K}_n(\bar{\mathbf{u}}_n, \bar{\mathbf{d}}_n, \bar{\eta}_n, v)(t) - \mathscr{K}(\mathbf{u}_{\ast}, \mathbf{d}_{\ast}, \eta_{\ast}, v)(t)|^2 \big] \,dt.
\end{align}

So we will show each term of right hand side of \eqref{kn} tends to the corresponding terms in \eqref{ks} in $L^2([0, T] \times \bar{\Omega}).$ Similarly for proving \eqref{LnLst}, we will show each term of right hand side of \eqref{Ln} tends to the corresponding terms in \eqref{Ls} in $L^2([0, T] \times \bar{\Omega}).$ So we need to prove the following Lemmas.

\begin{lemma}\label{knterm}
For all $v \in \mathbb{V}$
\begin{enumerate}[label=(\alph*)]
\item
$\lim_{n \to \infty} \bar{\mathbb{E}} \big[\int_{0}^{T} \big| (\bar{\mathbf{u}}_{n}(t) - \mathbf{u}_{\ast}(t), v)_{\mathbb{H}}\big|^{2} \, dt \big] = 0,$ 
\item
$\lim_{n \to \infty} \int_{0}^{T} \bar{\mathbb{E}} \big[ \big| (\bar{\mathbf{u}}_{n}(0) - \mathbf{u}_{\ast}(0), v )_{\mathbb{H}} \big|^2 \big] = 0,$
\item
$\lim_{n \to \infty} \int_{0}^{T} \bar{\mathbb{E}} \big[ \big| \int_{0}^{t} \langle \mathscr{A} \bar{\mathbf{u}}_{n}(s) - \mathscr{A}\mathbf{u}_{\ast}(s), v \rangle \,ds \big|^{2} \big] \,dt = 0,$
\item
$\lim_{n \to \infty} \int_{0}^{T} \bar{\mathbb{E}} \big[ \big| \int_{0}^{t} \big\langle B_n \big( \bar{\mathbf{u}}_{n}(s) \big)-B \big( \mathbf{u}_{\ast}(s) \big), v \big\rangle \,ds \big|^{2} \big] \,dt =0,$
\item
$\lim_{n \to \infty} \int_{0}^{T} \bar{\mathbb{E}} \big[ \big| \int_{0}^{t} \big\langle M_n \big( \bar{\mathbf{d}}_{n}(s) \big)-M \big( \mathbf{d}_{\ast}(s) \big), v \big\rangle \,ds \big|^2 \big] \,dt =0,$
\item
$\lim_{n \to \infty} \int_{0}^{T} \bar{\mathbb{E}} \big[ \big| \int_{0}^{t} \int_{Y} \big\langle P_n F(s, \bar{\mathbf{u}}_{n}(s), y) - F(s, \mathbf{u}_{\ast}(s), y), v \big\rangle \,\tilde{\eta}_{\ast}(ds, dy) \big|^2 \,dt \big] = 0.$
\end{enumerate}
\end{lemma}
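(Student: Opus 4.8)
The six items can all be treated by one common scheme, which I would set up first. The idea is: for each term, show that for $\bar{\mathbb{P}}$-a.e.\ $\bar\omega$ the relevant time integral tends to $0$ as $n\to\infty$, using the modes of convergence recorded in \eqref{zt1c}--\eqref{zt2c} — in particular the \emph{strong} convergences $\bar{\mathbf{u}}_n\to\mathbf{u}_{\ast}$ in $L^2(0,T;\mathbb{H})$ and $\bar{\mathbf{d}}_n\to\mathbf{d}_{\ast}$ in $L^2(0,T;H^1)$, the \emph{weak} convergences in $L^2(0,T;\mathbb{V})$ and $L^2(0,T;H^2)$, and the convergence in $\mathbb{D}([0,T];\mathbb{H}_w)$ — together with $\|P_nv-v\|_{\mathbb{V}}\to0$ and $\|\tilde{P}_nv-v\|_{H^1}\to0$ from Lemma \ref{Pnu}; and then upgrade this $\bar\omega$-wise convergence to convergence in $L^2([0,T]\times\bar\Omega)$ by Vitali's convergence theorem, the uniform integrability being supplied by the higher-moment a~priori bounds \eqref{unb2pg}, \eqref{unbvn}, \eqref{2prop}, \eqref{dnbla} and by \eqref{us2p}, \eqref{usv2}, \eqref{dsv2}, \eqref{dnsl} for the limiting processes. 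I expect the genuine difficulty to sit in the quadratic terms (d) and (e).

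The linear items are quick. For (a), $\int_0^T|(\bar{\mathbf{u}}_n(t)-\mathbf{u}_{\ast}(t),v)_{\mathbb{H}}|^2\,dt\le|v|_{\mathbb{H}}^2\,\|\bar{\mathbf{u}}_n-\mathbf{u}_{\ast}\|^2_{L^2(0,T;\mathbb{H})}\to0$ a.s., and the square of this is dominated in $L^1(\bar\Omega)$ by a constant times $\bar{\mathbb{E}}[\sup_{[0,T]}|\bar{\mathbf{u}}_n|_{\mathbb{H}}^4]$, bounded by \eqref{unb2pg}. For (b), $\bar{\mathbf{u}}_n(0)=P_n\mathbf{u}_0$ is deterministic and converges to $\mathbf{u}_0$ in $\mathbb{H}$; evaluating the $\mathbb{D}([0,T];\mathbb{H}_w)$-convergence at $t=0$ identifies $\mathbf{u}_{\ast}(0)=\mathbf{u}_0$ a.s., so the item reduces to $T\,|(P_n\mathbf{u}_0-\mathbf{u}_0,v)_{\mathbb{H}}|^2\to0$. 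For (c), using the symmetry of $\mathscr{A}$, $\int_0^t\langle\mathscr{A}(\bar{\mathbf{u}}_n(s)-\mathbf{u}_{\ast}(s)),v\rangle\,ds=\int_0^T\langle\mathbf{1}_{[0,t]}(s)\,\mathscr{A}v,\bar{\mathbf{u}}_n(s)-\mathbf{u}_{\ast}(s)\rangle\,ds\to0$ a.s., since $\mathbf{1}_{[0,t]}\mathscr{A}v\in L^2(0,T;\mathbb{V}')=L^2(0,T;\mathbb{V})'$ and $\bar{\mathbf{u}}_n\to\mathbf{u}_{\ast}$ weakly in $L^2(0,T;\mathbb{V})$; the crude bound $|\int_0^t\langle\mathscr{A}(\bar{\mathbf{u}}_n-\mathbf{u}_{\ast}),v\rangle\,ds|^2\le T\|v\|_{\mathbb{V}}^2\int_0^T(\|\bar{\mathbf{u}}_n\|+\|\mathbf{u}_{\ast}\|)^2\,ds$ gives the uniform integrability via \eqref{unbvn}, \eqref{usv2}.

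For (d) and (e) I would split off the projection error and a transport difference. In (d), $\langle B_n(\bar{\mathbf{u}}_n)-B(\mathbf{u}_{\ast}),v\rangle=\langle B(\bar{\mathbf{u}}_n),P_nv-v\rangle+\langle B(\bar{\mathbf{u}}_n-\mathbf{u}_{\ast},\bar{\mathbf{u}}_n),v\rangle+\langle B(\mathbf{u}_{\ast},\bar{\mathbf{u}}_n-\mathbf{u}_{\ast}),v\rangle$; estimating each piece by \eqref{BHH} (or, if one prefers, by \eqref{buvvd} and Hölder in time), integrating on $[0,t]$ and using Cauchy--Schwarz in time, the first piece carries the vanishing factor $\|P_nv-v\|_{\mathbb{V}}$ and the last two carry $(\int_0^T|\bar{\mathbf{u}}_n-\mathbf{u}_{\ast}|_{\mathbb{H}}^2)^{1/2}$, all other factors having bounded moments by \eqref{unb2pg}, \eqref{unbvn}, \eqref{us2p}, \eqref{usv2}. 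In (e) one writes likewise $\langle M_n(\bar{\mathbf{d}}_n)-M(\mathbf{d}_{\ast}),v\rangle=\langle M(\bar{\mathbf{d}}_n),P_nv-v\rangle+\langle M(\bar{\mathbf{d}}_n-\mathbf{d}_{\ast},\bar{\mathbf{d}}_n),v\rangle+\langle M(\mathbf{d}_{\ast},\bar{\mathbf{d}}_n-\mathbf{d}_{\ast}),v\rangle$ and applies \eqref{md1d2}. Here is the main obstacle: in $M(\bar{\mathbf{d}}_n-\mathbf{d}_{\ast},\cdot)$ one only controls $\|\bar{\mathbf{d}}_n-\mathbf{d}_{\ast}\|\to0$ in $L^2(0,T)$ while $|\Delta(\bar{\mathbf{d}}_n-\mathbf{d}_{\ast})|$ is merely bounded there, so one must integrate on $[0,t]$ and apply Hölder in time with exponents $(4,4,4,4)$ for $\mathbf{n}=2$ and $(8,\tfrac73,\tfrac73,\infty)$ for $\mathbf{n}=3$ — in the spirit of \eqref{gdld}--\eqref{gdld3} — so as to isolate a genuinely vanishing factor $\big(\int_0^T\|\bar{\mathbf{d}}_n-\mathbf{d}_{\ast}\|^2\big)^{\theta}$ with some $\theta>0$, the remaining factors staying within reach of \eqref{2prop}, \eqref{dnbla}, \eqref{dsv2}, \eqref{dnsl}; Vitali then finishes, and the $P_nv-v$ piece is handled as in (d).

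For the stochastic term (f), recall that $\bar\eta_n=\eta_{\ast}$ and that $P_nF(s,\bar{\mathbf{u}}_n(s),y)$ is predictable, so the isometry \eqref{isof} gives $\bar{\mathbb{E}}\big|\int_0^t\!\int_Y\langle P_nF(s,\bar{\mathbf{u}}_n(s),y)-F(s,\mathbf{u}_{\ast}(s),y),v\rangle\,\tilde\eta_{\ast}(ds,dy)\big|^2=\bar{\mathbb{E}}\int_0^t\!\int_Y|\langle P_nF(s,\bar{\mathbf{u}}_n(s),y)-F(s,\mathbf{u}_{\ast}(s),y),v\rangle|^2\,\nu(dy)\,ds$, which is bounded uniformly in $t\in[0,T]$ by the same expression with $t$ replaced by $T$. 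Writing $P_nF(s,\bar{\mathbf{u}}_n,y)-F(s,\mathbf{u}_{\ast},y)=(P_n-I)F(s,\mathbf{u}_{\ast},y)+(P_n-I)\big(F(s,\bar{\mathbf{u}}_n,y)-F(s,\mathbf{u}_{\ast},y)\big)+\big(F(s,\bar{\mathbf{u}}_n,y)-F(s,\mathbf{u}_{\ast},y)\big)$ and using $\|P_n-I\|\le2$, the Lipschitz bound \eqref{lipf} controls the last two contributions by $CL\int_0^T|\bar{\mathbf{u}}_n(s)-\mathbf{u}_{\ast}(s)|_{\mathbb{H}}^2\,ds\to0$ a.s., while $\int_0^T\!\int_Y|(P_n-I)F(s,\mathbf{u}_{\ast}(s),y)|_{\mathbb{H}}^2\,\nu(dy)\,ds\to0$ a.s.\ by dominated convergence, the dominating function $4|F(s,\mathbf{u}_{\ast}(s),y)|_{\mathbb{H}}^2$ being $\nu\otimes ds$-integrable by \eqref{lgf} (with $p=2$) and \eqref{us2p}; uniform integrability of $\int_0^T\!\int_Y|\langle\cdot,v\rangle|^2\,\nu(dy)\,ds$, hence Vitali, again follows from \eqref{lgf}, \eqref{unb2pg}, \eqref{us2p}.
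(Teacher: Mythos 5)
Your proposal is correct, and its overall architecture --- almost sure convergence of each term deduced from the $\mathcal{Z}_T$-convergences \eqref{zt1c}--\eqref{zt2c} together with Lemma \ref{Pnu}, upgraded to $L^2([0,T]\times\bar{\Omega})$ by Vitali's theorem via the higher-moment bounds \eqref{unb2pg}, \eqref{unbvn}, \eqref{2prop}, \eqref{dnbla}, and then dominated convergence in the outer $t$-integral --- is exactly the paper's. Where you diverge is in how the almost sure convergence of the nonlinear terms is established: the paper delegates (d) and (e) to the appendix Lemmas \ref{bnlim} and \ref{mlimi}, whereas you perform the bilinear splitting $B(\bar{\mathbf{u}}_n)-B(\mathbf{u}_{\ast})=B(\bar{\mathbf{u}}_n-\mathbf{u}_{\ast},\bar{\mathbf{u}}_n)+B(\mathbf{u}_{\ast},\bar{\mathbf{u}}_n-\mathbf{u}_{\ast})$ and its analogue for $M$ inline and then interpolate in time. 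For (e) this is more than a stylistic choice: Lemma \ref{mlimi} estimates $\|M(\mathbf{d}_1,\mathbf{d}_2)\|_{\mathbb{V}'}\le\|\mathbf{d}_1\|_{H^2}\|\mathbf{d}_2\|_{H^2}$ and consequently needs $\|\bar{\mathbf{d}}_n-\mathbf{d}_{\ast}\|_{L^2(0,T;H^2)}\to0$, while \eqref{zt2c} supplies only \emph{weak} convergence in $L^2(0,T;H^2)$ alongside \emph{strong} convergence in $L^2(0,T;H^1)$; your use of the sharper bound \eqref{md1d2} with a Hölder-in-time splitting that isolates a positive power of $\int_0^T\|\bar{\mathbf{d}}_n-\mathbf{d}_{\ast}\|^2\,ds$, leaving $\int_0^T|\Delta(\bar{\mathbf{d}}_n-\mathbf{d}_{\ast})|^2\,ds$ merely bounded (which it is, $\bar{\mathbb{P}}$-a.s., since a weakly convergent sequence in $L^2(0,T;H^2)$ is bounded there), closes this gap in the same spirit as \eqref{gdld}--\eqref{gdld3}. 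Your explicit three-term treatment of the projection error $(P_n-I)F$ in (f), with dominated convergence against $4|F(s,\mathbf{u}_{\ast}(s),y)|_{\mathbb{H}}^2$, is likewise a more transparent version of the paper's one-line passage from \eqref{lim4} to \eqref{pnF}; the remaining items coincide with the paper's argument up to your using the strong $L^2(0,T;\mathbb{H})$ convergence in (a) where the paper routes through $\mathbb{D}([0,T];\mathbb{V}')$ and dominated convergence.
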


\begin{lemma}\label{lnterm}
For all $v \in H^2$
\begin{enumerate}[label=(\alph*)]
\item
$\lim_{n \to \infty} \bar{\mathbb{E}} \big[\int_{0}^{T} \big| (\bar{\mathbf{d}}_{n}(t) - \mathbf{d}_{\ast}(t), v)_{L^2}\big|^{2} \, dt \big] = 0,$
\item
$\lim_{n \to \infty} \int_{0}^{T} \bar{\mathbb{E}} \big[ \big| (\bar{\mathbf{d}}_{n}(0) - \mathbf{d}_{\ast}(0), v )_{L^2} \big|^2 \big] = 0,$
\item
$\lim_{n \to \infty} \int_{0}^{T} \bar{\mathbb{E}} \big[ \big| \int_{0}^{t} \langle \mathcal{A} \bar{\mathbf{d}}_{n}(s) - \mathcal{A}\mathbf{d}_{\ast}(s), v \rangle \,ds \big|^{2} \big] \,dt = 0,$
\item
$\lim_{n \to \infty} \int_{0}^{T} \bar{\mathbb{E}} \big[ \big| \int_{0}^{t} \big\langle \tilde{B}_n \big( \bar{\mathbf{u}}_{n}(s), \bar{\mathbf{d}}_{n}(s) \big) - \tilde{B} \big( \mathbf{u}_{\ast}(s), \mathbf{d}_{\ast}(s) \big), v \big\rangle \,ds \big|^2 \big] \,dt =0,$
\item
$\lim_{n \to \infty} \int_{0}^{T} \bar{\mathbb{E}} \big[ \big| \int_{0}^{t} \big\langle f_n \big( \bar{\mathbf{d}}_{n}(s) \big) - f \big( \mathbf{d}_{\ast}(s) \big), v \big\rangle \,ds \big|^2 \big] \,dt =0.$
\end{enumerate}
\end{lemma}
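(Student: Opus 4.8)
The plan is to establish each of the six (resp.\ five) limits in Lemma~\ref{knterm} (resp.\ Lemma~\ref{lnterm}) by one recurring scheme: first use the Skorokhod convergences \eqref{zt1c}--\eqref{zt2c} to obtain $\bar{\mathbb{P}}$-a.s.\ convergence of the relevant time integral; then produce uniform integrability in $\bar\omega$ from the a priori moment bounds of Section~\ref{EE}, which transfer to $(\bar{\mathbf{u}}_n,\bar{\mathbf{d}}_n)$ because their laws agree with $(\mathbf{u}_n,\mathbf{d}_n)$, and which pass to $(\mathbf{u}_\ast,\mathbf{d}_\ast)$ by weak lower semicontinuity (Fatou); finally conclude $L^2([0,T]\times\bar\Omega)$-convergence by Vitali's theorem. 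Throughout we use that, $\bar{\mathbb{P}}$-a.s., $\bar{\mathbf{u}}_n\to\mathbf{u}_\ast$ strongly in $L^2(0,T;\mathbb{H})$ and weakly in $L^2(0,T;\mathbb{V})$, and $\bar{\mathbf{d}}_n\to\mathbf{d}_\ast$ strongly in $L^2(0,T;H^1)$ and weakly in $L^2(0,T;H^2)$.

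The value and linear terms — items (a), (b), (c) in both lemmas — are routine. For Lemma~\ref{lnterm}(a) one bounds $\int_0^T|(\bar{\mathbf{d}}_n(t)-\mathbf{d}_\ast(t),v)_{L^2}|^2\,dt\le|v|_{L^2}^2\int_0^T|\bar{\mathbf{d}}_n(t)-\mathbf{d}_\ast(t)|_{L^2}^2\,dt\to0$ a.s., and this is bounded in $L^p(\bar\Omega)$ by \eqref{dsp1}; for (c) one writes $\langle\mathcal{A}\bar{\mathbf{d}}_n-\mathcal{A}\mathbf{d}_\ast,v\rangle=((\bar{\mathbf{d}}_n-\mathbf{d}_\ast,v))$ and uses the strong $L^2(0,T;H^1)$-convergence together with \eqref{dnv2} and Corollary~\ref{codnh}; for (b), $\bar{\mathbf{d}}_n(0)=\tilde P_n\mathbf{d}_0\to\mathbf{d}_0$ in $H^1$ while the value at $t=0$ is preserved under $\mathbb{C}([0,T];H^1_w)$-convergence, so $\mathbf{d}_\ast(0)=\mathbf{d}_0$ $\bar{\mathbb{P}}$-a.s.\ and the integrand is deterministic and vanishes. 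The corresponding items of Lemma~\ref{knterm} are identical, with $L^2(0,T;H^1)$ replaced by $L^2(0,T;\mathbb{H})$, \eqref{dsp1} by \eqref{unb2pg}, \eqref{dnv2}/\eqref{ung} used for the $\mathscr{A}$-term, and for \ref{knterm}(b) the $\mathbb{D}([0,T];\mathbb{H}_w)$-convergence together with $P_n\mathbf{u}_0\to\mathbf{u}_0$ in $\mathbb{H}$.

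For the nonlinear terms — (d) and (e) in both lemmas — I would first swap $P_n v$ (resp.\ $\tilde P_n v$) for $v$ via $\langle B_n(\cdot),v\rangle=\langle B(\cdot),P_n v\rangle$ and its analogues, the error being controlled by a power of $\|P_n v-v\|_{\mathbb{V}}$ (resp.\ $\|\tilde P_n v-v\|_{H^1}$), which vanishes by Lemma~\ref{Pnu}. Then I would expand by bilinearity, e.g.\ $\langle B(\bar{\mathbf{u}}_n),v\rangle-\langle B(\mathbf{u}_\ast),v\rangle=\langle B(\bar{\mathbf{u}}_n-\mathbf{u}_\ast,\bar{\mathbf{u}}_n),v\rangle+\langle B(\mathbf{u}_\ast,\bar{\mathbf{u}}_n-\mathbf{u}_\ast),v\rangle$, and similarly for $M$ (through bilinearity of $m$) and for $\tilde B$, estimating each piece by \eqref{BHH}, \eqref{md1d2}, \eqref{btil}. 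After integrating over $[0,t]$ and applying H\"older in time, the difference factor appears as a strictly positive power (since $1-\frac{\mathbf{n}}{4}>0$ for $\mathbf{n}\in\{2,3\}$) of $\int_0^T|\bar{\mathbf{u}}_n-\mathbf{u}_\ast|_{\mathbb{H}}^2\,ds$ or $\int_0^T\|\bar{\mathbf{d}}_n-\mathbf{d}_\ast\|^2\,ds$, both $\to0$ a.s., while the remaining factors are powers of $\int_0^T\|\bar{\mathbf{u}}_n\|^2\,ds$, $\int_0^T|\Delta\bar{\mathbf{d}}_n|^2\,ds$, $\sup_{[0,T]}\|\bar{\mathbf{d}}_n\|_{H^1}^q$, bounded in $L^p(\bar\Omega)$ by Propositions~\ref{ugrdn}, \ref{Ldn} and Corollary~\ref{codnh} (the bounds for $\mathbf{u}_\ast,\mathbf{d}_\ast$ again following by Fatou), so Vitali applies. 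For \ref{lnterm}(e) I would instead use the mean value bound $|f(\bar{\mathbf{d}}_n)-f(\mathbf{d}_\ast)|\le\tilde c(1+|\bar{\mathbf{d}}_n|^{2N}+|\mathbf{d}_\ast|^{2N})|\bar{\mathbf{d}}_n-\mathbf{d}_\ast|$ from Remark~\ref{fes}, the embeddings $H^2\hookrightarrow L^\infty$ and $H^1\hookrightarrow L^{4N+2}$ (the latter, for $\mathbf{n}=3$, forcing $N=1$, consistent with Assumption~(C) of Section~\ref{assu}), and the Cauchy--Schwarz split $\int_0^T\langle f(\bar{\mathbf{d}}_n)-f(\mathbf{d}_\ast),v\rangle\,ds\le|v|_{L^\infty}\big(\int_0^T(1+\|\bar{\mathbf{d}}_n\|_{H^1}^{2N}+\|\mathbf{d}_\ast\|_{H^1}^{2N})^2\,ds\big)^{1/2}\big(\int_0^T|\bar{\mathbf{d}}_n-\mathbf{d}_\ast|_{L^2}^2\,ds\big)^{1/2}$, whose last factor tends to $0$ a.s.

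Finally, for the stochastic term \ref{knterm}(f) I would split the integrand as $(F(s,\bar{\mathbf{u}}_n(s);y)-F(s,\mathbf{u}_\ast(s);y),P_n v)_{\mathbb{H}}+(F(s,\mathbf{u}_\ast(s);y),P_n v-v)_{\mathbb{H}}$ and apply the It\^o isometry for the time-homogeneous compensated Poisson random measure (legitimate because $\bar{\mathbf{u}}_n$ and $\mathbf{u}_\ast$ are adapted to the filtration for which $\bar\eta_n=\eta_\ast$ is a Poisson random measure, as arranged in the construction); then \eqref{lipf} bounds the first part by $L|v|_{\mathbb{H}}^2\,\bar{\mathbb{E}}\int_0^T|\bar{\mathbf{u}}_n-\mathbf{u}_\ast|_{\mathbb{H}}^2\,ds\to0$ and \eqref{lgf} bounds the second by $|P_n v-v|_{\mathbb{H}}^2\,C_2\,\bar{\mathbb{E}}\int_0^T(1+|\mathbf{u}_\ast|_{\mathbb{H}}^2)\,ds\to0$. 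I expect the genuine obstacle to be the 3D cases of \ref{knterm}(d)--(e) and \ref{lnterm}(d)--(e), where the interpolation exponents are tightest and one must verify that, after the time-H\"older split, the difference factor still carries a positive power surviving to the limit; the adaptedness point in \ref{knterm}(f) is the only other delicate issue, and it is settled by the filtration built alongside the Skorokhod representation.
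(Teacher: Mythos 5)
Your proposal is correct, and its skeleton --- $\bar{\mathbb{P}}$-a.s.\ convergence of each time integral from the Skorokhod convergences \eqref{zt1c}--\eqref{zt2c}, uniform integrability in $\bar\omega$ from the moment bounds of Section~\ref{EE} (transferred by equality of laws, and to the limit processes by lower semicontinuity), then Vitali and dominated convergence in $t$ --- is exactly the paper's. The differences lie in how the a.s.\ limits of the individual terms are produced, and two are worth recording. In (b) you note that $\bar{\mathbf{d}}_n(0)=\tilde P_n\mathbf{d}_0$ is deterministic and converges to $\mathbf{d}_0=\mathbf{d}_\ast(0)$, so the term collapses to a deterministic quantity; the paper instead argues via continuity at $t=0$ in $\mathbb{C}([0,T];(H^2)')$ plus Vitali. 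In (d) the paper delegates the a.s.\ limit to Lemma~\ref{bntlim}, whose estimate $\|\tilde B(\mathbf{u},\mathbf{w})\|_{(H^1)'}\le\|\mathbf{u}\|_{H^1}\|\mathbf{w}\|_{H^1}$ forces it to assume \emph{strong} convergence of $\bar{\mathbf{u}}_n$ in $L^2(0,T;\mathbb{V})$, a hypothesis \eqref{zt1c} does not supply (only weak $L^2(0,T;\mathbb{V})$ and strong $L^2(0,T;\mathbb{H})$ convergence are available); your route through the interpolation bound \eqref{btil}, which places the strictly positive exponent $1-\tfrac{\mathbf{n}}{4}$ on the low norms $|\bar{\mathbf{u}}_n-\mathbf{u}_\ast|_{\mathbb{H}}$ and $\|\bar{\mathbf{d}}_n-\mathbf{d}_\ast\|$ and absorbs the high norms into the uniform bounds, uses only the convergences actually provided and is therefore the more robust argument; the 3D exponent bookkeeping you flag does close (H\"older in time with exponents $8,\tfrac83,8,\tfrac83$). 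In (e) your explicit local Lipschitz bound on $f$ makes quantitative the a.s.\ limit \eqref{dlimf} that the paper merely asserts. Items (a) and (c) are immaterial variants of the paper's argument.
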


\begin{proof}
First we establish the proof of Lemma \ref{knterm}. 
\begin{enumerate}[label=(\alph*)]
\item
Let us consider
\begin{align}
&\| ( \bar{\mathbf{u}}_{n}(\cdot), v )_{\mathbb{H}} - ( \mathbf{u}_{\ast}(\cdot), v)_{\mathbb{H}}\|^{2}_{L^2([0, T] \times \bar{\Omega})} = \int_{\bar{\Omega}} \int_{0}^{T} \big| (\bar{\mathbf{u}}_{n}(t) - \mathbf{u}_{\ast}(t), v)_{\mathbb{H}}\big|^{2} \, dt \ \bar{\mathbb{P}}(d\omega) \nonumber
\\ &= \bar{\mathbb{E}} \bigg[ \int_{0}^{T} \big| (\bar{\mathbf{u}}_{n}(t) - \mathbf{u}_{\ast}(t), v)_{\mathbb{H}}\big|^{2} \, dt \bigg]
\end{align}
Moreover,
\begin{align}\label{uudas}
&\int_{0}^{T} \big| (\bar{\mathbf{u}}_{n}(t) - \mathbf{u}_{\ast}(t), v)_{\mathbb{H}}\big|^{2} \, dt = \int_{0}^{T} \big| _{\mathbb{V}'}\big\langle \bar{\mathbf{u}}_{n}(t) - \mathbf{u}_{\ast}(t), v \big\rangle_{\mathbb{V}}\big|^{2} \, dt \leq \|v\|_{\mathbb{V}}^{2} \int_{0}^{T} \big| \bar{\mathbf{u}}_{n}(t) - \mathbf{u}_{\ast}(t) \big|_{\mathbb{V}'}^{2} \,dt
\end{align}

By \eqref{zt1c}, $\bar{\mathbf{u}}_{n} \to \mathbf{u}_{\ast}$ in $\mathbb{D}([0, T]; \mathbb{V}')$ and from \eqref{unb2p}, $\sup_{t \in [0, T]} |\bar{\mathbf{u}}_{n}(t)|_{\mathbb{H}}^{2} < \infty$, $\bar{\mathbb{P}}$-a.s.. The embedding $\mathbb{H} \hookrightarrow \mathbb{V}'$ is continuous. Then by Dominated Convergence Theorem we observe that $\bar{\mathbf{u}}_{n} \to \mathbf{u}_{\ast}$ in $L^2(0, T; \mathbb{V}').$ So from \eqref{uudas},
\begin{align}\label{1lim}
\lim_{n \to \infty} \int_{0}^{T} \big| (\bar{\mathbf{u}}_{n}(t) - \mathbf{u}_{\ast}(t), v)_{\mathbb{H}}\big|^{2} \, dt = 0.
\end{align}

Moreover, from \eqref{unb2pg}, Proposition \ref{ush2p} in Appendix and using H\"older's inequality, for every $n \in \mathbb{N}$ and every $r > 1$ we obtain
\begin{align}\label{1uni}
&\bar{\mathbb{E}} \bigg[ \bigg| \int_{0}^{T} \big| \bar{\mathbf{u}}_{n}(t) - \mathbf{u}_{\ast}(t) \big|^{2}_{\mathbb{H}} \,dt \bigg|^{2+r} \bigg] \leq c \ \bar{\mathbb{E}} \bigg[ \int_{0}^{T} \big( \big| \bar{\mathbf{u}}_{n}(t) \big|^{2(2+r)}_{\mathbb{H}} + \big| \mathbf{u}_{\ast}(t) \big|^{2(2+r)}_{\mathbb{H}} \,dt \big) \bigg] \nonumber
\\ & \ \leq \tilde{c} \ \bar{\mathbb{E}} \bigg[ \sup_{t \in [0, T]} \big| \bar{\mathbf{u}}_{n}(t) \big|^{2(2+r)}_{\mathbb{H}} \bigg] \leq \tilde{c} \cdot C(4+2r, T) < \infty.
\end{align}
for some constant $\tilde{c} > 0.$ Then by \eqref{1lim}, \eqref{1uni} and Vitali's Theorem we obtain
\begin{align*}
\lim_{n \to \infty} \bar{\mathbb{E}} \bigg[\int_{0}^{T} \big| (\bar{\mathbf{u}}_{n}(t) - \mathbf{u}_{\ast}(t), v)_{\mathbb{H}}\big|^{2} \, dt \bigg] = 0, \quad \text{which proves (a)}.
\end{align*}

\item
 From \eqref{zt1c}, $\bar{\mathbf{u}}_n \to \mathbf{u}_{\ast}$ in $\mathbb{D}([0, T]; \mathbb{H}_{w}) \ \bar{\mathbb{P}}$-a.s. and $\mathbf{u}_{\ast}$ is right continuous at $t =0.$ So we obtain $( \bar{\mathbf{u}}_{n}(0), v )_{\mathbb{H}} \to ( \mathbf{u}_{\ast}(0), v)_{\mathbb{H}}, \bar{\mathbb{P}}$-a.s. From \eqref{unb2p} and applying Vitali's Theorem, we get
\begin{align*}
\lim_{n \to \infty} \bar{\mathbb{E}} \bigg[ \big| (\bar{\mathbf{u}}_{n}(0) - \mathbf{u}_{\ast}(0), v )_{\mathbb{H}} \big|^2 \bigg] = 0
\end{align*}
Hence,
\begin{align}
\lim_{n \to \infty} \big\| (\bar{\mathbf{u}}_{n}(0) - \mathbf{u}_{\ast}(0), v )_{\mathbb{H}} \big\|^{2}_{L^2([0, T] \times \bar{\Omega})} = 0.
\end{align}
which proves (b).

\item
Now from \eqref{zt1c},  $\bar{\mathbf{u}}_n \to \mathbf{u}_{\ast}$ in $L^{2}_{w}(0, T; \mathbb{V}), \bar{\mathbb{P}}$-a.s., then from \eqref{scra} and for all $v \in \mathbb{V}$ we obtain $\bar{\mathbb{P}}$-a.s.,
\begin{align}\label{2lim}
&\lim_{n \to \infty} \int_{0}^{t} \langle \mathscr{A} \bar{\mathbf{u}}_{n}(s), v \rangle \,ds =  \lim_{n \to \infty} \int_{0}^{t} ((\bar{\mathbf{u}}_{n}(s), v )) \,ds \nonumber
\\ &= \int_{0}^{t} (( \mathbf{u}_{\ast}(s), v )) \,ds =  \int_{0}^{t} \langle \mathscr{A}\mathbf{u}_{\ast}(s), v \rangle \,ds
\end{align}

By \eqref{unbvn} and using H\"older's inequality we obtain for all $t \in [0, T], r > 2$  and $n \in \mathbb{N},$
\begin{align}\label{2uni}
&\bar{\mathbb{E}} \bigg[ \bigg| \int_{0}^{t} {_{\mathbb{V}'}}\langle \mathscr{A}\bar{\mathbf{u}}_{n}(s), v \rangle_{\mathbb{V}} \,ds \bigg|^{2 + r} \bigg] = \bar{\mathbb{E}} \bigg[ \bigg| \int_{0}^{t} ((\bar{\mathbf{u}}_{n}(s), v )) \,ds \bigg|^{2+r} \bigg] \nonumber
\\ &\leq \bar{\mathbb{E}} \bigg[ \bigg( \int_{0}^{t} \|\bar{\mathbf{u}}_{n}(s)\| \| v \|_{\mathbb{V}} \,ds \bigg)^{2+r} \bigg] \leq c \ \| v \|^{2+r}_{\mathbb{V}} \ \bar{\mathbb{E}} \bigg[ \bigg( \int_{0}^{T} \|\bar{\mathbf{u}}_{n}(s)\| \,ds \bigg)^{2+r} \bigg] \nonumber
\\ &\leq \tilde{c} \ \bar{\mathbb{E}} \bigg[ \bigg(\int_{0}^{T} \|\bar{\mathbf{u}}_{n}(s)\|^{2} \,ds \bigg)^{1+ \frac{r}{2}} \bigg]  \leq C
\end{align}
for some constant $C > 0.$ Then by \eqref{2lim}, \eqref{2uni} and using Vitali's Theorem we obtain for all $t \in [0, T],$
\begin{align}\label{aunbc}
\lim_{n \to \infty} \bar{\mathbb{E}} \bigg[\bigg| \int_{0}^{t} \langle \mathscr{A} \bar{\mathbf{u}}_{n}(s) - \mathscr{A}\mathbf{u}_{\ast}(s), v \rangle \,ds \bigg|^{2} \bigg] = 0
\end{align}

Now from \eqref{unbv}, using Dominated Convergence Theorem, for all $t \in [0, T]$ and all $n \in \mathbb{N}$ we get,
\begin{align}
\lim_{n \to \infty} \int_{0}^{T} \bar{\mathbb{E}} \bigg[ \bigg| \int_{0}^{t} \langle \mathscr{A} \bar{\mathbf{u}}_{n}(s) - \mathscr{A}\mathbf{u}_{\ast}(s), v \rangle \,ds \bigg|^{2} \bigg] \,dt = 0.
\end{align}

\noindent Now we advance to the nonlinear term.

\item
From Lemma \ref{Pnu} and Lemma \ref{bnlim} of Appendix, we have
\begin{align}\label{lim3}
&\lim_{n \to \infty} \int_{0}^{t} \big\langle B_n \big( \bar{\mathbf{u}}_{n}(s), \bar{\mathbf{u}}_{n}(s) \big)-B \big( \mathbf{u}_{\ast}(s), \mathbf{u}_{\ast}(s) \big), v \big\rangle \,ds \nonumber
\\ &= \lim_{n \to \infty} \int_{0}^{t} \big\langle B \big( \bar{\mathbf{u}}_{n}(s), \bar{\mathbf{u}}_{n}(s) \big)-B \big( \mathbf{u}_{\ast}(s), \mathbf{u}_{\ast}(s) \big), P_{n} v \big\rangle \,ds = 0 \quad \bar{\mathbb{P}}\text{-a.s.}
\end{align}

Now from \eqref{BHH}, \eqref{unb2pg} and using H\"older's inequality, we obtain for all $t \in [0, T], r >1$ and $n \in \mathbb{N}$
\begin{align}\label{3uni}
&\bar{\mathbb{E}} \bigg[ \bigg| \int_{0}^{t} \big\langle B_n \big( \bar{\mathbf{u}}_{n}(s) \big), v \big\rangle \,ds \bigg|^{1+r} \bigg] \leq \bar{\mathbb{E}} \bigg[ \bigg( \int_{0}^{t} \|B_n \big( \bar{\mathbf{u}}_{n}(s) \big)\|_{\mathbb{V}'} \|v\|_{\mathbb{V}} \,ds \bigg)^{1+r} \bigg] \nonumber
\\ &\leq \|v\|_{\mathbb{V}}^{1+r} \ t^r \ \bar{\mathbb{E}} \bigg[ \int_{0}^{t} \|B_n \big( \bar{\mathbf{u}}_{n}(s) \big)\|^{1+r}_{\mathbb{V}'} \,ds \bigg] \leq c \ \bar{\mathbb{E}} \bigg[ \int_{0}^{t} \big| \bar{\mathbf{u}}_{n}(s) \big|_{\mathbb{H}}^{2+2r} \,ds \bigg] \nonumber
\\ &\leq C \ \bar{\mathbb{E}} \bigg[ \sup_{t \in [0, T]} \big| \bar{\mathbf{u}}_{n}(s) \big|_{\mathbb{H}}^{2+2r} \,ds \bigg] \leq C(1+r, T).
\end{align}

So from \eqref{lim3}, \eqref{3uni} and using Vitali's Theorem we obtain for all $t \in [0, T],$
\begin{align}\label{dctc1}
\lim_{n \to \infty} \bar{\mathbb{E}} \bigg[ \bigg| \int_{0}^{t} \big\langle B_n \big( \bar{\mathbf{u}}_{n}(s) \big)-B \big( \mathbf{u}_{\ast}(s) \big), v \big\rangle \,ds \bigg|^2 \bigg] =0.
\end{align}

Now from \eqref{unb2pg}, for all $t \in [0, T]$ and all $n \in \mathbb{N},$
\begin{align}
\bar{\mathbb{E}} \bigg[ \bigg|  \int_{0}^{t} \big\langle B_n \big( \bar{\mathbf{u}}_{n}(s) \big), v \big\rangle \,ds  \bigg|^2 \bigg] \leq c \ \bar{\mathbb{E}} \bigg[ \sup_{t \in [0, T]} \big| \bar{\mathbf{u}}_{n}(s) \big|_{\mathbb{H}}^{4} \,ds \bigg] \leq C_{2, T},
\end{align}
where $C_{2, T}$ is a positive constant. Then from \eqref{dctc1} and Dominated Convergence Theorem we obtain,
\begin{align}
\lim_{n \to \infty} \int_{0}^{T} \bar{\mathbb{E}} \bigg[ \bigg| \int_{0}^{t} \big\langle B_n \big( \bar{\mathbf{u}}_{n}(s) \big)-B \big( \mathbf{u}_{\ast}(s) \big), v \big\rangle \,ds \bigg|^2 \bigg] \,dt =0.
\end{align}

\item
Now we come to the second nonlinear term. From Lemma \ref{Pnu} and Lemma \ref{mlimi} we have,
\begin{align}\label{dlim4}
&\lim_{n \to \infty} \int_{0}^{t} \big\langle M_n \big( \bar{\mathbf{d}}_{n}(s) \big) - M \big( \mathbf{d}_{\ast}(s) \big), v \big\rangle \,ds \nonumber
\\ &= \lim_{n \to \infty} \int_{0}^{t} \big\langle M \big( \bar{\mathbf{d}}_{n}(s)\big) - M \big( \mathbf{d}_{\ast}(s) \big), P_{n} v \big\rangle \,ds = 0 \quad \bar{\mathbb{P}}\text{-a.s.}
\end{align}

Now from \eqref{md1d2} and using H\"older's inequality, we obtain for all $t \in [0, T]$ and $n \in \mathbb{N}$
\begin{align}\label{5uni}
&\bar{\mathbb{E}} \bigg[ \bigg| \int_{0}^{t} \big\langle M_n \big( \bar{\mathbf{d}}_{n}(s) \big), v \big\rangle \,ds \bigg|^{r} \bigg] \leq \bar{\mathbb{E}} \bigg[ \bigg( \int_{0}^{t} \|M_n \big( \bar{\mathbf{d}}_{n}(s) \big)\|_{\mathbb{V}'} \|v\|_{\mathbb{V}} \,ds \bigg)^{r} \bigg] \nonumber
\\ &\leq \|v\|_{\mathbb{V}}^{r} \ t^{r-1} \ \bar{\mathbb{E}} \bigg[ \int_{0}^{t} \|M_n \big( \bar{\mathbf{d}}_{n}(s) \big)\|^{r}_{\mathbb{V}'} \,ds \bigg] \nonumber
\\ &\leq c_{t} \ \bar{\mathbb{E}} \bigg[ \int_{0}^{t} \|\bar{\mathbf{d}}_{n}(s)\|^{2r-\frac{r \mathbf{n}}{2}}  |\Delta \bar{\mathbf{d}}_{n}(s)|_{L^2}^{\frac{r \mathbf{n}}{2}} \,ds \bigg]
\end{align}

Now we will estimate separately for $\mathbf{n} = 2$ and $3.$ First consider the case for $\mathbf{n} =2.$ From \eqref{2prop} and \eqref{dnbla}, for $r \in (1, 2],$ we have
\begin{align}\label{unin2}
&\bar{\mathbb{E}} \bigg[ \int_{0}^{t} \|\bar{\mathbf{d}}_{n}(s)\|^{r}  |\Delta \bar{\mathbf{d}}_{n}(s)|_{L^2}^{r} \,ds \bigg] \leq  \bar{\mathbb{E}} \bigg[ \sup_{s \in [0, T]} \|\bar{\mathbf{d}}_{n}\|^r \int_{0}^{t}   |\Delta \bar{\mathbf{d}}_{n}|^r \,ds \bigg] \nonumber
\\ &\leq \bigg\{ \bar{\mathbb{E}} \bigg[ \sup_{s \in [0, T]} \|\bar{\mathbf{d}}_{n}\|^{2r} \bigg] \bigg\}^{\frac{1}{2}} \bigg\{ \bar{\mathbb{E}} \bigg[ \int_{0}^{t} |\Delta \bar{\mathbf{d}}_{n}|^r \,ds \bigg]^{2} \bigg\}^{\frac{1}{2}} \leq C(r, T).
\end{align}

Now for $\mathbf{n} = 3,$ from \eqref{2prop} and \eqref{dnbla}, for $r \in [1, \frac{4}{3}),$ we obtain
\begin{align}\label{unin3}
&\bar{\mathbb{E}} \bigg[ \int_{0}^{t} \|\bar{\mathbf{d}}_{n}\|^{\frac{r}{2}}  |\Delta \bar{\mathbf{d}}_{n}|^{\frac{3r}{2}} \,ds \bigg] \leq \bar{\mathbb{E}} \bigg[  \bigg(\int_{0}^{t} (\|\bar{\mathbf{d}}_{n}\|^\frac{r}{2})^{\frac{4}{4-3r}} \,ds \bigg)^{\frac{4-3r}{4}} \bigg( \int_{0}^{t} (|\Delta \bar{\mathbf{d}}_{n}|^{\frac{3r}{2}})^{\frac{4}{3r}} \,ds \bigg)^{\frac{3r}{4}} \bigg] \nonumber
\\ &\leq \bar{\mathbb{E}} \bigg[  \bigg(\int_{0}^{t} \|\bar{\mathbf{d}}_{n}\|^\frac{2r}{4-3r} \,ds \bigg)^{\frac{4-3r}{4}} \bigg( \int_{0}^{t} (|\Delta \bar{\mathbf{d}}_{n}|^2 \,ds \bigg)^{\frac{3r}{4}} \bigg] \nonumber
\\ &\leq \bigg\{ \bar{\mathbb{E}} \bigg[ \int_{0}^{t} \|\bar{\mathbf{d}}_{n}\|^\frac{2r}{4-3r} \,ds \bigg] \bigg\}^{\frac{4-3r}{4}} \bigg\{ \bar{\mathbb{E}} \bigg[ \int_{0}^{t} |\Delta \bar{\mathbf{d}}_{n}|^2 \,ds \bigg] \bigg\}^{\frac{3r}{4}} \nonumber
\\ &\leq c \ \bigg\{ \bar{\mathbb{E}} \bigg[ \sup_{s \in [0, T]} \|\bar{\mathbf{d}}_{n}\|^\frac{2r}{4-3r} \bigg] \bigg\}^{\frac{4-3r}{4}} \bigg\{ \bar{\mathbb{E}} \bigg[ \int_{0}^{T} |\Delta \bar{\mathbf{d}}_{n}|^2 \,ds \bigg] \bigg\}^{\frac{3r}{4}} \leq C(r, T).
\end{align}

So from \eqref{dlim4}, \eqref{5uni}, \eqref{unin2}, \eqref{unin3} and using Vitali's Theorem we obtain for all $t \in [0, T],$
\begin{align}\label{mnvi}
\lim_{n \to \infty} \bar{\mathbb{E}} \bigg[ \bigg| \int_{0}^{t} \big\langle M_n \big( \bar{\mathbf{d}}_{n}(s) \big) - M \big( \mathbf{d}_{\ast}(s) \big), v \big\rangle \,ds \bigg|^2 \bigg] =0.
\end{align}

Now following the calculations \eqref{gdld}, \eqref{gdld3} and from \eqref{mnvi}, using Dominated Convergence Theorem we obtain,
\begin{align}\label{mnvi1}
\lim_{n \to \infty} \int_{0}^{T} \bar{\mathbb{E}} \bigg[ \bigg| \int_{0}^{t} \big\langle M_n \big( \bar{\mathbf{d}}_{n}(s) \big) - M \big( \mathbf{d}_{\ast}(s) \big), v \big\rangle \,ds \bigg|^2 \bigg] =0.
\end{align}

\item
Let us proceed to the noise terms. Assume that $v \in \mathbb{H}.$ Using Lipschitz property of $F$, for all $t \in [0, T]$ we have,
\begin{align}
&\int_{0}^{t} \int_{Y} \big| \big( F(s, \bar{\mathbf{u}}_{n}(s), y) - F(s, \mathbf{u}_{\ast}(s), y), v \big)_{\mathbb{H}}\big|^2 \, d\nu(y) \,ds \nonumber
\\ &\leq \int_{0}^{t} \int_{Y} \big| F(s, \bar{\mathbf{u}}_{n}(s), y) - F(s, \mathbf{u}_{\ast}(s), y) \big|^2_{\mathbb{H}} \cdot \big| v\big|^2_{\mathbb{H}} \,d\nu(y) \,ds \nonumber
\\ &\leq C \int_{0}^{t} \big| \bar{\mathbf{u}}_{n}(s) - \mathbf{u}_{\ast}(s) \big|^2_{\mathbb{H}} \,ds \leq C \int_{0}^{T} \big| \bar{\mathbf{u}}_{n}(s) - \mathbf{u}_{\ast}(s) \big|^2_{\mathbb{H}} \,ds.
\end{align}

From \eqref{zt1c} we have, $\bar{\mathbf{u}}_{n} \to \mathbf{u}_{\ast}$ in $L^2(0, T; \mathbb{H}),$ $\bar{\mathbb{P}}$-a.s. Then we obtain for all $t \in [0, T],$ 
\begin{align}\label{lim4}
\lim_{n \to \infty} \int_{0}^{t} \int_{Y} \big| \big( F(s, \bar{\mathbf{u}}_{n}(s), y) - F(s, \mathbf{u}_{\ast}(s), y), v \big)_{\mathbb{H}}\big|^2 \, d\nu(y) \,ds = 0.
\end{align}

Moreover, from \eqref{lgf}, \eqref{unb2pg} and Proposition \ref{ush2p}, for every $t \in [0, T]$, every $r \geq 1$ and every $n \in \mathbb{N},$
\begin{align}\label{4uni}
&\bar{\mathbb{E}} \bigg[ \bigg| \int_{0}^{t} \int_{Y} \big| \big( F(s, \bar{\mathbf{u}}_{n}(s), y) - F(s, \mathbf{u}_{\ast}(s), y), v \big)_{\mathbb{H}}\big|^2 \, d\nu(y) \,ds \bigg|^{r} \bigg] \nonumber
\\ &\leq C \ |v|^{2r}_{\mathbb{H}} \ \bar{\mathbb{E}} \bigg[ \bigg| \int_{0}^{t} \int_{Y} \big\{ |F(s, \bar{\mathbf{u}}_{n}(s), y)|^{2}_{\mathbb{H}} + |F(s, \mathbf{u}_{\ast}(s), y)|^{2}_{\mathbb{H}} \big\} \, d\nu(y) \,ds \bigg|^{r} \bigg] \nonumber
\\ &\leq C \ \bar{\mathbb{E}} \bigg[ \bigg| \int_{0}^{t} \big\{ 2 + |\bar{\mathbf{u}}_{n}(s)|^{2}_{\mathbb{H}} + |\mathbf{u}_{\ast}(s)|^{2}_{\mathbb{H}} \big\} \,ds \bigg|^{r} \bigg] \nonumber
\\ &\leq C(r, T) \bigg( 1+ \bar{\mathbb{E}} \bigg[ \sup_{s \in [0, T]}  |\bar{\mathbf{u}}_{n}(s)|^{2r}_{\mathbb{H}} \bigg] +\bar{\mathbb{E}} \bigg[ \sup_{s \in [0, T]}  |\mathbf{u}_{\ast}(s)|^{2r}_{\mathbb{H}} \bigg]\bigg) \leq C.
\end{align}
Where $C > 0$ is a constant. Then by \eqref{lim4}, \eqref{4uni} and by Vitali's Theorem, for all $t \in [0, T],$ 
\begin{align}
\lim_{n \to \infty} \bar{\mathbb{E}} \bigg[ \int_{0}^{t} \int_{Y} \big| \big( F(s, \bar{\mathbf{u}}_{n}(s), y) - F(s, \mathbf{u}_{\ast}(s), y), v \big)_{\mathbb{H}}\big|^2 \, d\nu(y) \,ds \bigg] = 0, \quad v\in \mathbb{H}.
\end{align}

Since the restriction of $P_n$ to the space $\mathbb{H}$ is the $(\cdot, \cdot)_{\mathbb{H}}$-projection onto $\mathbb{H}_n,$ we obtain
\begin{align}\label{pnF}
\lim_{n \to \infty} \bar{\mathbb{E}} \bigg[ \int_{0}^{t} \int_{Y} \big| \big( P_n F(s, \bar{\mathbf{u}}_{n}(s), y) - F(s, \mathbf{u}_{\ast}(s), y), v \big)_{\mathbb{H}}\big|^2 \, d\nu(y) \,ds \bigg] = 0, \quad v\in \mathbb{H}.
\end{align}

Since $\mathbb{V} \subset \mathbb{H},$ \eqref{pnF} holds for all $v \in \mathbb{V}.$ As $\bar{\eta}_n = \eta_{\ast},$ for all $n \in \mathbb{N}.$ From \eqref{pnF} and \eqref{isof} we have,
 \begin{align}\label{pnF1}
\lim_{n \to \infty} \bar{\mathbb{E}} \bigg[ \bigg| \int_{0}^{t} \int_{Y} \big\langle P_n F(s, \bar{\mathbf{u}}_{n}(s), y) - F(s, \mathbf{u}_{\ast}(s), y), v \big\rangle \,\tilde{\eta}_{\ast}(ds, dy) \bigg|^2 \bigg] = 0.
\end{align}

Moreover, from \eqref{4uni} and \eqref{isof}, with $r=1,$ we obtain, 
\begin{align}\label{dctF}
&\bar{\mathbb{E}} \bigg[ \bigg| \int_{0}^{t} \int_{Y} \big\langle P_n F(s, \bar{\mathbf{u}}_{n}(s), y) - F(s, \mathbf{u}_{\ast}(s), y), v \big\rangle \,\tilde{\eta}_{\ast}(ds, dy) \bigg|^2 \bigg] \nonumber
\\ &= \bar{\mathbb{E}} \bigg[ \int_{0}^{t} \int_{Y} \big| \big( F(s, \bar{\mathbf{u}}_{n}(s), y) - F(s, \mathbf{u}_{\ast}(s), y), v \big)_{\mathbb{H}}\big|^2 \, d\nu(y) \,ds \bigg] \leq C.
\end{align}

Finally, from \eqref{pnF1}, \eqref{dctF} and using Dominated Convergence Theorem we obtain,
\begin{align}
\lim_{n \to \infty} \int_{0}^{T} \bar{\mathbb{E}} \bigg[  \bigg| \int_{0}^{t} \int_{Y} \big\langle P_n F(s, \bar{\mathbf{u}}_{n}(s), y) - F(s, \mathbf{u}_{\ast}(s), y), v \big\rangle \,\tilde{\eta}_{\ast}(ds, dy) \bigg|^2 \,dt \bigg] = 0.
\end{align}
\end{enumerate}
\end{proof}

Now we will give the proof of Lemma \ref{lnterm}.
\begin{proof}
\begin{enumerate}[label=(\alph*)]
\item
 Let us consider
\begin{align}
&\| ( \bar{\mathbf{d}}_{n}(\cdot), v )_{L^2} - ( \mathbf{d}_{\ast}(\cdot), v)_{L^2}\|^{2}_{L^2([0, T] \times \bar{\Omega})} = \int_{\bar{\Omega}} \int_{0}^{T} \big| (\bar{\mathbf{d}}_{n}(t) - \mathbf{d}_{\ast}(t), v)_{L^2}\big|^{2} \, dt \ \bar{\mathbb{P}}(d\omega) \nonumber
\\ &= \bar{\mathbb{E}} \bigg[ \int_{0}^{T} \big| (\bar{\mathbf{d}}_{n}(t) - \mathbf{d}_{\ast}(t), v)_{L^2}\big|^{2} \, dt \bigg]
\end{align}

Moreover,
\begin{align}\label{dddas}
&\int_{0}^{T} \big| (\bar{\mathbf{d}}_{n}(t) - \mathbf{d}_{\ast}(t), v)_{L^2}\big|^{2} \, dt = \int_{0}^{T} \big| _{(H^2)'}\big\langle \bar{\mathbf{d}}_{n}(t) - \mathbf{d}_{\ast}(t), v \big\rangle_{H^2}\big|^{2} \, dt \nonumber
\\ & \ \leq \|v\|_{H^2}^{2} \int_{0}^{T} \big| \bar{\mathbf{d}}_{n}(t) - \mathbf{d}_{\ast}(t) \big|_{(H^2)'}^{2} \,dt
\end{align}

By \eqref{zt2c}, $\bar{\mathbf{d}}_{n} \to \mathbf{d}_{\ast}$ in $\mathbb{C}([0, T]; (H^2)')$ and from \eqref{dnbp}, $\sup_{t \in [0, T]} |\bar{\mathbf{d}}_{n}(t)|_{L^2}^{2} < \infty$, $\bar{\mathbb{P}}$-a.s.. The embedding $L^2 \hookrightarrow (H^2)'$ is continuous. Then by Dominated Convergence Theorem we observe that $\bar{\mathbf{d}}_{n} \to \mathbf{d}_{\ast}$ in $L^2(0, T; (H^2)').$ So from \eqref{dddas},
\begin{align}\label{d1lim}
\lim_{n \to \infty} \int_{0}^{T} \big| (\bar{\mathbf{d}}_{n}(t) - \mathbf{d}_{\ast}(t), v)_{L^2}\big|^{2} \, dt = 0.
\end{align}

Moreover, from \eqref{dsp1}, Proposition \ref{dsL2p} and using H\"older's inequality, for every $n \in \mathbb{N}$ and every $r > 1$ we obtain
\begin{align}\label{d1uni}
&\bar{\mathbb{E}} \bigg[ \bigg| \int_{0}^{T} \big| \bar{\mathbf{d}}_{n}(t) - \mathbf{d}_{\ast}(t) \big|^{2}_{L^2} \,dt \bigg|^{r} \bigg] \leq c \ \bar{\mathbb{E}} \bigg[ \int_{0}^{T} \big( \big| \bar{\mathbf{d}}_{n}(t) \big|^{2r} + \big| \mathbf{d}_{\ast}(t) \big|^{2r} \big) \,dt  \bigg] \nonumber
\\ & \ \leq c \ \bar{\mathbb{E}} \bigg[ \sup_{t \in [0, T]} \big| \bar{\mathbf{d}}_{n}(t) \big|^{2r} \bigg] \leq c \cdot \tilde{C}_{2r, T} < \infty.
\end{align}
for some constant $c > 0.$ Then by \eqref{d1lim}, \eqref{d1uni} and Vitali's Theorem we obtain
\begin{align*}
\lim_{n \to \infty} \bar{\mathbb{E}} \bigg[\int_{0}^{T} \big| (\bar{\mathbf{d}}_{n}(t) - \mathbf{d}_{\ast}(t), v)_{L^2}\big|^{2} \, dt \bigg] = 0, \quad \text{which proves (a)}.
\end{align*}

\item
From \eqref{zt2c}, $\bar{\mathbf{d}}_n \to \mathbf{d}_{\ast}$ in $\mathbb{C}([0, T]; (H^2)') \ \bar{\mathbb{P}}$-a.s. and $\mathbf{d}_{\ast}$ is continuous at $t =0.$ So we obtain $_{(H^2)'}\big\langle \bar{\mathbf{d}}_{n}(0), v \big\rangle_{H^2} \to$  $_{(H^2)'}\big\langle \mathbf{d}_{\ast}(0), v \big\rangle_{H^2} \  \bar{\mathbb{P}}$-a.s. As $L^2 \hookrightarrow (H^2)'$ is continuous, we have
\[\sup_{n \in \mathbb{N}} \bar{\mathbb{E}}\big| \bar{\mathbf{d}}_{n}(0) \big|^{2}_{(H^2)'} \leq \sup_{n \in \mathbb{N}} \bar{\mathbb{E}}\big| \bar{\mathbf{d}}_{n}(0) \big|^{2}_{L^2} \leq C.\]

From \eqref{dnbp} and applying Vitali's Theorem, we get
\begin{align*}
\lim_{n \to \infty} \bar{\mathbb{E}} \bigg[ \big| _{(H^2)'}\big\langle\bar{\mathbf{d}}_{n}(0) - \mathbf{d}_{\ast}(0), v \big\rangle_{H^2} \big|^2 \bigg] = 0
\end{align*}
Hence,
\begin{align*}
\lim_{n \to \infty} \big\| _{(H^2)'}\big\langle \bar{\mathbf{d}}_{n}(0) - \mathbf{d}_{\ast}(0), v \big\rangle_{H^2} \big\|^{2}_{L^2([0, T] \times \bar{\Omega})} = 0.
\end{align*}

which implies
\begin{align}
\lim_{n \to \infty} \big\| \big( \bar{\mathbf{d}}_{n}(0) - \mathbf{d}_{\ast}(0), v \big)_{L^2} \big\|^{2}_{L^2([0, T] \times \bar{\Omega})} = 0.
\end{align}

\item
Now from \eqref{zt2c},  $\bar{\mathbf{d}}_n \to \mathbf{d}_{\ast}$ in $L^{2}(0, T; H^1), \bar{\mathbb{P}}$-a.s., then from \eqref{cala} and for all $v \in H^1 \hookrightarrow H^2$ we obtain $\bar{\mathbb{P}}$-a.s.,
\begin{align}\label{d2lim}
&\lim_{n \to \infty} \int_{0}^{t} \langle \mathcal{A} \bar{\mathbf{d}}_{n}(s), v \rangle \,ds =  \lim_{n \to \infty} \int_{0}^{t} ((\bar{\mathbf{d}}_{n}(s), v )) \,ds \nonumber
\\ &= \int_{0}^{t} (( \mathbf{d}_{\ast}(s), v )) \,ds =  \int_{0}^{t} \langle \mathcal{A}\mathbf{d}_{\ast}(s), v \rangle \,ds
\end{align}

By \eqref{2prop} and using H\"older's inequality we obtain for all $t \in [0, T], r > 2$  and $n \in \mathbb{N},$
\begin{align}\label{d2unii}
&\bar{\mathbb{E}} \bigg[ \bigg| \int_{0}^{t} \langle \mathcal{A}\bar{\mathbf{d}}_{n}(s), v \rangle \,ds \bigg|^{2 + r} \bigg] = \bar{\mathbb{E}} \bigg[ \bigg| \int_{0}^{t} ((\bar{\mathbf{d}}_{n}(s), v )) \,ds \bigg|^{2+r} \bigg] \nonumber
\\ &\leq \bar{\mathbb{E}} \bigg[ \bigg( \int_{0}^{t} \|\bar{\mathbf{d}}_{n}(s)\|_{H^1} \| v \|_{H^1} \,ds \bigg)^{2+r} \bigg] \leq c \ \| v \|^{2+r}_{H^2} \ \bar{\mathbb{E}} \bigg[ \bigg( \int_{0}^{T} \|\bar{\mathbf{d}}_{n}(s)\|_{H^1} \,ds \bigg)^{2+r} \bigg] \nonumber
\\ &\leq \tilde{c} \ \bar{\mathbb{E}} \bigg[ \sup_{s \in [0, T]} \big\|\bar{\mathbf{d}}_{n}(s) \big\|_{H^1}^{2+r} \,ds \bigg]  \leq C
\end{align}

for some constant $C > 0.$ Then by \eqref{d2lim}, \eqref{d2unii} and using Vitali's Theorem we obtain for all $t \in [0, T],$
\begin{align}\label{aunbc}
\lim_{n \to \infty} \bar{\mathbb{E}} \bigg[\bigg| \int_{0}^{t} \langle \mathcal{A} \bar{\mathbf{d}}_{n}(s) - \mathcal{A}\mathbf{d}_{\ast}(s), v \rangle \,ds \bigg|^{2} \bigg] = 0
\end{align}

Since $H^1 \hookrightarrow (H^2)'$ is continuous, from \eqref{2prop}, using Dominated Convergence Theorem, for all $t \in [0, T]$ and all $n \in \mathbb{N}$ we get,
\begin{align}
\lim_{n \to \infty} \int_{0}^{T} \bar{\mathbb{E}} \bigg[ \bigg| \int_{0}^{t} \langle \mathcal{A} \bar{\mathbf{d}}_{n}(s) - \mathcal{A}\mathbf{d}_{\ast}(s), v \rangle \,ds \bigg|^{2} \bigg] \,dt = 0.
\end{align}

\noindent Now we jump to the nonlinear term.

\item
From Lemma \ref{Pnu} and Lemma \ref{bntlim} we have,
\begin{align}\label{dlim3}
&\lim_{n \to \infty} \int_{0}^{t} \big\langle \tilde{B}_n \big( \bar{\mathbf{u}}_{n}(s), \bar{\mathbf{d}}_{n}(s) \big) - \tilde{B} \big( \mathbf{u}_{\ast}(s), \mathbf{d}_{\ast}(s) \big), v \big\rangle \,ds \nonumber
\\ &= \lim_{n \to \infty} \int_{0}^{t} \big\langle \tilde{B} \big( \bar{\mathbf{u}}_{n}(s), \bar{\mathbf{d}}_{n}(s) \big) - \tilde{B} \big( \mathbf{u}_{\ast}(s), \mathbf{d}_{\ast}(s) \big), \tilde{P}_{n} v \big\rangle \,ds = 0 \quad \bar{\mathbb{P}}\text{-a.s.}
\end{align}

\noindent Now from \eqref{btil} and H\"older's inequality, we obtain for all $t \in [0, T], n \in \mathbb{N}$ and $r>1,$
\begin{align}\label{duni3}
&\bar{\mathbb{E}} \bigg[ \bigg| \int_{0}^{t} \big\langle \tilde{B}_n \big( \bar{\mathbf{u}}_{n}(s), \bar{\mathbf{d}}_{n}(s) \big), v \big\rangle \,ds \bigg|^{r} \bigg] \leq \bar{\mathbb{E}} \bigg[ \bigg( \int_{0}^{t} \| \tilde{B}_n \big( \bar{\mathbf{u}}_{n}(s), \bar{\mathbf{d}}_{n}(s) \big)\|_{(H^1)'} \|v\|_{H^1} \,ds \bigg)^{r} \bigg] \nonumber
\\ &\leq \|v\|_{H^2}^{r} \ t^{r-1} \ \bar{\mathbb{E}} \bigg[ \int_{0}^{t} \| \tilde{B}_n \big( \bar{\mathbf{u}}_{n}(s), \bar{\mathbf{d}}_{n}(s) \big)\|^{r}_{(H^1)'} \,ds \bigg] \leq c \ \bar{\mathbb{E}} \bigg[ \int_{0}^{t} | \tilde{B}_n \big( \bar{\mathbf{u}}_{n}(s), \bar{\mathbf{d}}_{n}(s) \big)|^{r}_{L^2} \,ds \bigg] \nonumber
\\ &\leq c \ \bar{\mathbb{E}} \bigg[ \int_{0}^{t} |\bar{\mathbf{u}}_{n}|_{\mathbb{H}}^{r-\frac{r \mathbf{n}}{4}} \|\bar{\mathbf{u}}_{n}\|^{\frac{r \mathbf{n}}{4}} \|\bar{\mathbf{d}}_{n}\|^{r-\frac{r \mathbf{n}}{4}}  |\Delta \bar{\mathbf{d}}_{n}|_{L^2}^{\frac{r \mathbf{n}}{4}} \,ds \bigg],
\end{align}
for $\mathbf{n} = 2, 3.$

First we consider the case $\mathbf{n} = 2.$ Using Young's inequality we have,
\begin{align}\label{d2uni}
&\bar{\mathbb{E}} \bigg[ \int_{0}^{t} (|\bar{\mathbf{u}}_{n}|^{\frac{r}{2}} \|\bar{\mathbf{u}}_{n}\|^{\frac{r}{2}} ) (\|\bar{\mathbf{d}}_{n}\|^{\frac{r}{2}}  |\Delta \bar{\mathbf{d}}_{n}|^{\frac{r}{2}}) \,ds \bigg] \leq \bar{\mathbb{E}} \bigg[ \int_{0}^{t} |\bar{\mathbf{u}}_{n}|^r \|\bar{\mathbf{u}}_{n}\|^r \,ds \bigg] + \bar{\mathbb{E}} \bigg[ \int_{0}^{t} \|\bar{\mathbf{d}}_{n}\|^r  |\Delta \bar{\mathbf{d}}_{n}|^r \,ds \bigg]
\end{align}

Now the estimate for the second term of Right Hand Side follows from \eqref{unin2}. So let us estimate the first term. From \eqref{unb2pg} and \eqref{unbv}, using H\"older's inequality, for $r \in [1, 2]$ and for all $t \in [0, T], n \in \mathbb{N}$ we obtain,
\begin{align}\label{unbrr}
&\bar{\mathbb{E}} \bigg[ \int_{0}^{t} |\bar{\mathbf{u}}_{n}|^r \|\bar{\mathbf{u}}_{n}\|^r \,ds \bigg] \leq \bar{\mathbb{E}} \bigg[ \sup_{s \in [0, T]} |\bar{\mathbf{u}}_{n}|^r \int_{0}^{t} \|\bar{\mathbf{u}}_{n}\|^r \,ds \bigg] \nonumber
\\ &\leq \bigg\{ \bar{\mathbb{E}} \bigg[ \sup_{s \in [0, T]}  |\bar{\mathbf{u}}_{n}|^{2r} \bigg] \bigg\}^{\frac{1}{2}} \bigg\{ \bar{\mathbb{E}} \bigg[ \int_{0}^{t} \|\bar{\mathbf{u}}_{n}\|^r \,ds \bigg]^2 \bigg\}^{\frac{1}{2}} \leq C(r, T).
\end{align}

Similarly, from \eqref{duni3} for $\mathbf{n} = 3,$ using Young's inequality we obtain,
\begin{align}\label{d3uni}
&\bar{\mathbb{E}} \bigg[ \int_{0}^{t} (|\bar{\mathbf{u}}_{n}|^{\frac{r}{4}} \|\bar{\mathbf{u}}_{n}\|^{\frac{3r}{4}} ) (\|\bar{\mathbf{d}}_{n}\|^{\frac{r}{4}}  |\Delta \bar{\mathbf{d}}_{n}|^{\frac{3r}{4}}) \,ds \bigg] \nonumber
\\ &\leq \bar{\mathbb{E}} \bigg[ \int_{0}^{t} |\bar{\mathbf{u}}_{n}|^{\frac{r}{2}} \|\bar{\mathbf{u}}_{n}\|^{\frac{3r}{2}} \,ds \bigg] + \bar{\mathbb{E}} \bigg[ \int_{0}^{t} \|\bar{\mathbf{d}}_{n}\|^{\frac{r}{2}}  |\Delta \bar{\mathbf{d}}_{n}|^{\frac{3r}{2}} \,ds \bigg]
\end{align}

Similarly, the estimate for the second term of Right Hand Side follows from \eqref{unin3}. So we handle only the first term. From \eqref{unb2p} and \eqref{unbv} for $r \in [1, \frac{4}{3})$ and for all $t \in [0, T], n \in \mathbb{N},$ using H\"older's inequality we obtain,
\begin{align}\label{unbr2}
&\bar{\mathbb{E}} \bigg[ \int_{0}^{t} |\bar{\mathbf{u}}_{n}|^{\frac{r}{2}} \|\bar{\mathbf{u}}_{n}\|^{\frac{3r}{2}} \,ds \bigg] \leq \bar{\mathbb{E}} \bigg[  \bigg(\int_{0}^{t} (|\bar{\mathbf{u}}_{n}|^\frac{r}{2})^{\frac{4}{4-3r}} \,ds \bigg)^{\frac{4-3r}{4}} \bigg( \int_{0}^{t} (\|\bar{\mathbf{u}}_{n}\|^{\frac{3r}{2}})^{\frac{4}{3r}} \,ds \bigg)^{\frac{3r}{4}} \bigg] \nonumber
\\ &\leq \bar{\mathbb{E}} \bigg[  \bigg(\int_{0}^{t} |\bar{\mathbf{u}}_{n}|^\frac{2r}{4-3r} \,ds \bigg)^{\frac{4-3r}{4}} \bigg( \int_{0}^{t} \|\bar{\mathbf{u}}_{n}\|^2 \,ds \bigg)^{\frac{3r}{4}} \bigg] \nonumber
\\ &\leq \bigg\{ \bar{\mathbb{E}} \bigg[ \int_{0}^{t} |\bar{\mathbf{u}}_{n}|^\frac{2r}{4-3r} \,ds \bigg] \bigg\}^{\frac{4-3r}{4}} \cdot \bigg\{ \bar{\mathbb{E}} \bigg[ \int_{0}^{t} \|\bar{\mathbf{u}}_{n}\|^2 \,ds \bigg] \bigg\}^{\frac{3r}{4}} \nonumber
\\ &\leq c \ \bigg\{ \bar{\mathbb{E}} \bigg[ \sup_{s \in [0, T]} |\bar{\mathbf{u}}_{n}|^\frac{2r}{4-3r} \bigg] \bigg\}^{\frac{4-3r}{4}} \bigg\{ \bar{\mathbb{E}} \bigg[ \int_{0}^{T} \| \bar{\mathbf{u}}_{n}\|^2 \,ds \bigg] \bigg\}^{\frac{3r}{4}} \leq C(r, T).
\end{align}

So from \eqref{dlim3}, \eqref{duni3}, \eqref{d2uni}, \eqref{unbrr}, \eqref{d3uni}, \eqref{unbr2} and using Vitali's Theorem we obtain for all $t \in [0, T],$
\begin{align}\label{d3vit}
\lim_{n \to \infty} \bar{\mathbb{E}} \bigg[ \bigg| \int_{0}^{t} \big\langle \tilde{B}_n \big( \bar{\mathbf{u}}_{n}(s), \bar{\mathbf{d}}_{n}(s) \big) - \tilde{B} \big( \mathbf{u}_{\ast}(s), \mathbf{d}_{\ast}(s) \big), v \big\rangle \,ds \bigg|^2 \bigg] =0.
\end{align}

From \eqref{unb2p}, \eqref{unbv}, \eqref{dnbv}, \eqref{dnbla} and \eqref{d3vit}, using Dominated Convergence Theorem we obtain,
\begin{align}
\lim_{n \to \infty} \int_{0}^{T} \bar{\mathbb{E}} \bigg[ \bigg| \int_{0}^{t} \big\langle \tilde{B}_n \big( \bar{\mathbf{u}}_{n}(s), \bar{\mathbf{d}}_{n}(s) \big) - \tilde{B} \big( \mathbf{u}_{\ast}(s), \mathbf{d}_{\ast}(s) \big), v \big\rangle \,ds \bigg|^2 \bigg] \,dt =0.
\end{align}
\item
$\bar{\mathbf{d}}_{n} \to \mathbf{d}_{\ast}$ in $\mathcal{Z}_{T, 2}.$ Since $f$ is a polynomial function of order $2N+1,$ 
from Lemma \ref{Pnu} we have $\bar{\mathbb{P}}$-a.s.,
\begin{align}\label{dlimf}
&\lim_{n \to \infty} \int_{0}^{t} \big\langle f_n \big( \bar{\mathbf{d}}_{n}(s) \big) - f \big(\mathbf{d}_{\ast}(s) \big), v \big\rangle \,ds = \lim_{n \to \infty} \int_{0}^{t} \big\langle f \big(\bar{\mathbf{d}}_{n}(s) \big) - f \big(\mathbf{d}_{\ast}(s) \big), \tilde{P}_{n} v \big\rangle \,ds = 0
\end{align}

Since $H^1 \hookrightarrow L^{\bar{q}},$ for $\bar{q}=4N+2,$ where $N \in I_{\mathbf{n}}.$ From Remark \ref{fes} and \eqref{2prop}, we obtain for all $t \in [0, T], r >1$ and $n \in \mathbb{N},$
\begin{align}\label{5duni}
&\bar{\mathbb{E}} \bigg[ \bigg| \int_{0}^{t} \big\langle f_n \big( \bar{\mathbf{d}}_{n}(s) \big), v \big\rangle \,ds \bigg|^{r} \bigg] \leq  \|v\|_{L^2}^{r} \ t^{r-1} \ \bar{\mathbb{E}} \bigg[ \int_{0}^{t} \big| f_n \big( \bar{\mathbf{d}}_{n}(s) \big) \big|_{L^2}^{r} \,ds \bigg] \nonumber
\\ &\leq C + \|v\|_{H^2}^{r} \ t^{r-1} \ \bar{\mathbb{E}} \bigg[ \int_{0}^{t} \big\| \bar{\mathbf{d}}_{n}(s) \big\|_{L^{\bar{q}}}^{\frac{r\bar{q}}{2}} \,ds \bigg] \leq C + C_{r, t} \ \bar{\mathbb{E}} \bigg[ \int_{0}^{t} \big\|\bar{\mathbf{d}}_{n}(s) \big\|_{H^1}^{\frac{r\bar{q}}{2}} \,ds \bigg] \nonumber
\\ &\leq C + \tilde{C}_{r, t} \ \bar{\mathbb{E}} \bigg[ \sup_{s \in [0, T]} \big\|\bar{\mathbf{d}}_{n}(s) \big\|_{H^1}^{\frac{r\bar{q}}{2}}  \bigg] \leq C(r, N, T).
\end{align}

So from \eqref{dlimf}, \eqref{5duni} and using Vitali's Theorem we obtain for all $t \in [0, T],$
\begin{align}\label{dctc}
\lim_{n \to \infty} \bar{\mathbb{E}} \bigg[ \bigg| \int_{0}^{t} \big\langle f_n \big( \bar{\mathbf{d}}_{n}(s) \big) - f \big( \mathbf{d}_{\ast}(s) \big), v \big\rangle \,ds \bigg|^2 \bigg] =0.
\end{align}

Again from \eqref{2prop}, Remark \ref{fes} and using Dominated Convergence Theorem we obtain,
\begin{align}
\lim_{n \to \infty} \int_{0}^{T} \bar{\mathbb{E}} \bigg[ \bigg| \int_{0}^{t} \big\langle f_n \big( \bar{\mathbf{d}}_{n}(s) \big) - f \big( \mathbf{d}_{\ast}(s) \big), v \big\rangle \,ds \bigg|^2 \bigg] \,dt =0.
\end{align}
\end{enumerate}

\end{proof}

\section{Proof of the Main Result}

\begin{theorem}\label{exma}
There exists a martingale solution $\big( \bar{\Omega}, \bar{\mathcal{F}}, \bar{\mathbb{F}}, \bar{\mathbb{P}}, \bar{\mathbf{u}}, \bar{\mathbf{d}}, \bar{\eta} \big)$ of the problem \eqref{r1st}-\eqref{r2nd} provided the assumptions in Subsection \ref{assu} hold.
\end{theorem}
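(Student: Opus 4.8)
The plan is to assemble the ingredients prepared in Sections~\ref{EE}--\ref{eoms} and to verify that the limiting triple produced by the Skorokhod embedding is a martingale solution in the sense of Definition~\ref{mdefi}.

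\emph{Step 1: compactness and Skorokhod representation.} Starting from the Faedo--Galerkin solutions $(\mathbf u_n,\mathbf d_n)$ of \eqref{g1st}--\eqref{g2nd}, the a priori bounds of Propositions~\ref{dgrdn}, \ref{ugrdn}, \ref{Ldn} and Corollary~\ref{codnh} yield, via Corollaries~\ref{tc1}--\ref{tc2}, the tightness of $\{\mathscr L(\mathbf u_n,\mathbf d_n)\}$ on $(\mathcal Z_T,\mathscr T)$ (Lemma~\ref{tig}). Since $\eta_n\equiv\eta$, the laws $\{\mathscr L(\eta_n)\}$ are trivially tight on $M_{\bar{\mathbb N}}([0,T]\times Y)$, hence $\{\mathscr L(\mathbf u_n,\mathbf d_n,\eta_n)\}$ is tight on $\mathcal Z_T\times M_{\bar{\mathbb N}}([0,T]\times Y)$. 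Applying Theorem~\ref{set3} with $\mathscr X_1=M_{\bar{\mathbb N}}([0,T]\times Y)$ Polish, $\mathscr X_2=\mathcal Z_T$ carrying a separating sequence of continuous functionals, and $\rho=\eta$, produces a subsequence, a probability space $(\bar\Omega,\bar{\mathcal F},\bar{\mathbb P})$, copies $(\bar{\mathbf u}_n,\bar{\mathbf d}_n,\bar\eta_n)$ with $\mathscr L(\bar{\mathbf u}_n,\bar{\mathbf d}_n,\bar\eta_n)=\mathscr L(\mathbf u_n,\mathbf d_n,\eta_n)$, and a limit $(\mathbf u_\ast,\mathbf d_\ast,\eta_\ast)$ with $(\bar{\mathbf u}_n,\bar{\mathbf d}_n)\to(\mathbf u_\ast,\mathbf d_\ast)$ $\bar{\mathbb P}$-a.s.\ in $\mathcal Z_T$ (so that \eqref{zt1c}--\eqref{zt2c} hold) and $\bar\eta_n(\bar\omega)=\eta_\ast(\bar\omega)$ for all $\bar\omega$.

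\emph{Step 2: new filtration, regularity and energy bounds.} Equip $\bar\Omega$ with the filtration $\bar{\mathbb F}$ generated by $\mathbf u_\ast$, $\mathbf d_\ast$ and the restrictions of $\eta_\ast$ to $[0,s]\times Y$, suitably completed and made right-continuous. Since $\bar\eta_n$ has the same law as $\eta$ for every $n$ and $\bar\eta_n=\eta_\ast$, the random measure $\eta_\ast$ is a time homogeneous Poisson random measure on $(Y,\mathscr B(Y))$ over $(\bar\Omega,\bar{\mathcal F},\bar{\mathbb F},\bar{\mathbb P})$ with intensity $\nu$, the Poisson/independence property being transferred through the identity in law (e.g.\ via the Laplace functional). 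The path regularity $\mathbf u_\ast(\cdot,\bar\omega)\in\mathbb D([0,T];\mathbb H_w)\cap L^2(0,T;\mathbb V)$ and $\mathbf d_\ast(\cdot,\bar\omega)\in\mathbb C([0,T];H^1_w)\cap L^2(0,T;H^2)$ follows from the definition of $\mathcal Z_T$ together with \eqref{us2p}, \eqref{usv2}, \eqref{dsp}, \eqref{dnsl}; progressive measurability follows from the choice of $\bar{\mathbb F}$. The estimates \eqref{usv2-}--\eqref{dsp-} are obtained from \eqref{u1tip}, \eqref{pufe}, \eqref{2prop}, \eqref{dnbla} by weak/weak-$\ast$ lower semicontinuity of the relevant norms and Fatou's lemma.

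\emph{Step 3: passage to the limit in the equations.} Fix $v$ in a countable dense subset of $\mathbb V$. Because $(\bar{\mathbf u}_n,\bar{\mathbf d}_n,\bar\eta_n)$ and $(\mathbf u_n,\mathbf d_n,\eta_n)$ have the same law, the latter solves \eqref{g1st-}, and (by the Kuratowski-type measurability statement) the event that $(\bar{\mathbf u}_n(t),v)_{\mathbb H}=\mathscr K_n(\bar{\mathbf u}_n,\bar{\mathbf d}_n,\bar\eta_n,v)(t)$ for a.e.\ $t$ is Borel, this event has full $\bar{\mathbb P}$-probability; likewise for $\bar{\mathbf d}_n$ with $\Lambda_n$. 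By Lemmas~\ref{knterm} and \ref{lnterm},
\begin{align*}
\mathscr K_n(\bar{\mathbf u}_n,\bar{\mathbf d}_n,\bar\eta_n,v)\longrightarrow\mathscr K(\mathbf u_\ast,\mathbf d_\ast,\eta_\ast,v),\qquad
\Lambda_n(\bar{\mathbf u}_n,\bar{\mathbf d}_n,v)\longrightarrow\Lambda(\mathbf u_\ast,\mathbf d_\ast,v)
\end{align*}
in $L^2([0,T]\times\bar\Omega)$, while $(\bar{\mathbf u}_n(\cdot),v)_{\mathbb H}\to(\mathbf u_\ast(\cdot),v)_{\mathbb H}$ and $(\bar{\mathbf d}_n(\cdot),v)_{L^2}\to(\mathbf d_\ast(\cdot),v)_{L^2}$ in the same space; hence $(\mathbf u_\ast(t),v)_{\mathbb H}=\mathscr K(\mathbf u_\ast,\mathbf d_\ast,\eta_\ast,v)(t)$ and $(\mathbf d_\ast(t),v)_{L^2}=\Lambda(\mathbf u_\ast,\mathbf d_\ast,v)(t)$ for a.e.\ $(t,\bar\omega)$. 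A density argument in $v$, together with the càdlàg (resp.\ continuous) versions of both sides in $t$ provided by Step~2, upgrades these identities to every $t\in[0,T]$ and every $v\in\mathbb V$ (resp.\ $v\in H^2$), $\bar{\mathbb P}$-a.s. This is Definition~\ref{mdefi}, so $(\bar\Omega,\bar{\mathcal F},\bar{\mathbb F},\bar{\mathbb P},\mathbf u_\ast,\mathbf d_\ast,\eta_\ast)$, relabelled $(\bar\Omega,\bar{\mathcal F},\bar{\mathbb F},\bar{\mathbb P},\bar{\mathbf u},\bar{\mathbf d},\bar\eta)$, is the desired martingale solution.

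\emph{Main obstacle.} The delicate point is Step~3: transferring the nonlinear, jump-driven Galerkin identity to the Skorokhod copies and then passing to the limit in the compensated Poisson integral $\int_0^t\int_Y P_nF(s,\bar{\mathbf u}_n(s);y)\,\tilde{\bar\eta}_n(ds,dy)$. The difficulties are that the topology of $\mathbb D([0,T];\mathbb H_w)$ is weak and non-metric, the convergences in \eqref{zt1c}--\eqref{zt2c} are only weak/weak-$\ast$ in several component spaces, and one must verify that the compensator of $\eta_\ast$ with respect to the \emph{new} filtration $\bar{\mathbb F}$ is still $\nu$ (so the limiting integral is genuinely a martingale against $\tilde{\eta}_\ast$); one must also confirm that the truncation $\theta_n$ in $B_n',M_n',\tilde B_n'$ is inactive in the limit, which holds because on the sets carved out by the a priori bounds --- whose probabilities tend to $1$ --- it coincides with the untruncated Galerkin system. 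All the analytic estimates needed are already contained in Lemmas~\ref{knterm}--\ref{lnterm}; what remains is the measurability bookkeeping and the martingale characterization, carried out along the lines of \cite{EM}.
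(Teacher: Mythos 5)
Your proposal is correct and follows essentially the same route as the paper: tightness of $\{\mathscr L(\mathbf u_n,\mathbf d_n,\eta_n)\}$, the Jakubowski--Skorokhod representation on $\mathcal Z_T\times M_{\bar{\mathbb N}}([0,T]\times Y)$, transfer of the Galerkin identity to the new processes via equality of laws, the $L^2([0,T]\times\bar\Omega)$ convergences of $\mathscr K_n\to\mathscr K$ and $\Lambda_n\to\Lambda$ from Lemmas~\ref{knterm}--\ref{lnterm}, and the upgrade from a.e.\ $t$ to all $t$ using the weak c\`adl\`ag/continuous regularity of the limits. The extra points you flag (the filtration for $\eta_\ast$, the inactivity of the truncation in the limit) are handled implicitly in the paper by reference to \cite{EM}, so your treatment matches.
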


\begin{proof}
From Lemma \ref{knterm}, we have
\begin{align}\label{unbuscon}
\lim_{n \to \infty} \big\| (\bar{\mathbf{u}}_{n}(\cdot), v)_{\mathbb{H}} - (\mathbf{u}_{\ast}(\cdot), v)_{\mathbb{H}} \big\|_{L^2 ([0, T] \times \bar{\Omega})}= 0 
\end{align}
and
\begin{align}\label{knkcon}
\lim_{n \to \infty} \big\| \mathscr{K}_n(\bar{\mathbf{u}}_n, \bar{\mathbf{d}}_n, \bar{\eta}_n, v) - \mathscr{K}(\mathbf{u}_{\ast}, \mathbf{d}_{\ast}, \eta_{\ast}, v)\big\|_{L^2([0, T] \times \bar{\Omega})}= 0.
\end{align}

From Lemma \ref{lnterm}, we obtain
\begin{align}\label{dnbdscon}
\lim_{n \to \infty} \big\| (\bar{\mathbf{d}}_{n}(\cdot), v)_{L^2} - (\mathbf{d}_{\ast}(\cdot), v)_{L^2} \big\|_{L^2([0, T] \times \bar{\Omega})}= 0 
\end{align}
and
\begin{align}\label{lnlcon}
\lim_{n \to \infty} \big\| \Lambda_n(\bar{\mathbf{u}}_n, \bar{\mathbf{d}}_n, v) - \Lambda(\mathbf{u}_{\ast}, \mathbf{d}_{\ast}, v)\big\|_{L^2([0, T] \times \bar{\Omega})}= 0
\end{align} 

Since $(\mathbf{u}_n, \mathbf{d}_n)$ is a solution of the Galerkin approximation equations \eqref{g1st-}-\eqref{g2nd} for all $t \in [0, T]$, we have for $\mathbb{P}$-a.s.
\begin{align*}
(\mathbf{u}_{n}(t), v)_{\mathbb{H}} = \mathscr{K}_n(\mathbf{u}_n, \mathbf{d}_n, \eta_n, v)(t) 
\end{align*}
and
\begin{align*}
(\mathbf{d}_{n}(t), v)_{L^2} = \Lambda_n(\mathbf{u}_n, \mathbf{d}_n, v)(t). 
\end{align*}

In particular,
\begin{align*}
\int_{0}^{T} \mathbb{E} \big[ \big|(\mathbf{u}_{n}(t), v)_{\mathbb{H}} - \mathscr{K}_n(\mathbf{u}_n, \mathbf{d}_n, \eta_n, v)(t) \big|^2 \big] \,dt = 0
\end{align*}
and
\begin{align*}
\int_{0}^{T} \mathbb{E} \big[ \big|(\mathbf{d}_{n}(t), v)_{L^2} - \Lambda_n(\mathbf{u}_n, \mathbf{d}_n, v)(t) \big|^2 \big] \,dt = 0.
\end{align*}

Since $\mathscr{L}(\mathbf{u}_n, \mathbf{d}_n, \eta_n)= \mathscr{L}(\bar{\mathbf{u}}_n, \bar{\mathbf{d}}_n, \bar{\eta}_n),$ we conclude
\begin{align*}
\int_{0}^{T} \bar{\mathbb{E}} \big[ \big|(\bar{\mathbf{u}}_{n}(t), v)_{\mathbb{H}} - \mathscr{K}_n(\bar{\mathbf{u}}_n, \bar{\mathbf{d}}_n, \bar{\eta}_n, v)(t) \big|^2 \big] \,dt =0
\end{align*}
and
\begin{align*}
\int_{0}^{T} \bar{\mathbb{E}} \big[ \big|(\bar{\mathbf{d}}_{n}(t), v)_{L^2} - \Lambda_n(\bar{\mathbf{u}}_n, \bar{\mathbf{d}}_n, v)(t) \big|^2 \big] \,dt =0.
\end{align*}

From \eqref{unbuscon}, \eqref{knkcon}, \eqref{dnbdscon} and \eqref{lnlcon}, we have
\begin{align*}
\int_{0}^{T} \bar{\mathbb{E}} \big[ \big|(\mathbf{u}_{\ast}(t), v)_{\mathbb{H}} - \mathscr{K}(\mathbf{u}_{\ast}, \mathbf{d}_{\ast}, \eta_{\ast}, v)(t) \big|^2 \big] \,dt =0
\end{align*}
and
\begin{align*}
\int_{0}^{T} \bar{\mathbb{E}} \big[ \big|(\mathbf{d}_{\ast}(t), v)_{L^2} - \Lambda(\mathbf{u}_{\ast}, \mathbf{d}_{\ast}, v)(t) \big|^2 \big] \,dt =0.
\end{align*}

Hence for $l$-almost all $t \in [0, T]$ and $\bar{\mathbb{P}}$-almost all $\omega \in \bar{\Omega},$ we obtain
\begin{align*}
(\mathbf{u}_{\ast}(t), v)_{\mathbb{H}} - \mathscr{K}(\mathbf{u}_{\ast}, \mathbf{d}_{\ast}, \eta_{\ast}, v)(t) = 0
\end{align*}
and
\begin{align*}
(\mathbf{d}_{\ast}(t), v)_{L^2} - \Lambda(\mathbf{u}_{\ast}, \mathbf{d}_{\ast}, v)(t) = 0.
\end{align*}

In particular,
\begin{align}\label{eqnus}
(\mathbf{u}_{\ast}(t), v)_{\mathbb{H}}  &+ \int_{0}^{t} \big\langle \mathscr{A} \mathbf{u}_{\ast}(s), v \big\rangle \, ds + \int_{0}^{t} \big\langle B(\mathbf{u}_{\ast}(s)), v \big\rangle \,ds + \int_{0}^{t} \big\langle M(\mathbf{d}_{\ast}(s)), v \big\rangle \,ds \nonumber
\\ &= \big( \mathbf{u}_{\ast}(0), v \big)_{\mathbb{H}} + \int_{0}^{t} \int_{Y} \big( F(s, \mathbf{u}_{\ast}(s); y), v\big)_{\mathbb{H}} \, \tilde{\eta}_{\ast}(ds, dy)
\end{align}
and
\begin{align}\label{eqnds}
(\mathbf{d}_{\ast}(t), v)_{L^2}  &+ \int_{0}^{t} \big\langle \mathcal{A} \mathbf{d}_{\ast}(s), v \big\rangle \, ds + \int_{0}^{t} \big\langle \tilde{B}(\mathbf{u}_{\ast}(s), \mathbf{d}_{\ast}(s)), v \big\rangle \,ds \nonumber
\\ &= \big( \mathbf{d}_{\ast}(0), v \big)_{L^2} - \int_{0}^{t} \big\langle f(\mathbf{d}_{\ast}(s)), v \big\rangle \,ds
\end{align}

Since $(\mathbf{u}_{\ast}, \mathbf{d}_{\ast})$ is $\mathcal{Z}_{T, 1} \times \mathcal{Z}_{T, 2}$-valued random variable, and $\mathbf{u}_{\ast}, \mathbf{d}_{\ast}$ are weakly c\`adl\`ag, and weakly continuous respectively, we obtain that the equalities \eqref{eqnus}-\eqref{eqnds} hold for all $t \in [0, T]$ and all $v \in \mathbb{V}$ and $v \in H^2$ respectively. Putting $\bar{\mathbf{u}} := \mathbf{u}_{\ast},$ $\bar{\mathbf{d}} := \mathbf{d}_{\ast}$ and $\bar{\eta} := \eta_{\ast},$ we infer that the system $\big( \bar{\Omega}, \bar{\mathcal{F}}, \bar{\mathbb{F}}, \bar{\mathbb{P}}, \bar{\mathbf{u}}, \bar{\mathbf{d}}, \bar{\eta} \big)$ is a martingale solution of \eqref{r1st}-\eqref{r2nd}.  

\end{proof}

\section{Pathwise Uniqueness and Existence of Strong Solution in 2-D}\label{pu2}

In this section we prove the pathwise uniqueness of the weak solutions of \eqref{r1st}-\eqref{r2nd}. Then we use results from \cite{Ond}, for existence of strong solution of \eqref{r1st}-\eqref{r2nd} as well. We consider these cases only in two dimensions.

In the following Lemma we will show that almost all trajectories of the solution $(\mathbf{u}, \mathbf{d})$ are almost everywhere equal to a $\mathbb{V} \times H^2$-valued function defined on [$0, T$].

\begin{lemma}\label{val}
Let the assumptions from Subsection \ref{assu} holds. Let $\big( \bar{\Omega}, \bar{\mathcal{F}}, \bar{\mathbb{F}}, \bar{\mathbb{P}}, \bar{\mathbf{u}}, \bar{\mathbf{d}}, \bar{\eta} \big)$ be a martingale solution of \eqref{r1st}-\eqref{r2nd}. Let $(\mathbf{u}_0, \mathbf{d}_0) \in \mathbb{H} \times H^1.$ Then for $\bar{\mathbb{P}}$-almost all $\omega \in \bar{\Omega},$ the trajectories $\bar{\mathbf{u}}(\cdot, \omega)$ is almost everywhere equal to a c\`adl\`ag  $\mathbb{V}$-valued function and $\bar{\mathbf{d}}(\cdot, \omega)$ is almost everywhere equal to a continuous $L^2$-valued function defined on $[0, T]$.
\end{lemma}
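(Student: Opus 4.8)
The plan is to exploit the two a priori regularity statements already established for the limiting solution, namely $\bar{\mathbf{u}} \in L^2(\bar\Omega; L^\infty(0,T;\mathbb H)) \cap L^2([0,T]\times\bar\Omega;\mathbb V)$ (from \eqref{us2p}--\eqref{usv2}) and $\bar{\mathbf{d}} \in L^2(\bar\Omega;L^\infty(0,T;L^2)) \cap L^2([0,T]\times\bar\Omega;H^1)$ together with $\Delta\bar{\mathbf{d}} \in L^2([0,T]\times\bar\Omega;L^2)$ (from \eqref{dsp}, \eqref{dsv2}, \eqref{dnsl}), and to upgrade these into pathwise statements about the evolution equations \eqref{eqnus}--\eqref{eqnds}. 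First I would fix a set $\bar\Omega_0$ of full $\bar{\mathbb P}$-measure on which $\int_0^T \|\bar{\mathbf{u}}(s)\|^2_{\mathbb V}\,ds < \infty$, $\int_0^T \|\bar{\mathbf{d}}(s)\|^2_{H^2}\,ds < \infty$, the martingale solution identities \eqref{eqnus}--\eqref{eqnds} hold for all $t$ and all test functions, and (for $n=2$) the estimate \eqref{btil} applies; this is possible because all the relevant quantities are finite almost surely.

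The core of the argument is a \emph{deterministic} regularity bootstrap carried out $\omega$ by $\omega$ on $\bar\Omega_0$. For $\bar{\mathbf{d}}$: rewrite \eqref{eqnds} as the pathwise identity $\bar{\mathbf d}(t) = \mathbf d_0 - \int_0^t\big(\mathcal A\bar{\mathbf d}(s) + \tilde B(\bar{\mathbf u}(s),\bar{\mathbf d}(s)) + f(\bar{\mathbf d}(s))\big)\,ds$ in $(H^2)'$ (equivalently in $L^2$). Using \eqref{btil} with $\mathbf n = 2$ one gets $|\tilde B(\bar{\mathbf u}(s),\bar{\mathbf d}(s))|_{L^2} \le C|\bar{\mathbf u}(s)|^{1/2}\|\bar{\mathbf u}(s)\|^{1/2}\|\bar{\mathbf d}(s)\|^{1/2}|\Delta\bar{\mathbf d}(s)|^{1/2}$, which is integrable on $[0,T]$ since $\bar{\mathbf u}(\cdot,\omega)\in L^\infty(0,T;\mathbb H)\cap L^2(0,T;\mathbb V)$ and $\bar{\mathbf d}(\cdot,\omega)\in L^\infty(0,T;H^1)\cap L^2(0,T;H^2)$; the term $\mathcal A\bar{\mathbf d}(s)$ lies in $L^2(0,T;L^2)$ by \eqref{dnsl}, and $f(\bar{\mathbf d}(s))\in L^2(0,T;L^2)$ by Remark \ref{fes} together with the Sobolev embedding $H^1\hookrightarrow L^{4N+2}$ in 2D. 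Hence the right-hand side is an absolutely continuous (indeed $W^{1,1}(0,T;L^2)$) function of $t$, so $\bar{\mathbf d}(\cdot,\omega)$ agrees a.e.\ with a continuous $L^2$-valued function on $[0,T]$.

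For $\bar{\mathbf u}$ the situation is analogous but with jumps: from \eqref{eqnus}, $\bar{\mathbf u}(t) = \mathbf u_0 - \int_0^t\big(\mathscr A\bar{\mathbf u}(s) + B(\bar{\mathbf u}(s)) + M(\bar{\mathbf d}(s))\big)\,ds + \int_0^t\!\!\int_Y F(s,\bar{\mathbf u}(s);y)\,\tilde{\bar\eta}(ds,dy)$, interpreted in $\mathbb V'$. The drift is a Bochner integral of an $L^1(0,T;\mathbb V')$-valued integrand: $\mathscr A\bar{\mathbf u}(s)\in L^2(0,T;\mathbb V')$ by \eqref{scra}-type bounds and $\bar{\mathbf u}(\cdot,\omega)\in L^2(0,T;\mathbb V)$; $\|B(\bar{\mathbf u}(s))\|_{\mathbb V'}\le c|\bar{\mathbf u}(s)|^2_{\mathbb H}$ by \eqref{BHH} which is in $L^\infty(0,T)$; and $\|M(\bar{\mathbf d}(s))\|_{\mathbb V'}\le C\|\bar{\mathbf d}(s)\||\Delta\bar{\mathbf d}(s)|$ by \eqref{md1d2} with $\mathbf n=2$, integrable as above. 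So the drift part is continuous in $\mathbb V'$. The stochastic integral, being a càdlàg $\mathbb H$-valued (hence $\mathbb V'$-valued, via the continuous embedding) process by the standard theory of integration against a compensated Poisson random measure and \eqref{lgf}, is càdlàg. Therefore $\bar{\mathbf u}(\cdot,\omega)$ coincides a.e.\ with a càdlàg $\mathbb V'$-valued function; combined with $\bar{\mathbf u}(\cdot,\omega)\in L^2(0,T;\mathbb V)$ and the weak-càdlàg property inherited from membership in $\mathcal Z_{T,1}$, an interpolation/density argument shows this càdlàg version is in fact $\mathbb V$-valued. The main obstacle is precisely this last upgrade from ``càdlàg in $\mathbb V'$ and a.e.\ in $\mathbb V$'' to ``càdlàg as a $\mathbb V$-valued map'': one must be careful that the weak $\mathbb V$-continuity (weak-$\mathbb H$ càdlàg plus $L^2(\mathbb V)$) genuinely forces the representative to take values in $\mathbb V$ at the relevant times, which is where the essential two-dimensional structure — the stronger estimate \eqref{btil} with the exponent $1/2$ — is used to keep all drift terms in $\mathbb V'$ with an integrable norm.
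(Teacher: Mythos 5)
Your proposal follows essentially the same route as the paper: its proof of Lemma \ref{val} is precisely this term-by-term verification that $\mathscr{A}\bar{\mathbf{u}}$, $B(\bar{\mathbf{u}})$ and $M(\bar{\mathbf{d}})$ are square-integrable $\mathbb{V}'$-valued, that the stochastic integral is $\mathbb{H}$-valued via the isometry \eqref{isof} and \eqref{lgf}, and that $\mathcal{A}\bar{\mathbf{d}}$, $\tilde{B}(\bar{\mathbf{u}},\bar{\mathbf{d}})$ and $f(\bar{\mathbf{d}})$ are square-integrable $L^2$-valued, using the same estimates \eqref{BHH}, \eqref{md1d2}, \eqref{btil} and the moment bounds of Propositions \ref{ugrdn} and \ref{Ldn} (the only cosmetic difference being that the paper checks integrability in expectation while you work pathwise on a full-measure set). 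You are in fact more explicit than the paper about the concluding step (absolute continuity of the Bochner integrals, c\`adl\`ag regularity of the Poisson stochastic integral) and about the delicate upgrade from a c\`adl\`ag $\mathbb{V}'$-valued version to the claimed $\mathbb{V}$-valued one, a point the paper's proof does not address at all.
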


\begin{proof}
From previous results we have for $t \in [0, T],$
\begin{align}\label{firu}
\bar{\mathbf{u}}(t) &= \bar{\mathbf{u}}_{0} - \int_{0}^{t} \mathscr{A} \bar{\mathbf{u}}(s) \,ds - \int_{0}^{t} B(\bar{\mathbf{u}}(s)) \,ds - \int_{0}^{t} M(\bar{\mathbf{d}}(s)) \,ds + \int_{0}^{t} \int_{Y} F(s, \bar{\mathbf{u}}(s), y) \,\tilde{\eta}(ds, dy) 
\end{align} 
and
\begin{align}\label{secd}
\bar{\mathbf{d}}(t) = \bar{\mathbf{d}}_{0} - \int_{0}^{t} \mathcal{A} \bar{\mathbf{d}}(s) \,ds - \int_{0}^{t} \tilde{B}(\bar{\mathbf{u}}(s), \bar{\mathbf{d}}(s)) \,ds - \int_{0}^{t} f(\bar{\mathbf{d}}(s)) \,ds
\end{align}

We need to verify the drift terms in the RHS of \eqref{firu} are $\mathbb{V}'$-valued. Then from \ref{Ah1} and Proposition \ref{ugrdn} we get,
\begin{align}
\bar{\mathbb{E}} \int_{0}^{T} \big|\mathscr{A} \bar{\mathbf{u}}(s)\big|^2_{\mathbb{V}'} \,ds \leq \bar{\mathbb{E}}  \int_{0}^{T} \| \bar{\mathbf{u}}(s)\|^2 \, ds < \infty.
\end{align}

From \ref{BHH} and Proposition \ref{ugrdn} we have,
\begin{align}
\bar{\mathbb{E}} \int_{0}^{T} \big| B (\bar{\mathbf{u}}(s)) \big|^2_{\mathbb{V}'} \,ds \leq \bar{\mathbb{E}} \int_{0}^{T} | \bar{\mathbf{u}}(s)|^{2}_{\mathbb{H}} \, ds \leq \bar{\mathbb{E}} \big[\sup_{s \in [0, T]} | \bar{\mathbf{u}}(s)|^{2}_{\mathbb{H}}\big] < \infty.
\end{align}

From \eqref{md1d2}, Proposition \ref{Ldn} and Corollary \ref{codnh} we obtain,
\begin{align}
\bar{\mathbb{E}} \int_{0}^{T} \big| M (\bar{\mathbf{d}}(s)) \big|^2_{\mathbb{V}'} \,ds \leq \bar{\mathbb{E}} \int_{0}^{T} \|\bar{\mathbf{d}}(s)\|^{4-\mathbf{n}} | \Delta \bar{\mathbf{d}}(s)|^{\mathbf{n}} \, ds 
\end{align}

For $\mathbf{n} = 2,$ applying Young's inequality we get,
\begin{align}
&\bar{\mathbb{E}} \int_{0}^{T} \|\bar{\mathbf{d}}(s)\|^{2} | \Delta \bar{\mathbf{d}}(s)|^{2} \, ds \leq \bar{\mathbb{E}} \bigg[ \sup_{s \in [0, T]} \|\bar{\mathbf{d}}(s)\|^{2} \int_{0}^{T} | \Delta \bar{\mathbf{d}}(s)|^{2} \, ds \bigg] \nonumber
\\ &\leq c \ \bar{\mathbb{E}} \bigg[ \sup_{s \in [0, T]} \|\bar{\mathbf{d}}(s)\|^{2} \bigg]^2 + c \ \bar{\mathbb{E}} \bigg[ \int_{0}^{T}  | \Delta \bar{\mathbf{d}}(s)|^{2} \, ds \bigg]^2 < \infty.
\end{align}

Now we need to show $F$ is $\mathbb{H}$-valued. So using It\^o isometry, \eqref{lgf} and Proposition \ref{ugrdn} we obtain,
\begin{align}
&\bar{\mathbb{E}} \int_{0}^{T} \bigg| \int_{Y} F(s, \bar{\mathbf{u}}(s), y) \,\tilde{\eta}(ds, dy) \bigg|^2_{\mathbb{H}} \leq \bar{\mathbb{E}} \int_{0}^{T} \int_{Y} \big| F(s, \bar{\mathbf{u}}(s), y) \big|^2_{\mathbb{H}} \, \nu(dy)ds \nonumber
\\ & \leq \bar{\mathbb{E}} \int_{0}^{T} \big( 1+ |\bar{\mathbf{u}}(s)|^2_{\mathbb{H}}\big) \,ds \leq C_T + \bar{\mathbb{E}} \bigg[\sup_{s \in [0, T]} | \bar{\mathbf{u}}(s)|^{2}_{\mathbb{H}}\bigg] < \infty.
\end{align}

Now we consider the second equation \eqref{secd}. We need to show the RHS of \eqref{secd} is $L^2$-valued. Then from Proposition \ref{dgrdn} and Proposition \ref{Ldn} we have,
\begin{align}
\bar{\mathbb{E}} \int_{0}^{T} \big|\mathcal{A} \bar{\mathbf{d}}(s)\big|^2_{L^2} \,ds \leq \bar{\mathbb{E}} \int_{0}^{T} \| \bar{\mathbf{d}}(s) \|^2_{H^2} \,ds < \infty.
\end{align}

From \eqref{btil}, Proposition \ref{ugrdn} and Proposition \ref{Ldn} we get,
\begin{align}
\bar{\mathbb{E}} \int_{0}^{T} \big|\tilde{B} (\bar{\mathbf{u}}(s), \bar{\mathbf{d}}(s)) \big|^2_{L^2} \,ds \leq c\ \bar{\mathbb{E}} \int_{0}^{T} \big( |\bar{\mathbf{u}}(s)|^{2-\frac{\mathbf{n}}{2}} \|\bar{\mathbf{u}}(s)\|^{\frac{\mathbf{n}}{2}} \|\bar{\mathbf{d}}(s)\|^{2-\frac{\mathbf{n}}{2}} |\Delta \bar{\mathbf{d}}(s)|^{\frac{\mathbf{n}}{2}}\big) \,ds 
\end{align}

For $\mathbf{n}=2,$
\begin{align}
&\bar{\mathbb{E}} \int_{0}^{T} \big( |\bar{\mathbf{u}}(s)| \ \|\bar{\mathbf{u}}(s)\| \ \|\bar{\mathbf{d}}(s)\| \ |\Delta \bar{\mathbf{d}}(s)| \big) \,ds \leq \bar{\mathbb{E}} \int_{0}^{T} |\bar{\mathbf{u}}(s)|^2 \|\bar{\mathbf{u}}(s)\|^2 + \bar{\mathbb{E}} \int_{0}^{T} \|\bar{\mathbf{d}}(s)\|^2 |\Delta \bar{\mathbf{d}}(s)|^2 \,ds \nonumber
\\ &\leq c \ \bar{\mathbb{E}} \bigg[ \sup_{s \in [0, T]} |\bar{\mathbf{u}}(s)|^2 \int_{0}^{T} \|\bar{\mathbf{u}}(s)\|^2 \,ds \bigg] + c \ \bar{\mathbb{E}} \bigg[ \sup_{s \in [0, T]} \|\bar{\mathbf{d}}(s)\|^2 \int_{0}^{T} |\Delta \bar{\mathbf{d}}(s)|^2 \,ds \bigg] \nonumber
\\ &\leq c \ \bar{\mathbb{E}} \bigg( \sup_{s \in [0, T]} |\bar{\mathbf{u}}(s)|^2 \bigg)^2 + c \ \bar{\mathbb{E}} \bigg( \int_{0}^{T} \|\bar{\mathbf{u}}(s)\|^2 \,ds \bigg)^2  \nonumber
\\ & \qquad \quad + c \ \bar{\mathbb{E}} \bigg( \sup_{s \in [0, T]} \|\bar{\mathbf{d}}(s)\|^2 \bigg)^2 + c \ \bar{\mathbb{E}} \bigg( \int_{0}^{T} |\Delta \bar{\mathbf{d}}(s)|^2 \,ds \bigg)^2 < \infty.
\end{align}

Now from Corollary \ref{codnh} and Remark \ref{fes} we obtain for $\bar{q}= 4N+2$,
\begin{align}
\bar{\mathbb{E}} \int_{0}^{T} | f( \bar{\mathbf{d}}(s))|^2_{L^2} \,ds \leq C_T + \bar{\mathbb{E}} \int_{0}^{T} \| \bar{\mathbf{d}}(s)) \|^{\bar{q}}_{L^{\bar{q}}} \,ds \leq C_T + \bar{\mathbb{E}} \sup_{s \in [0, T]} \| \bar{\mathbf{d}}(s)) \|^{\bar{q}}_{H^1} < \infty.
\end{align}

Thus the proof is complete.

\end{proof}

In the following lemma we show that the solutions of \eqref{r1st}-\eqref{r2nd} are pathwise unique. We use  Gy\"ongy and Krylov's version of It\^o's formula (see \cite{GK}) for a suitable function in this proof.

\begin{lemma}\label{paun}
Let us assume that $(\mathbf{u}_1, \mathbf{d}_1)$ and $(\mathbf{u}_2, \mathbf{d}_2)$ are the two solutions of \eqref{r1st}-\eqref{r2nd} defined on the same stochastic system $(\Omega, \mathcal{F}, \mathcal{F}_t, \mathbb{P}, \tilde{\eta}),$ with the same initial condition $(\mathbf{u}_0, \mathbf{d}_0) \in \mathbb{H} \times H^1.$ Then 
\begin{align}\label{pu}
(\mathbf{u}_1(t), \mathbf{d}_1(t)) = (\mathbf{u}_2(t), \mathbf{d}_2(t)) \quad \mathbb{P}\text{-a.s.} \quad \text{for any} \ t \in (0, T].
\end{align}
\end{lemma}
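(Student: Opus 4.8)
The strategy is a localized energy estimate for the difference of the two solutions, combined with Gronwall's lemma. Put $\mathbf{w} := \mathbf{u}_1 - \mathbf{u}_2$ and $\mathbf{z} := \mathbf{d}_1 - \mathbf{d}_2$; subtracting the two copies of \eqref{r1st}-\eqref{r2nd} and using bilinearity of $B,\tilde{B},M$ gives, with $\mathbf{w}(0)=\mathbf{z}(0)=0$,
\begin{align*}
d\mathbf{w} &+ \big[ \mathscr{A}\mathbf{w} + B(\mathbf{u}_1,\mathbf{w}) + B(\mathbf{w},\mathbf{u}_2) + M(\mathbf{z},\mathbf{d}_1) + M(\mathbf{d}_2,\mathbf{z}) \big]\,dt = \int_Y \big( F(t,\mathbf{u}_1;y) - F(t,\mathbf{u}_2;y)\big)\,\tilde{\eta}(dt,dy),\\
d\mathbf{z} &+ \big[ \mathcal{A}\mathbf{z} + \tilde{B}(\mathbf{w},\mathbf{d}_1) + \tilde{B}(\mathbf{u}_2,\mathbf{z}) + f(\mathbf{d}_1) - f(\mathbf{d}_2)\big]\,dt = 0.
\end{align*}
By Lemma \ref{val}, for $\bar{\mathbb{P}}$-a.e.\ $\omega$ the path $\mathbf{w}(\cdot)$ agrees a.e.\ with a c\`adl\`ag $\mathbb{V}$-valued function belonging to $L^2(0,T;\mathbb{V})$, and $\mathbf{z}(\cdot)$ agrees a.e.\ with a continuous $L^2$-valued function belonging to $L^2(0,T;H^2)$; this is exactly the regularity needed to apply the Gy\"ongy--Krylov version of It\^o's formula (\cite{GK}) to $\mathbf{w}\mapsto |\mathbf{w}|^2_{\mathbb{H}}$ and to $\mathbf{z}\mapsto \|\mathbf{z}\|^2_{H^1}$ (for the latter the equation carries no jump part, so this is a chain rule pairing the $\mathbf{z}$-equation with $\mathbf{z}-\Delta\mathbf{z}$). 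Adding the resulting identities, the linear terms yield the dissipation $-2\|\mathbf{w}\|^2 - 2\|\mathbf{z}\|^2 - 2|\Delta\mathbf{z}|^2_{L^2}$; the compensator of the jump term equals $\int_Y | F(s,\mathbf{u}_1;y) - F(s,\mathbf{u}_2;y)|^2_{\mathbb{H}}\,d\nu(y)\le L|\mathbf{w}|^2_{\mathbb{H}}$ by \eqref{lipf}; and the remaining stochastic integral, call it $\mathcal{M}(t)$, is a local martingale.

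It then remains to control the six nonlinear contributions, which I would do term by term with the two-dimensional interpolation inequalities, each time absorbing the top-order part into the dissipation. For the Navier--Stokes term $b(\mathbf{w},\mathbf{u}_2,\mathbf{w})$ use $\|\mathbf{w}\|_{L^4}\le c|\mathbf{w}|_{\mathbb{H}}^{1/2}\|\mathbf{w}\|^{1/2}$; for $\langle M(\mathbf{z},\mathbf{d}_1),\mathbf{w}\rangle$ and $\langle M(\mathbf{d}_2,\mathbf{z}),\mathbf{w}\rangle$ use the estimate on $m$ from Section \ref{fsm}; for $\langle \tilde{B}(\mathbf{w},\mathbf{d}_1),\mathbf{z}-\Delta\mathbf{z}\rangle$ and $\langle \tilde{B}(\mathbf{u}_2,\mathbf{z}),\mathbf{z}-\Delta\mathbf{z}\rangle$ use $\langle \tilde{B}(\mathbf{u},\mathbf{d}),\mathbf{d}\rangle=0$ together with \eqref{btil}; and for $\langle f(\mathbf{d}_1)-f(\mathbf{d}_2),\mathbf{z}-\Delta\mathbf{z}\rangle$ use $| f(\mathbf{d}_1)-f(\mathbf{d}_2)|\le c(1+|\mathbf{d}_1|^{2N}+|\mathbf{d}_2|^{2N})|\mathbf{z}|$, $\nabla f(\mathbf{d})=f'(\mathbf{d})\nabla\mathbf{d}$, Remark \ref{fes}, and the embeddings $H^1\hookrightarrow L^q$ ($q<\infty$) and $H^2\hookrightarrow L^\infty$, both valid in dimension two. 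In each case, after Young's inequality the surplus is bounded by $g(s)\,\mathcal{E}(s)$, where $\mathcal{E}(s):=|\mathbf{w}(s)|^2_{\mathbb{H}}+\|\mathbf{z}(s)\|^2_{H^1}$ and $g\in L^1(0,T)$ $\bar{\mathbb{P}}$-a.s.\ is built from $\|\mathbf{u}_i(s)\|^2$, $|\Delta\mathbf{d}_i(s)|^2_{L^2}$, $\|\nabla\mathbf{d}_i(s)\|^2_{L^q}$ and powers of $\|\mathbf{d}_i(s)\|_{H^1}$, all finite a.s.\ by the $L^\infty_t H^1\cap L^2_t H^2$ regularity of the solutions (and the a priori bounds of Section \ref{EE}). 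This is precisely where two-dimensionality is used: the $M$- and $\tilde{B}$-contributions are the genuinely coupled top-order terms, and their mutual compatibility — rooted in the identity $\langle \tilde{B}(\mathbf{u},\mathbf{d}),\Delta\mathbf{d}\rangle=\langle M(\mathbf{d}),\mathbf{u}\rangle$ already used in Section \ref{EE} — leaves, in 2D, exponents that can be closed by Young's inequality, whereas the same computation in 3D does not.

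Finally I would localize. Set $\tau_N := \inf\big\{ t\ge 0 : \int_0^t g(s)\,ds + \sup_{r\le t}\big( |\mathbf{u}_1(r)|^2_{\mathbb{H}} + |\mathbf{u}_2(r)|^2_{\mathbb{H}}\big)\ge N\big\}\wedge T$. On $[0,\tau_N]$ the jump integrand has finite second moment (by \eqref{lgf} and \eqref{lipf}), so $\mathcal{M}(\cdot\wedge\tau_N)$ is a genuine martingale; taking expectations in the energy inequality eliminates it and yields $\bar{\mathbb{E}}\,\mathcal{E}(t\wedge\tau_N)\le (N+L)\int_0^t \bar{\mathbb{E}}\,\mathcal{E}(s\wedge\tau_N)\,ds$, whence $\mathcal{E}(\cdot\wedge\tau_N)\equiv 0$ by Gronwall's lemma. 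Since $\tau_N\uparrow T$ $\bar{\mathbb{P}}$-a.s.\ as $N\to\infty$, we get $\mathbf{w}\equiv 0$ and $\mathbf{z}\equiv 0$, which is \eqref{pu}. The main obstacle is the nonlinear bookkeeping of the second paragraph — organising the pairings so that the $M$- and $\tilde{B}$-terms cooperate and pinning down the exact Young exponents; a secondary, more technical point is the rigorous justification of It\^o's formula at this low, c\`adl\`ag level of regularity, which is the role of Lemma \ref{val} and \cite{GK}.
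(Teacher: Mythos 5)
Your overall route is the paper's: form the difference system for $\mathbf{w}=\mathbf{u}_1-\mathbf{u}_2$, $\mathbf{z}=\mathbf{d}_1-\mathbf{d}_2$, apply the Gy\"ongy--Krylov It\^o formula to $|\mathbf{w}|^2_{\mathbb{H}}$, $|\mathbf{z}|^2_{L^2}$ and $\|\nabla\mathbf{z}\|^2_{L^2}$, absorb the top-order parts of the nonlinear terms into the dissipation $\|\mathbf{w}\|^2+\|\mathbf{z}\|^2+|\Delta\mathbf{z}|^2$ via the two-dimensional interpolation inequalities and Young, use \eqref{lipf} for the compensator, and close with Gronwall. (One small point of divergence: the paper does not rely on any exact cancellation between the $M$- and $\tilde B$-contributions in the difference system --- the identity $\langle\tilde B(\mathbf{u},\mathbf{d}),\Delta\mathbf{d}\rangle=\langle M(\mathbf{d}),\mathbf{u}\rangle$ holds for the diagonal terms only --- it simply estimates each cross term separately as in \eqref{bmmbt}; this works in 2D and your hedged description is compatible with it.)

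The genuine gap is in your final Gronwall step. After the energy estimate the surplus is $\int_0^{t}g(s)\,\mathcal{E}(s)\,ds$ with a \emph{random} coefficient $g$ built from $|\mathbf{u}_i|^2\|\mathbf{u}_i\|^2$, $\|\mathbf{d}_i\|^2|\Delta\mathbf{d}_i|^2$, etc., which is only in $L^1(0,T)$ $\mathbb{P}$-a.s., not in $L^\infty(0,T)$. Your stopping time $\tau_N$ controls $\int_0^{\tau_N}g(s)\,ds\le N$, but this does \emph{not} yield the claimed inequality $\bar{\mathbb{E}}\,\mathcal{E}(t\wedge\tau_N)\le (N+L)\int_0^t\bar{\mathbb{E}}\,\mathcal{E}(s\wedge\tau_N)\,ds$: you would need the pointwise bound $g(s)\le N$ on $[0,\tau_N]$, which no stopping time can provide for a merely integrable $g$ (and a pathwise Gronwall applied before taking expectations fails because the resulting weights $e^{\int_s^tg}$ multiply the martingale increment and destroy its zero mean, $g$ being correlated with the noise). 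The missing ingredient is the exponential weight: the paper sets $\Upsilon(t):=\exp\bigl(-2\int_0^t(\xi_1+\xi_2+\xi_3)\,ds\bigr)$ and applies It\^o's formula to $\Upsilon(t)|\mathbf{w}(t)|^2$, $\Upsilon(t)|\mathbf{z}(t)|^2$, $\Upsilon(t)\|\mathbf{z}(t)\|^2$, so that the term $\Upsilon'(t)\mathcal{E}(t)$ exactly cancels $2\Upsilon(t)g(t)\mathcal{E}(t)$; what remains has deterministic constants, expectation kills the stochastic integral, and Gronwall gives $\mathbb{E}[\Upsilon(t)\mathcal{E}(t)]=0$, whence $\mathcal{E}(t)=0$ a.s.\ since $\Upsilon(t)>0$ a.s.\ (as $\xi_i\in L^1(0,T)$ a.s.). Your localization is still useful --- indeed more careful than the paper's --- for justifying that the stochastic integral is a true martingale before taking expectations, but it cannot replace the weight.
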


\begin{proof}
Let us denote the norms as per section \ref{bd} in this proof. Let $\mathbf{u}(t) = \mathbf{u}_1(t) - \mathbf{u}_2(t)$ and $\mathbf{d}(t) = \mathbf{d}_1(t) - \mathbf{d}_2(t),$ with $(\mathbf{u}(0), \mathbf{d}(0)) = (0, 0).$ Let us denote $F_d(t, y) := \big(F(t, \mathbf{u}_1(t); y)-F(t, \mathbf{u}_2(t); y)\big)$. These processes satisfy
\begin{align*}
d\mathbf{u}(t) &+ \Big( \mathscr{A}\mathbf{u}(t) + B(\mathbf{u}(t), \mathbf{u}_1(t)) + B(\mathbf{u}_2(t), \mathbf{u}(t))\Big) \,dt  
\\ &= - \Big( M(\mathbf{d}(t), \mathbf{d}_1(t)) + M(\mathbf{d}_2(t), \mathbf{d}(t)) \Big) \,dt + \int_{Y} F_d(t, y) \,\tilde{\eta}(dt, dy),
\end{align*}
and
\begin{align*}
d\mathbf{d}(t) &+ \Big( \mathcal{A}\mathbf{d}(t) + \tilde{B}(\mathbf{u}(t), \mathbf{d}_1(t)) + \tilde{B}(\mathbf{u}_2(t), \mathbf{d}(t))\Big) \,dt = - \Big( f(\mathbf{d}_2(t)) - f(\mathbf{d}_1(t)) \Big) \,dt
\end{align*}

From \eqref{buvvd}, \eqref{md1d2}, \eqref{btil} and using Poincar\'e and Young's inequalities, we obtain for any $\kappa_3 > 0, \kappa_4 > 0,\kappa_5 > 0, \kappa_6 > 0, \kappa_7 > 0$ and $\kappa_8 > 0,$ there exist $C(\kappa_3)  >0, C(\kappa_4, \kappa_5) > 0, C(\kappa_6, \kappa_8) >0$ and  $C(\kappa_7) > 0$ such that 
\begin{align}\label{bmmbt}
&|\langle B(\mathbf{u}, \mathbf{u}_1), \mathbf{u}\rangle| \leq \kappa_3 \|\mathbf{u}\|^2 + C(\kappa_3) | \mathbf{u}_1|^2 \| \mathbf{u}_1\|^2 | \mathbf{u}|^2, \nonumber
\\ & |\langle M(\mathbf{d}_2, \mathbf{d}), \mathbf{u}\rangle| \leq \kappa_4 \|\mathbf{u}\|^2 + \kappa_5 | \Delta \mathbf{d}|^2 + C(\kappa_4, \kappa_5) \|\mathbf{d}_2\|^2 | \Delta \mathbf{d}_2|^2 \|\mathbf{d}\|^2, \nonumber
\\ & |\langle M(\mathbf{d}, \mathbf{d}_1), \mathbf{u}\rangle| \leq \kappa_8 \|\mathbf{u}\|^2 + \kappa_6 | \Delta \mathbf{d}|^2 + C(\kappa_6, \kappa_8) \|\mathbf{d}_1\|^2 | \Delta \mathbf{d}_1|^2 \|\mathbf{d}\|^2, \nonumber
\\ &|\langle \tilde{B}(\mathbf{u}_2, \mathbf{d}), \Delta \mathbf{d}\rangle| \leq \kappa_7 | \Delta \mathbf{d}|^2 + C(\kappa_7) | \mathbf{u}_2|^2 \|\mathbf{u}_2\|^2 \|\mathbf{d}\|^2.
\end{align}

From Gagliardo-Nirenberg inequality and from the Sobolev embedding $H^2 \subset L^{\infty},$ we obtain for any $\kappa_9 > 0$ there exists $C(\kappa_9)>0$ such that
\begin{align*}
|\langle \tilde{B}(\mathbf{u}, \mathbf{d}_1), \mathbf{d}\rangle| &\leq |\mathbf{u}| \ \|\mathbf{d}_1\| \ \|\mathbf{d}\|_{L^{\infty}} \leq \kappa_9 |\Delta \mathbf{d}|^2 + C(\kappa_9) |\mathbf{u}|^2 \|\mathbf{d}_1\|^2.
\end{align*} 

Now let us define
\[\Upsilon(t) := \exp\bigg(- 2 \int_{0}^{t} (\xi_1(s) + \xi_2(s) + \xi_3(s)) \,ds \bigg), \quad \text{for any} \ t > 0.\]
where
\begin{align*}
&\xi_1(s) := C(\kappa_3) |\mathbf{u}_1(s)|^2 \|\mathbf{u}_1(s)\|^2 + C(\kappa_9) \|\mathbf{d}_1(s)\|^2,
\\ &\xi_2(s) := C(\kappa_4, \kappa_5) \|\mathbf{d}_2(s)\|^2 | \Delta \mathbf{d}_2(s)|^2 + C(\kappa_6, \kappa_8) \|\mathbf{d}_1(s)\|^2 | \Delta \mathbf{d}_1(s)|^2 
\\ &\qquad\qquad+ C(\kappa_7) |\mathbf{u}_2(s)|^2 \|\mathbf{u}_2(s)\|^2 + C_1(\kappa_2) \beta(\mathbf{d}_1, \mathbf{d}_2),
\\ &\xi_3(s) := \big(C(\kappa_1) + C_2(\kappa_2) \big) \beta(\mathbf{d}_1, \mathbf{d}_2),
\end{align*}
where $\beta(\mathbf{d}_1, \mathbf{d}_2)$ is defined in \eqref{betdef} of Appendix.

 Now applying It\^o's formula to $\Upsilon(t) |\mathbf{d}(t)|^2,$ we obtain
\begin{align}\label{dU1}
d\big[\Upsilon(t) |\mathbf{d}(t)|^2\big] &= -2 \Upsilon(t) \Big[ \|\mathbf{d}(t)\|^2 + \big\langle \tilde{B}(\mathbf{u}(t), \mathbf{d}_1(t)), \mathbf{d}(t) \big\rangle \nonumber
\\ & \ \ \ + \big\langle f(\mathbf{d}_2(t)) - f(\mathbf{d}_1(t)), \mathbf{d}(t) \big\rangle \Big] \,dt + \Upsilon '(t) |\mathbf{d}(t)|^2 \,dt.
\end{align}

In this proof we will use It\^o's formula due to \cite{GK}. So applying It\^o's formula to $\Upsilon(t) \|\mathbf{d}(t)\|^2$ and $\Upsilon(t) |\mathbf{u}(t)|^2$ we get,
\begin{align}\label{dU2}
d\big[\Upsilon(t) \|\mathbf{d}(t)\|^2\big] &= 2 \Upsilon(t) \Big[ -|\Delta \mathbf{d}(t)|^2 + \big\langle \tilde{B}(\mathbf{u}(t), \mathbf{d}_1(t))+ \tilde{B}(\mathbf{u}_2(t), \mathbf{d}(t)), \Delta \mathbf{d}(t) \big\rangle \nonumber
\\ & \ \ + \big\langle f(\mathbf{d}_2(t)) - f(\mathbf{d}_1(t)), \Delta \mathbf{d}(t) \big\rangle \Big] \,dt + \Upsilon '(t) \|\mathbf{d}(t)\|^2 \,dt,
\end{align}
and
\begin{align}\label{dU3}
d\big[\Upsilon(t) |\mathbf{u}(t)|^2\big] &= -2 \Upsilon(t) \Big[ \|\mathbf{u}(t)\|^2 + \big\langle B(\mathbf{u}(t), \mathbf{u}_1(t)) + M(\mathbf{d}(t), \mathbf{d}_1(t)), \mathbf{u}(t) \big\rangle \nonumber
\\ & \ \ \ + \big\langle  M(\mathbf{d}_2(t), \mathbf{d}(t)), \mathbf{u}(t) \big\rangle \Big] \,dt  \nonumber
\\ & \ \ \ + 2 \Upsilon(t) \Big[ \int_{Y} \big( F_d(t, y), \mathbf{u}(t)\big)_{\mathbb{H}} \,\tilde{\eta}(dt, dy) + \int_{Y}  \big|F_{d}(t, y) \big|^2_{\mathbb{H}} \,\nu(dy)ds \Big] \nonumber
\\ & \ \ \ + \Upsilon '(t) |\mathbf{u}(t)|^2 \,dt.
\end{align}

Now adding \eqref{dU1}, \eqref{dU2} and \eqref{dU3}, using the inequalities \eqref{bmmbt}, Assumption (B) and Lemma \ref{fdflbet} we get,
\begin{align}\label{5s}
d&\Big[ \Upsilon(t) \big( |\mathbf{u}(t)|^2 + |\mathbf{d}(t)|^2 + \|\mathbf{d}(t)\|^2 \big) \Big]+ 2 \Upsilon(t) \Big[ \|\mathbf{u}(t)\|^2 + \|\mathbf{d}(t)\|^2 + |\Delta \mathbf{d}(t)|^2 \Big] \,dt \nonumber
\\ &\leq 2 \Upsilon(t) \Big[ \xi_1(t) |\mathbf{u}(t)|^2 + \xi_2(t) |\mathbf{d}(t)|^2 + \xi_3(t) \|\mathbf{d}(t)\|^2 \Big] \,dt \nonumber
\\ & \ \ \ + 2 \Upsilon(t) \Big[ \big( \kappa_2 + \kappa_9 + \sum_{i=5}^{7} \kappa_i \big) |\Delta \mathbf{d}(t)|^2 \,dt + \int_{Y} \big( F_d(t, y), \mathbf{u}(t)\big)_{\mathbb{H}} \,\tilde{\eta}(dt, dy) \Big] \nonumber
\\ & \ \ \ + 2 \Upsilon(t) \Big[ L |\mathbf{u}(t)|^2 + (\kappa_3 + \kappa_4 + \kappa_8) \|\mathbf{u}(t)\|^2 + \kappa_1 \|\mathbf{d}(t)\|^2 \Big] \,dt \nonumber
\\ & \ \ \ + \Upsilon '(t) \Big[ |\mathbf{u}(t)|^2 + |\mathbf{d}(t)|^2 + \|\mathbf{d}(t)\|^2 \Big] \,dt.
\end{align}

\noindent By the choice of $\Upsilon$ we have
\begin{align*}
2 \Upsilon(t) \Big[ \xi_1(t) |\mathbf{u}(t)|^2 &+ \xi_2(t) |\mathbf{d}(t)|^2 + \xi_3(t) \|\mathbf{d}(t)\|^2 \Big] + \Upsilon '(t) \Big[ |\mathbf{u}(t)|^2 + |\mathbf{d}(t)|^2 + \|\mathbf{d}(t)\|^2 \Big] \leq 0.
\end{align*}

So dropping the above term from the right hand side of \eqref{5s}, then choosing $\kappa_2 = \kappa_9 = \kappa_i = \frac{1}{10},$ for $i=5, 6, 7.$ $\kappa_3 = \kappa_4 = \kappa_8 = \frac{1}{6}, \kappa_1=\frac{1}{2}$ and rearranging we obtain,
\begin{align}
d&\Big[ \Upsilon(t) \big( |\mathbf{u}(t)|^2 + |\mathbf{d}(t)|^2 + \|\mathbf{d}(t)\|^2 \big) \Big]+ \Upsilon(t) \Big[ \|\mathbf{u}(t)\|^2 + \|\mathbf{d}(t)\|^2 + |\Delta \mathbf{d}(t)|^2 \Big] \,dt \nonumber
\\ &\leq 2 \Upsilon(t) \Big[ C \big( |\mathbf{u}(t)|^2 + |\mathbf{d}(t)|^2 + \|\mathbf{d}(t)\|^2 \big) \,dt + \int_{Y} \big( F_d(t, y), \mathbf{u}(t)\big)_{\mathbb{H}} \,\tilde{\eta}(dt, dy) \Big]
\end{align}

Now integrating both side and taking mathematical expectation we get
\begin{align}
&\mathbb{E} \Big[ \Upsilon(t) \big( |\mathbf{u}(t)|^2 + |\mathbf{d}(t)|^2 + \|\mathbf{d}(t)\|^2 \big) \Big] + \mathbb{E} \bigg[ \int_{0}^{t} \Upsilon(s) \big( \|\mathbf{u}(s)\|^2 + \|\mathbf{d}(s)\|^2 + |\Delta \mathbf{d}(s)|^2 \big) \,ds \bigg] \nonumber
\\ & \ \ \ \leq C \int_{0}^{t} \mathbb{E} \Big[ \Upsilon(s) \big( |\mathbf{u}(s)|^2 + |\mathbf{d}(s)|^2 + \|\mathbf{d}(s)\|^2 \big)\Big] \,ds.
\end{align}

Now applying Gronwall's inequality we obtain \eqref{pu}.

\end{proof}

\begin{definition}
Let $\big( \Omega_i, \mathcal{F}_i, \mathbb{F}_i, \mathbb{P}_i, \big\{ \mathbf{u}_{i}(t)\big\}_{t \geq 0}, \big\{ \mathbf{d}_{i}(t)\big\}_{t \geq 0}, \big\{ \tilde{\eta}_i(t, \cdot) \big\}_{t \geq 0} \big)$ for $i=1, 2$ be two martingale solutions of \eqref{r1st}-\eqref{r2nd} with $\mathbf{u}_{i}(0) = \mathbf{u}_{0}$ and $\mathbf{d}_{i}(0) = \mathbf{d}_{0}; i= 1, 2.$ Then the solutions are said to be unique in law if
\[ \mathscr{L}_{\mathbb{P}_1}(\mathbf{u}_{1}) = \mathscr{L}_{\mathbb{P}_2}(\mathbf{u}_{2}) \ \ \ \text{on}  \ \ \ L^{2}(0, T; \mathbb{V}) \cap \mathbb{D}([0, T]; \mathbb{H})\]
and
\[ \mathscr{L}_{\mathbb{P}_1}(\mathbf{d}_{1}) = \mathscr{L}_{\mathbb{P}_2}(\mathbf{d}_{2}) \ \ \ \text{on}  \ \ \ L^{2}(0, T; H^2) \cap \mathbb{C}([0, T]; H^1),\]
where $\mathscr{L}_{\mathbb{P}_i}(\mathbf{u}_{i})$ and $\mathscr{L}_{\mathbb{P}_i}(\mathbf{d}_{i})$ for $i = 1, 2$ are probability measures on $L^{2}(0, T; \mathbb{V}) \cap \mathbb{D}([0, T]; \mathbb{H})$ and $L^{2}(0, T; H^2) \cap \mathbb{C}([0, T]; H^1)$ respectively.
\end{definition}

\begin{corollary}
Let $\mathbf{n} = 2$ be the dimension. Let the assumptions from Subsection \ref{assu} hold. Then
\begin{enumerate}
\item
There exists a pathwise unique strong solution of \eqref{r1st}-\eqref{r2nd}.
\item
Moreover, if $(\Omega, \mathcal{F}, \mathcal{F}_t, \mathbb{P}, \mathbf{u}, \mathbf{d}, \tilde{\eta})$ is a strong solution of \eqref{r1st}-\eqref{r2nd} then for $\mathbb{P}$-almost all $\omega \in \Omega$ the trajectories $\mathbf{u}(\cdot, \omega)$ is almost everywhere equal to a c\`adl\`ag  $\mathbb{V}$-valued function and $\mathbf{d}(\cdot, \omega)$ is almost everywhere equal to a continuous $H^2$-valued function defined on $[0, T].$
\item
The martingale solution of \eqref{r1st}-\eqref{r2nd} is unique in law. 
\end{enumerate}
\end{corollary}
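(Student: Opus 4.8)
The plan is to obtain all three assertions from the combination of Theorem~\ref{exma} (existence of a martingale solution) and Lemma~\ref{paun} (pathwise uniqueness in dimension two) via the infinite-dimensional Yamada--Watanabe principle for stochastic evolution equations driven by a Poisson random measure, in the form of \cite{Ond}. First I would view the pair $(\mathbf u,\mathbf d)$, together with $\bar\eta$, as a solution of an abstract SPDE on the state space $\mathbb H\times H^1$ with values in the path spaces $L^2(0,T;\mathbb V)\cap\mathbb D([0,T];\mathbb H)$ and $L^2(0,T;H^2)\cap\mathbb C([0,T];H^1)$; by the Kuratowski theorem these are genuine Borel subsets of $\mathcal Z_{T,1}$ and $\mathcal Z_{T,2}$, so the martingale solution produced in Theorem~\ref{exma} can be read as a solution of the associated martingale problem on these Borel subspaces. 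Theorem~\ref{exma} then provides weak (in the probabilistic sense) existence, while Lemma~\ref{paun} provides pathwise uniqueness, and the Yamada--Watanabe theorem of \cite{Ond} converts these two inputs into: (1) existence of a strong solution on every prescribed stochastic basis carrying a time homogeneous Poisson random measure with intensity $\nu$, which is moreover pathwise unique by Lemma~\ref{paun}; and (3) uniqueness in law of martingale solutions with the same initial law, on exactly the spaces $L^2(0,T;\mathbb V)\cap\mathbb D([0,T];\mathbb H)$ resp. $L^2(0,T;H^2)\cap\mathbb C([0,T];H^1)$ appearing in the definition of uniqueness in law. The integrability bounds \eqref{usv2-}--\eqref{dsp-} for this strong solution are inherited from Propositions~\ref{dgrdn}, \ref{ugrdn}, \ref{Ldn} and Corollary~\ref{codnh}, since the strong solution has, by equality of laws, the same one-dimensional-marginal distributions as the Galerkin approximants along a subsequence.

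For (2), note that a strong solution is in particular a martingale solution, so Lemma~\ref{val} applies verbatim: for $\mathbb P$-a.a.\ $\omega$ the trajectory $\mathbf u(\cdot,\omega)$ is a.e.\ equal to a c\`adl\`ag $\mathbb V$-valued function and $\mathbf d(\cdot,\omega)$ is a.e.\ equal to a continuous $L^2$-valued function. To upgrade the continuity of $\mathbf d$ from $L^2$ to $H^2$ I would exploit that, pathwise, $\mathbf d$ solves the purely deterministic equation $\mathbf d(t)=\mathbf d_0-\int_0^t\big(\mathcal A\mathbf d(s)+\tilde B(\mathbf u(s),\mathbf d(s))+f(\mathbf d(s))\big)\,ds$; in dimension two, Corollary~\ref{codnh}, Proposition~\ref{Ldn}, the estimate \eqref{btil}, the embedding $H^2\hookrightarrow L^\infty$ and Remark~\ref{fes} show the integrand lies in $L^2(0,T;L^2)$ while $\mathbf d\in L^2(0,T;H^2)$, which already gives $\mathbf d\in C([0,T];H^1)$ by the Lions--Magenes interpolation lemma. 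A further bootstrap, testing the $\mathbf d$-equation with $\mathcal A^2\mathbf d$ and using the two-dimensional Ladyzhenskaya/Gagliardo--Nirenberg inequalities together with the bounds already established for $\mathbf u$ and $\mathbf d$, yields $\mathbf d\in L^2(0,T;H^3)$ with $\partial_t\mathbf d\in L^2(0,T;H^1)$, hence $\mathbf d\in C([0,T];H^2)$, again by interpolation. The $\mathbb V$-valued c\`adl\`ag statement for $\mathbf u$ requires nothing beyond Lemma~\ref{val}.

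The main obstacle I anticipate is not the abstract deduction but verifying that the Yamada--Watanabe machinery of \cite{Ond} is legitimately applicable in the present non-metric setting: since $\mathcal Z_{T,1}$ and $\mathcal Z_{T,2}$ are not Polish (as noted after Theorem~\ref{compad}), one must rephrase the whole construction on the Borel subspaces $L^2(0,T;\mathbb V)\cap\mathbb D([0,T];\mathbb H)$ and $L^2(0,T;H^2)\cap\mathbb C([0,T];H^1)$, check the measurability hypotheses of \cite{Ond} there, and verify that the solution functional is jointly measurable in the driving noise and the initial datum. The second, more computational, difficulty is the 2D bootstrap for the $H^2$-continuity of $\mathbf d$: the needed higher-order energy estimates are genuinely two-dimensional, relying on the same absorption of the coupling terms that makes Lemma~\ref{paun} work in 2D but fails in 3D.
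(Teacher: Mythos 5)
Your treatment of assertions (1) and (3) coincides with the paper's: the paper likewise combines Theorem~\ref{exma} (existence of a martingale solution) with Lemma~\ref{paun} (pathwise uniqueness in 2-D) and then invokes Theorems~2 and~11 of Ondrej\'at \cite{Ond} to obtain the pathwise unique strong solution and uniqueness in law; the paper supplies no more detail than you do on the measurability and non-Polish-space issues you flag, so on these points your write-up is at least as complete as the source. The divergence is in assertion (2). The paper disposes of it in one line as a ``direct consequence of Lemma~\ref{val}'', but, as you correctly observed, Lemma~\ref{val} only furnishes a continuous \emph{$L^2$-valued} representative for $\mathbf{d}$, not an $H^2$-valued one, so the paper's own argument does not actually deliver the continuity claimed in the statement. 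Your instinct to bridge this by a parabolic bootstrap is the right one, and the first half of it --- $\mathbf{d}\in L^2(0,T;H^2)$ with $\partial_t\mathbf{d}\in L^2(0,T;L^2)$ giving $\mathbf{d}\in C([0,T];H^1)$ via the Lions--Magenes lemma --- is sound and already improves on the paper.

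The second half of your bootstrap, however, cannot close the gap on the closed interval. To get $\mathbf{d}\in L^2(0,T;H^3)$ with $\partial_t\mathbf{d}\in L^2(0,T;H^1)$ (and hence $\mathbf{d}\in C([0,T];H^2)$ by interpolation) you must test against $\mathcal{A}^2\mathbf{d}$ and integrate from $t=0$, which requires $\mathbf{d}_0\in H^2$; the corollary assumes only $\mathbf{d}_0\in H^1$. For generic $\mathbf{d}_0\in H^1\setminus H^2$ the parabolic smoothing gives $\|\mathbf{d}(t)\|_{H^2}\sim t^{-1/2}$ as $t\to 0^+$, so no function that agrees with $\mathbf{d}$ almost everywhere can be continuous $H^2$-valued on all of $[0,T]$. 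What is actually attainable under the stated hypotheses is $\mathbf{d}\in C([0,T];H^1)\cap C((0,T];H^2)$ (the latter after a weighted-in-time higher-order estimate). In short: for (1) and (3) you reproduce the paper's argument; for (2) you have correctly identified a defect in the paper's proof (and arguably in the statement itself), but your proposed repair overreaches at $t=0$.
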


\begin{proof}
The existence of a martingale solution is shown in Theorem \ref{exma}. From lemma \ref{paun} we obtained the solutions are pathwise unique. Thus the first assertion follows from Theorem 2 of Ondrej\'at \cite{Ond}. The second assertion is a direct consequence of Lemma \ref{val}. The third assertion follows from Theorems 2, 11 of \cite{Ond}.  
\end{proof}

\begin{appendix}

\section{Time Homogeneous Poisson Random Measure}

We follow the approach due to \cite{BHs, BHa}. Also refer Ikeda and Watanabe \cite{IW}, Peszat and Zabczyk \cite{PZ} for details. Let $\bar{\mathbb{N}}$ denote the set of all extended natural numbers, i.e., $\bar{\mathbb{N}}:=\mathbb{N} \cup \{\infty\}$ and $\mathbb{R}^{+}:=[0,\infty).$ Let $(S, \mathscr{S})$ be a measurable space and $M_{\bar{\mathbb{N}}}(S)$ be the set of all $\bar{\mathbb{N}}$-valued measures on $(S, \mathscr{S}).$
On the set $M_{\bar{\mathbb{N}}}(S)$ we consider the $\sigma$-field $\mathscr{M}_{\bar{\mathbb{N}}}(S)$ defined as the smallest $\sigma$-field such that for all $B\in \mathscr{S},$ the map $$i_B:M_{\bar{\mathbb{N}}}(S) \ni \mu \rightarrow \mu(B) \in \bar{\mathbb{N}}$$ is measurable.

Let $(\Omega,\mathcal{F},\mathbb{F},\mathbb{P})$ be a filtered probability space with a filtration $\mathbb{F} = \{\mathcal{F}_t\}_{t \geq 0}$ satisfying the usual hypothesis and this probability space satisfies the usual conditions, i.e.
 \begin{enumerate}
		\item[(i)] $\mathbb{P}$ is complete on $(\Omega, \mathcal{F})$,
		\item[(ii)] for each $t\geq 0$, $\mathcal{F}_t$ contains all $(\mathcal{F},\mathbb{P})$-null sets,
		\item[(iii)] the filtration $\mathbb{F}$ is right-continuous.
\end{enumerate}

\begin{definition}
(See Appendix C of \cite{BHs})\\ Let $(Y, \mathscr{B}(Y))$ be a measurable space. A {\bf time homogeneous Poisson random measure} $\eta$ on $(Y, \mathscr{B}(Y))$ over $(\Omega,\mathcal{F},\mathcal{F}_t,\mathbb{P})$ is a measurable function
$$ \eta\,:\, (\Omega,\mathscr{F}) \rightarrow (M_{\bar{\mathbb{N}}}(\mathbb{R}^+ \times Y),\mathscr{M}_{\bar{\mathbb{N}}}(\mathbb{R}^+ \times Y))$$
such that
\begin{itemize}
\item[(a)] for each $B\in\mathscr{B}(\mathbb{R}^+) \otimes \mathscr{B}(Y), \eta(B):=i_B \circ \eta :\Omega \rightarrow \bar{\mathbb{N}}$ is a Poisson random variable with parameter $\mathbb{E}[\eta(B)];$ 
\item[(b)] $\eta$ is independently scattered, i.e., if the sets $B_1,B_2,\ldots,B_n \in \mathscr{B}(\mathbb{R}^+)\otimes \mathscr{B}(Y) $ are disjoint then the random variables $\eta(B_1), \eta(B_2),\ldots, \eta(B_n)$ are mutually independent;
\item[(c)] for all $U \in \mathscr{B}(Y)$ the $\bar{\mathbb{N}}-$valued process $(N(t,U))_{t \geq 0}$ defined by
$$ N(t,U):= \eta((0,t]\times U),\quad t \geq 0$$  is $\mathcal{F}_t$-adapted and its increments are independent of the past,i.e., if $t>s \geq 0,$ then $N(t,U) - N(s,U) = \eta((s,t]\times U)$ is independent of $\mathcal{F}_s.$
\end{itemize}
\end{definition}

If $\eta$ is a time homogeneous Poisson random measure then the formula 
$$\nu(A):=\mathbb{E}[\eta((0,1] \times A)],\quad A \in \mathscr{B}(Y)$$ defines a measure on $(Y, \mathscr{B}(Y))$ called an intensity measure of $\eta.$
Moreover, for all $T<\infty$ and all $A \in \mathscr{B}(Y)$ such that $\mathbb{E}[\eta((0,T] \times A)]< \infty,$ the $\mathbb{R}-$valued process $\{\tilde{N}(t,A)\}_{t \in [0,T]}$ defined by
$$\tilde{N}(t,A):=\eta((0,t]\times A) -t \, \nu(A),\quad t \in (0,T],$$
is an integrable martingale on $(\Omega,\mathcal{F},\mathcal{F}_t,\mathbb{P}).$ The random measure $l\otimes\nu$ on $\mathscr{B}(\mathbb{R}^+)\otimes \mathscr{B}(Y),$ where $l$ stands for the Lebesgue measure, is called a compensator of $\eta$ and the difference between a time homogeneous Poisson random measure $\eta$ and its compensator, i.e.,  
$$ \tilde{\eta}:=\eta-l\otimes\nu,$$ is called a \textbf{compensated time homogeneous Poisson random measure}.
  
 We follow the notion of the first author and E. Hausenblas \cite{BHs}. Also see \cite{IW} and \cite{PZ}, to list some of the basic properties of the stochastic integral with respect to $\tilde{\eta}.$
Let $E$ be a separable Hilbert space and let $\mathcal{P}$ be the progressively measurable $\sigma$-field on $[0,T]\times\Omega.$ Let $\mathfrak{L}^2_{\nu,T}(\mathcal{P}\otimes\mathscr{B}(Y), l \otimes \mathbb{P}\otimes \nu; E)$ be a space of all $E$-valued, $\mathcal{P}\otimes\mathscr{B}(Y)$-measurable processes such that $$\mathbb{E}\bigg[\int_0^T \int_Y \|\xi(s,\cdot,y)\|_E^2 \,d \nu (y) \,ds \bigg] <\infty. $$ 

If $\xi \in \mathfrak{L}^2_{\nu,T}(\mathcal{P}\otimes\mathscr{B}(Y), l \otimes \mathbb{P}\otimes \nu; E)$ then the integral process ${\int_0^T \int_Y} \xi(s,\cdot,y)\\\tilde{\eta}(ds,dy),$ $t \in[0,T],$ is a c\`{a}dl\`{a}g $L^2$-integrable martingale. Moreover, we have the following \textbf{isometry formula}
\begin{align}\label{isof}
\mathbb{E}\bigg[\bigg\| \int_0^T \int_Y \xi(s,\cdot,y) \,\tilde{\eta}(ds,dy)\bigg\|_E^2\bigg]=\mathbb{E}\bigg[\int_0^T \int_Y \|\xi(s,\cdot,y)\|_E^2 \,d \nu(y) \,ds \bigg],\, t\in[0,T].
\end{align}

\section{Some Important Inequalities}

In this section we recall some important inequalities which are needed in our proof of main result. 

Let $\mathbf{n} = 2, 3.$ Take $a = \frac{\mathbf{n}}{4}.$ Then for all $\mathbf{u} \in W^{1, 4},$ the following special cases of Gagliardo-Nirenberg inequalities hold,
\begin{align*}
&\|\mathbf{u}\|_{L^4} \leq \|\mathbf{u}\|^{1-a}_{L^2}\| \nabla \mathbf{u}\|^{a}_{L^2},
\end{align*}
which can be viewed in terms of the continuous embedding $H^1 \subset L^4.$ Also we have
\begin{align*}
\|\mathbf{u}\|_{L^{\infty}} \leq \|\mathbf{u}\|^{1-a}_{L^4}\| \nabla \mathbf{u}\|^{a}_{L^4}.
\end{align*}

So from above observations, for all $\mathbf{u} \in D(\mathcal{A})$ we have,
\begin{align*}
\|\mathbf{u}\|_{L^{\infty}} \leq \|\mathbf{u}\|^{1-a}_{H^1}\| \nabla \mathbf{u}\|^{a}_{H^2}.
\end{align*}

\begin{lemma}\label{burk}
{\bf (Burkholder-Davis-Gundy Inequality)} Let $M$ be a Hilbert space valued c\`{a}dl\`{a}g martingale with $M_0=0$ and let $p\geq 1$ be fixed. Then for any
$\mathcal{F}$-stopping time $\mathbf{\tau}$, there exists constants $c_p$ and $C_p$ such that
$$\mathbb{E}\left\{[M]_{\mathbf{\tau}}^{p/2}\right\}\leq c_p \, \mathbb{E}\left\{\sup_{0\leq t\leq\mathbf{\tau}}\|M_t\|_H^p\right\}\leq C_p \,\mathbb{E}\left\{[M]_{\mathbf{\tau}}^{p/2}\right\}$$ for all $\mathbf{\tau}$, $0\leq
\mathbf{\tau}\leq \infty$, where $[M]$ is the quadratic variation of process $M$. The constants are universal (independent of $M$).
\end{lemma}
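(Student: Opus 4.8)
The plan is to prove both inequalities simultaneously by the classical \emph{good-$\lambda$} method, carried out directly on the $H$-valued martingale so that the constants never see the dimension of $H$. Write $M^*_t := \sup_{0\le s\le t}\|M_s\|_H$ and recall that $[M]$ is the unique increasing c\`adl\`ag process with $[M]_0=0$ for which $\|M_t\|_H^2-[M]_t$ is a martingale; in particular $\mathbb{E}[\|M_\tau\|_H^2]=\mathbb{E}[[M]_\tau]$, which is the case $p=2$ combined with Doob's inequality. Note also that $\|M_\cdot\|_H$ is a nonnegative c\`adl\`ag submartingale, which is the only structural fact I will use about the norm.

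First I would reduce to a situation in which everything is a priori finite. Replacing $M$ by the stopped martingale $M^{\tau}$ I may take $\tau=\infty$; introducing a localizing sequence of stopping times $\sigma_m\uparrow\infty$ at which $M^*$ and $[M]$ are bounded, I would prove the estimate for each $M^{\sigma_m}$ and then let $m\to\infty$, both sides converging by monotone convergence since $M^*_{\cdot\wedge\sigma_m}\uparrow M^*$ and $[M]_{\cdot\wedge\sigma_m}\uparrow[M]$. Thus it suffices to argue under the standing assumption $\mathbb{E}[(M^*_\tau)^p]+\mathbb{E}[[M]_\tau^{p/2}]<\infty$, which is exactly what legitimizes the absorption step below.

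The core is the two-sided good-$\lambda$ inequality: for $\beta>1$ and $\delta\in(0,1)$ there is a universal constant $C$ with
\[
\mathbb{P}\big(M^*_\tau>\beta\lambda,\ [M]_\tau^{1/2}\le\delta\lambda\big)\le \frac{C\delta^2}{(\beta-1)^2}\,\mathbb{P}\big(M^*_\tau>\lambda\big),\qquad \lambda>0,
\]
together with its mirror image obtained by interchanging the roles of $M^*_\tau$ and $[M]_\tau^{1/2}$. To prove it I would introduce the stopping times $S=\inf\{t:\|M_t\|_H>\lambda\}$, $T=\inf\{t:\|M_t\|_H>\beta\lambda\}$ and $R=\inf\{t:[M]_t^{1/2}>\delta\lambda\}$, observe that on the relevant event one must have $S<\infty$ and that the increment of the martingale $M_{t\wedge R}-M_{t\wedge S}$ between $S$ and $T\wedge R$ exceeds roughly $(\beta-1)\lambda$, and then apply Doob's $L^2$ maximal inequality to this stopped martingale, whose quadratic variation is bounded by $\delta^2\lambda^2$. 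The prefactor $\mathbb{P}(M^*_\tau>\lambda)=\mathbb{P}(S<\infty)$ enters through optional stopping on $\{S<\infty\}$. Integrating the good-$\lambda$ inequality against $p\lambda^{p-1}\,d\lambda$ and using the layer-cake identity $\mathbb{E}[X^p]=\int_0^\infty p\lambda^{p-1}\mathbb{P}(X>\lambda)\,d\lambda$ yields
\[
\beta^{-p}\,\mathbb{E}\big[(M^*_\tau)^p\big]\le \frac{C\delta^2}{(\beta-1)^2}\,\mathbb{E}\big[(M^*_\tau)^p\big]+\delta^{-p}\,\mathbb{E}\big[[M]_\tau^{p/2}\big].
\]
Choosing $\delta=\delta(\beta,p)$ small enough that $\beta^{p}C\delta^2/(\beta-1)^2<\tfrac12$ and absorbing the first term (this is where a priori finiteness is used) gives $\mathbb{E}[(M^*_\tau)^p]\le C_p\,\mathbb{E}[[M]_\tau^{p/2}]$, while the mirror estimate gives $\mathbb{E}[[M]_\tau^{p/2}]\le c_p\,\mathbb{E}[(M^*_\tau)^p]$, which is the assertion.

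I expect the main obstacle to be the good-$\lambda$ inequality in the presence of jumps: the overshoot at the crossing time $T$ is no longer negligible, so one must ensure that the single jump carrying $\|M\|_H$ from below $\lambda$ to above $\beta\lambda$ is itself controlled by $[M]_\tau^{1/2}\le\delta\lambda$ (the squared jump size is part of $[M]$), and one must dispose of the degenerate case where the jump alone dominates by the complementary tail estimate $\mathbb{P}([M]_\tau^{1/2}>\delta\lambda)$. The remaining ingredients — the submartingale property of $\|M_\cdot\|_H$, optional stopping, and Doob's $L^2$ inequality — are intrinsic to the Hilbert space and never invoke an orthonormal basis, so the constants $c_p,C_p$ depend only on $p$ and are independent of $M$ and of $H$, as claimed.
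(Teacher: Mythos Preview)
Your proposal is a correct and standard route to the Burkholder--Davis--Gundy inequality via the good-$\lambda$ technique, and you have correctly flagged the only genuinely delicate point, namely controlling the overshoot at the crossing time in the presence of jumps (the jump contribution being absorbed into $[M]$). The paper, however, does not prove this lemma at all: its entire ``proof'' is a one-line citation to Theorem~3.50 of Peszat and Zabczyk \cite{PZ} (and even that reference is for real-valued c\`adl\`ag martingales). So your write-up actually supplies substantially more than the paper does; in particular you make explicit why the constants are dimension-free for Hilbert-space-valued martingales, a point the paper simply asserts.
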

\begin{proof}
For the real-valued c\`{a}dl\`{a}g martingales see Theorem 3.50 of Peszat and Zabczyk \cite{PZ}.
\end{proof}

\begin{remark}\label{sep}
We will show the existence of the countable family of real valued continuous functions which are defined on $\mathcal{Z}_{T}$ and separate points of this space.
\end{remark}
\begin{enumerate}
\item
We know $L^2(0, T; \mathbb{H})$ and $\mathbb{D}([0, T]; \mathbb{V}')$ are completely metrizable and separable spaces, we deduce that there exists a countable family of continuous real valued functions on each of these spaces which separate points. For example see \cite{Bad}, expos\'e 8. 
\item
For the space $L^{2}_{w}(0, T; \mathbb{V})$ we define
\[ g_m(\mathbf{u}) := \int_{0}^{T} (( \mathbf{u}(t), v_m(t))) \, dt \in \mathbb{R}, \qquad \mathbf{u} \in L^{2}(0, T; \mathbb{V}), \quad m \in \mathbb{N},\]
where $\{ v_m, m \in \mathbb{N}\}$ is a dense subset of $ L^{2}(0, T; \mathbb{V}).$ then $(g_m)_{m \in \mathbb{N}}$ is a sequence of continuous real valued functions separating points of the space $L^{2}_{w}(0, T; \mathbb{V}).$  
\item
Let $\mathbb{H}_0 \subset \mathbb{H}$ be a countable and dense subset of $\mathbb{H}.$ Then for each $h \in \mathbb{H}_0$ the mapping
\[\mathbb{D}([0, T]; \mathbb{H}_w) \ni \mathbf{u} \mapsto (\mathbf{u}(\cdot), h)_{\mathbb{H}} \in \mathbb{D}([0, T]; \mathbb{R})\]
is continuous. Since $\mathbb{D}([0, T]; \mathbb{R})$ is a separable complete metric space, there exists a sequence $(f_l)_{l \in \mathbb{N}}$ of real valued continuous functions defined on $\mathbb{D}([0, T]; \mathbb{R})$ separating points of this space. Then the mappings $m_{h, l},$ where $h \in H_0, l \in \mathbb{N}$ defined by
\[ m_{h, l}(\mathbf{u}) := f_l(( \mathbf{u}(\cdot), h)_{\mathbb{H}}, \qquad \mathbf{u} \in \mathbb{D}([0, T]; \mathbb{H}_w),\]
form a countable family of continuous functions on $\mathbb{D}([0, T]; \mathbb{H}_w)$ which separates points of this space. 
\end{enumerate}

Similarly we can define for the space $\mathcal{Z}_{T, 2}.$

\section{Some convergence results}

\begin{lemma}\label{bntlim}
Let $\mathbf{u}_{\ast} \in L^2(0, T; \mathbb{V})$ and $\mathbf{d}_{\ast} \in L^2(0, T; H^1).$ Let $(\bar{\mathbf{u}}_{n})_{n \in \mathbb{N}}$ and $(\bar{\mathbf{d}}_{n})_{n \in \mathbb{N}}$ are bounded sequences in $L^2(0, T; \mathbb{V})$ and $L^2(0, T; H^1)$ respecitvely such that $\bar{\mathbf{u}}_{n} \to \mathbf{u}_{\ast}$ in $L^2(0, T; \mathbb{V})$ and $\bar{\mathbf{d}}_{n} \to \mathbf{d}_{\ast}$ in $L^2(0, T; H^1).$ Then for all $v \in H^2$ and all $t \in [0, T],$
\begin{align*}
\lim_{n \to \infty} \int_{0}^{t} \big\langle \tilde{B} \big( \bar{\mathbf{u}}_{n}(s), \bar{\mathbf{d}}_{n}(s)\big), v \big\rangle \,ds =  \int_{0}^{t} \big\langle \tilde{B} \big( \mathbf{u}_{\ast}(s), \mathbf{d}_{\ast}(s) \big), v \big\rangle \,ds.
\end{align*}
\end{lemma}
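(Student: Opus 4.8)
The plan is to decompose the difference of the trilinear forms using the bilinearity of $b$ (equivalently of $\tilde B$), so that each piece involves only one factor in which $\bar{\mathbf{u}}_n$ or $\bar{\mathbf{d}}_n$ differs from its limit. Since $b(\mathbf{u},\mathbf{d},v) = -b(\mathbf{u},v,\mathbf{d})$ for $\mathbf{u}\in\mathbb{V}$, $\mathbf{d},v\in H^1$, we write, for fixed $v\in H^2\subset H^1$,
\begin{align*}
\big\langle \tilde B(\bar{\mathbf{u}}_n,\bar{\mathbf{d}}_n),v\big\rangle - \big\langle \tilde B(\mathbf{u}_\ast,\mathbf{d}_\ast),v\big\rangle
&= b(\bar{\mathbf{u}}_n,\bar{\mathbf{d}}_n,v) - b(\mathbf{u}_\ast,\mathbf{d}_\ast,v)\\
&= -b(\bar{\mathbf{u}}_n - \mathbf{u}_\ast, v, \bar{\mathbf{d}}_n) - b(\mathbf{u}_\ast, v, \bar{\mathbf{d}}_n - \mathbf{d}_\ast).
\end{align*}
Integrating over $[0,t]$ it then suffices to show that each of the two integrals $\int_0^t b(\bar{\mathbf{u}}_n - \mathbf{u}_\ast, v, \bar{\mathbf{d}}_n)\,ds$ and $\int_0^t b(\mathbf{u}_\ast, v, \bar{\mathbf{d}}_n - \mathbf{d}_\ast)\,ds$ tends to $0$.

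\textbf{Key steps.} First I would bound the integrand of the first term. Using H\"older's inequality on $\mathbb{O}$ with exponents $(2,\infty,2)$ on the three factors $|\bar{\mathbf{u}}_n - \mathbf{u}_\ast|\,|\nabla v|\,|\nabla\bar{\mathbf{d}}_n|$ and the Sobolev embedding $H^2\hookrightarrow W^{1,\infty}$ (valid for $\mathbf{n}=2,3$), we get
\begin{align*}
|b(\bar{\mathbf{u}}_n - \mathbf{u}_\ast, v, \bar{\mathbf{d}}_n)| \le c\, |\bar{\mathbf{u}}_n - \mathbf{u}_\ast|_{L^2}\, \|v\|_{H^2}\, \|\bar{\mathbf{d}}_n\|_{H^1}.
\end{align*}
Then by Cauchy--Schwarz in time,
\begin{align*}
\int_0^t |b(\bar{\mathbf{u}}_n - \mathbf{u}_\ast, v, \bar{\mathbf{d}}_n)|\,ds \le c\,\|v\|_{H^2}\Big(\int_0^T |\bar{\mathbf{u}}_n - \mathbf{u}_\ast|_{L^2}^2\,ds\Big)^{1/2}\Big(\int_0^T \|\bar{\mathbf{d}}_n\|_{H^1}^2\,ds\Big)^{1/2},
\end{align*}
and the first factor goes to $0$ because $\bar{\mathbf{u}}_n\to\mathbf{u}_\ast$ in $L^2(0,T;\mathbb{V})\subset L^2(0,T;L^2)$, while the second is bounded by hypothesis. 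For the second term, the same H\"older/embedding argument gives $|b(\mathbf{u}_\ast, v, \bar{\mathbf{d}}_n - \mathbf{d}_\ast)| \le c\,|\mathbf{u}_\ast|_{L^2}\,\|v\|_{H^2}\,\|\bar{\mathbf{d}}_n - \mathbf{d}_\ast\|_{H^1}$, and Cauchy--Schwarz in time together with $\mathbf{u}_\ast\in L^2(0,T;\mathbb{V})$ and $\bar{\mathbf{d}}_n\to\mathbf{d}_\ast$ in $L^2(0,T;H^1)$ finishes the argument. Combining the two estimates yields the claim for each $t\in[0,T]$.

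\textbf{Main obstacle.} The only genuinely delicate point is the choice of norms so that every factor is controlled by quantities known to be bounded or convergent: one must put the \emph{difference} factor in $L^2$ in space (where we have strong convergence) and absorb the derivative onto the smooth test function $v\in H^2$ via $H^2\hookrightarrow W^{1,\infty}$, leaving the remaining $H^1$-factor to be handled by the uniform $L^2(0,T;H^1)$ bound. One should also note that $\tilde B$ here is the extension to $\mathbb{V}\times H^1$ (rather than the restriction to $\mathbb{V}\times H^2$), so the pairing $\langle\tilde B(\cdot,\cdot),v\rangle$ is interpreted in the $(H^1)'$--$H^1$ duality, which is consistent with the integration-by-parts identity $b(\mathbf{u},\mathbf{d},v)=-b(\mathbf{u},v,\mathbf{d})$ used above. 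Everything else is a routine application of H\"older's inequality and the dominated/bounded convergence already available from the a priori estimates.
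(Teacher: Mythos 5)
Your overall strategy --- split the difference by bilinearity so that each term contains exactly one ``difference'' factor, estimate pointwise in time, then apply Cauchy--Schwarz in $t$ and invoke the strong convergences --- is exactly the paper's, and it does lead to a proof. However, one step as written is false: the embedding $H^2 \hookrightarrow W^{1,\infty}$ does \emph{not} hold for $\mathbf{n}=2,3$. It would require $\nabla v \in H^1 \hookrightarrow L^{\infty}$, which fails already in dimension $2$ (the borderline case) and certainly in dimension $3$ (where $H^1\hookrightarrow L^6$ only). So your pointwise bound $|b(\bar{\mathbf{u}}_n-\mathbf{u}_{\ast},v,\bar{\mathbf{d}}_n)|\le c\,|\bar{\mathbf{u}}_n-\mathbf{u}_{\ast}|_{L^2}\|v\|_{H^2}\|\bar{\mathbf{d}}_n\|_{H^1}$ is not justified by the $(2,\infty,2)$ H\"older split. (A smaller slip: the integrand of $b(\mathbf{w},v,\mathbf{d})$ is $\mathbf{w}^{(i)}\partial_{x_i}v^{(j)}\mathbf{d}^{(j)}$, so the third factor is $\mathbf{d}$, not $\nabla\mathbf{d}$.)

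The gap is easily repaired, and in fact the detour through the antisymmetry $b(\mathbf{u},\mathbf{d},v)=-b(\mathbf{u},v,\mathbf{d})$ is unnecessary here: the hypotheses give \emph{strong} convergence of $\bar{\mathbf{u}}_n-\mathbf{u}_{\ast}$ in $L^2(0,T;\mathbb{V})$ and of $\bar{\mathbf{d}}_n-\mathbf{d}_{\ast}$ in $L^2(0,T;H^1)$, i.e.\ in the full $H^1$-type norms, so there is no need to place the difference factor in $L^2(\mathbb{O})$ and push a derivative onto $v$. The paper uses the same decomposition $\tilde{B}(\bar{\mathbf{u}}_n,\bar{\mathbf{d}}_n)-\tilde{B}(\mathbf{u}_{\ast},\mathbf{d}_{\ast})=\tilde{B}(\bar{\mathbf{u}}_n-\mathbf{u}_{\ast},\bar{\mathbf{d}}_n)+\tilde{B}(\mathbf{u}_{\ast},\bar{\mathbf{d}}_n-\mathbf{d}_{\ast})$ but estimates each piece by the continuity of $b$ on $H^1\times H^1\times H^1$ (via $H^1\hookrightarrow L^4$ for $\mathbf{n}\le 3$), namely $|\langle\tilde{B}(\mathbf{u},\mathbf{w}),v\rangle|\le c\,\|\mathbf{u}\|_{H^1}\|\mathbf{w}\|_{H^1}\|v\|_{H^1}$, and then concludes by Cauchy--Schwarz in time exactly as you do. If you prefer to keep your transposed form, replace the $(2,\infty,2)$ split by $(2,6,3)$ (or $(2,4,4)$): $\nabla v\in H^1\hookrightarrow L^6$ and $\bar{\mathbf{d}}_n\in H^1\hookrightarrow L^3$ for $\mathbf{n}\le 3$, which yields the bound $c\,|\bar{\mathbf{u}}_n-\mathbf{u}_{\ast}|_{L^2}\|v\|_{H^2}\|\bar{\mathbf{d}}_n\|_{H^1}$ legitimately; with either repair the rest of your argument goes through.
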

\begin{proof}
Let $v \in H^1 \hookrightarrow H^2.$ Then for $\mathbf{u}, \mathbf{w} \in H^1,$ from \eqref{buw} we have
\begin{align}\label{Buwv}
\big| _{(H^1)'}\big\langle \tilde{B}(\mathbf{u}, \mathbf{w}), v \big\rangle_{H^1} \big| \leq \|\tilde{B}(\mathbf{u}, \mathbf{w})\|_{(H^1)'} \| v \|_{H^1} \leq \| \mathbf{u}\|_{H^1} \| \mathbf{w}\|_{H^1} \|v\|_{H^1}.
\end{align}
Moreover,
\begin{align}\label{Bunus}
\tilde{B} \big( \bar{\mathbf{u}}_{n}, \bar{\mathbf{d}}_{n} \big) - \tilde{B} \big( \mathbf{u}_{\ast}, \mathbf{d}_{\ast} \big) = \tilde{B} \big( \bar{\mathbf{u}}_{n}-\mathbf{u}_{\ast}, \bar{\mathbf{d}}_{n} \big) + \tilde{B} \big( \mathbf{u}_{\ast}, \bar{\mathbf{d}}_{n}-\mathbf{d}_{\ast} \big).
\end{align}
Then from \eqref{Buwv}, \eqref{Bunus} and using H\"older's inequality, we obtain
\begin{align}
&\bigg| \int_{0}^{t} \big\langle \tilde{B} \big( \bar{\mathbf{u}}_{n}(s), \bar{\mathbf{d}}_{n}(s) \big), v \big\rangle \,ds - \int_{0}^{t} \big\langle \tilde{B} \big( \mathbf{u}_{\ast}(s), \mathbf{d}_{\ast}(s) \big), v \big\rangle \,ds \bigg| \nonumber
\\ &\leq \bigg| \int_{0}^{t} \big\langle \tilde{B} \big( \bar{\mathbf{u}}_{n}(s)-\mathbf{u}_{\ast}(s), \bar{\mathbf{d}}_{n}(s) \big), v \big\rangle \,ds \bigg| + \bigg| \int_{0}^{t} \big\langle \tilde{B} \big( \mathbf{u}_{\ast}(s), \bar{\mathbf{d}}_{n}(s)-\mathbf{d}_{\ast}(s) \big), v \big\rangle \,ds \bigg| \nonumber
\\ &\leq \bigg( \int_{0}^{t} \|\bar{\mathbf{u}}_{n}(s)-\mathbf{u}_{\ast}(s)\|_{\mathbb{V}} \|\bar{\mathbf{d}}_{n}(s)\|_{H^1} \,ds + \int_{0}^{t} \|\mathbf{u}_{\ast}(s)\|_{\mathbb{V}} \|\bar{\mathbf{d}}_{n}(s)-\mathbf{d}_{\ast}(s)\|_{H^1} \,ds \bigg) \|v\|_{H^1} \nonumber
\\ &\leq c \ \big( \|\bar{\mathbf{u}}_{n}-\mathbf{u}_{\ast}\|_{L^2(0, T; \mathbb{V})} \|\bar{\mathbf{d}}_{n}\|_{L^2(0, T; H^1)} + \|\mathbf{u}_{\ast}\|_{L^2(0, T; \mathbb{V})} \|\bar{\mathbf{d}}_{n}-\mathbf{d}_{\ast}\|_{L^2(0, T; H^1)} \big) \|v\|_{H^1}.
\end{align}

Now from \eqref{unbv}, \eqref{usv2}, \eqref{dnbv}, \eqref{dsv2} and using the fact that $\bar{\mathbf{u}}_{n} \to \mathbf{u}_{\ast}$ in ${L^2(0, T; \mathbb{V})}$ and $\bar{\mathbf{d}}_{n} \to \mathbf{d}_{\ast}$ in $L^2(0, T; H^1)$ and $H^1$ is dense in $H^2,$ we conclude for all $v \in H^2$
\begin{align}\label{liBun}
\lim_{n \to \infty} \int_{0}^{t} \big\langle B \big( \bar{\mathbf{u}}_{n}(s), \bar{\mathbf{u}}_{n}(s) \big), v \big\rangle \,ds =  \int_{0}^{t} \big\langle B \big( \mathbf{u}_{\ast}(s), \mathbf{u}_{\ast}(s) \big), v \big\rangle \,ds.
\end{align}
\end{proof}

\begin{lemma}\label{bnlim}
Let $\mathbf{u}_{\ast} \in L^2(0, T; \mathbb{V}).$  Let $(\bar{\mathbf{u}}_{n})_{n \in \mathbb{N}}$ is a bounded sequence in $L^2(0, T; \mathbb{V})$  such that $\bar{\mathbf{u}}_{n} \to \mathbf{u}_{\ast}$ in $L^2(0, T; \mathbb{V}).$ Then for all $v \in \mathbb{V}$ and all $t \in [0, T],$
\begin{align*}
\lim_{n \to \infty} \int_{0}^{t} \big\langle B \big( \bar{\mathbf{u}}_{n}(s)\big), v \big\rangle \,ds =  \int_{0}^{t} \big\langle B \big( \mathbf{u}_{\ast}(s)\big), v \big\rangle \,ds.
\end{align*}
\end{lemma}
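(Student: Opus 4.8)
The argument parallels the proof of Lemma~\ref{bntlim} given above, with $\tilde B$ replaced by $B$ and both slots filled by the same sequence. First I would split the nonlinearity by bilinearity: for each $s \in [0,T]$,
\begin{align*}
B\big(\bar{\mathbf{u}}_n(s)\big) - B\big(\mathbf{u}_{\ast}(s)\big)
= B\big(\bar{\mathbf{u}}_n(s) - \mathbf{u}_{\ast}(s),\, \bar{\mathbf{u}}_n(s)\big)
+ B\big(\mathbf{u}_{\ast}(s),\, \bar{\mathbf{u}}_n(s) - \mathbf{u}_{\ast}(s)\big).
\end{align*}
Pairing with a fixed $v \in \mathbb{V}$ and invoking the continuity estimate \eqref{buw}, namely $\|B(\mathbf{u},\mathbf{w})\|_{\mathbb{V}'} \le c\,\|\mathbf{u}\|_{\mathbb{V}}\|\mathbf{w}\|_{\mathbb{V}}$, I obtain for every $t \in [0,T]$
\begin{align*}
\Big| \int_{0}^{t} \big\langle B(\bar{\mathbf{u}}_n(s)) - B(\mathbf{u}_{\ast}(s)), v \big\rangle \, ds \Big|
\le c\,\|v\|_{\mathbb{V}} \int_{0}^{T} \Big( \|\bar{\mathbf{u}}_n(s) - \mathbf{u}_{\ast}(s)\|_{\mathbb{V}} \|\bar{\mathbf{u}}_n(s)\|_{\mathbb{V}}
+ \|\mathbf{u}_{\ast}(s)\|_{\mathbb{V}} \|\bar{\mathbf{u}}_n(s) - \mathbf{u}_{\ast}(s)\|_{\mathbb{V}} \Big) ds.
\end{align*}

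Next I would apply the Cauchy--Schwarz inequality in $s$ to each of the two terms on the right, which gives the bound
\begin{align*}
c\,\|v\|_{\mathbb{V}}\,\|\bar{\mathbf{u}}_n - \mathbf{u}_{\ast}\|_{L^2(0,T;\mathbb{V})}
\Big( \|\bar{\mathbf{u}}_n\|_{L^2(0,T;\mathbb{V})} + \|\mathbf{u}_{\ast}\|_{L^2(0,T;\mathbb{V})} \Big).
\end{align*}
By hypothesis $(\bar{\mathbf{u}}_n)$ is bounded in $L^2(0,T;\mathbb{V})$ and $\mathbf{u}_{\ast} \in L^2(0,T;\mathbb{V})$, so the factor in parentheses is bounded uniformly in $n$; since $\bar{\mathbf{u}}_n \to \mathbf{u}_{\ast}$ in $L^2(0,T;\mathbb{V})$, the factor $\|\bar{\mathbf{u}}_n - \mathbf{u}_{\ast}\|_{L^2(0,T;\mathbb{V})}$ tends to $0$. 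Hence the whole expression tends to $0$, which is exactly the claimed convergence. (The estimate is in fact uniform in $t \in [0,T]$, but the statement only requires pointwise convergence in $t$.)

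There is no real obstacle here: the only point worth emphasizing is that the hypothesis provides \emph{strong} convergence in $L^2(0,T;\mathbb{V})$, which is what makes the quadratic term $B$ pass to the limit via the bilinear splitting; mere weak convergence would not be enough. In applications the needed strong convergence of $\bar{\mathbf{u}}_n$ in $L^2(0,T;\mathbb{V})$ (equivalently in $L^2(0,T;\mathbb{H})$ together with the uniform $L^2(0,T;\mathbb{V})$ bound and interpolation, or directly from the Skorokhod convergence \eqref{zt1c}) is supplied before this lemma is invoked, so the hypotheses are met when the lemma is used in Lemma~\ref{knterm}(d).
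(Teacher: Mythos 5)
Your proof is correct and follows essentially the same route as the paper, which simply refers back to the proof of Lemma \ref{bntlim}: the bilinear splitting $B(\bar{\mathbf{u}}_n)-B(\mathbf{u}_{\ast})=B(\bar{\mathbf{u}}_n-\mathbf{u}_{\ast},\bar{\mathbf{u}}_n)+B(\mathbf{u}_{\ast},\bar{\mathbf{u}}_n-\mathbf{u}_{\ast})$, the continuity estimate \eqref{buw}, Cauchy--Schwarz in $s$, and the strong convergence hypothesis. Your closing remark that strong convergence in $L^2(0,T;\mathbb{V})$ is what drives the argument (and must be supplied before the lemma is invoked) is an accurate and worthwhile observation.
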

\begin{proof}
The proof is similar to the proof of previous Lemma.
\end{proof}

\begin{lemma}\label{mlimi}
Let $\mathbf{d}_{\ast} \in L^2(0, T; H^2)$ and let $(\bar{\mathbf{d}}_{n})_{n \in \mathbb{N}}$ be a bounded sequence in $L^2(0, T; H^2)$ such that $\bar{\mathbf{d}}_{n} \to \mathbf{d}_{\ast}$ in $L^2(0, T; H^2).$ Then for all $v \in \mathbb{V}$ and all $t \in [0, T],$
\begin{align*}
\lim_{n \to \infty} \int_{0}^{t} \big\langle M \big( \bar{\mathbf{d}}_{n}(s)\big), v \big\rangle \,ds =  \int_{0}^{t} \big\langle M \big( \mathbf{d}_{\ast}(s) \big), v \big\rangle \,ds.
\end{align*}
\end{lemma}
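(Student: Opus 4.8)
The plan is to imitate the argument of Lemma \ref{bntlim}, exploiting the bilinearity of the operator $M$ together with the continuity estimate \eqref{md1d2}. First I would use that $M(\mathbf{d}) = M(\mathbf{d}, \mathbf{d})$ and that $M$ is bilinear on $H^2 \times H^2$, so that, setting $\mathbf{w}_n := \bar{\mathbf{d}}_n - \mathbf{d}_{\ast}$, the difference decomposes as
\begin{align*}
M(\bar{\mathbf{d}}_n) - M(\mathbf{d}_{\ast}) = M(\mathbf{w}_n, \bar{\mathbf{d}}_n) + M(\mathbf{d}_{\ast}, \mathbf{w}_n).
\end{align*}
This splits the nonlinear difference into two terms, each of which is linear in the small factor $\mathbf{w}_n$.

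Next I would reduce \eqref{md1d2} to a clean $H^2$-bilinear bound. Since $\|\mathbf{d}\| \leq \|\mathbf{d}\|_{H^2}$ and $|\Delta \mathbf{d}|_{L^2} \leq \|\mathbf{d}\|_{H^2}$, and since the exponents $1-\tfrac{\mathbf{n}}{4}$ and $\tfrac{\mathbf{n}}{4}$ sum to $1$, the estimate \eqref{md1d2} collapses for both $\mathbf{n} \in \{2,3\}$ to
\begin{align*}
\|M(\mathbf{d}_1, \mathbf{d}_2)\|_{\mathbb{V}'} \leq C \|\mathbf{d}_1\|_{H^2} \|\mathbf{d}_2\|_{H^2}.
\end{align*}
Then, for fixed $v \in \mathbb{V}$, applying the duality pairing $|\langle M(\cdot), v\rangle| \leq \|M(\cdot)\|_{\mathbb{V}'}\|v\|_{\mathbb{V}}$ and the Cauchy--Schwarz inequality in time, I would estimate
\begin{align*}
\bigg| \int_0^t \big\langle M(\bar{\mathbf{d}}_n) - M(\mathbf{d}_{\ast}), v\big\rangle \, ds \bigg|
&\leq \|v\|_{\mathbb{V}} \int_0^t \big( \|M(\mathbf{w}_n, \bar{\mathbf{d}}_n)\|_{\mathbb{V}'} + \|M(\mathbf{d}_{\ast}, \mathbf{w}_n)\|_{\mathbb{V}'} \big) \, ds
\\ &\leq C \|v\|_{\mathbb{V}} \|\mathbf{w}_n\|_{L^2(0,T;H^2)} \big( \|\bar{\mathbf{d}}_n\|_{L^2(0,T;H^2)} + \|\mathbf{d}_{\ast}\|_{L^2(0,T;H^2)} \big).
\end{align*}

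Finally, I would invoke the boundedness of $(\bar{\mathbf{d}}_n)_{n\in\mathbb{N}}$ in $L^2(0,T;H^2)$, which keeps the bracketed factor uniformly bounded, together with the hypothesis $\bar{\mathbf{d}}_n \to \mathbf{d}_{\ast}$ in $L^2(0,T;H^2)$, which forces $\|\mathbf{w}_n\|_{L^2(0,T;H^2)} \to 0$; hence the right-hand side vanishes as $n \to \infty$, yielding the claim. I do not expect a genuine obstacle here, as the structure parallels Lemma \ref{bntlim}; the only point requiring a little care is the correct collapse of the interpolation exponents in \eqref{md1d2} to the uniform $H^2$-bilinear bound, which I would verify holds identically for $\mathbf{n}=2$ and $\mathbf{n}=3$.
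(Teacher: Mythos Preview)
Your proposal is correct and follows essentially the same approach as the paper: the bilinear decomposition $M(\bar{\mathbf{d}}_n)-M(\mathbf{d}_{\ast})=M(\bar{\mathbf{d}}_n-\mathbf{d}_{\ast},\bar{\mathbf{d}}_n)+M(\mathbf{d}_{\ast},\bar{\mathbf{d}}_n-\mathbf{d}_{\ast})$, the collapse of \eqref{md1d2} to the $H^2$-bilinear bound, and Cauchy--Schwarz in time are exactly what the paper does. Your presentation is in fact slightly cleaner, since you state the conclusion via strong convergence in $L^2(0,T;H^2)$ as given in the hypothesis, whereas the paper's concluding line references $L^2_w(0,T;H^2)$, which would not by itself yield $\|\bar{\mathbf{d}}_n-\mathbf{d}_{\ast}\|_{L^2(0,T;H^2)}\to 0$.
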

\begin{proof}
Assume that $v \in \mathbb{V}.$ Then for $\mathbf{d}_1, \mathbf{d}_2 \in H^2,$ from \eqref{md1d2} we have
\begin{align}\label{Muwv}
\big| _{\mathbb{V}'}\big\langle M(\mathbf{d}_1, \mathbf{d}_2), v \big\rangle_{\mathbb{V}} \big| \leq \|M(\mathbf{d}_1, \mathbf{d}_2)\|_{\mathbb{V}'} \| v \|_{\mathbb{V}} \leq \| \mathbf{d}_1\|_{H^2} \| \mathbf{d}_2\|_{H^2} \|v\|_{\mathbb{V}}.
\end{align}
Moreover,
\begin{align}\label{Munus}
M \big( \bar{\mathbf{d}}_{n}, \bar{\mathbf{d}}_{n} \big) - M \big( \mathbf{d}_{\ast}, \mathbf{d}_{\ast} \big) = M \big( \bar{\mathbf{d}}_{n}-\mathbf{d}_{\ast}, \bar{\mathbf{d}}_{n} \big) + M \big( \mathbf{d}_{\ast}, \bar{\mathbf{d}}_{n}-\mathbf{d}_{\ast} \big).
\end{align}
Then from \eqref{Muwv}, \eqref{Munus} and using H\"older's inequality, we obtain
\begin{align}
&\bigg| \int_{0}^{t} \big\langle M \big( \bar{\mathbf{d}}_{n}(s), \bar{\mathbf{d}}_{n}(s) \big), v \big\rangle \,ds - \int_{0}^{t} \big\langle M \big( \mathbf{d}_{\ast}(s), \mathbf{d}_{\ast}(s) \big), v \big\rangle \,ds \bigg| \nonumber
\\ &\leq \bigg| \int_{0}^{t} \big\langle M \big( \bar{\mathbf{d}}_{n}(s)-\mathbf{d}_{\ast}(s), \bar{\mathbf{d}}_{n}(s) \big), v \big\rangle \,ds \bigg| + \bigg| \int_{0}^{t} \big\langle M \big( \mathbf{d}_{\ast}(s), \bar{\mathbf{d}}_{n}(s)-\mathbf{d}_{\ast}(s) \big), v \big\rangle \,ds \bigg| \nonumber
\\ &\leq \bigg( \int_{0}^{t} \|\bar{\mathbf{d}}_{n}(s)-\mathbf{d}_{\ast}(s)\|_{H^2} \|\bar{\mathbf{d}}_{n}(s)\|_{H^2} \,ds + \int_{0}^{t} \|\mathbf{d}_{\ast}(s)\|_{H^2} \|\bar{\mathbf{d}}_{n}(s)-\mathbf{d}_{\ast}(s)\|_{H^2} \,ds \bigg) \|v\|_{\mathbb{V}} \nonumber
\\ &\leq c \  \|\bar{\mathbf{d}}_{n}-\mathbf{d}_{\ast}\|_{L^2(0, T; H^2)}\big( \|\bar{\mathbf{d}}_{n}\|_{L^2(0, T; H^2)} + \|\mathbf{d}_{\ast}\|_{L^2(0, T; H^2)} \big) \|v\|_{\mathbb{V}}.
\end{align}

Now from \eqref{c0}, \eqref{ldn} for $p=2, q=1$ and using the fact that $\bar{\mathbf{d}}_{n} \to \mathbf{d}_{\ast}$ in ${L^{2}_{w}(0, T; H^2)}$ we conclude for all $v \in \mathbb{V}$
\begin{align}\label{liBun}
\lim_{n \to \infty} \int_{0}^{t} \big\langle M \big( \bar{\mathbf{d}}_{n}(s), \bar{\mathbf{d}}_{n}(s) \big), v \big\rangle \,ds =  \int_{0}^{t} \big\langle M \big( \mathbf{d}_{\ast}(s), \mathbf{d}_{\ast}(s) \big), v \big\rangle \,ds.
\end{align}
\end{proof}

\section{Some important estimates}

\begin{proposition}\label{ush2p}
Let $\mathbf{u}_{\ast}$ be the process as defined in \eqref{zt1c}. Then for $p \geq 1,$ we have
\[ \bar{\mathbb{E}} \bigg[ \sup_{s \in [0,T]} \big|\mathbf{u}_{\ast}(s) \big|_{\mathbb{H}}^{2p} \bigg] < C_p.\]
\end{proposition}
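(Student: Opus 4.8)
\textbf{Proof strategy for Proposition \ref{ush2p}.} The plan is to transfer the uniform-in-$n$ bound \eqref{unb2pg} on the Galerkin processes $\bar{\mathbf{u}}_n$ to the limit $\mathbf{u}_{\ast}$ via weak-star compactness and lower semicontinuity of the norm. First I would recall that by \eqref{unb2pg}, for every fixed $p \geq 1$,
\begin{align*}
\sup_{n \in \mathbb{N}} \bar{\mathbb{E}} \bigg[ \sup_{s \in [0, T]} \big| \bar{\mathbf{u}}_{n}(s)\big|^{2p}_{\mathbb{H}}\bigg] \leq C_{p, T} < \infty,
\end{align*}
so the sequence $(\bar{\mathbf{u}}_n)$ is bounded in $L^{2p}\big(\bar{\Omega}; L^{\infty}(0, T; \mathbb{H})\big)$. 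Since for $2p > 1$ this space is the dual of the separable Banach space $L^{\frac{2p}{2p-1}}\big(\bar{\Omega}; L^{1}(0, T; \mathbb{H})\big)$ (using that $\mathbb{H}$ is a separable Hilbert space, hence identified with its dual and possessing the Radon--Nikodym property), the Banach--Alaoglu theorem furnishes a subsequence, still denoted $(\bar{\mathbf{u}}_n)$, converging weak-star in $L^{2p}\big(\bar{\Omega}; L^{\infty}(0, T; \mathbb{H})\big)$ to some limit $\mathbf{v} \in L^{2p}\big(\bar{\Omega}; L^{\infty}(0, T; \mathbb{H})\big)$, with
\begin{align*}
\bar{\mathbb{E}} \bigg[ \sup_{s \in [0, T]} \big| \mathbf{v}(s)\big|^{2p}_{\mathbb{H}}\bigg] \leq \liminf_{n \to \infty} \bar{\mathbb{E}} \bigg[ \sup_{s \in [0, T]} \big| \bar{\mathbf{u}}_{n}(s)\big|^{2p}_{\mathbb{H}}\bigg] \leq C_{p, T}.
\end{align*}

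The second step is to identify $\mathbf{v}$ with $\mathbf{u}_{\ast}$. This follows exactly as in the case $p=1$ treated in \eqref{us2p}: from \eqref{zt1c} we know $\bar{\mathbf{u}}_n \to \mathbf{u}_{\ast}$ in $\mathbb{D}([0, T]; \mathbb{V}')$ $\bar{\mathbb{P}}$-a.s., hence also $\bar{\mathbf{u}}_n \to \mathbf{u}_{\ast}$ in $L^1(0,T;\mathbb{V}')$ a.s. (by the dominated convergence theorem, using e.g. the a.s. bound $\sup_{s}|\bar{\mathbf{u}}_n(s)|_{\mathbb{H}} < \infty$ together with $\mathbb{H} \hookrightarrow \mathbb{V}'$ and uniform integrability coming from \eqref{unb2pg}). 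On the other hand the weak-star limit $\mathbf{v}$, tested against elements of $L^{\frac{2p}{2p-1}}(\bar{\Omega}; L^1(0,T;\mathbb{H}))$ of the form $\mathbf{1}_A \otimes (\varphi \otimes h)$ with $A \in \bar{\mathcal{F}}$, $\varphi \in L^1(0,T)$, $h \in \mathbb{H}$, must agree with any other limit of $(\bar{\mathbf{u}}_n)$ in a weaker topology; since $\mathbb{H}$ is densely and continuously embedded in $\mathbb{V}'$, the two limits coincide and $\mathbf{v} = \mathbf{u}_{\ast}$ for $l \otimes \bar{\mathbb{P}}$-almost every $(s,\omega)$, and then for all $s$ by right-continuity of $\mathbf{u}_{\ast}$ in $\mathbb{V}'$. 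Substituting back gives the claimed bound
\begin{align*}
\bar{\mathbb{E}} \bigg[ \sup_{s \in [0,T]} \big|\mathbf{u}_{\ast}(s) \big|_{\mathbb{H}}^{2p} \bigg] \leq C_{p, T} =: C_p.
\end{align*}

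The main obstacle, and the only point requiring genuine care, is the lower semicontinuity step: $\mathbf{u} \mapsto \bar{\mathbb{E}}[\sup_{s\in[0,T]} |\mathbf{u}(s)|_{\mathbb{H}}^{2p}]$ is precisely (the $2p$-th power of) the norm of $L^{2p}(\bar{\Omega}; L^{\infty}(0,T;\mathbb{H}))$, which is weak-star lower semicontinuous by definition of the weak-star topology, so this is automatic once the duality set-up is in place. One should only make sure that $\mathbf{u}_{\ast}$, which a priori lives in $\mathcal{Z}_{T,1}$ and in particular in $\mathbb{D}([0,T];\mathbb{H}_w)$, indeed has a well-defined pointwise supremum of $|\mathbf{u}_{\ast}(s)|_{\mathbb{H}}$; this is fine because weak càdlàg paths are bounded on $[0,T]$ and the norm is weakly lower semicontinuous, so $s \mapsto |\mathbf{u}_{\ast}(s)|_{\mathbb{H}}$ is Borel and the supremum is measurable in $\omega$. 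All remaining manipulations (extracting the diagonal subsequence so the identification works simultaneously with the previous extractions, and checking uniform integrability to pass from a.s. convergence in $\mathbb{V}'$ to an identification of limits) are routine and can be done exactly as in the proof of \eqref{us2p}.
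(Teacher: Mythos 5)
Your proposal is correct and follows essentially the same route as the paper's proof: the uniform bound \eqref{unb2pg}, weak-star compactness in $L^{2p}(\bar{\Omega};L^{\infty}(0,T;\mathbb{H}))$ via duality with $L^{\frac{2p}{2p-1}}(\bar{\Omega};L^{1}(0,T;\mathbb{H}))$, identification of the weak-star limit with $\mathbf{u}_{\ast}$ by testing against a dense class, and lower semicontinuity of the norm. The only (immaterial) difference is that you identify the limit through the a.s. convergence in $\mathbb{D}([0,T];\mathbb{V}')$, whereas the paper uses the weak convergence of $(\bar{\mathbf{u}}_n)$ in $L^2(\bar{\Omega};L^2(0,T;\mathbb{V}))$ together with the embedding $\mathbb{V}\subset\mathbb{H}$ and the density of $L^2(\bar{\Omega};L^2(0,T;\mathbb{H}))$ in $L^{\frac{2p}{2p-1}}(\bar{\Omega};L^{1}(0,T;\mathbb{H}))$.
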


\begin{proof}
From \eqref{unb2p} we have, $\big(\bar{\mathbf{u}}_{n}\big)_{n \in \mathbb{N}}$ is uniformly bounded in $L^{2p}(\bar{\Omega}; L^{\infty}(0,T;\mathbb{H})).$
Since the dual of $L^{2p}(\bar{\Omega}; L^{\infty}(0,T; \mathbb{H}))$ is $\big(L^\frac{2p}{2p-1}(\bar{\Omega}; L^{1}(0,T; \mathbb{H}))\big)',$ by Banach-Alaoglu Theorem, there exists a subsequence of $\bar{\mathbf{u}}_{n}$, again denoted by the same and there exists $v \in L^{2p}(\bar{\Omega}; L^{\infty}(0,T; \mathbb{H}))$ such that 
$\bar{\mathbf{u}}_{n}$ convergent weakly-star to $v$ in $L^{2p}(\bar{\Omega}; L^{\infty}(0,T; \mathbb{H})).$ In particular,
\[ \big\langle \bar{\mathbf{u}}_{n}, \phi \big\rangle \rightharpoonup \big\langle v, \phi \big\rangle, \quad \phi \in L^\frac{2p}{2p-1}(\bar{\Omega}; L^{1}(0,T; \mathbb{H})).\]
i.e.,
\begin{align}\label{etuv}
\int_{\bar{\Omega}} \int_{0}^{T} \big\langle \bar{\mathbf{u}}_{n}, \phi \big\rangle \,dt \,d\mathbb{P}(\omega) \to \int_{\bar{\Omega}} \int_{0}^{T} \big\langle v, \phi \big\rangle \,dt \,d\mathbb{P}(\omega) 
\end{align}

Again we have $\bar{\mathbf{u}}_{n}$ convergent weakly to $\mathbf{u}_{\ast}$ in $L^2(\bar{\Omega}; L^{2}(0,T; \mathbb{V})).$ So by using the compact embedding $\mathbb{V} \subset \mathbb{H},$ we have
\begin{align}\label{etup}
\bar{\mathbb{E}} \bigg[\int_{0}^{T} \big(\bar{\mathbf{u}}_{n}(t,\omega), \phi(t,\omega) \big)_{\mathbb{H}} \, dt \bigg] \rightarrow \bar{\mathbb{E}} \bigg[\int_{0}^{T} &\big(\mathbf{u}_{\ast}(t,\omega), \phi(t,\omega) \big)_{\mathbb{H}} \, dt \bigg] \nonumber
\\ &\forall\,\,\phi \in L^{2}(\bar{\Omega}; L^{2}(0,T; \mathbb{H})).
\end{align} 

For $p \geq 1, \, L^{2}(\bar{\Omega}; L^{2}(0,T; \mathbb{H})) $ is dense subspace of $L^{\frac{2p}{2p-1}}(\bar{\Omega}; L^{1}(0,T; \mathbb{H})).$ 

From \eqref{etuv} and \eqref{etup} we have
\begin{align*}
\bar{\mathbb{E}} \bigg[ \int_0^T (v(t,\omega),\phi(t,\omega))_{\mathbb{H}} \,dt \bigg] = \bar{\mathbb{E}}\bigg[ \int_0^T&(\mathbf{u}_{\ast}(t,\omega),\phi(t,\omega))_{\mathbb{H}} \,dt \bigg]
\\& \forall \, \phi \in L^{2}(\bar{\Omega}; L^{2}(0,T; \mathbb{H})).
\end{align*}
Thus we have, $\mathbf{u}_{\ast} = v$ and $\mathbf{u}_{\ast} \in L^{2p}(\bar{\Omega}; L^{2}(0,T; \mathbb{H})),$ i.e.,
\begin{align}
\bar{\mathbb{E}} \bigg[\sup_{s \in [0,T]}\big|\mathbf{u}_{\ast}(s) \big|_{\mathbb{H}}^{2p} \bigg] < C_p
\end{align} 
\end{proof}

\begin{proposition}\label{dsL2p}
Let $\mathbf{d}_{\ast}$ be the process as defined in \eqref{zt2c}. Then for $p \geq 1,$ we have
\[ \bar{\mathbb{E}} \bigg[ \sup_{s \in [0,T]} \big|\mathbf{d}_{\ast}(s) \big|_{L^2}^{2p} \bigg] < C(p).\]
\end{proposition}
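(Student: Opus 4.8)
The plan is to mimic the argument of Proposition \ref{ush2p} almost verbatim, since $\mathbf{d}_{\ast}$ is constructed in exactly the same way as $\mathbf{u}_{\ast}$ but with the roles of $\mathbb{H}$, $\mathbb{V}$ replaced by $L^2$, $H^1$, and with the c\`adl\`ag space $\mathbb{D}([0,T];\mathbb{H}_w)$ replaced by the continuous space $\mathbb{C}([0,T];H^1_w)$. The starting ingredient is the uniform bound \eqref{dsp1}, which gives that $(\bar{\mathbf{d}}_n)_{n\in\mathbb{N}}$ is uniformly bounded in $L^{2p}(\bar{\Omega};L^\infty(0,T;L^2))$ for every $p\geq 1$.

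First I would invoke the Banach--Alaoglu theorem: since $L^{2p}(\bar{\Omega};L^\infty(0,T;L^2))$ is the dual of $L^{\frac{2p}{2p-1}}(\bar{\Omega};L^1(0,T;L^2))$, there is a subsequence of $(\bar{\mathbf{d}}_n)$ converging weak-star to some $w\in L^{2p}(\bar{\Omega};L^\infty(0,T;L^2))$. Next, using \eqref{dsv2} (the bound in $L^2([0,T]\times\bar{\Omega};H^1)$) and the convergence $\bar{\mathbf{d}}_n\to\mathbf{d}_{\ast}$ in $L^2_w(0,T;H^1)$ from \eqref{zt2c}, together with the compact embedding $H^1\subset L^2$, I would identify the weak-star limit $w$ with $\mathbf{d}_{\ast}$ by testing against $\phi\in L^2(\bar{\Omega};L^2(0,T;L^2))$: for such $\phi$ both $\bar{\mathbb{E}}\int_0^T(\bar{\mathbf{d}}_n,\phi)_{L^2}\,dt\to\bar{\mathbb{E}}\int_0^T(w,\phi)_{L^2}\,dt$ (weak-star convergence, noting $L^2(\bar{\Omega};L^2(0,T;L^2))$ is dense in $L^{\frac{2p}{2p-1}}(\bar{\Omega};L^1(0,T;L^2))$) and $\bar{\mathbb{E}}\int_0^T(\bar{\mathbf{d}}_n,\phi)_{L^2}\,dt\to\bar{\mathbb{E}}\int_0^T(\mathbf{d}_{\ast},\phi)_{L^2}\,dt$ (weak convergence in $L^2([0,T]\times\bar{\Omega};H^1)$). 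Hence $w=\mathbf{d}_{\ast}$, and the weak-star lower semicontinuity of the norm yields
\[
\bar{\mathbb{E}}\bigg[\sup_{s\in[0,T]}\big|\mathbf{d}_{\ast}(s)\big|_{L^2}^{2p}\bigg]\le\liminf_{n\to\infty}\bar{\mathbb{E}}\bigg[\sup_{s\in[0,T]}\big|\bar{\mathbf{d}}_n(s)\big|_{L^2}^{2p}\bigg]\le\tilde{C}_{2p,T}=:C(p),
\]
using \eqref{dsp1} for the last inequality.

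There is essentially no genuine obstacle here; the only point requiring a little care is the identification of the weak-star limit with $\mathbf{d}_{\ast}$, which is why one passes through a common test-function class and uses the compactness of $H^1\hookrightarrow L^2$ to reconcile the weak topology on $L^2_w(0,T;H^1)$ with the weak-star topology on $L^{2p}(\bar\Omega;L^\infty(0,T;L^2))$. I would simply write: the proof is identical to that of Proposition \ref{ush2p}, replacing $\mathbb{H}$, $\mathbb{V}$, $\bar{\mathbf{u}}_n$, $\mathbf{u}_{\ast}$, \eqref{unb2p} by $L^2$, $H^1$, $\bar{\mathbf{d}}_n$, $\mathbf{d}_{\ast}$, \eqref{dsp1} respectively, and using \eqref{dsv2} in place of \eqref{usv2}.
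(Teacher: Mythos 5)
Your proposal is correct and is exactly what the paper does: its proof of Proposition \ref{dsL2p} consists of the single line ``Similar proof as Proposition \ref{ush2p},'' and you have carried out precisely that adaptation, replacing $\mathbb{H}$, $\mathbb{V}$, \eqref{unb2p} by $L^2$, $H^1$, \eqref{dsp1} and identifying the weak-star limit with $\mathbf{d}_{\ast}$ through a common class of test functions. Your explicit appeal to weak-star lower semicontinuity of the norm at the end is, if anything, a slightly cleaner way to finish than the paper's wording.
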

\begin{proof}
Similar proof as Proposition \ref{ush2p}.
\end{proof}

\begin{lemma}
Let the Assumption (C) of section \ref{assu} holds. Then there exists a positive constant $C$ such that for any $\mathbf{d}_1, \mathbf{d}_2 \in H^2,$ we obtain
\begin{align*}
\|f(\mathbf{d}_1)-f(\mathbf{d}_2)\|_{H^1} \leq C \big( 1+ \big\|\mathbf{d}_1 \big\|^{2N}_{H^2} + \big\|\mathbf{d}_2 \big\|^{2N}_{H^2} \big) \|\mathbf{d}_1-\mathbf{d}_2\|_{H^2}.
\end{align*}
\end{lemma}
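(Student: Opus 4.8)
The statement is a local Lipschitz-type bound for the Nemytskii map $\mathbf{d}\mapsto f(\mathbf{d})=\tilde f(|\mathbf{d}|^2)\mathbf{d}$ on $H^2(\mathbb{O};\mathbb{R}^{\mathbf{n}})$ in the $H^1$-norm, where $\mathbf n\in\{2,3\}$ so that $H^2\hookrightarrow L^\infty$ and $H^2$ is a Banach algebra (more precisely, $H^1\cap L^\infty$ is an algebra and products of $H^2$ functions lie in $H^1$). The plan is to write $f(\mathbf{d}_1)-f(\mathbf{d}_2)$ along the segment $\mathbf d_\theta:=\theta\mathbf d_1+(1-\theta)\mathbf d_2$, $\theta\in[0,1]$, via the fundamental theorem of calculus:
\begin{align*}
f(\mathbf d_1)-f(\mathbf d_2)=\int_0^1 f'(\mathbf d_\theta)[\mathbf d_1-\mathbf d_2]\,d\theta,
\end{align*}
and then estimate $\|f'(\mathbf d_\theta)[\mathbf g]\|_{H^1}$ for $\mathbf g:=\mathbf d_1-\mathbf d_2$, using that $\|\mathbf d_\theta\|_{H^2}\le\|\mathbf d_1\|_{H^2}+\|\mathbf d_2\|_{H^2}$. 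Integrating over $\theta$ then gives the claimed bound since $\int_0^1(\|\mathbf d_1\|_{H^2}+\|\mathbf d_2\|_{H^2})^{2N}\,d\theta\le C(1+\|\mathbf d_1\|_{H^2}^{2N}+\|\mathbf d_2\|_{H^2}^{2N})$.

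The core computation is thus to bound $\|f'(\mathbf d)[\mathbf g]\|_{H^1}$ by $C(1+\|\mathbf d\|_{H^2}^{2N})\|\mathbf g\|_{H^2}$ for a single $\mathbf d\in H^2$. One differentiates $f(\mathbf d)=\tilde f(|\mathbf d|^2)\mathbf d$ to get
\begin{align*}
f'(\mathbf d)[\mathbf g]=2\tilde f'(|\mathbf d|^2)(\mathbf d\cdot\mathbf g)\,\mathbf d+\tilde f(|\mathbf d|^2)\,\mathbf g,
\end{align*}
so each summand is a product of the scalar polynomial expressions $\tilde f(|\mathbf d|^2)$ (degree $\le N$ in $|\mathbf d|^2$, hence degree $\le 2N$ in $\mathbf d$) and $\tilde f'(|\mathbf d|^2)$ (degree $\le 2(N-1)$ in $\mathbf d$) times one or two factors of $\mathbf d$ and one factor of $\mathbf g$. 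For the $L^2$ part of the $H^1$ norm I would use the embedding $H^2\hookrightarrow L^\infty$ to put $\tilde f(|\mathbf d|^2)$, $\tilde f'(|\mathbf d|^2)$ and the extra $\mathbf d$ factors in $L^\infty$ (bounded by $C(1+\|\mathbf d\|_{H^2}^{2N})$ via Remark \ref{fes} applied pointwise) and keep $\mathbf g$ in $L^2$. For the gradient part, one applies the Leibniz rule; the derivative lands either on $\mathbf g$ (giving $\nabla\mathbf g\in L^2$, the rest in $L^\infty$), or on $\mathbf d$, or on the polynomial coefficients $\tilde f,\tilde f'$ where the chain rule produces $\tilde f''(|\mathbf d|^2)$ times $\mathbf d\cdot\nabla\mathbf d$ or $\tilde f'$ times $\nabla\mathbf d$. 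In every term exactly one factor carries a gradient; that factor ($\nabla\mathbf g$ or $\nabla\mathbf d$) is placed in $L^2$ using $\mathbf d,\mathbf g\in H^1$, while all remaining (polynomial and undifferentiated) factors are placed in $L^\infty$ using $H^2\hookrightarrow L^\infty$ together with the polynomial growth bounds $|\tilde f(r)|\le l_1(1+r^N)$, $|\tilde f'(r)|\le l_2(1+r^{N-1})$, $|\tilde f''(r)|\le l_3(1+r^{N-2})$ from Assumption (C)/Remark \ref{fes}. Collecting the $L^\infty$ factors produces the factor $C(1+\|\mathbf d\|_{H^2}^{2N})$ (the worst case being $2N$ powers of $\mathbf d$ in $L^\infty$), and the single gradient factor produces $\|\mathbf g\|_{H^2}$ (in fact $\|\mathbf g\|_{H^1}\le\|\mathbf g\|_{H^2}$ suffices).

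The main obstacle — though it is bookkeeping rather than a genuine analytic difficulty — is the careful Leibniz-rule expansion when the gradient falls on the polynomial coefficient $\tilde f(|\mathbf d|^2)$, since then one gets $\tilde f''(|\mathbf d|^2)$, which requires $N\ge 2$ for the bound $r^{N-2}$ to make sense; for $N=0,1$ these terms simply vanish and should be treated as a trivial special case. One must also make sure the counting of powers never exceeds $2N$: the term $\tilde f'(|\mathbf d|^2)(\mathbf d\cdot\mathbf g)\mathbf d$ contributes at most $2(N-1)+2=2N$ powers of $\mathbf d$, and differentiating it keeps this count, so the bound is sharp and consistent. Once this is organized, summing the finitely many terms and then integrating in $\theta$ completes the proof; I would state it cleanly by first isolating the single-point estimate $\|f'(\mathbf d)[\mathbf g]\|_{H^1}\le C(1+\|\mathbf d\|_{H^2}^{2N})\|\mathbf g\|_{H^2}$ as the key inequality and then deducing the Lipschitz bound by the segment integral.
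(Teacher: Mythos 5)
Your argument is correct, and it takes a genuinely different route from the paper. The paper first uses the Banach algebra property of $H^2$ (valid for $\mathbf{n}=2,3$) to reduce to the leading monomial $b_N|\mathbf{d}|^{2N}\mathbf{d}$, then applies the explicit algebraic factorization $|\mathbf{d}_1|^{2N}\mathbf{d}_1-|\mathbf{d}_2|^{2N}\mathbf{d}_2=|\mathbf{d}_1|^{2N}(\mathbf{d}_1-\mathbf{d}_2)+\mathbf{d}_2(|\mathbf{d}_1|-|\mathbf{d}_2|)\sum_{k=0}^{2N-1}|\mathbf{d}_1|^{2N-k-1}|\mathbf{d}_2|^k$ together with the product estimate $\||\mathbf{d}_1|^{2r}\mathbf{d}_2\|_{H^2}\le c\|\mathbf{d}_1\|_{H^2}^{2r}\|\mathbf{d}_2\|_{H^2}$ and Young's inequality; the difference $\mathbf{d}_1-\mathbf{d}_2$ appears as an explicit factor and no differentiation of $f$ is needed. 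You instead integrate $f'(\mathbf{d}_\theta)[\mathbf{d}_1-\mathbf{d}_2]$ along the segment and estimate the derivative term by term via the Leibniz rule and $H^2\hookrightarrow L^\infty$. Your route requires the extra bookkeeping with $\tilde f'$ and $\tilde f''$ that you acknowledge, but it isolates the reusable single-point estimate $\|f'(\mathbf{d})[\mathbf{g}]\|_{H^1}\le C(1+\|\mathbf{d}\|_{H^2}^{2N})\|\mathbf{g}\|_{H^2}$ and generalizes immediately to non-polynomial $\tilde f$ with the stated growth bounds; the paper's route is shorter because the algebra property absorbs all the product estimates at once, but it leans on $f$ being exactly a polynomial. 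One small inconsistency in your write-up: the parenthetical claim that $\|\mathbf{g}\|_{H^1}$ would suffice does not follow from the $L^\infty$--$L^2$ splitting you describe, since in the terms where the gradient falls on $\mathbf{d}$ the factor $\mathbf{g}$ must be placed in $L^\infty$ (controlled by $\|\mathbf{g}\|_{H^2}$) or at least in $L^4$ (which would indeed be controlled by $\|\mathbf{g}\|_{H^1}$ in dimensions $2,3$, but that is a different splitting); since the lemma only asserts the $\|\mathbf{g}\|_{H^2}$ bound, this does not affect correctness.
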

\begin{proof}
Since $H^2$ is an algebra, for any $\mathbf{d}_1, \mathbf{d}_2 \in H^2$ and $r \geq 0,$ we have
\begin{align*}
\big\||\mathbf{d}_1|^{2r} \mathbf{d}_2 \big\|_{H^2} \leq c_1 \, \big\|\mathbf{d}_1 \big\|_{H^2}^{2r} \big\|\mathbf{d}_2 \big\|_{H^2}.
\end{align*}
Further, by Young's inequality we obtain
\begin{align}\label{dH2}
\big\|\mathbf{d}_1 \big\|_{H^2}^{2r} \big\|\mathbf{d}_2 \big\|_{H^2} \leq c_2 \, \big\|\mathbf{d}_2 \big\|_{H^2} \big(1 + \big\|\mathbf{d}_1 \big\|_{H^2}^{2N} \big), \quad \text{for} \ r = 0, \cdots, N.
\end{align}

So it is sufficient to prove the lemma for the leading term $b_N |\mathbf{d}|^{2N} \mathbf{d}.$ Further we have
\begin{align*}
|\mathbf{d}_1|^{2N} \mathbf{d}_1 - |\mathbf{d}_2|^{2N} \mathbf{d}_2 = |\mathbf{d}_1|^{2N} (\mathbf{d}_1 - \mathbf{d}_2) + \mathbf{d}_2 (|\mathbf{d}_1|-|\mathbf{d}_2|) \big( \sum_{k=0}^{2N-1} |\mathbf{d}_1|^{2N-k-1} |\mathbf{d}_2|^k \big).
\end{align*}

Then by using Young's inequality several times we get,
\begin{align}\label{d2N}
&\big||\mathbf{d}_1|^{2N} \mathbf{d}_1 - |\mathbf{d}_2|^{2N} \mathbf{d}_2 \big| \leq |\mathbf{d}_1 - \mathbf{d}_2| |\mathbf{d}_1|^{2N} + |\mathbf{d}_2| \big||\mathbf{d}_1|-|\mathbf{d}_2| \big| \big( \sum_{k=0}^{2N-1} |\mathbf{d}_1|^{2N-k-1} |\mathbf{d}_2|^k \big) \nonumber
\\&\leq |\mathbf{d}_1 - \mathbf{d}_2| \bigg[ |\mathbf{d}_1|^{2N} + |\mathbf{d}_2| \big( \sum_{k=0}^{2N-1} |\mathbf{d}_1|^{2N-k-1} |\mathbf{d}_2|^k \big) \bigg] \leq C_N \, |\mathbf{d}_1 - \mathbf{d}_2| \big( 1+ |\mathbf{d}_1|^{2N} + |\mathbf{d}_2|^{2N}\big).
\end{align}

Finally from \eqref{dH2} and \eqref{d2N} we infer that the lemma holds for the leading term.
\end{proof}

\begin{lemma}\label{fdflbet}
For any $\kappa_1 >0$ and $\kappa_2 >0,$ there exist $C(\kappa_1) > 0, C_1(\kappa_2) >0$ and $C_2(\kappa_2) >0$ such that
\begin{align}\label{fdbeta}
\big| \big\langle f(\mathbf{d}_1)-f(\mathbf{d}_2), \mathbf{d}_1 - \mathbf{d}_2\big\rangle\big| \leq \kappa_1 \|\nabla \mathbf{d}_1 - \nabla \mathbf{d}_2\|^2_{L^2} + C(\kappa_1) \|\mathbf{d}_1 - \mathbf{d}_2\|^2_{L^2} \beta(\mathbf{d}_1, \mathbf{d}_2),
\end{align}
and
\begin{align}\label{flbeta}
&\big| \big\langle f(\mathbf{d}_1)-f(\mathbf{d}_2), \Delta \mathbf{d}_1 - \Delta \mathbf{d}_2 \big\rangle\big| \nonumber
\\ &\leq \kappa_2 \|\Delta \mathbf{d}_1 - \Delta \mathbf{d}_2\|^2_{L^2} + \big \{ C_1(\kappa_2) \|\nabla \mathbf{d}_1 - \nabla \mathbf{d}_2\|^2_{L^2} + C_2(\kappa_2) \|\mathbf{d}_1 - \mathbf{d}_2\|^2_{L^2} \big\} \beta(\mathbf{d}_1, \mathbf{d}_2),
\end{align}
where 
\begin{align}\label{betdef}
\beta(\mathbf{d}_1, \mathbf{d}_2) := C \big( 1 +  \|\mathbf{d}_1\|^{2N}_{L^{4N+2}} + \|\mathbf{d}_2\|^{2N}_{L^{4N+2}} \big)^2.
\end{align}
\end{lemma}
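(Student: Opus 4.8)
\textbf{Proof proposal for Lemma \ref{fdflbet}.}

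The plan is to reduce the estimates to pointwise algebraic bounds on the polynomial nonlinearity $f(\mathbf{d}) = \tilde f(|\mathbf{d}|^2)\mathbf{d}$ and then invoke Sobolev embeddings together with H\"older's and Young's inequalities. Recall from Assumption (C) and Remark \ref{fes} that $|f'(\mathbf{d})| \leq \tilde c(1 + |\mathbf{d}|^{2N})$. By the mean value theorem applied along the segment joining $\mathbf{d}_2$ to $\mathbf{d}_1$, we get the pointwise bound
\begin{align*}
|f(\mathbf{d}_1) - f(\mathbf{d}_2)| \leq C\big(1 + |\mathbf{d}_1|^{2N} + |\mathbf{d}_2|^{2N}\big)\,|\mathbf{d}_1 - \mathbf{d}_2|,
\end{align*}
which is the workhorse for everything. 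First I would prove \eqref{fdbeta}: apply the pointwise bound, then H\"older's inequality with exponents chosen so that the factor $1 + |\mathbf{d}_1|^{2N} + |\mathbf{d}_2|^{2N}$ lands in $L^{(2N+1)/N} = L^{(4N+2)/(2N)}$ (so that its $L^{(4N+2)/(2N)}$-norm is controlled by $(1 + \|\mathbf{d}_1\|_{L^{4N+2}}^{2N} + \|\mathbf{d}_2\|_{L^{4N+2}}^{2N})$, matching $\beta(\mathbf{d}_1,\mathbf{d}_2)^{1/2}$), while $|\mathbf{d}_1 - \mathbf{d}_2|$ splits into a factor in $L^2$ and a factor in $L^{2N+1}$. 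The $L^{2N+1}$ norm of $\mathbf{d}_1 - \mathbf{d}_2$ is then interpolated between $L^2$ and $H^1$ (valid since $\mathbf{n} = 2$ gives $H^1 \hookrightarrow L^q$ for all finite $q$) and absorbed using Young's inequality, producing the $\kappa_1\|\nabla(\mathbf{d}_1-\mathbf{d}_2)\|^2_{L^2}$ term plus $C(\kappa_1)\|\mathbf{d}_1-\mathbf{d}_2\|^2_{L^2}\,\beta(\mathbf{d}_1,\mathbf{d}_2)$.

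For \eqref{flbeta} the structure is the same but one derivative higher. I would integrate by parts to write $\langle f(\mathbf{d}_1)-f(\mathbf{d}_2), \Delta\mathbf{d}_1 - \Delta\mathbf{d}_2\rangle = -\langle \nabla(f(\mathbf{d}_1)-f(\mathbf{d}_2)), \nabla(\mathbf{d}_1-\mathbf{d}_2)\rangle$, expand $\nabla(f(\mathbf{d}_1)-f(\mathbf{d}_2))$ via the chain rule into a term with $f'(\mathbf{d}_1)\nabla(\mathbf{d}_1-\mathbf{d}_2)$ and a term with $(f'(\mathbf{d}_1)-f'(\mathbf{d}_2))\nabla\mathbf{d}_2$, bound $|f'(\mathbf{d}_1)-f'(\mathbf{d}_2)| \leq C(1+|\mathbf{d}_1|^{2N-1}+|\mathbf{d}_2|^{2N-1})|\mathbf{d}_1-\mathbf{d}_2|$ (again by the mean value theorem, using $|f''| \leq C(1+|\mathbf{d}|^{2N-1})$), and then distribute the Lebesgue exponents using H\"older; alternatively, keep the $\Delta$ form and estimate directly. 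The key point is that every resulting product can be arranged so that one genuinely second-order quantity appears with a small coefficient $\kappa_2$ (peeled off by Young's inequality to give $\kappa_2\|\Delta(\mathbf{d}_1-\mathbf{d}_2)\|^2_{L^2}$), while the remaining polynomial factors are placed in Lebesgue spaces controlled by $\beta(\mathbf{d}_1,\mathbf{d}_2)$ and the low-order differences $\|\nabla(\mathbf{d}_1-\mathbf{d}_2)\|_{L^2}$, $\|\mathbf{d}_1-\mathbf{d}_2\|_{L^2}$. Here I would lean on the Gagliardo-Nirenberg inequalities recalled in Appendix B and on $H^1 \hookrightarrow L^q$ (all finite $q$) and $H^2 \hookrightarrow L^\infty$ in dimension two.

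The main obstacle is bookkeeping the exponents in the H\"older splittings so that the polynomial prefactors really do end up in $L^{4N+2}$ (to the right power to match the definition \eqref{betdef} of $\beta$) and the interpolation of the difference terms between $L^2$ and $H^1$ (or $H^2$) leaves exactly a second-order term with adjustable small constant and lower-order terms with the structure claimed. This is delicate because the degree $2N$ must be tracked carefully and the two-dimensionality is essential — it is what makes $H^1 \hookrightarrow L^q$ hold for every finite $q$, so that no constraint linking $N$ and the embedding exponent is violated. Once the exponent arithmetic is set up correctly, the absorption via Young's inequality and the final regrouping are routine.
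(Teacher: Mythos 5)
Your overall strategy — the pointwise bound $|f(\mathbf{d}_1)-f(\mathbf{d}_2)|\leq C(1+|\mathbf{d}_1|^{2N}+|\mathbf{d}_2|^{2N})|\mathbf{d}_1-\mathbf{d}_2|$ followed by H\"older, Sobolev/Gagliardo--Nirenberg and Young — is exactly the paper's, and both estimates do go through this way. Two remarks. First, your H\"older split for \eqref{fdbeta} has an exponent error: with the polynomial factor in $L^{(4N+2)/(2N)}$ and one copy of $|\mathbf{d}_1-\mathbf{d}_2|$ in $L^2$, the remaining copy must go in $L^{4N+2}$, not $L^{2N+1}$, since $\frac{2N}{4N+2}+\frac12+\frac{1}{2N+1}>1$. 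The paper avoids this bookkeeping entirely by the cruder $(2,2)$ split, putting $|\mathbf{d}_1-\mathbf{d}_2|^2$ in $L^2$ (i.e.\ $\|\mathbf{d}_1-\mathbf{d}_2\|_{L^4}^2$) and the polynomial factor in $L^2$ (controlled by $L^{4N}\subset L^{4N+2}$ norms on the bounded domain), then using the two-dimensional Ladyzhenskaya inequality $\|u\|_{L^4}^2\leq\|u\|_{L^2}\|\nabla u\|_{L^2}$ before Young; either corrected version works. Second, for \eqref{flbeta} your primary route (integrate by parts and expand $\nabla(f(\mathbf{d}_1)-f(\mathbf{d}_2))$ by the chain rule) is a genuine detour: it produces the additional terms $\int f'(\mathbf{d}_1)|\nabla(\mathbf{d}_1-\mathbf{d}_2)|^2$ and $\int(f'(\mathbf{d}_1)-f'(\mathbf{d}_2))\nabla\mathbf{d}_2\cdot\nabla(\mathbf{d}_1-\mathbf{d}_2)$, each of which then needs its own interpolation between $\|\nabla(\mathbf{d}_1-\mathbf{d}_2)\|_{L^2}$ and $\|\Delta(\mathbf{d}_1-\mathbf{d}_2)\|_{L^2}$ and a careful match against $\beta$ (note also $f'(\mathbf{d}_1)$ cannot be taken out in $L^\infty$ if you want only $L^{4N+2}$ norms in $\beta$). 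The alternative you mention in passing — keep the $\Delta$ form and apply H\"older directly with exponents $2$, $4N+2$, $\frac{4N+2}{2N}$, then $H^1\hookrightarrow L^{4N+2}$ and Young — is what the paper does, and it is both shorter and lands exactly on the claimed right-hand side (the harmless $\kappa_2\|\mathbf{d}_1-\mathbf{d}_2\|_{L^2}^2$ coming from $\|\cdot\|_{H^1}^2$ is absorbed since $\beta$ is bounded below). So the proposal is sound in outline, but you should fix the exponent in \eqref{fdbeta} and take your own "alternative" as the main route for \eqref{flbeta}.
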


\begin{proof}
From \eqref{d2N} we obtain
\begin{align*}
\big| \big\langle f(\mathbf{d}_1)-f(\mathbf{d}_2), \mathbf{d}_1 - \mathbf{d}_2\big\rangle\big| \leq C \int_{\mathbb{O}} \big( 1+ |\mathbf{d}_1|^{2N} + |\mathbf{d}_2|^{2N}\big) |\mathbf{d}_1 - \mathbf{d}_2|^2 \,dx.
\end{align*}

Then using H\"older's, Gagliardo-Nirenberg  and Young's inequalities and using the fact that $L^{4N+2} \subset L^{4N},$ we get
\begin{align*}
&\big| \big\langle f(\mathbf{d}_1)-f(\mathbf{d}_2), \mathbf{d}_1 - \mathbf{d}_2\big\rangle\big| \leq C \bigg\{\int_{\mathbb{O}} \big( |\mathbf{d}_1 - \mathbf{d}_2|^2 \big)^2 \,dx \bigg\}^{\frac{1}{2}} \bigg\{\int_{\mathbb{O}} \big( 1+ |\mathbf{d}_1|^{2N} + |\mathbf{d}_2|^{2N}\big)^2 \,dx \bigg\}^{\frac{1}{2}}
\\ &\leq C \bigg\{\int_{\mathbb{O}} |\mathbf{d}_1 - \mathbf{d}_2|^4 \,dx \bigg\}^{\frac{1}{4} \cdot 2} \bigg\{\int_{\mathbb{O}} \big( 1+ |\mathbf{d}_1|^{4N} + |\mathbf{d}_2|^{4N} \big) \,dx \bigg\}^{\frac{1}{4N} \cdot 2N}
\\ &\leq C \|\mathbf{d}_1 - \mathbf{d}_2\|^2_{L^4}  \big( 1+ \|\mathbf{d}_1 \|_{L^{4N+2}}^{2N} + \|\mathbf{d}_2 \|_{L^{4N+2}}^{2N}\big) 
\\ &\leq  C \|\mathbf{d}_1 - \mathbf{d}_2\|_{L^2} \| \nabla \mathbf{d}_1 - \nabla \mathbf{d}_2\|_{L^2} \big( 1+ \|\mathbf{d}_1 \|_{L^{4N+2}}^{2N} + \|\mathbf{d}_2 \|_{L^{4N+2}}^{2N}\big)
\\ &\leq \kappa_1 \| \nabla \mathbf{d}_1 - \nabla \mathbf{d}_2\|^2_{L^2} + C(\kappa_1) \|\mathbf{d}_1 - \mathbf{d}_2\|^2_{L^2} \big( 1+ \|\mathbf{d}_1 \|_{L^{4N+2}}^{2N} + \|\mathbf{d}_2 \|_{L^{4N+2}}^{2N}\big)^2.
\end{align*} 

which proves \eqref{fdbeta}. Using the similar argument as in \eqref{fdbeta}, using H\:\"older's inequality with exponents $2, 4N+2, \frac{4N+2}{2N}$ and using the fact that $H^1 \subset L^{4N+2}$ for $N \in \mathbb{N}$ we obtain
\begin{align*}
&\big| \big\langle f(\mathbf{d}_1)-f(\mathbf{d}_2), \Delta \mathbf{d}_1 - \Delta \mathbf{d}_2\big\rangle\big| \leq C \int_{\mathbb{O}} \big( 1+ |\mathbf{d}_1|^{2N} + |\mathbf{d}_2|^{2N}\big) |\mathbf{d}_1 - \mathbf{d}_2| | \Delta \mathbf{d}_1 - \Delta \mathbf{d}_2| \,dx
\\ &\leq \|\mathbf{d}_1 - \mathbf{d}_2\|_{L^{4N+2}} \| \Delta \mathbf{d}_1 - \Delta \mathbf{d}_2\|_{L^2} \big( 1+ \|\mathbf{d}_1 \|_{L^{4N+2}}^{2N} + \|\mathbf{d}_2 \|_{L^{4N+2}}^{2N}\big)
\\ &\leq \|\mathbf{d}_1 - \mathbf{d}_2\|_{H^1} \| \Delta \mathbf{d}_1 - \Delta \mathbf{d}_2\|_{L^2} \big( 1+ \|\mathbf{d}_1 \|_{L^{4N+2}}^{2N} + \|\mathbf{d}_2 \|_{L^{4N+2}}^{2N}\big)
\\ &\leq \kappa_2 \| \Delta \mathbf{d}_1 - \Delta \mathbf{d}_2\|^2_{L^2} + C(\kappa_2) \|\mathbf{d}_1 - \mathbf{d}_2\|^2_{H^1}  \big( 1+ \|\mathbf{d}_1 \|_{L^{4N+2}}^{2N} + \|\mathbf{d}_2 \|_{L^{4N+2}}^{2N}\big)^2.
\end{align*}
from which we get \eqref{flbeta}.
\end{proof}
\end{appendix}

\end{document}